\definecolor{dark-red}{rgb}{0.4,0.15,0.15}
\definecolor{dark-blue}{rgb}{0.15,0.15,0.4}
\definecolor{medium-blue}{rgb}{0,0,0.5}
\newcommand*{\defeq}{\mathrel{\rlap{%
			\raisebox{0.3ex}{$\m@th\cdot$}}%
		\raisebox{-0.3ex}{$\m@th\cdot$}}%
	=}
\newcommand*{\eqdef}{\mathrel{=\llap{%
			\raisebox{0.3ex}{$\m@th\cdot$}}%
		\llap{\raisebox{-0.3ex}{$\m@th\cdot$}}}%
}
\newcommand\A{\mathbb{A}}
\newcommand\BB{\mathcal{B}}
\renewcommand\C{\mathbb{C}}
\newcommand\CC{\mathcal{C}}
\newcommand\dd{\mathfrak{d}}
\newcommand\e{\varepsilon}
\newcommand\EE{\mathcal{E}}
\newcommand\GL{\mathrm{GL}}
\newcommand\Gs{\mathscr{G}}
\newcommand\Hb{\mathbb{H}}
\newcommand\NN{\mathcal{N}}
\newcommand\OO{\mathcal{O}}
\newcommand\pp{\mathfrak{p}}
\newcommand\PGL{\mathrm{PGL}}
\newcommand\PSL{\mathrm{PSL}}
\newcommand\qq{\mathfrak{q}}
\newcommand\reg{\mathrm{reg}}
\newcommand\Q{\mathbb{Q}}
\newcommand\R{\mathbb{R}}
\newcommand\SL{\mathrm{SL}}
\newcommand\SO{\mathrm{SO}}
\newcommand\T{\mathbb{T}}
\newcommand\tors{\mathrm{tors}}
\newcommand\Z{\mathbb{Z}}
\DeclareMathOperator{\arsinh}{arsinh}
\DeclareMathOperator{\Cl}{Cl}
\DeclareMathOperator{\Gen}{Gen}
\DeclareMathOperator{\sym}{sym}
\DeclareMathOperator{\Var}{Var}
\DeclareMathOperator{\vol}{vol}
\numberwithin{equation}{section}
\newtheorem{theorem}[equation]{Theorem}
\newtheorem{conjecture}[equation]{Conjecture}
\newtheorem{corollary}[equation]{Corollary}
\newtheorem{lemma}[equation]{Lemma}
\newtheorem{proposition}[equation]{Proposition}
\newtheorem{question}[equation]{Question}
\theoremstyle{remark}
\newtheorem{remark}[equation]{Remark}
\begin{document}

\title[Equidistribution in Shrinking Sets and $L^4$-Norm Bounds]{Equidistribution in Shrinking Sets and $L^4$-Norm Bounds for Automorphic Forms}

\author{Peter Humphries}


\address{Department of Mathematics, University College London, Gower Street, London WC1E 6BT}

\email{\href{mailto:pclhumphries@gmail.com}{pclhumphries@gmail.com}}

\keywords{quantum unique ergodicity, Maass form}

\subjclass[2010]{11F12 (primary); 58J51 (secondary)}

\begin{abstract}
We study two closely related problems stemming from the random wave conjecture for Maa\ss{} forms. The first problem is bounding the $L^4$-norm of a Maa\ss{} form in the large eigenvalue limit; we complete the work of Spinu to show that the $L^4$-norm of an Eisenstein series $E(z,1/2+it_g)$ restricted to compact sets is bounded by $\sqrt{\log t_g}$. The second problem is quantum unique ergodicity in shrinking sets; we show that by averaging over the centre of hyperbolic balls in $\Gamma \backslash \Hb$, quantum unique ergodicity holds for almost every shrinking ball whose radius is larger than the Planck scale. This result is conditional on the generalised Lindel\"{o}f hypothesis for Hecke--Maa\ss{} eigenforms but is unconditional for Eisenstein series. We also show that equidistribution for Hecke--Maa\ss{} eigenforms need not hold at or below the Planck scale. Finally, we prove similar equidistribution results in shrinking sets for Heegner points and closed geodesics associated to ideal classes of quadratic fields.
\end{abstract}

\maketitle


\section{Introduction}

\subsection{Randomness of Maa\ss{} Newforms}\label{randomnesssect}

\subsubsection{Random Wave Conjecture}

Let $\BB_0(\Gamma)$ denote the set of Hecke--Maa\ss{} eigenforms of weight zero and level $1$ on the modular surface $\Gamma \backslash \Hb$, where $\Gamma = \SL_2(\Z)$ and $\Hb$ denotes the upper half-plane; we normalise $g \in \BB_0(\Gamma)$ to be such that
\[\langle g, g \rangle \defeq \int_{\Gamma \backslash \Hb} |g(z)|^2 \, d\mu(z) = 1,\]
where $d\mu(z) = y^{-2} \, dx \, dy$. A well-known conjecture of Berry \cite{Ber} and Hejhal and Rackner \cite{HejRa} states that a Hecke--Maa\ss{} eigenform $g \in \BB_0(\Gamma)$ of large Laplacian eigenvalue $\lambda_g = 1/4 + t_g^2$ ought to behave like a random wave. Here by a random wave, we mean a function of the form
\[g_{\lambda}(z) = \sum_{\lambda \leq \lambda_f \leq \lambda + \eta(\lambda)} c_f f(z),\]
where $\eta(\lambda) \to \infty$ as $\lambda \to \infty$ and $\eta(\lambda) = o(\lambda)$, each $f$ is a normalised Hecke--Maa\ss{} eigenform, and the coefficients $c_f$ are independent Gaussian random variables of mean $0$ and variance $1$. These are a randomised model of eigenfunctions of the Laplacian in the large eigenvalue limit $\lambda \to \infty$, and it is easier to prove (almost surely) results for random waves than for true eigenfunctions.

For $\Gamma \backslash \Hb$, there are situations in which random waves do not behave precisely like Laplacian eigenfunctions: random waves satisfy $\sup_{z \in K} |g_{\lambda}(z)| \asymp_K \sqrt{\log \lambda}$ almost surely for every compact subset $K$, whereas Mili\'{c}evi\'{c} \cite[Theorem 1]{Mil} proved the existence of a dense subset of points $z \in \Gamma \backslash \Hb$ for which a subsequence of Hecke--Maa\ss{} eigenforms $g \in \BB_0(\Gamma)$ may be much larger. Nonetheless, it is conjectured that Laplacian eigenfunctions should, on the whole, be well-modelled by random waves. This (admittedly loosely defined) conjecture is known as the random wave conjecture.

In this paper, we study two aspects of this conjecture: bounds for the $L^4$-norm of an automorphic form, and quantum unique ergodicity in shrinking balls. The former is a special case of the Gaussian moments conjecture, while the latter is a refinement of quantum unique ergodicity.

\subsubsection{Gaussian Moments Conjecture}

A particular manifestation of the random wave conjecture states that the moments of a Hecke--Maa\ss{} eigenform $g \in \BB_0(\Gamma)$ should be identical to those of a Gaussian random variable in the large eigenvalue limit.

\begin{conjecture}[Gaussian Moments Conjecture]
Let $K$ be any fixed compact continuity set of $\Gamma \backslash \Hb$, so that the boundary of $K$ has $\mu$-measure zero, and let $g \in \BB_0(\Gamma)$ be a Hecke--Maa\ss{} eigenform normalised such that $\langle g, g \rangle = 1$. Then for every nonnegative integer $n$,
\begin{equation}\label{Kgnmoment}
\frac{1}{\Var_K(g)^{n/2} \vol(K)} \int_K g(z)^n \, d\mu(z)
\end{equation}
converges to
\[\frac{1}{\sqrt{2\pi}} \int_{-\infty}^{\infty} x^n e^{-\frac{x^2}{2}} \, dx = \begin{dcases*}
\displaystyle \frac{2^{n/2}}{\sqrt{\pi}} \Gamma\left(\frac{n + 1}{2}\right) & if $n$ is even,	\\
0 & if $n$ is odd,
\end{dcases*}\]
as $t_g$ tends to infinity. Here 
\[\Var_K(g) \defeq \frac{1}{\vol(K)} \int_K |g(z)|^2 \, d\mu(z).\]
\end{conjecture}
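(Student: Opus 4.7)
The plan is to reduce moments over $K$ to moments over all of $\Gamma \backslash \Hb$ by spectrally decomposing a smooth approximation to $\chi_K$, and then to attack the resulting global moment with the spectral theory of automorphic forms. Write $\chi_K \approx \psi_K$ for a smooth cut-off, and expand
\[
\psi_K(z) = \frac{\vol(K)}{\vol(\Gamma \backslash \Hb)} + \sum_{f \in \BB_0(\Gamma)} \langle \psi_K, f \rangle f(z) + (\text{Eisenstein part}).
\]
Inserting this into $\int_{\Gamma \backslash \Hb} \psi_K\, g^n \, d\mu$ yields a main term $\frac{\vol(K)}{\vol(\Gamma \backslash \Hb)} \int_{\Gamma \backslash \Hb} g^n \, d\mu$ together with an error of the shape $\sum_f \langle \psi_K, f \rangle \langle g^n, f \rangle$ and its Eisenstein analogue. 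Smoothness of $\psi_K$ makes $\langle \psi_K, f \rangle$ decay rapidly in the spectral parameter of $f$, so truncating the spectrum and bounding $\langle g^n, f \rangle$ by QUE-type estimates should kill the error; this is the step where some regularity of $\partial K$ (the continuity-set hypothesis) enters.

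The remaining problem is an asymptotic for the \emph{global} moment $\int_{\Gamma \backslash \Hb} g^n \, d\mu$. For $n = 2$ this is simply the normalisation $\langle g, g \rangle = 1$, which already matches the Gaussian second moment after dividing by $\Var_K(g)\vol(K)$. For $n = 1$ and, more delicately, $n = 3$, the predicted cancellation should follow from QUE together with subconvexity for the triple product $L$-functions that appear after expanding one factor of $g$ spectrally. For $n = 4$, Parseval gives $\int g^4 \, d\mu = \|g^2\|_2^2$; expanding $g^2$ into its spectral components and applying Watson's formula expresses this as a spectral sum of central $L$-values of the shape $L(1/2, \sym^2 g \times f)/L(1, \sym^2 g)^2$, plus an Eisenstein contribution. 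The Gaussian fourth-moment constant $3$ should then emerge from a sharp asymptotic second-moment estimate for these $L$-values, which is essentially the problem on which this paper makes progress in the Eisenstein setting.

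The main obstacle lies in $n \geq 5$. Iterating Parseval and Watson rewrites such a moment as a multiple spectral sum of products of central $L$-values of steadily increasing degree, and no unconditional or even GRH-level bounds are known for the required $(n-2)$-fold averages with the precision needed to pin down the Gaussian constant $\frac{2^{n/2}}{\sqrt{\pi}}\Gamma\!\left(\frac{n+1}{2}\right)$. A secondary difficulty is controlling the smoothing error uniformly in both $t_g$ and the regularity scale of $\psi_K$, which limits how close one can bring the test function to $\chi_K$. Consequently only $n \leq 4$ looks within reach of present techniques, and the full Gaussian moments conjecture appears genuinely open beyond that range.
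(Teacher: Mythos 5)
The statement you were asked to prove is labelled a \emph{Conjecture} in the paper: it is not proved there, and the paper itself records only that the cases $n \in \{0,1,2,3\}$ are understood (the last via Watson's formula) while the $L^4$-norm case $n=4$ is the subject of conditional progress (Buttcane--Khan). Your proposal correctly recognises this and does not overclaim; as an assessment of feasibility it is broadly accurate. Two points, however, deserve flagging.

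First, a structural issue with your reduction from $K$ to $\Gamma \backslash \Hb$. You write the moment as $\frac{\vol(K)}{\vol(\Gamma \backslash \Hb)} \int_{\Gamma \backslash \Hb} g^n \, d\mu$ plus spectral ``error'' terms, and propose to treat the first piece as the main term. But the paper explicitly notes (citing Hejhal--Str\"{o}mbergsson's tidal-pulse heuristic) that the global moment $\int_{\Gamma \backslash \Hb} g^n \, d\mu$ is expected to grow like a power of $t_g$ along a subsequence for even $n \geq 12$, whereas the $K$-moment is conjectured to remain bounded. In that regime your ``error'' $\sum_f \langle \psi_K, f \rangle \langle g^n, f \rangle$ must be as large as your ``main term'' and cancel it; the decomposition is therefore not a main-term-plus-negligible-error decomposition but a genuine cancellation, and rapid decay of $\langle \psi_K, f\rangle$ in $t_f$ is not enough to control the part of the spectrum where $\langle g^n, f\rangle$ is large. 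For $n \leq 4$, where you restrict your claims, this pathology does not arise, but the plan as stated does not actually extend beyond small $n$ even in outline.

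Second, a smaller point: for $n = 2$ the conjectured value follows immediately from the definition of $\Var_K(g)$, not from $\langle g, g\rangle = 1$ (those coincide only after first proving QUE on $K$); and for $n = 3$, the subconvexity for the self-triple-product $L$-function you invoke is not known unconditionally, so the route through the global moment is more delicate than it may appear --- the paper's citation of Watson should be read with this caveat in mind. None of this undermines your final, correct conclusion that the full conjecture is open, but it does mean the intermediate ``reduction to $\Gamma \backslash \Hb$'' step should be presented as an identity whose two halves interact, rather than as a triangle inequality.
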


When $K$ is replaced by a noncompact set, the Gaussian moments conjecture ought not necessarily to hold for high moments. As explained in \cite[Section 4]{HeSt}, using a heuristic appearing in \cite[Section 7]{Hej}, the transition range of the Whittaker function leads to a ``tidal pulse'' phenomenon near the cusp of $\Gamma \backslash \Hb$; when $K$ is replaced by $\Gamma \backslash \Hb$, so that $\Var_{\Gamma \backslash \Hb}(g) = \vol\left(\Gamma \backslash \Hb\right)^{-1}$, one can thereby show that there exists a subsequence of Hecke--Maa\ss{} eigenforms $g \in \BB_0(\Gamma)$ for which \eqref{Kgnmoment} grows like a power of $t_g$ whenever $n \geq 12$ is even. This is closely related to the fact that there exists a subsequence of Hecke--Maa\ss{} eigenforms for which
\[\|g\|_{\infty} \gg_{\e} t_g^{\frac{1}{6} - \e}.\]

Nonetheless, it is not unreasonable to conjecture that the Gaussian moments conjecture holds for smaller moments when $K$ is replaced by $\Gamma \backslash \Hb$. Indeed, the conjecture holds by definition for $n \in \{0,2\}$ and is easily shown to also be true when $n = 1$, as both sides vanish, while for $n = 3$, this can be shown to hold via the work of Watson \cite{Wat}.

\subsubsection{Quantum Unique Ergodicity}

Another manifestation of the randomness of Hecke--Maa\ss{} eigenforms is quantum unique ergodicity.

\begin{conjecture}[Quantum Unique Ergodicity in Configuration Space]
Let $g \in \BB_0(\Gamma)$ be a Hecke--Maa\ss{} eigenform normalised such that $\langle g, g \rangle = 1$. Then the probability measure $|g(z)|^2 \, d\mu(z)$ converges in distribution to the uniform probability measure on $\Gamma \backslash \Hb$ as $t_g$ tends to infinity, so that for every continuity set $B \subset \Gamma \backslash \Hb$,
\[\int_{B} |g(z)|^2 \, d\mu(z) = \frac{\vol(B)}{\vol\left(\Gamma \backslash \Hb\right)} + o_{B}(1)\]
as $t_g$ tends to infinity.
\end{conjecture}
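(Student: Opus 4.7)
The plan is to invoke Weyl's equidistribution criterion: since $B$ is a continuity set, its indicator function can be sandwiched between $C_c^\infty(\Gamma \backslash \Hb)$ test functions that agree with $\mathbf{1}_B$ outside a set of arbitrarily small $\mu$-measure, so it suffices to show that for every $\psi \in C_c^\infty(\Gamma \backslash \Hb)$,
\[\int_{\Gamma \backslash \Hb} |g(z)|^2 \psi(z) \, d\mu(z) \to \frac{1}{\vol(\Gamma \backslash \Hb)} \int_{\Gamma \backslash \Hb} \psi(z) \, d\mu(z)\]
as $t_g \to \infty$. First I would spectrally decompose $\psi$ into its constant, cuspidal, and Eisenstein components. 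The constant part produces the main term, and it remains to show that each of the other two contributions vanishes in the limit.

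For the cuspidal contribution, Watson's triple product formula identifies $|\langle |g|^2, \phi_j\rangle|^2$, for $\phi_j$ a Hecke--Maa\ss{} cusp form, with the central value $L(1/2, \phi_j \times g \times g)$ up to ratios of symmetric square $L$-values at $s=1$ and explicit archimedean factors. Under the generalised Lindel\"of hypothesis, this yields sufficient decay in $t_g$, and Weyl's law controls the resulting sum over $\phi_j$ in dyadic windows. For the Eisenstein contribution, unfolding reduces matters to bounding $\int_{\Gamma \backslash \Hb} |g(z)|^2 E(z, 1/2+it) \, d\mu(z)$, which by Rankin--Selberg theory is proportional to $L(1/2+it, g \times g)/L(1, \sym^2 g)$ up to archimedean factors; this can be handled via convexity together with the standard zero-free region for $\zeta(s)$, and is therefore unconditional.

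The main obstacle is the cuspidal case: GLH for the degree-eight $L$-function $L(s, \phi_j \times g \times g)$ immediately delivers it, but unconditional progress requires either Lindenstrauss's measure-rigidity approach, which exploits Hecke invariance of the quantum limit to rule out singular components and escape of mass, or weak subconvexity of Soundararajan--Holowinsky type. When $g$ is replaced by an Eisenstein series $E(\cdot, 1/2+it_g)$, the triple product $L$-function factors into products of Hecke $L$-functions of smaller degree, and the analogous decay follows unconditionally from bounds for $\zeta(s)$ on the critical line, as in the work of Jakobson and of Luo--Sarnak.
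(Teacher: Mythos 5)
This item is labelled a \emph{conjecture} in the paper, and the paper supplies no proof of its own; it simply records that QUE in configuration space (indeed in phase space) is a theorem by the work of Lindenstrauss and Soundararajan, and moves on. So there is no ``paper's proof'' to reproduce, and your proposal should be read as a sketch of why the conjecture holds. As a conditional argument under GLH your outline is sound: reduce to smooth test functions, spectrally decompose, identify the constant term as the main term, use Watson's formula plus GLH for the cuspidal projections, and control the dyadic sum over the spectrum by the Weyl law and the rapid decay of the spectral coefficients of a fixed smooth $\psi$.

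However, two of your unconditional claims are not correct as stated. First, you assert that the Eisenstein contribution ``can be handled via convexity together with the standard zero-free region for $\zeta(s)$, and is therefore unconditional.'' This is false. For fixed $t$ and $t_g \to \infty$, the relevant quantity is $|\langle |g|^2, E(\cdot, 1/2+it)\rangle|^2$, which by the formula in \hyperref[innerproductcuspasympprop]{Proposition \ref*{innerproductcuspasympprop}} is essentially $|\zeta(\frac12+it)|^2 \, |L(\frac12+it, \sym^2 g)|^2 / (|\zeta(1+2it)|^2 L(1,\sym^2 g)^2)$ times an archimedean factor that, by \hyperref[Gammafactorslemma]{Lemma \ref*{Gammafactorslemma}}, decays only like $t_g^{-1}$. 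Convexity gives $L(\frac12+it, \sym^2 g) \ll_{t,\e} t_g^{1/2+\e}$, so the product is $O_{t,\e}(t_g^{\e})$ and does \emph{not} tend to zero. One needs subconvexity for $L(\frac12+it, \sym^2 g)$ in the $t_g$-aspect, which remains open; the Lindenstrauss--Soundararajan proof avoids this entirely because it is ergodic-theoretic and never performs a spectral decomposition of the test function. Second, you attribute ruling out escape of mass to Lindenstrauss, but Lindenstrauss's measure-rigidity theorem only shows that the microlocal lift of any quantum limit is $c$ times Liouville measure for some $0 \le c \le 1$; it is Soundararajan's separate argument that forces $c = 1$. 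A smaller point: Holowinsky--Soundararajan weak subconvexity is tailored to holomorphic Hecke eigenforms in the weight aspect and uses Deligne's bound on Fourier coefficients, so it does not provide an alternative unconditional route for Maa\ss{} forms in the eigenvalue aspect.
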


By the Portmanteau theorem, this conjecture is equivalent to
\begin{equation}\label{QUEeq}
\int_{\Gamma \backslash \Hb} f(z) |g(z)|^2 \, d\mu(z) = \frac{1}{\vol\left(\Gamma \backslash \Hb\right)} \int_{\Gamma \backslash \Hb} f(z) \, d\mu(z) + o_{f}(1)
\end{equation}
for every bounded continuous function on $\Gamma \backslash \Hb$.

It behoves us to mention that there is a stronger formulation of quantum unique ergodicity, namely quantum unique ergodicity in phase space, which is the cosphere bundle $S^{\ast} \left(\Gamma \backslash \Hb\right) \cong \Gamma \backslash \SL_2(\R)$: not only should the sequence of probability measures $|g(z)|^2 \, d\mu(z)$ equidistribute on the configuration space $\Gamma \backslash \Hb$, but that a microlocal lift of these measures to Wigner distributions on phase space should equidistribute with respect to the Liouville measure.

Quantum unique ergodicity in phase space, and hence also in configuration space, is known to be true via the work of Lindenstrauss \cite{Lin} and Soundararajan \cite{Sou}. However, this proof does not quantify the rate of equidistribution; in particular, it does not give explicit rates of decay for the terms
\begin{equation}\label{basisQUE}
\int_{\Gamma \backslash \Hb} f(z) |g(z)|^2 \, d\mu(z)
\end{equation}
for fixed $f \in C_b(\Gamma \backslash \Hb)$ as $t_g$ tends to infinity. Watson \cite[Corollary 1]{Wat} has shown that optimal decay rates for these integrals follow directly from the generalised Lindel\"{o}f hypothesis.

The $n = 2$ case of the Gaussian moments conjecture for the set $K = \Gamma \backslash \Hb$ --- namely the $L^4$-norm of $g$ --- shares many similarities with quantum unique ergodicity in configuration space. In fact, it is extremely closely related to a more refined version of quantum unique ergodicity, namely equidistribution on shrinking sets.

\subsubsection{Randomness of Eisenstein Series}

The  Gaussian moments conjecture and quantum unique ergodicity ought to be true, once suitably modified, when $g(z) = E(z, 1/2 + it_g)$ is an Eisenstein series. Eisenstein series are not square-integrable, so one must use some sort of regularisation. One method is to use Zagier's regularisation of divergent integrals \cite{Zag}; another is to replace $E(z, 1/2 + it_g)$ with the truncated Eisenstein series $\Lambda^T E(z,1/2 + it_g)$ for some $T \geq 1$; this is defined for $\Re(s) > 1$ by
\[\Lambda^T E(z,s) \defeq E(z,s) - \sum_{\substack{\gamma \in \Gamma_{\infty} \backslash \Gamma \\ \Im(\gamma z) > T}} \left(\Im(\gamma z)^s + \frac{\Lambda(2 - 2s)}{\Lambda(2s)} \Im(\gamma z)^{1 - s}\right)\]
and extended by meromorphic continuation to the complex plane; here $\Lambda(s)$ denotes the completed Riemann zeta function.

For quantum unique ergodicity, we need not deal with the truncated version of the Eisenstein series provided that we take into account the growth of the $L^2$-norm of an Eisenstein series on compact sets.

\begin{theorem}[Luo--Sarnak {\cite[Theorem 1.1]{LS}}]\label{QUEeisen}
For any compact continuity set $K \subset \Gamma \backslash \Hb$ and for $g(z) = E\left(z, 1/2 + it_g\right)$,
\[\int_K |g(z)|^2 \, d\mu(z) = \frac{\log \left(\frac{1}{4} + t_g^2\right) \vol(K)}{\vol\left(\Gamma \backslash \Hb\right)} + o_K\left(\log t_g\right)\]
as $t_g$ tends to infinity.
\end{theorem}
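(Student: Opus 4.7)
The plan is to spectrally expand a smoothing of $\mathbf{1}_K$ against $|E(\cdot, 1/2 + it_g)|^2$ and isolate the logarithmic main term via the Maass--Selberg relation, controlling the cuspidal and continuous contributions by means of $t$-aspect subconvexity for $\GL_2$ $L$-functions and for $\zeta$.

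Since $K$ is a continuity set, I would sandwich $\mathbf{1}_K$ between smooth compactly supported functions $f^{\pm} \in C_c^\infty(\Gamma \backslash \Hb)$ with $\int (f^+ - f^-) \, d\mu < \e$; once the main-term asymptotic for smooth test functions is established, the sandwich error contributes at most $O(\e \log t_g)$ and is absorbed into the $o_K(\log t_g)$ error. It therefore suffices to prove, for each smooth compactly supported $f$, that
\[\int_{\Gamma \backslash \Hb} f(z) \, |E(z, 1/2 + it_g)|^2 \, d\mu(z) = \frac{\log(1/4 + t_g^2) \, \langle f, 1 \rangle}{\vol(\Gamma \backslash \Hb)} + o_f(\log t_g).\]
Choosing $T$ larger than the maximum height of the support of $f$, one has $E(\cdot, 1/2 + it_g) = \Lambda^T E(\cdot, 1/2 + it_g)$ on the support of $f$, so one may replace $E$ by the $L^2$ truncation $\Lambda^T E$ throughout the left-hand side. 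The spectral decomposition
\[f = \frac{\langle f, 1 \rangle}{\vol(\Gamma \backslash \Hb)} + \sum_j \langle f, u_j \rangle u_j + \frac{1}{4\pi} \int_{-\infty}^{\infty} \langle f, E(\cdot, 1/2 + iu) \rangle E(z, 1/2 + iu) \, du\]
then partitions the resulting integral into constant, cuspidal, and continuous pieces, each handled in turn.

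The constant-function piece is $\frac{\langle f, 1 \rangle}{\vol(\Gamma \backslash \Hb)} \int_{\Gamma \backslash \Hb} |\Lambda^T E(z, 1/2 + it_g)|^2 \, d\mu(z)$, which the Maass--Selberg relation evaluates as
\[2\log T - \frac{\phi'}{\phi}\left(\tfrac{1}{2} + it_g\right) + \frac{\phi(1/2 + it_g) T^{-2it_g} - \phi(1/2 - it_g) T^{2it_g}}{-2it_g};\]
substituting $\phi(s) = \Lambda(2s - 1)/\Lambda(2s)$ and applying Stirling's formula, one finds $-\phi'/\phi(1/2 + it_g) = \log(1/4 + t_g^2) + o(\log t_g)$ (the contribution of $\Re(\zeta'/\zeta(1 + 2it_g))$ being $o(\log t_g)$ by standard bounds on the line $\Re(s) = 1$), while the oscillatory truncation term is $O(1/t_g)$. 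For the cuspidal piece, Watson's triple-product formula (or the explicit integral identity of Zagier) expresses each $\int u_j \, |E(\cdot, 1/2 + it_g)|^2 \, d\mu$ in terms of central $L$-values with square proportional to $L(1/2, u_j)^2 \, |L(1/2 + 2it_g, u_j)|^2 / (|\zeta(1 + 2it_g)|^4 \, L(1, \sym^2 u_j))$, the gamma factors combining via Stirling into a polynomial factor in $t_g$. The $t$-aspect subconvex bound $L(1/2 + 2it_g, u_j) \ll_j t_g^{1/3 + \e}$, together with $|\zeta(1 + 2it_g)|^{-1} \ll \log t_g$, makes each triple product $o(1)$; summing against the rapidly-decaying coefficients $\langle f, u_j \rangle$ produces a cuspidal contribution of $o_f(1)$. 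The continuous piece is treated analogously via the regularized Rankin--Selberg identity for $\int E(\cdot, 1/2 + iu) \, |E(\cdot, 1/2 + it_g)|^2 \, d\mu$, which reduces to a quotient of zeta values at the shifted arguments $1/2 + iu \pm 2it_g$; Weyl's subconvex bound $\zeta(1/2 + it) \ll t^{1/6 + \e}$ then renders this contribution $o(\log t_g)$.

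I expect the principal obstacle to be the main-term extraction itself: the Maass--Selberg formula conceals delicate cancellations between the Stirling expansion of $\phi'/\phi(1/2 + it_g)$, the contributions of $\zeta'/\zeta$ on the line $\Re(s) = 1$, and the oscillatory truncation term $T^{\pm 2it_g}$, and verifying that the net effect is precisely $\log(1/4 + t_g^2) + o(\log t_g)$ with the correct coefficient requires careful bookkeeping of all subleading terms.
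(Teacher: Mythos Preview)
The paper does not give its own proof of this theorem; it is quoted from Luo--Sarnak \cite{LS} as background. Your outline is essentially the Luo--Sarnak argument and is correct in its broad strokes: sandwich by smooth test functions, spectrally decompose, extract the main term from the constant function via Maa\ss{}--Selberg, and kill the cuspidal and continuous contributions with $t$-aspect subconvexity for $L(1/2+it,u_j)$ and $\zeta(1/2+it)$.

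One point to tighten. After replacing $|E|^2$ by $|\Lambda^T E|^2$ (legitimate on the support of $f$) and then spectrally expanding $f$, the cuspidal and continuous terms you must bound are $\langle u_j,|\Lambda^T E|^2\rangle$ and $\langle E(\cdot,1/2+iu),|\Lambda^T E|^2\rangle$, not the untruncated versions to which the triple-product and Rankin--Selberg identities apply. You switch back silently. The discrepancy is harmless for cusp forms (the difference is an integral over $y>T$ of $u_j$ against the constant term of $E$, and the constant Fourier coefficient of $u_j$ vanishes), but for the Eisenstein piece it needs a sentence of justification; compare the paper's \hyperref[FEElemma]{Lemma \ref*{FEElemma}}, which circumvents this by using Zagier's regularised inner product instead of truncation.

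Your ``principal obstacle'' is misidentified. The Maa\ss{}--Selberg extraction involves no delicate cancellation: the paper carries it out verbatim in \hyperref[constantcontrib]{Corollary \ref*{constantcontrib}}, and the only inputs are Stirling and the elementary bound $\zeta'/\zeta(1+2it_g)=o(\log t_g)$. The genuine content of Luo--Sarnak was the subconvexity input, which you correctly invoke.
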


Since $K$ is compact, one can replace $g(z)$ with $\Lambda^T E\left(z, 1/2 + it_g\right)$ for some $T$ sufficiently large dependent on $K$. The presence of $\log (1/4 + t_g^2)$ essentially stems from the Maa\ss{}--Selberg relation; see \hyperref[constantcontrib]{Corollary \ref*{constantcontrib}}.

Quantum unique ergodicity in phase space is also known for Eisenstein series; this is a result of Jakobson \cite[Theorem 1]{Jak}.

\subsection{The \texorpdfstring{$L^4$}{L\9040\164}-Norm Problem}\label{L4normproblemsect}

The $L^4$-norm problem for a Hecke--Maa\ss{} eigenform $g$ is the second nontrivial case of the Gaussian moments conjecture.

\begin{conjecture}[$L^4$-Norm Problem]
Let $g \in \BB_0(\Gamma)$ be a Hecke--Maa\ss{} eigenform normalised such that $\langle g, g \rangle = 1$. As $t_g$ tends to infinity,
\[\int_{\Gamma \backslash \Hb} |g(z)|^4 \, d\mu(z) = \frac{3}{\vol\left(\Gamma \backslash \Hb\right)} + o(1).\]
\end{conjecture}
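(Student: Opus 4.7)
The plan is to apply Parseval's identity to the spectral expansion of $|g|^2$, yielding
\[\|g\|_4^4 = \bigl\| |g|^2 \bigr\|_2^2 = \frac{|\langle |g|^2, 1 \rangle|^2}{\vol\left(\Gamma \backslash \Hb\right)} + \sum_{f \in \BB_0(\Gamma)} |\langle |g|^2, f \rangle|^2 + \frac{1}{4\pi} \int_{-\infty}^{\infty} \bigl|\langle |g|^2, E(\cdot, 1/2 + it)\rangle\bigr|^2 \, dt,\]
where $f$ ranges over an orthonormal basis of Hecke--Maa\ss{} cusp forms of weight zero. Since $\langle |g|^2, 1 \rangle = \langle g, g \rangle = 1$, the identity contribution is exactly $1/\vol\left(\Gamma \backslash \Hb\right)$. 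The task therefore reduces to showing that the cuspidal sum is $o(1)$ and that the Eisenstein integral tends to $2/\vol\left(\Gamma \backslash \Hb\right)$, producing the conjectured $3/\vol\left(\Gamma \backslash \Hb\right)$ and matching the fourth moment $3$ of a standard Gaussian.

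To each spectral coefficient I would apply Watson's explicit triple product formula. For cuspidal $f$ with spectral parameter $t_f$, this takes the shape
\[|\langle |g|^2, f \rangle|^2 \;\propto\; \frac{\Lambda(1/2, f)\,\Lambda(1/2, \sym^2 g \times f)}{\Lambda(1, \sym^2 g)^2\, \Lambda(1, \sym^2 f)},\]
with the archimedean factors forcing the effective range of the sum to be the transition window $|t_f| \leq t_g$. Under the generalised Lindel\"{o}f hypothesis, each central value is $\ll_{\e} (t_g(1+|t_f|))^{\e}$; summing against the archimedean weights then yields a cuspidal bound of size $O(t_g^{-1+\e})$, which is $o(1)$.

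The genuinely hard step is extracting the main term $2/\vol\left(\Gamma \backslash \Hb\right)$ from the continuous spectrum, where upper bounds alone will not suffice. The analogous Watson formula reads
\[\bigl|\langle |g|^2, E(\cdot, 1/2+it)\rangle\bigr|^2 \;\propto\; \frac{|\Lambda(1/2+it)|^2\, |\Lambda(1/2+it, \sym^2 g)|^2}{|\Lambda(1 + 2it)|^2\, \Lambda(1, \sym^2 g)^2},\]
so the Eisenstein integral is a mixed second moment of $|\zeta(1/2+it)|^2 |L(1/2+it, \sym^2 g)|^2$ weighted by the archimedean transition factor centred on $|t| \asymp t_g$. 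I would extract the main term by inserting approximate functional equations for both $L$-values, interchanging the $t$-integration with the resulting Dirichlet sums, and applying Rankin--Selberg to evaluate the diagonal: the diagonal should collapse to $L(1, \sym^2 g)^2$, cancelling the denominator and leaving the predicted constant. The principal obstacle is precisely this asymptotic analysis, namely a genuine --- rather than merely upper-bound --- treatment of a mixed moment of $\zeta$ and $L(\cdot, \sym^2 g)$ through the critical transition zone. This is the very phenomenon driving the $\sqrt{\log t_g}$ scaling of the Eisenstein $L^4$-norm on compact sets treated in the present paper, the logarithm reflecting the contribution of $|\zeta(1/2+it)|^2$ near the pole of $\zeta$.
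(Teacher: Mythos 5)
The statement you were asked about is labelled a conjecture in the paper, and the paper does not prove it; the closest result is the conditional (GLH) proof of Buttcane and Khan \cite[Theorem~1.1]{BK17b}, which the paper cites. So the comparison here is between your plan of attack and what is known to be involved, rather than against a proof in the paper.

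Your framing of the problem via Parseval's identity and the Watson--Ichino formula is the standard and correct starting point, and your accounting of the identity term $1/\vol(\Gamma\backslash\Hb)$ is right. But the heart of your plan has the roles of the cuspidal and continuous spectra reversed, and this is a genuine error, not a matter of emphasis. The Weyl law says there are $\asymp H^2$ cusp forms with $t_f$ in a dyadic window $[H,2H]$, so for $H\asymp t_g$ that is $\asymp t_g^2$ forms, each weighted by an archimedean factor of size $\asymp 1/\bigl((1+t_f)(1+2t_g+t_f)^{1/2}(1+|2t_g-t_f|)^{1/2}\bigr) \asymp 1/t_g^2$. These cancel; under GLH the cuspidal sum is therefore $\ll_\e t_g^\e$, not $O(t_g^{-1+\e})$, and Buttcane--Khan's Proposition~2.2 shows it in fact tends to $2/\vol(\Gamma\backslash\Hb)$. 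The continuous spectrum, by contrast, has measure $\asymp 1$ per unit length of $t$ rather than $\asymp t$, so the same archimedean weight and GLH give $\int_0^{2t_g} t_g^\e / (t\cdot t_g) \, dt \ll t_g^{-1+\e}$: it is the Eisenstein integral that is negligible under GLH, and the cuspidal sum that supplies the main term $2/\vol(\Gamma\backslash\Hb)$. The hard step is therefore the asymptotic evaluation of a sum over Maa\ss{} forms $f$ of $L(1/2,f)L(1/2,\sym^2 g\otimes f)$, not a second moment of $\zeta$ and $L(\cdot,\sym^2 g)$ over the continuous spectrum; the tools are the Kuznetsov trace formula and careful treatment of the off-diagonal (as in \cite{BK17a,BK17b}), not a Rankin--Selberg diagonal evaluation of the Eisenstein integral.

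Your closing remark about the $\sqrt{\log t_g}$ growth of the Eisenstein $L^4$-norm is also misattributed: in the paper that logarithm enters through the Maa\ss{}--Selberg relation (\hyperref[constantcontrib]{Corollary~\ref*{constantcontrib}}), which makes the constant-eigenfunction contribution grow like $(\log t_g)^2$, together with Spinu's $(\log t_g)^2$ bound on the bulk of the cuspidal sum; it does not come from $|\zeta(1/2+it)|^2$ near the pole inside the continuous-spectrum integral, which is bounded by $O(T)$ in \cite[Theorem~3.3]{Spi}.
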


A similar statement can be formulated when $g$ is an Eisenstein series, though some care must be taken, since Eisenstein series are not square-integrable; see \cite{DK18}.

In general, an unconditional proof of the $L^4$-norm problem seems quite difficult. A weaker conjecture (see, for example, \cite[Conjecture 4]{Sar}) is that
\begin{equation}\label{L4epseq}
\|g\|_{L^4\left(\Gamma \backslash \Hb\right)}^4 \ll_{\e} t_g^{\e}.
\end{equation}
In certain special cases, this has been shown: when $g$ is a dihedral Maa\ss{} eigenform, this is a result of Luo \cite{Luo}, while when $g$ is a truncated Eisenstein series, this is a result of Spinu \cite{Spi} (with the implicit constant of course dependent on the truncation parameter $T$).

Buttcane and Khan \cite[Theorem 1.1]{BK17b} have recently given a proof, conditional on the generalised Lindel\"{o}f hypothesis, of the $L^4$-norm problem for a Hecke--Maa\ss{} eigenform $g \in \BB_0(\Gamma)$. Our first main result is to give an unconditional upper bound for the $L^4$-norm of a truncated Eisenstein series that is sharper than \eqref{L4epseq}.

\begin{theorem}\label{L4uncondthm}
Let $g(z) = \Lambda^T E\left(z, 1/2 + it_g\right)$. We have that
\[\|g\|_{L^4\left(\Gamma \backslash \Hb\right)}^4 \ll_T \left(\log t_g\right)^2.\]
\end{theorem}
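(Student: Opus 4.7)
The natural approach is to apply Parseval's identity for the spectral decomposition of $L^2(\Gamma\backslash\Hb)$ to $|g|^2$, where $g = \Lambda^T E(\cdot, 1/2+it_g)$. This gives
\[
\|g\|_{L^4}^4 = \||g|^2\|_{L^2}^2 = \frac{|\langle g,g\rangle|^2}{\vol(\Gamma\backslash\Hb)} + \sum_{f\in\BB_0(\Gamma)}|\langle |g|^2, f\rangle|^2 + \frac{1}{4\pi}\int_{-\infty}^\infty |\langle |g|^2, E(\cdot, 1/2+it)\rangle|^2\,dt.
\]
The constant contribution is already of order $(\log t_g)^2$ by the Maa\ss--Selberg relation (as will be encoded by \hyperref[constantcontrib]{Corollary~\ref*{constantcontrib}}), since $\langle \Lambda^T E, \Lambda^T E\rangle \asymp_T \log t_g$. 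It therefore suffices to show that the cuspidal and continuous contributions are also $\ll_T (\log t_g)^2$.

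For each cusp form $f$, I would use an unfolded Rankin--Selberg / Watson-type identity to write
\[
|\langle |\Lambda^T E|^2, f\rangle|^2 = \mathcal{W}_T(t_f, t_g)\cdot\frac{L(1/2, f)\,L(1/2+2it_g, f)\,L(1/2-2it_g, f)}{L(1, \sym^2 f)\,|\zeta(1+2it_g)|^4},
\]
where $\mathcal{W}_T(t_f, t_g)$ is an explicit archimedean weight built from the ratio of $\Gamma$-factors in the completed triple product $L$-function, together with boundary corrections coming from the truncation at height $T$. By Stirling, $\mathcal{W}_T$ is essentially supported in $|t_f|\leq 2t_g + O(\log t_g)$, decays polynomially in $t_g$ throughout the bulk $|t_f|\leq 2t_g - t_g^{1/3+\e}$, and is enhanced by roughly $t_g^{1/3}$ in the transition range $||t_f|-2t_g|\leq t_g^{1/3+\e}$. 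After Cauchy--Schwarz the cuspidal sum reduces to weighted second-moment bounds for $L(1/2, f)$ and fourth-moment bounds for $L(1/2 + 2it_g, f)$ in the spectral aspect, in short windows tiling the support of $\mathcal{W}_T$. The relevant input is a sharp Motohashi-type mean value estimate
\[
\sum_{|t_f - T_0|\leq \Delta}\frac{|L(1/2 + 2it_g, f)|^4}{L(1, \sym^2 f)} \ll_{\e}(\Delta T_0 + T_0)^{1+\e}(1+t_g)^{\e},
\]
which when combined with a comparable bound for $L(1/2, f)$ compensates both the polynomial decay of $\mathcal{W}_T$ and the factor $|\zeta(1+2it_g)|^{-4} \ll (\log t_g)^4$. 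The continuous-spectrum contribution is treated in parallel, with $\zeta(1/2+it)$ and $\zeta(1/2 + it \pm 2it_g)$ in place of the Maa\ss{} $L$-values; here the classical fourth moment of the Riemann zeta function on the critical line in $t$-aspect suffices.

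The main analytic obstacle is the transition range $|t_f| \in [2t_g - t_g^{1/3+\e}, 2t_g + t_g^{1/3+\e}]$: there $\mathcal{W}_T$ attains its maximum size while only a very short spectral window of length $\asymp t_g^{1/3}$ is available for mean-value averaging. This is precisely the refinement of Spinu's argument that is needed, and it demands a fourth-moment estimate that remains effective across such short windows uniformly in the shift $2t_g$. Once the transition-range contribution is tamed together with the bulk, summing the constant, cuspidal, and continuous contributions produces the claimed bound $\|g\|_{L^4}^4 \ll_T (\log t_g)^2$.
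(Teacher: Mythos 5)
Your high-level framework matches the paper's: apply Parseval to $|g|^2$, extract the constant contribution $(\log t_g)^2$ from the Maaß--Selberg relation, and bound the cuspidal and continuous spectra by splitting into spectral ranges and invoking moment bounds for $L$-functions. However, two of the key steps in your sketch are not correct as stated. First, the Motohashi-type estimate you posit,
\[
\sum_{|t_f - T_0|\leq \Delta}\frac{|L(1/2 + 2it_g, f)|^4}{L(1, \sym^2 f)} \ll_{\e}(\Delta T_0 + T_0)^{1+\e}(1+t_g)^{\e},
\]
is far stronger than anything available unconditionally: specializing $T_0 = O(1)$, $\Delta = O(1)$ gives $|L(1/2+2it_g, f)|\ll_\e t_g^\e$ on average, i.e.~Lindelöf-on-average in the $t_g$-aspect, which is open. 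The actual Jutila--Motohashi input is of the shape $\ll_\e (H+t)^{4/3+\e}$ for windows of length $G \le (H+t)^{4/3}H^{-1+\e}$, and this carries a genuine $t_g^{4/3}$ loss; the paper combines it with Jutila's spectral mean square $\sum_{H\le t_f\le H+G}|L(1/2+it,f)|^2/L(1,\sym^2 f)\ll_\e (GH+t^{2/3})^{1+\e}$ and Ivić's $L(1/2,f)\ll_\e t_f^{1/3+\e}$ to cope with small $t_f$. All of these have polynomial-in-$t_g$ losses that must be absorbed by the polynomial decay of the archimedean weight — there is no clean $(1+t_g)^\e$ save.

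Second, you describe the bulk range $t_g^{1-\delta} < t_f < 2t_g - t_g^{1-\delta}$ as though the pointwise decay of the archimedean weight renders it small, but that is the opposite of the situation: the Weyl count of eigenvalues in a dyadic window near $t_f \asymp t_g$ exactly compensates the decay of the weight, and the bulk is precisely where the $(\log t_g)^2$ main term arises. Proving that the bulk contributes only $\ll_T (\log t_g)^2$ is the non-trivial content of Spinu's thesis together with the extension by Buttcane and Khan via the Kuznetsov formula; it is not a crude moment bound. The paper's distinctive contribution is handling the \emph{short initial range} $t_f \ll t_g^{1-\delta}$ and the \emph{short transition range} $|t_f - 2t_g| \ll t_g^{1-\delta}$, which Spinu left unaddressed; you identify the transition as the bottleneck but do not flag the short initial range at all. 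Two further minor points: the paper does not apply Watson--Ichino directly to $|\Lambda^T E|^2$ with boundary corrections as you propose, but rather first invokes Spinu's reduction to replace $\Lambda^T E$ by $E$ up to $O_T((\log t_g)^2)$; and the Eisenstein triple product factorization is $L(1/2,f)^2\,L(1/2+2it_g,f)\,L(1/2-2it_g,f)$, with $L(1/2,f)$ squared, rather than the single power in your display.
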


Up to the implicit constant, \hyperref[L4uncondthm]{Theorem \ref*{L4uncondthm}} should be sharp, for the Maa\ss{}--Selberg relation implies that
\[\|g\|_{L^2\left(\Gamma \backslash \Hb\right)}^4 = \left(\log \left(\left(\frac{1}{4} + t_g^2\right) T^2\right) + O\left(\left(\log t_g\right)^{2/3} \left(\log \log t_g\right)^{1/3}\right)\right)^2.\]

\begin{remark}
\hyperref[L4uncondthm]{Theorem \ref*{L4uncondthm}} was previously claimed by Spinu \cite[Theorem 1.2]{Spi}, as was a proof of \eqref{L4epseq} for Hecke--Maa\ss{} cusp forms by Sarnak and Watson \cite[Theorem 3]{Sar}; in both cases, however, the proofs are incomplete, as we shall discuss further in \hyperref[SpinuSarnakremark]{Remark \ref*{SpinuSarnakremark}}.
\end{remark}

\begin{remark}
Djankovi\'{c} and Khan \cite{DK18} have recently reformulated the $L^4$-norm problem for Eisenstein series by studying a regularised fourth moment of an Eisenstein series in the sense of Zagier \cite{Zag}; cf.~\hyperref[WatsonIchinosect]{Section \ref*{WatsonIchinosect}}. This has the advantage that one ought to be able to prove an asymptotic for this regularised fourth moment, whereas \hyperref[L4uncondthm]{Theorem \ref*{L4uncondthm}} only provides an upper bound for the fourth moment of a truncated Eisenstein series.
\end{remark}

\subsection{Quantum Unique Ergodicity in Shrinking Sets}\label{Eqshrinkingsetsect}

A natural strengthening of quantum unique ergodicity is to determine whether equidistribution still occurs if we vary the set $B$ with $t_g$; in particular, if the size of $B$ shrinks as $t_g$ increases. This small scale equidistribution should be thought of as a reinterpretation of determining the rate of equidistribution, as opposed to determining explicit rates of decay for the terms in \eqref{basisQUE}. Proving equidistribution in shrinking sets has applications towards bounds for the $L^p$-norms and size of nodal domains of eigenfunctions of the Laplacian; see \cite{HezRi16}.

We denote by $B = B_R(w)$ the hyperbolic ball of radius $R$ centred at $w \in \Gamma \backslash \Hb$: its hyperbolic volume is
\[\vol\left(B_R\right) = 4\pi \sinh^2 \frac{R}{2},\]
which is independent of the centre $w$.

\begin{question}\label{shrinkingq}
Let $g \in \BB_0(\Gamma)$ be a Hecke--Maa\ss{} eigenform normalised such that $\langle g, g \rangle = 1$. For what conditions on $R$, with regards to $t_g$, is it still true that
\begin{equation}\label{shrinkingQUEeq}
\frac{1}{\vol\left(B_R\right)} \int_{B_R(w)} |g(z)|^2 \, d\mu(z) = \frac{1}{\vol\left(\Gamma \backslash \Hb\right)} + o_w(1)
\end{equation}
as $t_g$ tends to infinity?
\end{question}

In the general setting of negatively curved manifolds, this question has independently been answered by Han \cite[Theorem 1.5]{Han15} and Hezari and Rivi\`{e}re \cite[Proposition 2.1]{HezRi16} for a full density subsequence of Laplacian eigenfunctions with the radius $R$ shrinking at a rate $(\log \lambda_g)^{-\beta}$ for a particular range of $\beta > 0$ dependent on the manifold.

We should not expect equidistribution to hold when $R \ll t_g^{-1}$; indeed, Hejhal and Rackner \cite[Section 5]{HejRa}, writing $\Psi_n$ in place of $g$, $\lambda_n$ in place of $\lambda_g = 1/4 + t_g^2$, and $A$ in place of $R$, state that
\begin{quote}
\ldots in the physics literature, $c / \sqrt{\lambda_n}$ is commonly referred to as the de Broglie wavelength. At length scales below $c / \sqrt{\lambda_n}$, one expects the topography of $\Psi_n$ to look ``essentially sinusoidal'', that is, regular. It is only when $A$ is substantially bigger than the de Broglie wavelength that one stands any chance of seeing any type of Gaussian distribution.
\end{quote}
We confirm this statement by showing that if $R \ll_A t_g^{-1} (\log t_g)^A$ for any $A > 0$, then there exist infinitely many points $w \in \Gamma \backslash \Hb$ for which \eqref{shrinkingQUEeq} does \emph{not} hold, so that the sequence of probability measures $|g(z)|^2 \, d\mu(z)$ does not equidistribute on the shrinking balls of radius $t_g^{-1} (\log t_g)^A$ centred at these points. We think of $R \asymp t_g^{-1}$ as being the Planck scale, so that equidistribution need not occur within a logarithmic window of the Planck scale.

\begin{theorem}\label{Planckscalethm}
Let $g \in \BB_0(\Gamma)$ be a Hecke--Maa\ss{} eigenform normalised such that $\langle g, g \rangle = 1$. For every fixed Heegner point $w \in \Gamma \backslash \Hb$, we have that
\[\frac{1}{\vol\left(B_R\right)} \int_{B_R(w)} |g(z)|^2 \, d\mu(z) = \Omega\left(\exp\left(2\sqrt{\frac{\log t_g}{\log \log t_g}} \left(1 + O\left(\frac{\log \log \log t_g}{\log \log t_g}\right)\right)\right)\right)\]
for $R \ll_A t_g^{-1} (\log t_g)^A$ for any $A > 0$ as $t_g$ tends to infinity.
\end{theorem}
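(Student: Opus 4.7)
The plan is to bootstrap Mili\'{c}evi\'{c}'s pointwise $\Omega$-result for $|g(w)|$ at Heegner points \cite{Mil} into the corresponding lower bound for ball averages. The key observation is that a Laplacian eigenfunction of eigenvalue $1/4 + t_g^2$ cannot oscillate on scales below the Planck scale $1/t_g$, so the ball average of $|g|^2$ on the inner ball of radius $c/t_g$ stays within a constant factor of $|g(w)|^2$. For any larger radius $R \ll_A t_g^{-1}(\log t_g)^A$, monotonicity of the integral combined with the ratio of volumes produces at most a polylogarithmic-in-$t_g$ loss, which is absorbable into the error term in Mili\'{c}evi\'{c}'s exponential bound.

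Concretely, I would proceed as follows. Let $\phi_{t_g}(s) = P_{-1/2 + it_g}(\cosh s)$ be the spherical function at $w$. Its Taylor expansion at the origin, $\phi_{t_g}(s) = 1 - (1/4 + t_g^2)s^2/4 + O((t_g s)^4)$, shows that $\phi_{t_g}(s) \geq 1/2$ for $s \leq c/t_g$ with some absolute $c > 0$. The spherical mean value identity
\[
\frac{1}{2\pi}\int_0^{2\pi} g(\exp_w(s,\theta))\, d\theta = g(w)\phi_{t_g}(s),
\]
integrated against $\sinh(s)\, ds$ and simplified via $2\pi \int_0^r \sinh(s)\, ds = \vol(B_r)$, gives that the ball average of $g$ over $B_{c/t_g}(w)$ has absolute value at least $|g(w)|/2$. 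Jensen's inequality applied to the convex function $|z|^2$ then yields
\[
\frac{1}{\vol(B_{c/t_g})}\int_{B_{c/t_g}(w)} |g(z)|^2\, d\mu(z) \geq \frac{|g(w)|^2}{4}.
\]

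For $R \ll_A t_g^{-1}(\log t_g)^A$, monotonicity gives $\int_{B_R(w)} |g|^2\, d\mu \geq \int_{B_{c/t_g}(w)} |g|^2\, d\mu$, and since both radii tend to $0$ the Euclidean approximation $\vol(B_r) \sim \pi r^2$ yields $\vol(B_{c/t_g})/\vol(B_R) \gg c^2/(\log t_g)^{2A}$. Combining,
\[
\frac{1}{\vol(B_R)}\int_{B_R(w)} |g(z)|^2\, d\mu(z) \gg_A \frac{|g(w)|^2}{(\log t_g)^{2A}}.
\]
Invoking Mili\'{c}evi\'{c}'s $\Omega$-theorem at the fixed Heegner point $w$ then supplies a subsequence of Hecke--Maa\ss{} eigenforms satisfying $|g(w)|^2 \gg \exp(2\sqrt{\log t_g/\log\log t_g}(1 + O(\log\log\log t_g/\log\log t_g)))$. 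The polylog factor $(\log t_g)^{-2A}$ contributes a correction of size $2A\log\log t_g$ in the exponent, which equals a constant times $\sqrt{\log t_g/\log\log t_g} \cdot (\log\log t_g)^{3/2}/\sqrt{\log t_g}$; since $(\log\log t_g)^{3/2}/\sqrt{\log t_g} = o(\log\log\log t_g/\log\log t_g)$, this correction is absorbed into the error term in the exponent.

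The only substantive input is Mili\'{c}evi\'{c}'s pointwise bound in precisely the form above (in particular, with the explicit constant $2$ after squaring and the explicit $O(\log\log\log t_g/\log\log t_g)$ error in the exponent); the bootstrap from pointwise to ball-integrated lower bound is purely local, relying on nothing beyond the Taylor expansion of the spherical function near $0$ and monotonicity of the integral, so I anticipate no serious analytic obstacle.
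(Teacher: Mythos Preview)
Your proof is correct and follows the same core strategy as the paper: use the eigenfunction property to express the ball average of $g$ as a scalar multiple of $g(w)$, apply Cauchy--Schwarz (your Jensen) to pass from $g$ to $|g|^2$, and then invoke Mili\'{c}evi\'{c}'s $\Omega$-result at the Heegner point. The only difference is tactical: the paper works directly at radius $R$, writing the ball average of $g$ as $h_R(t_g)g(w)$ via the Selberg--Harish-Chandra transform and then appealing to the asymptotics of $h_R$ in \hyperref[hR(t)asymplemma]{Lemma \ref*{hR(t)asymplemma}}, whereas you first work at the Planck-scale radius $c/t_g$ (where the Taylor expansion of the spherical function gives a trivial lower bound $\geq 1/2$) and then extend to $R$ by monotonicity of $\int_{B_R(w)}|g|^2\,d\mu$, absorbing the resulting $(\log t_g)^{-2A}$ volume-ratio loss into the exponent. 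Your two-step route is marginally more elementary, since it sidesteps the oscillatory asymptotic $h_R(t_g)\sim \pi^{-1/2}(2/(Rt_g))^{3/2}\sin(Rt_g-\pi/4)$ in the regime $Rt_g\to\infty$ and the attendant worry about the sine factor being small along the chosen subsequence.
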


Nevertheless, we should expect equidistribution to occur at every scale larger than the Planck scale, namely $R \gg t_g^{-\delta}$ for any $\delta < 1$. Towards this, Young \cite{You16} has proved the following.

\begin{theorem}[{Young \cite[Proposition 1.5]{You16}}]
Let $g \in \BB_0(\Gamma)$ be a Hecke--Maa\ss{} eigenform normalised such that $\langle g, g \rangle = 1$. Assume the generalised Lindel\"{o}f hypothesis, and suppose that $R \asymp t_g^{-\delta}$ with $\delta < 1/3$. Then
\[\frac{1}{\vol\left(B_R\right)} \int_{B_R(w)} |g(z)|^2 \, d\mu(z) = \frac{1}{\vol\left(\Gamma \backslash \Hb\right)} + o_{w,\delta}(1)\]
for every fixed point $w \in \Gamma \backslash \Hb$.

Similarly, let $g(z) = E(z,1/2 + it_g)$, and suppose that $R \asymp t_g^{-\delta}$ with $\delta < 1/9$. Then unconditionally
\[\frac{1}{\log \left(\frac{1}{4} + t_g^2\right) \vol\left(B_R\right)} \int_{B_R(w)} |g(z)|^2 \, d\mu(z) = \frac{1}{\vol(\Gamma \backslash \Hb)} + o_{w,\delta}(1)\]
for every fixed point $w \in \Gamma \backslash \Hb$.
\end{theorem}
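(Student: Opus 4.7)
The plan is to spectrally expand the normalised ball indicator and reduce the shrinking-ball mass to a sum of triple products $\langle u_j, |g|^2 \rangle$, then to invoke Watson's triple-product formula together with the generalised Lindel\"{o}f hypothesis (or, in the Eisenstein case, unconditional subconvexity for $\zeta$ and $\GL_2$ $L$-functions in the $t$-aspect).

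First, I would choose a smooth approximant $k \colon [0,\infty) \to \R$ to $\vol(B_R)^{-1} \mathbf{1}_{[0, \sinh^2(R/2)]}$ and form the automorphic kernel
\[K_R(z,w) \defeq \sum_{\gamma \in \Gamma} k(u(\gamma z, w)), \qquad u(z,w) = \frac{|z-w|^2}{4 \Im(z) \Im(w)}.\]
The spectral expansion of $K_R(z,w)$ in the $z$-variable is
\[K_R(z,w) = \sum_{j \geq 0} h(t_j) \overline{u_j(w)} u_j(z) + \frac{1}{4\pi} \int_{-\infty}^{\infty} h(t) \overline{E\left(w, \tfrac{1}{2}+it\right)} E\left(z, \tfrac{1}{2}+it\right) dt,\]
where $\{u_j\}_{j \geq 0}$ is an orthonormal basis of Hecke--Maa\ss{} eigenforms (with $u_0 = \vol(\Gamma \backslash \Hb)^{-1/2}$) and $h$ is the Selberg/Harish--Chandra transform of $k$. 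Pairing against $|g(z)|^2$, the constant eigenfunction contributes the main term $h(i/2) \vol(\Gamma \backslash \Hb)^{-1} \|g\|_2^2$, which matches the desired right-hand side using $\|g\|_2 = 1$ in the cusp-form case and \hyperref[QUEeisen]{Theorem \ref*{QUEeisen}} in the Eisenstein case.

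Second, stationary-phase analysis of the Selberg transform shows $h(t) \ll_N (1 + R|t|)^{-N}$ for any $N$ once $k$ is suitably smoothed, effectively truncating the remaining spectral sum to $|t_j| \leq T \defeq t_g^{\delta + \e}$. Watson's triple product formula expresses $|\langle u_j, |g|^2 \rangle|^2$ as a ratio involving the central value $L(1/2, u_j \times \sym^2 g)$, modulated by an archimedean factor that is polynomial of size $(1+|t_j|)^{-1}$ for $|t_j| \lesssim t_g$ and exponentially small beyond. Under GLH one has $L(1/2, u_j \times \sym^2 g) \ll_\e (t_g(1+|t_j|))^\e$; combining this with the standard sup-norm bound $u_j(w) \ll (1+|t_j|)^{1/2 + \e}$ at fixed $w$ and with Weyl's law $\#\{j : |t_j| \leq T\} \asymp T^2$, together with an analogous treatment of the Eisenstein continuous spectrum, produces an $o_{w,\delta}(1)$ bound precisely in the range $\delta < 1/3$.

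For the Eisenstein case $g = E(\cdot, 1/2 + it_g)$, the triple product unfolds, in the style of Rankin--Selberg, into a product of shifted $\zeta$ and $L(\cdot, u_j)$ values divided by $\zeta(1 + 2it_j)$; Weyl's subconvex bound $\zeta(1/2 + it) \ll (1+|t|)^{1/6 + \e}$ together with Good's unconditional subconvex bound for $L(1/2 + it, u_j)$ in the $t$-aspect restricts the acceptable range to $\delta < 1/9$. The main obstacle is controlling the triple products uniformly through the transition window $|t_j| \sim t_g$ where the archimedean factor in Watson's formula changes regime, and, in the Eisenstein-$g$ case, properly subtracting the residual contributions arising from poles of the relevant shifted $L$-functions when $t$ collides with $\pm t_g$.
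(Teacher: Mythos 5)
The paper does not prove this statement; it is cited directly from Young \cite[Proposition 1.5]{You16}, so there is no internal proof to compare against. Assessing your sketch on its own merits: the framework (spectral expansion of the automorphic kernel $K_R$, paired against $|g|^2$, with Watson--Ichino converting $|\langle u_j,|g|^2\rangle|^2$ into $L$-values, then GLH or subconvexity in the truncated range $t_j \leq T \asymp t_g^{\delta+\e}$) is indeed the correct one and is the one Young uses. However, there is a genuine quantitative gap in your closing step.

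You propose to combine the pointwise bound $u_j(w) \ll_{w,\e} (1+t_j)^{1/2+\e}$ with the counting form of Weyl's law. Under GLH and \hyperref[Gammafactorslemma]{Lemma \ref*{Gammafactorslemma}}, $|\langle u_j,|g|^2\rangle| \ll_\e (t_j t_g)^{-1/2+\e}$ for $t_j \ll t_g$, so this gives
\[\sum_{t_j \leq T} |u_j(w)|\,|\langle u_j,|g|^2\rangle| \ll_\e t_g^{-1/2+\e}\sum_{t_j\leq T} t_j^{\e} \ll_\e t_g^{-1/2+\e}T^{2+\e} \asymp t_g^{2\delta-1/2+\e},\]
which is $o(1)$ only for $\delta < 1/4$, not the claimed $\delta < 1/3$. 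To reach $\delta < 1/3$ one must not decouple $u_j(w)$ from the count: apply Cauchy--Schwarz and use the \emph{local} Weyl law $\sum_{t_j\leq T}|u_j(w)|^2 \ll_w T^2$ (so that $|u_j(w)|$ is $\ll 1$ on average, not $\ll t_j^{1/2}$), yielding
\[\Bigl(\sum_{t_j\leq T}|u_j(w)|^2\Bigr)^{1/2}\Bigl(\sum_{t_j\leq T}|\langle u_j,|g|^2\rangle|^2\Bigr)^{1/2} \ll_\e T\cdot T^{1/2+\e}t_g^{-1/2+\e} \asymp t_g^{3\delta/2-1/2+\e},\]
which is $o(1)$ precisely for $\delta < 1/3$. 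The same Cauchy--Schwarz structure is what produces the $\delta < 1/9$ threshold in the Eisenstein case with the unconditional bound $L(\tfrac12+it,u_j)\ll_\e (t_f+t)^{1/3+\e}$ in place of GLH (this is the bound that \hyperref[improvedYoungEisenthm]{Theorem \ref*{improvedYoungEisenthm}} of the present paper improves upon).

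Two smaller points. First, for $g=E(\cdot,\tfrac12+it_g)$ one cannot simply take $\|g\|_2^2 = 1$ or invoke \hyperref[QUEeisen]{Theorem \ref*{QUEeisen}} to supply the constant; $|g|^2$ is not in $L^2$, and the ``main term'' of size $\asymp \log t_g$ at a fixed $w$ arises from a regularised spectral expansion in which the continuous-spectrum contribution and the residue at $s=1$ are carefully separated, as in \hyperref[FEElemma]{Lemma \ref*{FEElemma}} and \hyperref[smallscaleEisenlemma]{Lemma \ref*{smallscaleEisenlemma}}. Second, you should be explicit that one needs \emph{both} a minorant and a majorant approximant to the ball indicator (the Selberg transform of the sharp indicator, used elsewhere in this paper, decays only like $(Rt)^{-3/2}$, which is not enough for your truncation argument without also handling a polynomially-decaying tail).
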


In fact, with little work, we can improve the range in Young's result for Eisenstein series.

\begin{theorem}\label{improvedYoungEisenthm}
Let $g(z) = E(z,1/2 + it_g)$, and suppose that $R \asymp t_g^{-\delta}$ with $\delta < 1/6$. Then unconditionally
\[\frac{1}{\log \left(\frac{1}{4} + t_g^2\right) \vol\left(B_R\right)} \int_{B_R(w)} |g(z)|^2 \, d\mu(z) = \frac{1}{\vol(\Gamma \backslash \Hb)} + o_{w,\delta}(1)\]
for every fixed point $w \in \Gamma \backslash \Hb$.
\end{theorem}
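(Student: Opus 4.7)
The plan is to follow Young's spectral approach in \cite{You16} and squeeze a sharper exponent out of it by exploiting the fact that, for Eisenstein series, the relevant triple product factorises into three degree-$2$ Hecke $L$-functions (rather than a degree-$8$ Rankin--Selberg $L$-function, as would arise in the Maa\ss{} cusp form case). Let $k \colon [0,\infty) \to [0,\infty)$ be a smooth radial majorant of the characteristic function of $[0,R]$, normalised so that $\int_{\Hb} k(\rho(z,w)) \, d\mu(z) = \vol(B_R)(1 + O(\eta))$ for a small parameter $\eta$ to be sent to $0$, and form the automorphised kernel $K(z,w) \defeq \sum_{\gamma \in \Gamma} k(\rho(\gamma z, w))$. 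Up to an error that is acceptable in the limit $\eta \to 0$, the quantity of interest is $\vol(B_R)^{-1} \int_{\Gamma \backslash \Hb} K(z,w) |E(z,1/2 + it_g)|^2 \, d\mu(z)$.

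Expanding $K(\cdot,w)$ spectrally on $\Gamma \backslash \Hb$ in terms of the Selberg/Harish-Chandra transform $\widetilde k(t)$ of $k$ decomposes the above as a sum of three pieces: the constant function's contribution, which by \hyperref[QUEeisen]{Theorem \ref*{QUEeisen}} (applied on a fixed compact set containing the shrinking ball) delivers the desired main term $\log (1/4 + t_g^2) / \vol(\Gamma \backslash \Hb)$; a discrete cusp form sum
\[
\sum_{\phi_j \in \BB_0(\Gamma)} \widetilde k(t_j) \, \overline{\phi_j(w)} \int_{\Gamma \backslash \Hb} \phi_j(z) |E(z,1/2 + it_g)|^2 \, d\mu(z);
\]
and an Eisenstein integral of similar shape. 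Two analytic inputs drive the rest. First, $\widetilde k(t)$ is of size $O(1)$ on $|t| \leq T \defeq R^{-1} = t_g^\delta$ and decays rapidly beyond this window, so the effective spectral range is $|t_j| \leq T$. Second, by Watson's triple product formula together with the factorisation of the Rankin--Selberg convolution $|E(\cdot,1/2+it_g)|^2$ into products of shifted Hecke $L$-functions, each cusp form integral equals, up to an archimedean factor whose Stirling asymptotic is $\asymp (1 + |t_j|)^{-1/2} ((1 + |2t_g - t_j|)(1 + |2t_g + t_j|))^{-1/2}$, the ratio
\[
\frac{L(1/2, \phi_j) L(1/2 + 2it_g, \phi_j) L(1/2 - 2it_g, \phi_j)}{L(1, \mathrm{sym}^2 \phi_j)}.
\]

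The cusp form sum is then controlled by Cauchy--Schwarz (or H\"older) applied to the Hecke variable, combined with the local Weyl law $\sum_{|t_j| \leq T} |\phi_j(w)|^2 \ll T^2$ and with moment bounds for the $L$-factors: the standard spectral large sieve for $L(1/2, \phi_j)$, and either a hybrid spectral fourth moment for $L(1/2 + 2it_g, \phi_j)$ or, in the regime $T \ll t_g$, Weyl-type subconvexity of Jutila--Motohashi/Petrow--Young type combined with a second moment. The Eisenstein contribution is handled analogously via the fourth moment of $\zeta(1/2 + it)$. The threshold $\delta < 1/6$ emerges precisely when the product of all archimedean losses together with these mean value bounds matches the scale $\vol(B_R) \log t_g \asymp t_g^{-2\delta} \log t_g$ of the main term.

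The main obstacle is the mean value estimate for the product $L(1/2, \phi_j) |L(1/2 + 2it_g, \phi_j)|^2$ summed over $|t_j| \leq T$: this is what forced Young's bound $\delta < 1/9$. The key observation that allows improvement to $\delta < 1/6$ is that, because the Eisenstein series factorises, this product is an honest triple of Hecke $L$-values rather than a single degree-$8$ $L$-function, so H\"older inequalities in the spectral aspect can separate the two factors $L(1/2, \phi_j)$ and $L(1/2 + 2it_g, \phi_j)$ and apply the sharp (unconditional) spectral fourth moment bound to each. Once the moment analysis is balanced optimally against the archimedean Stirling decay in the Watson ratio, $\delta < 1/6$ is the natural boundary and the theorem follows.
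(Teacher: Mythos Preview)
Your overall architecture matches the paper's: expand spectrally, separate $\phi_j(w)$ via Cauchy--Schwarz and the local Weyl law, and then feed in a spectral moment bound for the $L$-values coming from the Eisenstein triple product. The paper's own proof is a one-line modification of Young's argument in \cite{You16}: at the step after \cite[(4.24)]{You16}, one replaces the pointwise Weyl bound $L(\tfrac12+it,f)\ll (t_f+t)^{1/3+\e}$ by the hybrid mixed moment of Jutila--Motohashi, recorded here as \hyperref[shortinitialrangelemma]{Lemma~\ref*{shortinitialrangelemma}}, namely a bound for $\sum_{t_f\sim H} L(\tfrac12,f)^2|L(\tfrac12+it,f)|^2/L(1,\sym^2 f)$. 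That single substitution upgrades \cite[(4.26)]{You16} to $T^{-1/6+\e}\|\phi\|_2$ and hence pushes the exponent from $1/9$ to $1/6$.

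Where your proposal diverges is in the specific moment input. You suggest applying H\"older in the spectral variable to decouple $L(\tfrac12,\phi_j)$ from $L(\tfrac12+2it_g,\phi_j)$ and then invoking a ``sharp spectral fourth moment'' for each. The fourth moment of $L(\tfrac12,\phi_j)$ is indeed available, but a sharp unconditional fourth moment for $|L(\tfrac12+2it_g,\phi_j)|$ summed over $t_j\le T$ with $T\ll t_g^{1/6}$ is not: the Jutila--Motohashi fourth moment in \hyperref[shorttransitionrangelemma]{Lemma~\ref*{shorttransitionrangelemma}} requires $t\ll H^{3/2-\e}$, which fails badly here. If one instead inserts Weyl subconvexity pointwise for that factor, the H\"older route only recovers Young's $\delta<1/9$. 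The object that actually does the work is \cite[Theorem~2]{JM}, which bounds the \emph{mixed} moment $\sum L(\tfrac12,f)^2|L(\tfrac12+it,f)|^2$ directly; no separation is needed or wanted. So the gap is not in the strategy but in the choice of lemma: swap your H\"older-plus-fourth-moments step for \hyperref[shortinitialrangelemma]{Lemma~\ref*{shortinitialrangelemma}} and the argument closes.

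A smaller point: your displayed ratio for the cusp form integral is off. From \hyperref[innerproductEisenasympprop]{Proposition~\ref*{innerproductEisenasympprop}} the \emph{square} of $\langle |E|^2,\phi_j\rangle$ carries $L(\tfrac12,\phi_j)^2|L(\tfrac12+2it_g,\phi_j)|^2/L(1,\sym^2\phi_j)$, with archimedean weight as in \hyperref[Gammafactorslemma]{Lemma~\ref*{Gammafactorslemma}}; your expression drops one factor of $L(\tfrac12,\phi_j)$ and misassigns the Stirling exponents. This does not affect the strategy but would throw off any explicit balancing of exponents.
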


A simpler version of \hyperref[shrinkingq]{Question \ref*{shrinkingq}} is to instead consider eigenfunctions of the Laplacian on the $d$-torus $\T^d$ for any $d \geq 2$. Hezari and Rivi\`{e}re \cite[Corollary 1.5]{HezRi17} give strong bounds for equidistribution in shrinking balls along a full density subsequence of eigenfunctions of the Laplacian on $\T^d$ with eigenvalue $\lambda$, namely equidistribution on all balls of radius $R \gg \lambda^{-\frac{1}{4(d + 1)}}$. Lester and Rudnick \cite[Theorem 1.1]{LR} improve this to $R \gg_{\e} \lambda^{-\frac{1}{2(d - 1)} + \e}$. Moreover, they prove \cite[Theorems 3.1 and 4.1]{LR} that this is essentially sharp, in that there exists a subsequence of eigenfunctions for which equidistribution does not occur on shrinking balls of radius $R \ll_{\e} \lambda^{-\frac{1}{2(d - 1)} - \e}$. For $d = 2$, Granville and Wigman \cite[Corollary 3.2]{GW} have subsequently sharpened Lester and Rudnick's results to show there exists $A > 0$ such that equidistribution may not occur on shrinking balls of radius $R \ll_A \lambda^{-1/2} (\log \lambda)^A$.

One can also reformulate \hyperref[shrinkingq]{Question \ref*{shrinkingq}} probabilistically by asking for which scales equidistribution holds almost surely with respect to a random eigenbasis of Laplacian eigenfunctions; positive results towards this question appear in the work of Han \cite{Han17} and Han and Tacy \cite{HT}.

We study a related question: instead of demanding that equidistribution hold in shrinking balls of radius $R > 0$ centred at $w$ for every point $w \in \Gamma \backslash \Hb$, we relax this requirement by instead asking whether equidistribution holds in shrinking balls $B_R(w)$ for almost every $w \in \Gamma \backslash \Hb$.

\subsubsection{Conditional Results}

We are able to give a conditional proof of equidistribution in almost every shrinking ball when $g \in \BB_0(\Gamma)$ and $R \gg t_g^{-\delta}$ for any $0 < \delta < 1$, that is, at all scales above the Planck scale.

\begin{theorem}\label{Maassalmosteverythm}
Let $g \in \BB_0(\Gamma)$ be a Hecke--Maa\ss{} eigenform normalised such that $\langle g, g \rangle = 1$. Assume the generalised Lindel\"{o}f hypothesis, and suppose that $R \asymp t_g^{-\delta}$ for some $0 < \delta < 1$. Then for any $c \gg_{\e} t_g^{- \frac{1 - \delta}{2} + \e}$,
\[\vol\left(\left\{w \in \Gamma \backslash \Hb : \left|\frac{1}{\vol\left(B_R\right)} \int_{B_R(w)} |g(z)|^2 \, d\mu(z) - \frac{1}{\vol\left(\Gamma \backslash \Hb\right)}\right| > c\right\}\right)\]
converges to zero as $t_g$ tends to infinity.
\end{theorem}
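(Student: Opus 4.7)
The plan is to apply Chebyshev's inequality to the $L^2(\Gamma \backslash \Hb)$-norm of the variance function
\[F(w) \defeq \frac{1}{\vol(B_R)} \int_{B_R(w)} |g(z)|^2 \, d\mu(z) - \frac{1}{\vol\left(\Gamma \backslash \Hb\right)},\]
so that the measure of the set in the theorem is at most $c^{-2} \|F\|_{L^2(\Gamma \backslash \Hb)}^2$. It then suffices to show $\|F\|_2^2 \ll_{\e} t_g^{-(1 - \delta) + \e}$, since $c^2 \gg t_g^{-(1 - \delta) + 2\e}$.

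To bound $\|F\|_2^2$, observe that $w \mapsto \vol(B_R)^{-1} \int_{B_R(w)} |g|^2 \, d\mu$ is the integral against $|g|^2$ of the automorphic kernel associated to the normalised indicator function of the hyperbolic ball of radius $R$, which is a point-pair invariant. Spectrally expanding $|g|^2$ into its projections onto the constant function, the Hecke--Maa\ss{} cusp forms $u_j \in \BB_0(\Gamma)$ (with spectral parameter $t_j$), and the unitary Eisenstein series $E(\cdot, 1/2 + it)$, and using that point-pair invariants act diagonally on each of these, Parseval yields
\[\|F\|_2^2 = \sum_{j \geq 1} |h_R(t_j)|^2 \left|\langle |g|^2, u_j \rangle\right|^2 + \frac{1}{4\pi} \int_{-\infty}^{\infty} |h_R(t)|^2 \left|\langle |g|^2, E(\cdot, 1/2 + it) \rangle\right|^2 \, dt,\]
where $h_R(t)$ is the Selberg/Harish-Chandra transform of the normalised ball indicator: the constant $1/\vol(\Gamma \backslash \Hb)$ is removed precisely because $h_R(i/2) \langle |g|^2, 1 \rangle \vol(\Gamma \backslash \Hb)^{-1/2} u_0 = \vol(\Gamma \backslash \Hb)^{-1}$ from the normalisation of $g$. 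Standard estimates for the Selberg transform of the ball indicator give $h_R(t) \ll \min\{1, (|t|R)^{-3/2}\}$, so the spectral sum is essentially truncated at $|t_j| \ll 1/R \asymp t_g^\delta$.

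For the cuspidal contribution, Watson's triple product formula expresses $|\langle |g|^2, u_j \rangle|^2$ in terms of a central value $L(1/2, g \times g \times u_j) = L(1/2, \sym^2 g \times u_j) L(1/2, u_j)$, divided by arithmetic factors $L(1, \sym^2 g)^2 L(1, \sym^2 u_j) \gg_\varepsilon t_g^{-\varepsilon} t_j^{-\varepsilon}$, and multiplied by an archimedean weight that decays exponentially outside the range $|t_j| \leq 2 t_g$ and, in the regime $|t_j| \leq 2t_g$ relevant here, contributes a saving of size $t_g^{-1}$. Invoking the generalised Lindel\"of hypothesis for $L(1/2, \sym^2 g \times u_j)$ and $L(1/2, u_j)$ yields $|\langle |g|^2, u_j\rangle|^2 \ll_\varepsilon t_g^{-1 + \varepsilon} t_j^{\varepsilon}$. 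Combined with Weyl's law (which gives $\#\{t_j \leq T\} \asymp T^2$), one obtains
\[\sum_{|t_j| \ll 1/R} |h_R(t_j)|^2 |\langle |g|^2, u_j\rangle|^2 \ll_{\e} t_g^{-1 + \e} R^{-2} \ll t_g^{-(1 - \delta) + \e}.\]
The Eisenstein contribution is handled analogously, using Lindel\"of bounds for $\zeta(1/2 + it)$ and $L(1/2 + it, \sym^2 g)$ in place of Watson's formula and again exploiting the exponential decay of the archimedean factor outside $|t| \leq 2 t_g$; this contribution satisfies the same bound.

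The main technical obstacle is the careful bookkeeping of the archimedean factor in Watson's formula across the full spectral range of $t_j$, in particular verifying the $t_g^{-1}$ saving throughout $|t_j| \leq 2 t_g$ and the exponential decay beyond, which together ensure the ``diagonal'' contribution from $|t_j|$ near $1/R$ furnishes the stated power saving. Combining the cuspidal and continuous bounds gives $\|F\|_2^2 \ll_\varepsilon t_g^{-(1-\delta) + \varepsilon}$, and Chebyshev's inequality then completes the proof.
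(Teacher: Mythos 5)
Your outline follows the same route as the paper: reduce via Chebyshev to bounding the variance $\Var(g;R)=\|F\|_{L^2}^2$, spectrally expand using Parseval and the diagonal action of the point-pair invariant $K_R$ (giving the clean identity with $|h_R(t_f)|^2$ in front of each $|\langle |g|^2,f\rangle|^2$), feed in the Watson--Ichino factorisation, invoke GLH for the central $L$-values, and count with Weyl's law. That is precisely Propositions 5.1--5.2 in the paper.

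However, there is a genuine gap in your final estimate, and it stems from underestimating the archimedean factor. You assert that in the regime $|t_j|\le 2t_g$ the archimedean weight ``contributes a saving of size $t_g^{-1}$'' and conclude $|\langle|g|^2,u_j\rangle|^2\ll_\e t_g^{-1+\e}t_j^\e$. In fact Stirling (Lemma \ref{Gammafactorslemma}) gives, for $t_j\ll t_g$,
\[
\left|\langle |g|^2, u_j\rangle\right|^2 \asymp \frac{1}{(1+t_j)\,t_g}\cdot\frac{L\left(\frac12,u_j\right)L\left(\frac12,\sym^2 g\otimes u_j\right)}{L\left(1,\sym^2 g\right)^2 L\left(1,\sym^2 u_j\right)},
\]
so the archimedean saving is $\left((1+t_j)t_g\right)^{-1}$, not merely $t_g^{-1}$. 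The extra $(1+t_j)^{-1}$ is essential. With your stated bound the cuspidal sum becomes $\sum_{t_j\ll 1/R}t_g^{-1+\e}t_j^\e \ll t_g^{-1+\e}R^{-2}\asymp t_g^{-1+2\delta+\e}$, and the inequality $t_g^{-1+2\delta+\e}\ll t_g^{-(1-\delta)+\e}$ you write in the last display is false for every $\delta>0$ (it would require $2\delta\le\delta$). With the correct archimedean weight, partial summation against Weyl's law gives $\sum_{t_j\ll 1/R}(1+t_j)^{-1+\e}\ll R^{-1-\e}$, yielding $t_g^{-1+\e}R^{-1-\e}\asymp t_g^{-(1-\delta)+\e}$, which is what you need; the polynomial-decay and transition ranges for larger $t_j$ then contribute at most the same order once $|h_R(t_j)|^2\ll (Rt_j)^{-3}$ is paired with the full archimedean factor $\bigl((1+t_j)(1+2t_g+t_j)^{1/2}(1+|2t_g-t_j|)^{1/2}\bigr)^{-1}$, and the range $t_j>2t_g$ is handled by the exponential factor $e^{-\pi(t_j-2t_g)}$. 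So the fix is local (restore the $(1+t_j)^{-1}$), but as written the exponents do not close.
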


\subsubsection{Unconditional Results}

Proving unconditional results seems to be much more difficult. Nevertheless, we are able to do so when $g(z) = E\left(z, 1/2 + it_g\right)$ is an Eisenstein series.

\begin{theorem}\label{Eisenalmosteverythm}
Let $g(z) = E\left(z, 1/2 + it_g\right)$. Suppose that $R \asymp t_g^{-\delta}$ for some $0 < \delta < 1$. Then for any $c \gg_{\e} t_g^{-\min\left\{\frac{5}{14} (1 - \delta), 2\delta, \frac{1}{12}\right\} + \e}$,
\[\vol\left(\left\{w \in \Gamma \backslash \Hb : \left|\frac{1}{\vol\left(B_R\right)} \int_{B_R(w)} |g(z)|^2 \, d\mu(z) - D(g;w)\right| > c\right\}\right)\]
converges to zero as $t_g$ tends to infinity, where $D(g;w)$ is given by \eqref{D(g;w)}.
\end{theorem}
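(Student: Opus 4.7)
The plan is to prove the theorem by a second moment (variance) method combined with Chebyshev's inequality. Setting
\[F(w) \defeq \frac{1}{\vol(B_R)} \int_{B_R(w)} |g(z)|^2 \, d\mu(z) - D(g;w),\]
the measure to be controlled is at most $c^{-2} \|F\|_{L^2(\Gamma \backslash \Hb)}^2$, so it suffices to establish
\[\|F\|_{L^2(\Gamma \backslash \Hb)}^2 \ll_\e t_g^{-2 \min\left\{\frac{5}{14}(1-\delta),\, 2\delta,\, \frac{1}{12}\right\} + \e}.\]

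Let $k_R$ denote the point-pair invariant kernel on $\Hb \times \Hb$ attached to the characteristic function of the hyperbolic ball of radius $R$, with Selberg/Harish-Chandra transform $h$. Standard computations give $h(t) \asymp \vol(B_R)$ on $|t| \leq R^{-1}$ and $|h(t)| \ll \vol(B_R)(R|t|)^{-3/2}$ on $|t| \geq R^{-1}$. Applying the automorphic kernel $\sum_{\gamma \in \Gamma} k_R(z, \gamma w)$ to the Zagier-regularised spectral expansion of $|g(z)|^2$ yields
\[\frac{1}{\vol(B_R)} \int_{B_R(w)} |g(z)|^2 \, d\mu(z) = D(g;w) + \frac{1}{\vol(B_R)} \sum_{f \in \BB_0(\Gamma)} h(t_f) \langle |g|^2, f \rangle_{\reg} f(w) + (\mathrm{Eis}),\]
where $D(g;w)$ absorbs the constant term together with the low-frequency portion of the continuous spectrum, in analogy with the Maa\ss{}--Selberg relation and Young's treatment in \cite{You16}. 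Squaring and integrating in $w$ over $\Gamma \backslash \Hb$, Parseval gives
\[\|F\|_2^2 = \frac{1}{\vol(B_R)^2} \sum_{f \in \BB_0(\Gamma)} |h(t_f)|^2 \left|\langle |g|^2, f \rangle_{\reg}\right|^2 + \text{(continuous contribution)}.\]

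The Watson--Ichino formula (\hyperref[WatsonIchinosect]{Section \ref*{WatsonIchinosect}}) converts each triple product into a central $L$-value: for cuspidal $f$,
\[\left|\langle |E_{t_g}|^2, f \rangle_{\reg}\right|^2 \approx \frac{\mathcal{G}(t_f, t_g)}{|\zeta(1+2it_g)|^4 \, L(1, \sym^2 f)} \, L\!\left(\tfrac12, f\right) \left|L\!\left(\tfrac12 + 2it_g, f\right)\right|^2,\]
where by Stirling the archimedean factor $\mathcal{G}(t_f, t_g)$ admits a $t_g^{-2}(1+|t_f|)^{-1/2}$ bound on $|t_f| \leq 2t_g - C t_g^{1/3}$ and decays exponentially beyond $2t_g + C t_g^{1/3}$; an analogous formula in terms of $\zeta$-values governs the continuous piece. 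The factor $|h(t_f)/\vol(B_R)|^2$ localises the cuspidal sum (dyadically) to $|t_f| \lesssim R^{-1} = t_g^\delta$, with polynomial decay beyond. Splitting the sum dyadically and combining short-interval moment bounds for $L(\tfrac12, f)|L(\tfrac12 + 2it_g, f)|^2/L(1, \sym^2 f)$ via the Kuznetsov formula and spectral large sieve (in the spirit of Spinu \cite{Spi} for \hyperref[L4uncondthm]{Theorem \ref*{L4uncondthm}} and Young \cite{You16}) with the unconditional sixth moment of $\zeta(\tfrac12 + it)$ in a short interval near $t = \pm 2t_g$ to handle the continuous piece produces three competing upper bounds on $\|F\|_2^2$, realising the three exponents $\frac{5}{14}(1-\delta)$, $2\delta$, and $\frac{1}{12}$.

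The main technical obstacle lies in the transition range $|t_f| \approx 2 t_g$: here $\mathcal{G}(t_f, t_g)$ no longer decays, and a subconvex moment bound for shifted $\GL_2 \times \GL_1$ $L$-values is needed to keep the cuspidal contribution in check. The exponent $\tfrac{5}{14}$ arises from interpolating Weyl-type subconvexity against the spectral large sieve in this range; the exponent $\tfrac{1}{12}$ reflects the quality of the sixth moment of $\zeta$ used on the Eisenstein side; and $2\delta$ becomes the bottleneck only when $R$ shrinks slowly, so that the normalisation $\vol(B_R)^{-2} = t_g^{4\delta}$ is comparatively mild but the effective spectral support is too short to exploit the full strength of the moment estimates.
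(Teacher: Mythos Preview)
Your high-level strategy (Chebyshev plus a spectral expansion of the variance, with the Watson--Ichino/Rankin--Selberg identities turning the inner products into $L$-values) is exactly the paper's, but the execution has a genuine gap and the attribution of the three exponents is incorrect.

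The main gap is that your $F(w)$ is \emph{not} in $L^2(\Gamma\backslash\Hb)$, so $\|F\|_2^2$ does not exist and Parseval cannot be applied directly. The regularised spectral expansion of $\vol(B_R)^{-1}\int_{B_R(w)}|g|^2\,d\mu$ does not produce $D(g;w)$ as its ``constant part''; it produces
\[C(g;R;w)=D(g;w)+\frac{2ih_R'(i/2)}{\vol(\Gamma\backslash\Hb)}+2\Re\!\left(h_R\!\left(2t_g+\tfrac{i}{2}\right)\frac{\Lambda(1-2it_g)}{\Lambda(1+2it_g)}E(w,1-2it_g)\right),\]
and the last term, being a genuine Eisenstein series on the critical line shifted to $\Re(s)=1$, grows like $\Im(w)$ at the cusp and is not square-integrable. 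The paper therefore proceeds in two steps: first it bounds the honest variance $\int|\ldots - C(g;R;w)|^2\,d\mu(w)$ by $t_g^{-\min\{5(1-\delta)/7,\,1/6\}+\e}$ via Parseval, and then it controls $C(g;R;w)-D(g;w)$ separately. The Eisenstein piece is handled by a second Chebyshev argument after truncation and the Maa\ss{}--Selberg relation; the scalar piece $2ih_R'(i/2)/\vol(\Gamma\backslash\Hb)\asymp R^2\asymp t_g^{-2\delta}$ is simply absorbed into $c$.

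This also explains where the exponents actually come from, contrary to your account. The exponent $2\delta$ has nothing to do with the moment bounds or the normalisation $\vol(B_R)^{-2}$; it is precisely the size of $h_R'(i/2)$. The exponent $\tfrac{1}{12}$ does not come from the sixth moment of $\zeta$ on the continuous side (the continuous spectrum contributes $t_g^{-13/84+\e}$ via Spinu and Bourgain, which is better); it comes from the cuspidal short initial ranges via the Jutila--Motohashi and Ivi\'{c} bounds of \hyperref[shortinitialrangelemma]{Lemma \ref*{shortinitialrangelemma}}, which give $t_g^{-1/6}$ for the variance. Finally, $\tfrac{5}{14}(1-\delta)$ does not arise from the transition range $t_f\approx 2t_g$: that range is heavily damped by $|h_R(t_f)|^2\ll (Rt_f)^{-3}\approx t_g^{-3(1-\delta)}$ and contributes $t_g^{-7(1-\delta)/2}$. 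The $\tfrac{5}{7}(1-\delta)$ in the variance comes from optimising a cut parameter $\delta'$ between the short initial polynomial decay range $t_g^{\delta}<t_f<t_g^{1-\delta'}$ (bounded by $t_g^{-\delta'}$ via \hyperref[shortinitialrangelemma]{Lemma \ref*{shortinitialrangelemma}}) and the bulk polynomial decay range $t_g^{1-\delta'}\le t_f\le 2t_g-t_g^{\delta}$ (bounded by $t_g^{-5(1-\delta-\delta')/2}$ via the large sieve, \hyperref[largesieveinitiallemma]{Lemma \ref*{largesieveinitiallemma}}), with the choice $\delta'=\tfrac{5}{7}(1-\delta)$. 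Note also that for Eisenstein $g$ the cuspidal inner product involves $L(\tfrac12,f)^2|L(\tfrac12+2it_g,f)|^2$, not $L(\tfrac12,f)|L(\tfrac12+2it_g,f)|^2$.
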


This result is consistent with \hyperref[QUEeisen]{Theorem \ref*{QUEeisen}} due to the following.

\begin{lemma}\label{D(g;w)compactlemma}
In any compact subset $K$ of $\Gamma \backslash \Hb$, we have that for all $w \in K$,
\[D(g;w) = \frac{\log \left(\frac{1}{4} + t_g^2\right)}{\vol\left(\Gamma \backslash \Hb\right)} + O_K\left(\left(\log t_g\right)^{2/3} \left(\log \log t_g\right)^{1/3}\right).\]
\end{lemma}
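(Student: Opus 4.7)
The plan is to start from the explicit formula \eqref{D(g;w)} defining $D(g;w)$, which arises as the asymptotic local density in the shrinking ball average for Eisenstein series treated in \hyperref[Eisenalmosteverythm]{Theorem \ref*{Eisenalmosteverythm}}. I expect this formula to decompose into a $w$-independent main term, coming from the constant function in the spectral decomposition of the (smoothed) indicator of $B_R(w)$, together with auxiliary terms depending on $w$ through evaluations of Eisenstein series $E(w, 1/2+iu)$ at various spectral parameters $u$.

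For the $w$-independent main term, the argument parallels the proof of \hyperref[QUEeisen]{Theorem \ref*{QUEeisen}}: the Maa\ss{}--Selberg relation yields the leading piece $\log(1/4+t_g^2)/\vol(\Gamma \backslash \Hb)$, while the subleading contribution consists of logarithmic derivatives and ratios of completed zeta factors $\Lambda(1\pm 2it_g)$. These are controlled by the Vinogradov--Korobov zero-free region, which yields
\[\left|\zeta(1+2it_g)\right|^{-1} \ll (\log t_g)^{2/3}(\log\log t_g)^{1/3},\]
and this is precisely the source of the stated error term.

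For the $w$-dependent auxiliary terms, the key input is the pointwise bound
\[\left|E\left(w, \tfrac{1}{2}+iu\right)\right| \ll_K (\log(|u|+2))^{2/3} (\log\log(|u|+3))^{1/3} \qquad (w \in K),\]
which holds uniformly on any fixed compact set $K$ and again comes from the Vinogradov--Korobov bound on $|\zeta(1+2iu)|^{-1}$ after normalising via the completed Eisenstein series. Combined with the $\Lambda$-factor coefficients in \eqref{D(g;w)}, whose ratios have modulus one by the functional equation $\Lambda(s)=\Lambda(1-s)$ together with $\Lambda(1-2it_g)=\overline{\Lambda(1+2it_g)}$, each such term contributes $O_K((\log t_g)^{2/3}(\log\log t_g)^{1/3})$.

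The main obstacle is ensuring that the $w$-dependent contributions genuinely satisfy the claimed logarithmic error bound rather than its square: two uncanceled occurrences of $1/\zeta(1+2it_g)$-type factors would instead produce $(\log t_g)^{4/3}(\log\log t_g)^{2/3}$, which exceeds the target. Avoiding this requires careful inspection of \eqref{D(g;w)} to confirm that the cancellation from the functional equation --- which makes the relevant ratios of $\Lambda$-values unimodular --- reduces the number of uncanceled Vinogradov--Korobov factors to at most one per term, both in the Maa\ss{}--Selberg contribution and in the auxiliary Eisenstein evaluations.
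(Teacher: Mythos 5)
Your instinct that the key analytic inputs are Stirling's formula and the Vinogradov--Korobov bound for $\zeta'/\zeta$ on the $1$-line is correct, but your picture of the \emph{structure} of $D(g;w)$ does not match the formula actually defined in \eqref{D(g;w)}:
\[D(g;w) = \frac{2}{\vol\left(\Gamma \backslash \Hb\right)} \left(2\Re\left(\frac{\Lambda'}{\Lambda} \left(1 + 2it_g\right)\right) + 2\gamma_0 - \frac{12 \zeta'(2)}{\pi^2} - \log \left|4 \Im(w) \eta(w)^4\right|\right).\]
There are no evaluations $E(w,1/2+iu)$ here, so the uniform pointwise bound on Eisenstein series that you invoke is not needed, and the worry about ``two uncanceled occurrences of $1/\zeta(1+2it_g)$'' does not arise. (You may be conflating $D(g;w)$ with $C(g;R;w)$ from \eqref{C(g;R;w)}, which \emph{does} contain a term $E(w,1-2it_g)$ weighted by the unimodular ratio $\Lambda(1-2it_g)/\Lambda(1+2it_g)$; but that term is controlled separately in the proof of \hyperref[Eisenalmosteverythm]{Theorem~\ref*{Eisenalmosteverythm}}, not here.)

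The paper's proof is much shorter and more direct than what you sketch: one simply expands the single $t_g$-dependent term. By \eqref{varphi'varphi}, $\Re(\Lambda'/\Lambda)(1+2it_g)$ decomposes into a $\log\pi$ constant, $\tfrac12\Re(\Gamma'/\Gamma)(1/2+it_g)$, and $\Re(\zeta'/\zeta)(1+2it_g)$. Stirling, via \eqref{Gamma'Gamma}, identifies the digamma piece with $\tfrac12\log(1/4+t_g^2)+O(1/t_g)$, producing exactly the claimed main term after multiplying by $2/\vol(\Gamma\backslash\Hb)$. The $\zeta'/\zeta$ piece is bounded by \eqref{zeta'zeta}, which is the sole source of the $(\log t_g)^{2/3}(\log\log t_g)^{1/3}$ error. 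The remaining $w$-dependent ingredient, $\log|4\Im(w)\eta(w)^4|$, is a continuous function of $w$ alone and is bounded on any fixed compact $K$ because $\Im(w)^6\eta(w)^{24}$ is a nonvanishing cusp form on $K$ --- so it contributes $O_K(1)$, which is absorbed. Your Maa\ss{}--Selberg route is the idea behind \hyperref[constantcontrib]{Corollary~\ref*{constantcontrib}} and explains \emph{why} $D(g;w)$ has this shape, but the lemma itself needs no spectral decomposition: it is a direct consequence of the three asymptotic identities \eqref{varphi'varphi}, \eqref{Gamma'Gamma}, \eqref{zeta'zeta} and the nonvanishing of $\eta$ on $K$.
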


In particular, we may rephrase \hyperref[Eisenalmosteverythm]{Theorem \ref*{Eisenalmosteverythm}} in the following way.

\begin{corollary}
Let $g(z) = E\left(z, 1/2 + it_g\right)$, and let $K$ be a fixed compact subset of $\Gamma \backslash \Hb$. Suppose that $R \gg_{\e} t_g^{-1 + \e}$. Then for any fixed $c  > 0$,
\[\vol\left(\left\{w \in K : \left|\frac{1}{\log \left(\frac{1}{4} + t_g^2\right) \vol\left(B_R\right)} \int_{B_R(w)} |g(z)|^2 \, d\mu(z) - \frac{1}{\vol\left(\Gamma \backslash \Hb\right)}\right| > c\right\}\right)\]
converges to zero as $t_g$ tends to infinity.
\end{corollary}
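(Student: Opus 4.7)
The plan is to deduce the corollary from \hyperref[Eisenalmosteverythm]{Theorem \ref*{Eisenalmosteverythm}} and \hyperref[D(g;w)compactlemma]{Lemma \ref*{D(g;w)compactlemma}} via a triangle inequality, after dividing by $\log(1/4 + t_g^2)$. First I would reduce to the case $R \asymp t_g^{-\delta}$ for some $\delta \in (0, 1)$: the hypothesis $R \gg_{\e} t_g^{-1 + \e}$ forces any $\delta$ for which $R \asymp t_g^{-\delta}$ to satisfy $\delta < 1$, while the remaining case in which $R$ has a constant lower bound follows immediately from \hyperref[QUEeisen]{Theorem \ref*{QUEeisen}} applied to the compact set obtained by thickening $K$.

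For $w \in K$, the triangle inequality yields
\begin{align*}
& \left| \frac{1}{\log\left(\frac{1}{4} + t_g^2\right) \vol(B_R)} \int_{B_R(w)} |g(z)|^2 \, d\mu(z) - \frac{1}{\vol(\Gamma \backslash \Hb)} \right| \\
& \quad \leq \frac{1}{\log\left(\frac{1}{4} + t_g^2\right)} \left| \frac{1}{\vol(B_R)} \int_{B_R(w)} |g(z)|^2 \, d\mu(z) - D(g; w) \right| + \left| \frac{D(g; w)}{\log\left(\frac{1}{4} + t_g^2\right)} - \frac{1}{\vol(\Gamma \backslash \Hb)} \right|.
\end{align*}
By \hyperref[D(g;w)compactlemma]{Lemma \ref*{D(g;w)compactlemma}}, the second summand is $O_K((\log t_g)^{-1/3} (\log \log t_g)^{1/3})$ uniformly in $w \in K$, and so is bounded above by $c/2$ for all $t_g$ sufficiently large.

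For the first summand to exceed $c/2$ requires the absolute-value factor to exceed $(c/2)\log(1/4 + t_g^2)$. Since $\log t_g$ grows faster than any negative power of $t_g$, this threshold comfortably exceeds $t_g^{-\min\{\frac{5}{14}(1 - \delta),\, 2\delta,\, \frac{1}{12}\} + \e}$ for $t_g$ large. Hence \hyperref[Eisenalmosteverythm]{Theorem \ref*{Eisenalmosteverythm}} applies, and the volume of the set of $w \in K$ on which the first summand exceeds $c/2$ tends to zero as $t_g \to \infty$. Combining with the uniform bound on the second summand yields the claimed statement.

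There is no substantive obstacle in this deduction: the only thing to verify is that the logarithmic denominator $\log(1/4 + t_g^2)$ is large enough both to absorb the polynomial power-saving threshold from \hyperref[Eisenalmosteverythm]{Theorem \ref*{Eisenalmosteverythm}} and to suppress the secondary error $O_K((\log t_g)^{2/3} (\log \log t_g)^{1/3})$ from \hyperref[D(g;w)compactlemma]{Lemma \ref*{D(g;w)compactlemma}} after normalisation. Both are automatic, so the corollary is essentially a clean combination of the two preceding results.
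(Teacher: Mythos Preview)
Your proposal is correct and matches the paper's approach: the paper does not give a separate proof of this corollary, presenting it simply as an immediate rephrasing of \hyperref[Eisenalmosteverythm]{Theorem \ref*{Eisenalmosteverythm}} in light of \hyperref[D(g;w)compactlemma]{Lemma \ref*{D(g;w)compactlemma}}, which is precisely the triangle-inequality combination you carry out. The only minor looseness is your reduction to $R \asymp t_g^{-\delta}$ (since $R$ need not have a fixed exponent), but this is a routine subsequence/technical point that the paper likewise glosses over and does not affect the argument.
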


\subsection{Equidistribution of Geometric Invariants of Quadratic Fields in Shrinking Sets}
\label{GeomInvintrosect}

Finally, in \hyperref[section6]{Section \ref*{section6}}, we study a similar equidistribution problem in shrinking sets. Associated to each narrow ideal class $A$ of the narrow class group $\Cl_K^+$ of a quadratic number field $K = \Q(\sqrt{D})$ is a geometric invariant. For $D < 0$, this is a Heegner point $z_A$, while for $D > 0$, this is a closed geodesic $\CC_A$ or a hyperbolic orbifold $\Gamma_A \backslash \NN_A$ having this closed geodesic as its boundary; we explain these geometric invariants in more detail in \hyperref[geominvsect]{Section \ref*{geominvsect}}.

For each fundamental discriminant $D$, we choose a genus $G_K \subset \Cl_K^+$ in the group of genera $\Gen_K = \Cl_K^+ / (\Cl_K^+)^2$, so that $G_K$ is a coset $A (\Cl_K^+)^2$ of narrow ideal classes in $\Cl_K^+$. We have that $\Gen_K \cong (\Z / 2\Z)^{\omega(|D|) - 1}$, where $\omega(|D|)$ is the number of distinct prime factors of $|D|$, so that $\# G_K = \# (\Cl_K^+)^2 = 2^{1 - \omega(|D|)} h_K^+$, where $h_K^+ \defeq \# \Cl_K^+$ denotes the narrow class number of $K$. Duke, Imamo\={g}lu, and T\'{o}th have proved the following equidistribution theorem.

\begin{theorem}[{\cite[Theorem 2]{DIT}}]
For every continuity set $B \subset \Gamma \backslash \Hb$,
\begin{align*}
\frac{\#\left\{A \in G_K : z_A \in B\right\}}{\# G_K} & = \frac{\vol(B)}{\vol\left(\Gamma \backslash \Hb\right)} + o_B(1)
\intertext{as $D \to -\infty$ through fundamental discriminants, and}
\frac{\sum_{A \in G_K} \ell\left(\CC_A \cap B\right)}{\sum_{A \in G_K} \ell\left(\CC_A\right)} & = \frac{\vol(B)}{\vol\left(\Gamma \backslash \Hb\right)} + o_B(1),	\\
\frac{\sum_{A \in G_K} \vol\left(\Gamma_A \backslash \NN_A \cap B\right)}{\sum_{A \in G_K} \vol\left(\Gamma_A \backslash \NN_A\right)} & = \frac{\vol(B)}{\vol\left(\Gamma \backslash \Hb\right)} + o_B(1)
\end{align*}
as $D \to \infty$ through fundamental discriminants, where $\ell(\CC_A) \defeq \int_{\CC_A} \, ds$, with $ds^2 = y^{-2} dx^2 + y^{-2} dy^2$.
\end{theorem}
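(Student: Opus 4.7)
The plan is to apply Weyl's equidistribution criterion spectrally: by the Portmanteau theorem and the Selberg spectral decomposition of $L^2(\Gamma \backslash \Hb)$, it suffices to show that for every fixed Hecke--Maass cusp form $g \in \BB_0(\Gamma)$ and every fixed incomplete Eisenstein series $g$, the spectral Weyl sum satisfies
\[\sum_{A \in G_K} g(z_A) = o(\# G_K) \quad \text{as } D \to -\infty,\]
together with the analogous cancellation for the geodesic periods $\sum_{A \in G_K} \int_{\CC_A} g\,ds$ and the orbifold integrals $\sum_{A \in G_K} \int_{\Gamma_A \backslash \NN_A} g\,d\mu$ as $D \to +\infty$, the orbifold case reducing to the geodesic case via Stokes' theorem since $\CC_A = \dee(\Gamma_A \backslash \NN_A)$. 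The constant spectral component supplies the advertised main term in each case.

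Exploiting $G_K = A_0 (\Cl_K^+)^2$ together with character orthogonality on the group of genera, I would write
\[\sum_{A \in G_K} g(z_A) = \frac{1}{\# \Gen_K} \sum_{\psi \in \widehat{\Gen_K}} \overline{\psi(A_0)} \, W_\psi(g), \qquad W_\psi(g) \defeq \sum_{C \in \Cl_K^+} \psi(C) \, g(z_C),\]
and similarly for the geodesic Weyl sums. Since $\# \Gen_K = 2^{\omega(|D|) - 1} \ll_\e |D|^\e$, it suffices to bound each individual twisted Weyl sum $W_\psi(g)$. Here I would invoke the Katok--Sarnak formula (for $D < 0$) and its real-quadratic analogue due to Popa (for $D > 0$), which yields an identity of the shape
\[|W_\psi(g)|^2 \asymp_g \frac{\sqrt{|D|}}{L(1, \chi_D)} \, L(1/2, g \otimes \psi),\]
together with the crucial genus-character factorisation $L(s, g \otimes \psi) = L(s, g \otimes \chi_{D_1}) L(s, g \otimes \chi_{D_2})$, where $D = D_1 D_2$ is the split into fundamental discriminants induced by $\psi$ (with an analogous four-fold factorisation into Dirichlet $L$-values when $g$ is Eisenstein).

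Applying Duke's subconvex bound $L(1/2, g \otimes \chi_{D_j}) \ll_{g,\e} |D_j|^{1/2 - \delta}$ for some $\delta > 0$ together with Siegel's bound $L(1, \chi_D) \gg_\e |D|^{-\e}$ and the class number formula $h_K^+ \asymp \sqrt{|D|} L(1, \chi_D)$, I obtain
\[|W_\psi(g)| \ll_{g,\e} h_K^+ \cdot |D|^{-\delta/2 + \e},\]
which on trivial summation over $\psi$ gives $\sum_{A \in G_K} g(z_A) \ll_{g,\e} \# G_K \cdot |D|^{-\delta/2 + 2\e} = o(\# G_K)$, as required. The hardest ingredient is securing the Waldspurger--Katok--Sarnak identity with its correct normalisation and the genus-character $L$-function factorisation, particularly in the real-quadratic closed-geodesic setting; it is precisely this structural input which forces Duke--Imamo\u{g}lu--T\'{o}th to access the problem through the Shimura correspondence and the Kohnen plus space, realising the twisted Weyl sums as Fourier coefficients of a weight $3/2$ form. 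Once these identities are in place, the genus-character split ensures that any subconvex exponent --- Duke's original theorem already suffices --- delivers the requisite power saving.
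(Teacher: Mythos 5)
This theorem is stated in the paper as a citation to Duke--Imamo\={g}lu--T\'{o}th \cite[Theorem 2]{DIT}; the paper does not supply a proof of it. What the paper \emph{does} contain, in Section~6 (particularly \hyperref[MaassWeyllemma]{Lemma~\ref*{MaassWeyllemma}} and \hyperref[EisensteinWeyllemma]{Lemma~\ref*{EisensteinWeyllemma}}), is a restatement of the key DIT identities expressing the genus-restricted Weyl sums in terms of products of twisted $L$-functions, and it is that same machinery which your sketch invokes. In outline your proposal is faithful to DIT: test against the Laplace spectrum (constant function plus cusp forms plus incomplete Eisenstein series); pass from $G_K = A_0(\Cl_K^+)^2$ to the full narrow class group via orthogonality of genus characters, using $\#\Gen_K = 2^{\omega(|D|)-1} \ll_\e |D|^\e$; apply the Waldspurger/Katok--Sarnak and Popa-type period formulas coupled with the genus-character factorisation $L(s, g \otimes \chi_{d_1,d_2}) = L(s, g\otimes\chi_{d_1})L(s, g\otimes\chi_{d_2})$; and close with Duke's subconvexity plus Siegel's lower bound. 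This is indeed how the nontrivial spectral components are killed.

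Two points of imprecision are worth flagging. First, your formula $|W_\psi(g)|^2 \asymp_g \sqrt{|D|} L(1/2, g\otimes\psi)/L(1,\chi_D)$ should not have the $L(1,\chi_D)$ in the denominator at the level of the period identity itself; the identity (cf.\ the paper's \hyperref[MaassWeyllemma]{Lemma~\ref*{MaassWeyllemma}}, which records DIT's formula) is of the shape $\sqrt{|D|}\, L(1/2, g\otimes\chi_{d_1}) L(1/2, g\otimes\chi_{d_2}) / L(1,\sym^2 g)$, and $L(1,\chi_D)$ only enters once you normalise by $h_K^+$ via the class number formula. This does not affect the final power saving but blurs the bookkeeping. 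Second, and more substantively, the claim that ``the orbifold case reduces to the geodesic case via Stokes' theorem'' is too glib. Applying Green's identity to a Laplace eigenfunction converts the orbifold integral $\int_{\Gamma_A\backslash\NN_A} f\,d\mu$ into a boundary period of $\partial_n f$, \emph{not} of $f$ itself, and these two periods are controlled by different twists: DIT show the ordinary geodesic period corresponds to splittings $D = d_1 d_2$ with $d_1, d_2 > 0$, while the orbifold/normal-derivative period corresponds to splittings with $d_1, d_2 < 0$. The orbifold case therefore requires its own $L$-function input and does not formally reduce to the geodesic case, even though the two are tightly intertwined in DIT's half-integral-weight framework. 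With those caveats, the sketch is sound and reflects the route DIT actually take.
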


If we sum over all genera, so that we are studying equidistribution associated to the full narrow class group, then this result is due to Duke \cite[Theorem 1]{Duk} for Heegner points and closed geodesics, while this result becomes trivial for hyperbolic orbifolds, for there is no error term whatsoever in this case. Moreover, the equidistribution of closed geodesics has a stronger realisation: instead of merely asking for the equidistribution of closed geodesics on $\Gamma \backslash \Hb$, we may lift these geodesics to phase space $S^{\ast} \left(\Gamma \backslash \Hb\right) \cong \Gamma \backslash \SL_2(\R)$ and demand equidistribution with respect to the Liouville measure. This has been proved by Chelluri \cite{Che}.

It is natural to ask whether equidistribution still occurs if $B$ shrinks as $|D|$ grows. Towards this, Young \cite{You17a} has proved the following.

\begin{theorem}[{Young \cite[Theorem 2.1]{You17a}}]
Fix $w \in \Gamma \backslash \Hb$, and suppose that $R \asymp (-D)^{-\delta}$. Unconditionally, as $D \to -\infty$ through odd fundamental discriminants,
\begin{equation}\label{YoungHeegnereq}
\frac{\#\left\{A \in \Cl_K : z_A \in B_R(w)\right\}}{\vol\left(B_R\right) h_K} = \frac{1}{\vol\left(\Gamma \backslash \Hb\right)} + o_{w,\delta}(1)
\end{equation}
for fixed $\delta < 1/24$, where $\Cl_K$ denotes the class group of $K$ and $h_K \defeq \# \Cl_K$ denotes the class number. Assuming the generalised Lindel\"{o}f hypothesis, \eqref{YoungHeegnereq} holds as $D \to -\infty$ through fundamental discriminants for fixed $\delta < 1/8$.
\end{theorem}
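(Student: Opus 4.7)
The plan is to establish both thresholds, $\delta<1/24$ unconditionally and $\delta<1/8$ under the generalised Lindel\"of hypothesis, by spectrally expanding a smooth approximation of $\mathbf{1}_{B_R(w)}$ on $\Gamma\backslash\Hb$, converting the resulting arithmetic Weyl sums $\sum_{A\in\Cl_K}g(z_A)$ into central $L$-values via the Katok--Sarnak formula, and then applying subconvex (respectively Lindel\"of) bounds.

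First I would choose a non-negative radial bump $\psi_R(z)=\phi(d(z,w)/R)$ with $\phi\in C_c^\infty([0,1+\eta])$ equal to $1$ on $[0,1-\eta]$, where $\eta>0$ is a small parameter to be optimised. Writing
\[
\#\{A\in\Cl_K:z_A\in B_R(w)\}=\sum_{A}\psi_R(z_A)+O\Bigl(\#\{A:z_A\in B_{R(1+\eta)}(w)\setminus B_{R(1-\eta)}(w)\}\Bigr),
\]
the boundary error is absorbed inductively by running the same argument on the fattened annulus. Since $\psi_R$ is a point-pair invariant, the pretrace formula gives
\[
\psi_R(z)=\frac{\langle \psi_R,1\rangle}{\vol(\Gamma\backslash\Hb)}+\sum_{g\in\BB_0(\Gamma)}h(t_g)\overline{g(w)}g(z)+\text{Eisenstein part},
\]
where $h(t)$ is the Selberg/Harish--Chandra transform of the radial profile; stationary phase gives $|h(t)|\ll\vol(B_R)(1+R|t|)^{-N}$ for any $N$, so the Maa\ss{} sum is effectively truncated at the near-Planck threshold $T\defeq R^{-1}|D|^{\varepsilon}$.

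Summing over Heegner points, the constant term supplies the main term $\vol(B_R)h_K/\vol(\Gamma\backslash\Hb)$. The continuous-spectrum contribution is unfolded via Hecke's identity $\sum_{A\in \Cl_K}E(z_A,s)=|D|^{s/2}\zeta_K(s)/\zeta(2s)$, reducing it to a spectral integral of $\tilde h(t)\zeta(1/2+it)L(1/2+it,\chi_D)$, to which subconvexity of $L(1/2+it,\chi_D)$ (Burgess, or Conrey--Iwaniec hybrid) applies. For the cuspidal part the Katok--Sarnak formula
\[
\Bigl|\sum_{A\in \Cl_K}g(z_A)\Bigr|^2\asymp|D|^{1/2}\frac{L(1/2,g)L(1/2,g\otimes\chi_D)}{L(1,\sym^2 g)}
\]
converts the Weyl sum into central $L$-values. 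A Cauchy--Schwarz split, pairing the spectral square $\sum_{t_g\leq T}|g(w)|^2|h(t_g)|^2\ll \vol(B_R)$ (by the local Weyl law) against a spectral second moment of the quadratic twists, together with the Conrey--Iwaniec subconvex bound for $L(1/2,g\otimes\chi_D)$ in the $D$-aspect, controls the cuspidal contribution. Balancing the arithmetic saving from subconvexity against $T\asymp|D|^{\delta}$ yields the unconditional threshold $\delta<1/24$. Replacing subconvexity by Lindel\"of makes the $L$-values $(|D|t_g)^{\varepsilon}$ and the same balance goes through up to $\delta<1/8$.

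The main obstacle is the simultaneous arithmetic and spectral saving: a pointwise subconvex bound applied to each $g$ with $t_g\leq T$ loses too many factors of $T$, so one must exploit positivity of the central $L$-values and use a Cauchy--Schwarz/spectral large sieve combination to pair the large-sieve-type spectral second moment against the local Weyl law at the point $w$. A secondary subtlety is that the continuous-spectrum piece is genuinely competitive with the cuspidal one, since Hecke's identity factors $\zeta_K$ into a ``diagonal'' $\zeta$ and a quadratic twist $L(\cdot,\chi_D)$; the hybrid subconvex exponent for $L(1/2+it,\chi_D)$ must be at least as strong as the one used in the cuspidal sum, and it is this simultaneous requirement that pins down the unconditional threshold $\delta<1/24$.
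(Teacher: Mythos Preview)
The paper does not prove this theorem; it is quoted verbatim as Young's result \cite[Theorem 2.1]{You17a} and then immediately generalised (without proof) to genera, geodesics, and hyperbolic orbifolds. So there is no ``paper's own proof'' to compare against.

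That said, your outline is essentially Young's strategy in \cite{You17a}: smooth the ball indicator, spectrally expand via the pretrace formula so that the Selberg--Harish-Chandra transform truncates the spectral sum at $T\asymp R^{-1}\asymp |D|^{\delta}$, convert the Heegner Weyl sums into $|D|^{1/2}L(1/2,g)L(1/2,g\otimes\chi_D)/L(1,\sym^2 g)$, and then feed in subconvexity. Two points are worth sharpening. First, the specific input that pins down the unconditional threshold $\delta<1/24$ is not the Conrey--Iwaniec bound per se but Young's own Weyl-type hybrid cubic moment
\[
\sum_{T\leq t_g\leq T+1}\frac{L(1/2,g\otimes\chi_D)^3}{L(1,\sym^2 g)}\ll_{\e}(|D|T)^{1+\e},
\]
which is uniform in both $T$ and $D$ and is precisely what is cited later in the paper as \hyperref[Dnontrivsubconvexsumlemma]{Lemma \ref*{Dnontrivsubconvexsumlemma}}; quoting Conrey--Iwaniec alone would lose uniformity in the $t_g$-aspect and not reach $1/24$. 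Second, in Young's argument the Cauchy--Schwarz/local Weyl law pairing you describe is not the sharpest route: one combines the hybrid cubic moments (for both the trivial twist and $\chi_D$) via H\"older, and the sup-norm of $g$ at the fixed point $w$ is controlled separately. Your description of the Lindel\"of case is accurate and indeed yields $\delta<1/8$ by the same mechanism with the $L$-values replaced by $(|D|t_g)^{\e}$.
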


In fact, from the method of proof, it is clear that Young's theorem applies to genera mutatis mutandis, and proves equidistribution not only of Heegner points, but also of closed geodesics and hyperbolic orbifolds.

\begin{theorem}
Fix $w \in \Gamma \backslash \Hb$, and suppose that $R \asymp D^{-\delta}$. Unconditionally, as $D \to \infty$ through odd fundamental discriminants,
\begin{align*}
\frac{\sum_{A \in G_K} \ell\left(\CC_A \cap B_R(w)\right)}{\vol\left(B_R\right) \sum_{A \in G_K} \ell\left(\CC_A\right)} & = \frac{1}{\vol\left(\Gamma \backslash \Hb\right)} + o_{w,\delta}(1) \qquad \text{for $\delta < 1/18$,}	\\
\frac{\sum_{A \in G_K} \vol\left(\Gamma_A \backslash \NN_A \cap B_R(w)\right)}{\vol\left(B_R\right) \sum_{A \in G_K} \vol\left(\Gamma_A \backslash \NN_A\right)} & = \frac{1}{\vol\left(\Gamma \backslash \Hb\right)} + o_{w,\delta}(1) \qquad \text{for $\delta < 1/12$.}
\end{align*}
Assuming the generalised Lindel\"{o}f hypothesis, these hold as $D \to \infty$ through fundamental discriminants for $\delta < 1/6$ and $\delta < 1/4$ respectively.
\end{theorem}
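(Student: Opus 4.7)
The plan is to follow Young's approach \cite{You17a} to the Heegner point equidistribution problem \emph{mutatis mutandis}, with Waldspurger's formula replaced by the real quadratic analogue due to Popa. The key arithmetic input is a period formula that relates, for each Hecke--Maa\ss{} eigenform $f$,
\[\left|\sum_{A \in G_K} \int_{\CC_A} f \, ds \right|^2 \ll_{\e} D^{\e}\, \frac{L(1/2, f) L(1/2, f \otimes \chi_D)}{L(1, \sym^2 f)},\]
together with an analogous formula for the orbifold periods $\sum_{A \in G_K} \int_{\Gamma_A \backslash \NN_A} f \, d\mu$ that carries an \emph{additional} archimedean factor of order $D^{-1/2}$, arising from the expanded range of integration across the fundamental domain of $\Gamma_A \backslash \NN_A$.

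First I would smooth $\mathbf{1}_{B_R(w)}$ by a nonnegative radial bump $k_R$ concentrated at $w$ at scale $R$ and insert it into the automorphic kernel $\sum_{\gamma \in \Gamma} k_R(\gamma z, w)$. By the spectral decomposition on $\Gamma \backslash \Hb$, this kernel equals $1/\vol(\Gamma \backslash \Hb)$ plus a spectral sum whose Selberg--Harish-Chandra transform $\hat{k}_R(t)$ is negligible for $|t| \gg R^{-1} D^{\e}$. Integrating against either $\sum_A \int_{\CC_A} \cdot \, ds$ or $\sum_A \int_{\Gamma_A \backslash \NN_A} \cdot \, d\mu$ then yields the desired main term, plus an Eisenstein contribution, plus a spectral sum
\[\sum_{|t_f| \ll R^{-1} D^{\e}} \hat{k}_R(t_f) \overline{f(w)} P_f,\]
where $P_f$ denotes the corresponding period of $f$. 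One then replaces $P_f$ by its Popa bound and estimates the spectral sum by Cauchy--Schwarz, the $|f(w)|^2$ large sieve, and short-window second-moment bounds for $L(1/2, f) L(1/2, f \otimes \chi_D)$ over spectral parameters in $[T, T + D^{\e}]$ with $T \asymp R^{-1}$. Unconditionally one feeds in a hybrid Weyl (Conrey--Iwaniec type) subconvex bound in the $D$-aspect; under the generalised Lindel\"{o}f hypothesis one uses the Lindel\"{o}f bound directly.

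Balancing these estimates against the normalising volume --- $\sum_A \ell(\CC_A) \asymp \sqrt{D}\, h_K^+$ for geodesics and $\sum_A \vol(\Gamma_A \backslash \NN_A) \asymp \sqrt{D}\, h_K^+ \log \e_K$ for orbifolds --- together with the extra $D^{-1/2}$ in the orbifold archimedean weight, produces the claimed exponents $1/18$ and $1/12$ unconditionally and $1/6$ and $1/4$ conditionally. The Eisenstein contribution is handled by the same procedure, with the Popa-type period on the Eisenstein side expressible via Hecke $L$-functions of $\chi_D$ and Dirichlet $L$-functions, which admit corresponding (sub)convex bounds.

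The main obstacle will be the careful derivation and uniform control of the archimedean weight in the orbifold version of Popa's formula: one must show that a stationary-phase analysis of the Mellin--Barnes expansion of $\int_{\Gamma_A \backslash \NN_A} f \, d\mu$ yields an extra factor of order $D^{-1/2}$ relative to the closed geodesic period $\int_{\CC_A} f \, ds$, as this is precisely what doubles the permissible range of $\delta$ in the orbifold case over the geodesic case. Once this archimedean weight is pinned down, the remaining argument is a direct transposition of Young's spectral estimation strategy.
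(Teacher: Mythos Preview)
Your overall strategy is correct and is essentially what the paper means by ``Young's method applies mutatis mutandis'': spectrally expand the ball kernel, replace periods by $L$-values via the Duke--Imamo\={g}lu--T\'{o}th formulae, and feed in hybrid subconvexity. However, two concrete errors in your accounting would derail the computation of the exponents.

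First, the extra archimedean weight in the orbifold period is a factor of the \emph{spectral parameter}, not of $D$. The DIT formula (the relevant period identity, particularly for the orbifold case, is theirs rather than Popa's) gives
\[\left|\sum_{A} \frac{\frac{1}{4}+t_f^2}{2}\int_{\Gamma_A\backslash\NN_A} f\,d\mu\right|^2 = \frac{1}{2}\frac{\Lambda(1/2,f\otimes\chi_{d_1})\Lambda(1/2,f\otimes\chi_{d_2})}{\Lambda(1,\sym^2 f)},\]
so the squared orbifold Weyl sum carries an extra $(1/4+t_f^2)^{-2}$ relative to the Heegner case and an extra $(1/4+t_f^2)^{-1}$ relative to the geodesic case. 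Since the spectral sum is truncated at $t_f\ll R^{-1}\asymp D^{\delta}$, this $t_f^{-2}$ converts into a saving of $D^{-2\delta}$, which is exactly what shifts the balance point and enlarges the admissible range of $\delta$. There is no $D^{-1/2}$ hiding in a stationary-phase analysis; the weight is already explicit in the period formula, and your proposed ``main obstacle'' is a phantom.

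Second, your normalising volumes are off: by the class number formula, $\sum_{A\in G_K}\ell(\CC_A)=2^{2-\omega(D)}h_K^+\log\epsilon_K^+ \asymp_{\e} D^{1/2\pm\e}$, and likewise $\sum_{A\in G_K}\vol(\Gamma_A\backslash\NN_A)\asymp_{\e} D^{1/2\pm\e}$; neither is of size $\sqrt{D}\,h_K^+$. With the correct normalisation $\asymp\sqrt{D}$ and the correct archimedean weights $t_f^{-1}$ (geodesic) and $t_f^{-3}$ (orbifold) in the Weyl sum, Young's spectral estimates produce the stated thresholds $1/18,\,1/12$ unconditionally and $1/6,\,1/4$ under Lindel\"of.
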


Once again, we may weaken the demand that equidistribution hold in shrinking balls of radius $R > 0$ centred at $w$ for every point $w \in \Gamma \backslash \Hb$ and instead study whether equidistribution holds in shrinking balls $B_R(w)$ for almost every $w \in \Gamma \backslash \Hb$.

We prove the following conditional result.

\begin{theorem}\label{quadalmosteverycondthm}
Suppose that $R \asymp |D|^{-\delta}$. Assuming the generalised Lindel\"{o}f hypothesis, we have that for $0 < \delta < 1/4$ and $c \gg_{\e} (-D)^{-\frac{1}{2} \left(\frac{1}{4} - \delta\right) + \e}$,
\[\vol\left(\left\{w \in \Gamma \backslash \Hb : \left|\frac{\#\left\{A \in G_K : z_A \in B_R(w)\right\}}{\vol\left(B_R\right) \# G_K} - \frac{1}{\vol\left(\Gamma \backslash \Hb\right)}\right| > c\right\}\right)\]
converges to zero as $D \to -\infty$ along fundamental discriminants, while for $0 < \delta < 1/2$ and $c \gg_{\e} D^{-\frac{1}{2} \left(\frac{1}{2} - \delta\right) + \e}$,
\[\vol\left(\left\{w \in \Gamma \backslash \Hb : \left|\frac{\sum_{A \in G_K} \ell\left(\CC_A \cap B_R(w)\right)}{\vol\left(B_R\right) \sum_{A \in G_K} \ell\left(\CC_A\right)} - \frac{1}{\vol\left(\Gamma \backslash \Hb\right)}\right| > c\right\}\right)\]
converges to zero as $D \to \infty$ along fundamental discriminants.
\end{theorem}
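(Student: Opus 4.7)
The plan is a spectral variance estimate combined with Chebyshev's inequality; I describe the Heegner case, with the closed geodesic case handled by a parallel argument in which the point evaluations $f(z_A)$ are replaced by period integrals $\int_{\CC_A} f\, ds$ and $\# G_K$ is replaced by $\sum_{A \in G_K} \ell(\CC_A) \asymp D^{1/2 + o(1)}$ (from the class number formula). Setting $\psi_R(z,w) \defeq \mathbf{1}_{B_R(w)}(z)/\vol(B_R)$ and
\[\Delta(w) \defeq \frac{1}{\#G_K}\sum_{A \in G_K} \psi_R(z_A,w) - \frac{1}{\vol(\Gamma\backslash\Hb)},\]
Chebyshev's inequality gives $\vol\{w : |\Delta(w)| > c\} \leq c^{-2}\|\Delta\|_{L^2}^2$, reducing the task to an upper bound for the variance $\|\Delta\|_{L^2}^2$.

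Expanding the point-pair invariant $\psi_R(z,w)$ spectrally via its Selberg/Harish--Chandra transform $h_R$, the constant eigenfunction contributes exactly the main term $1/\vol(\Gamma\backslash\Hb)$; Parseval then yields
\[\|\Delta\|_{L^2}^2 = \sum_{f \in \BB_0(\Gamma)} |h_R(t_f)|^2 |W_f|^2 + \frac{1}{4\pi}\int_{-\infty}^{\infty} |h_R(t)|^2 |W_{E_t}|^2 \, dt,\]
where $W_f \defeq (\#G_K)^{-1}\sum_{A \in G_K} f(z_A)$ is the genus Weyl sum and $W_{E_t}$ is its Eisenstein analogue.

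The Weyl sums are then controlled under GLH via the Waldspurger / Zhang period formula. Decomposing the genus sum through genus characters $\psi \in \widehat{\Gen_K}$,
\[\sum_{A \in G_K} f(z_A) = \frac{1}{\#\Gen_K}\sum_{\psi} \overline{\psi(A_0)} \sum_{A \in \Cl_K^+} \psi(A) f(z_A);\]
each inner sum factorises into a product of central values $L(1/2, f \otimes \chi_{D_1}) L(1/2, f \otimes \chi_{D_2})$ (with $D = D_1 D_2$ the factorisation associated to $\psi$), multiplied by $\sqrt{|D|}$ and an archimedean gamma factor. Under GLH each twisted $L$-value is $\ll_\e (|D|(1 + |t_f|))^\e$; together with Siegel's bound $\#G_K \gg |D|^{1/2 - \e}$ and Cauchy--Schwarz over $\widehat{\Gen_K}$, this produces a pointwise bound of the shape $|W_f|^2 \ll |D|^{-\alpha + \e}(1 + |t_f|)^{-\beta + \e}$, with $\alpha,\beta > 0$ read off from the archimedean local factor, and analogously for $W_{E_t}$ via $|\zeta(1/2 + it) L(1/2 + it, \chi_D)| \ll (|D|(1 + |t|))^\e$. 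Standard oscillatory-integral estimates give $|h_R(t)| \ll (1 + R|t|)^{-3/2}$ and, via Parseval applied to $\psi_R(\cdot, w)$ followed by integration in $w$, $\sum_f |h_R(t_f)|^2 \ll \vol(B_R)^{-1} \asymp |D|^{2\delta}$; the weighted refinement $\sum_f |h_R(t_f)|^2 (1 + |t_f|)^{-\beta} \ll |D|^{(2 - \beta)\delta}$ follows from Weyl's law. Combining these ingredients bounds $\|\Delta\|_{L^2}^2$ by a suitable negative power of $|D|$ times a power of $|D|^\delta$, and Chebyshev yields the desired measure bound.

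The main obstacle is the archimedean bookkeeping in the Waldspurger/Zhang identities: one must identify the explicit gamma factors in each twisted Heegner period formula (and in Zhang's corresponding formula for hyperbolic orbit integrals), verify that Cauchy--Schwarz over $\widehat{\Gen_K}$ costs at most $|D|^\e$, and combine the resulting $t_f$-decay of $|W_f|^2$ with the decay of $h_R$ so as to extract the precise exponents in the statement. A secondary nuisance is ensuring that the spectral expansion converges absolutely for the indicator $\psi_R$ (easily handled by a smooth majorant/minorant) and that the contribution of $B_R(w)$ wrapping around the cusp is negligible for $R$ in the relevant range.
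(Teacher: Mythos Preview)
Your proposal is correct and follows essentially the same route as the paper: Chebyshev reduces to a variance, Parseval gives the spectral expansion in terms of Weyl sums, genus-character orthogonality unfolds the Weyl sums, a period formula expresses each twisted sum as $L(1/2,f\otimes\chi_{d_1})L(1/2,f\otimes\chi_{d_2})$, and GLH together with the Weyl law and the decay $|h_R(t)|\ll (1+R|t|)^{-3/2}$ finishes. One small correction: in the Heegner case the archimedean factor in the period formula gives \emph{no} polynomial decay in $t_f$ (so $\beta=0$ in your notation), and all the $t_f$-savings come from $h_R$; it is only in the closed-geodesic case that the gamma factors contribute an extra $(1+t_f^2)^{-1/2}$, which is precisely why the admissible range of $\delta$ doubles from $1/4$ to $1/2$ there. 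The paper cites the explicit formulae of Duke--Imamo\={g}lu--T\'oth rather than Waldspurger/Zhang, but these are the same identities packaged with the archimedean constants already computed, so no separate bookkeeping is needed.
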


Unconditionally, we obtain the following weaker results.

\begin{theorem}\label{quadalmosteveryuncondthm}
Suppose that $R \asymp |D|^{-\delta}$. We have that for $0 < \delta < 1/12$ and $c \gg_{\e} (-D)^{-\frac{1}{2} \left(\frac{1}{12} - \delta\right) + \e}$,
\[\vol\left(\left\{w \in \Gamma \backslash \Hb : \left|\frac{\#\left\{A \in G_K : z_A \in B_R(w)\right\}}{\vol\left(B_R\right) \# G_K} - \frac{1}{\vol\left(\Gamma \backslash \Hb\right)}\right| > c\right\}\right)\]
converges to zero as $D \to -\infty$ along odd fundamental discriminants, while for $0 < \delta < 1/6$ and $c \gg_{\e} D^{-\frac{1}{2} \left(\frac{1}{6} - \delta\right) + \e}$,
\[\vol\left(\left\{w \in \Gamma \backslash \Hb : \left|\frac{\sum_{A \in G_K} \ell\left(\CC_A \cap B_R(w)\right)}{\vol\left(B_R\right) \sum_{A \in G_K} \ell\left(\CC_A\right)} - \frac{1}{\vol\left(\Gamma \backslash \Hb\right)}\right| > c\right\}\right)\]
converges to zero as $D \to \infty$ along odd fundamental discriminants, and for all $\delta > 0$ and $c \gg_{\e} D^{-1/4 + \e}$,
\[\vol\left(\left\{w \in \Gamma \backslash \Hb : \left|\frac{\sum_{A \in G_K} \vol\left(\Gamma_A \backslash \NN_A \cap B_R(w)\right)}{\vol\left(B_R\right) \sum_{A \in G_K} \vol\left(\Gamma_A \backslash \NN_A\right)} - \frac{1}{\vol\left(\Gamma \backslash \Hb\right)}\right| > c\right\}\right)\]
converges to zero as $D \to \infty$ along odd fundamental discriminants.
\end{theorem}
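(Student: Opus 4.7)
The plan is to apply the variance method combined with Chebyshev's inequality. Writing $E_D(w)$ for the quantity inside the absolute values in each of the three claims, we have
\[\vol\left(\left\{w \in \Gamma\backslash\Hb : |E_D(w)|>c\right\}\right) \leq c^{-2} \int_{\Gamma\backslash\Hb} |E_D(w)|^2 \, d\mu(w),\]
so it suffices to prove $\|E_D\|_{L^2}^2 \ll_{\e} |D|^{-\kappa+\e}$ with $\kappa$ equal to $\frac{1}{12}-\delta$, $\frac{1}{6}-\delta$, and $\frac{1}{2}$ respectively in the three cases. First, I would replace the sharp indicator $\mathbf{1}_{B_R(w)}(z)$ by a smooth bi-$K$-invariant point-pair invariant $k_R^{\pm}(z,w)$ sandwiching it between scales $R(1\pm|D|^{-\alpha})$ for a small fixed $\alpha > 0$, the thin annular error being absorbed by trivial counting after the $w$-integration.

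Applying the pre-trace spectral expansion to $k_R^{\pm}$, and writing $h_R^{\pm}$ for its Selberg/Harish-Chandra transform (which satisfies $h_R^{\pm}(0) \asymp \vol(B_R)$ and $h_R^{\pm}(t) \ll_A \vol(B_R)(1+|t|R)^{-A}$ for any $A$), the constant-eigenfunction term exactly cancels the leading normalisation, and Parseval's identity yields
\[\|E_D\|_{L^2}^2 \ll \frac{1}{(\vol(B_R) F_D^{\mathrm{main}})^2}\left(\sum_{j\geq 1} |h_R^{\pm}(t_j)|^2 |P_j^{(D)}|^2 + \frac{1}{4\pi}\int_{-\infty}^{\infty} |h_R^{\pm}(t)|^2 |P^{(D)}(t)|^2 \, dt\right),\]
where $P_j^{(D)}$ is the genus period of $\phi_j$ over the cycles $\{z_A\}_{A \in G_K}$, $\{\CC_A\}_{A \in G_K}$, or $\{\Gamma_A\backslash\NN_A\}_{A \in G_K}$, and $P^{(D)}(t)$ is the analogous period of $E(\cdot, 1/2+it)$. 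The Katok--Sarnak/Waldspurger formula (and its geodesic analogue due to Popa) identifies $|P_j^{(D)}|^2$ with $(\# G_K)^2 L(1/2,\phi_j) L(1/2, \phi_j\otimes\chi_D) / (|D|^{1/2} L(1,\sym^2\phi_j))$ up to bounded factors, together with an oscillatory weight of stationary-phase type in the geodesic case, while $P^{(D)}(t)$ unfolds to a product of Dirichlet $L$-functions in $\chi_D$. Truncating to $|t_j|, |t|\leq T \asymp |D|^\delta$ via the rapid decay of $h_R^{\pm}$, the problem is reduced to a short-window second moment of the twisted central values divided by $|D|^{1/2}$.

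The main obstacle is the unconditional control of this second moment. One applies Cauchy--Schwarz together with the spectral large sieve $\sum_{|t_j|\leq T}|L(1/2,\phi_j)|^2\ll_\e T^{2+\e}$ and the Heath-Brown--Blomer estimate $\sum_{|t_j|\leq T}|L(1/2,\phi_j\otimes\chi_D)|^2\ll_\e (T^2+|D|^{1/2+\e})(T|D|)^\e$ for odd fundamental $D$, then balances the resulting bound against the $|D|^{-1/2}$ prefactor to obtain the exponent $\frac{1}{12}-\delta$ for Heegner points; the oscillatory geodesic weight permits an additional stationary-phase saving that doubles this to $\frac{1}{6}-\delta$. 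For the hyperbolic orbifolds, repeated integration by parts using the Laplacian eigenvalue equation yields rapid polynomial decay of the cuspidal periods in $t_j$, so the cuspidal contribution is bounded uniformly in $T$ and the variance is dominated by the continuous-spectrum integral in $|L(1/2+it,\chi_D)|^2$, handled by Burgess's subconvex bound $L(1/2+it,\chi_D)\ll_\e (|D|(1+|t|))^{3/16+\e}$ to give the $\delta$-independent variance $|D|^{-1/2+\e}$. Replacing these moment estimates by the Lindel\"of bound $L(1/2,\phi_j) L(1/2,\phi_j \otimes \chi_D)\ll_\e (t_j|D|)^\e$ immediately produces the improved exponents $\frac{1}{4}-\delta$ and $\frac{1}{2}-\delta$ of \hyperref[quadalmosteverycondthm]{Theorem \ref*{quadalmosteverycondthm}}.
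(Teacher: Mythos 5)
Your overall architecture --- Chebyshev's inequality, spectral (Parseval) expansion of the variance against the sharp ball kernel, and reduction of the squared Weyl sums to products of central $L$-values via a Waldspurger/Duke--Imamo\={g}lu--T\'{o}th type period formula --- is exactly the route the paper takes (the smoothing by $k_R^{\pm}$ is superfluous: \hyperref[heigenlemma]{Lemma \ref*{heigenlemma}} works directly with the sharp indicator, whose transform $h_R$ already decays like $(Rt)^{-3/2}$, which suffices). However, the unconditional arithmetic input you plug in is wrong, and this is precisely where the theorem's force lies.

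You claim a ``Heath-Brown--Blomer estimate'' $\sum_{|t_j|\leq T}|L(1/2,\phi_j\otimes\chi_D)|^2 \ll_{\e} (T^2 + |D|^{1/2+\e})(T|D|)^{\e}$. No such bound is known: it amounts to Lindel\"{o}f on average in the $D$-aspect for $T = O(1)$. Indeed, if you carry your Cauchy--Schwarz through, with $T \asymp |D|^{\delta}$ and the spectral large sieve $\sum_{|t_j|\leq T} L(1/2,\phi_j)^2 \ll T^{2+\e}$, you would get $\Var(G_K(z_A);R) \ll_{\e} (-D)^{-1/4 + \delta + \e}$, which is the \emph{conditional} exponent of \hyperref[quadalmosteverycondthm]{Theorem \ref*{quadalmosteverycondthm}} --- the whole point of the present unconditional theorem is that it obtains the weaker exponent $\frac{1}{12}-\delta$. (This mismatch is an internal red flag in your proposal, since you also assert that your estimate ``balances \ldots to obtain the exponent $\frac{1}{12}-\delta$''.) What the paper actually uses, after genus-character orthogonality rewrites $|W_{G_K,f}|^2$ as a sum over all factorisations $d_1 d_2 = D$ of $L(\frac{1}{2}, f\otimes\chi_{d_1})L(\frac{1}{2}, f\otimes\chi_{d_2})$ --- not just the single product $L(\frac{1}{2},f)L(\frac{1}{2},f\otimes\chi_D)$ you write --- is the generalised H\"{o}lder inequality with exponents $(3,3,3)$ together with the \emph{cubic} spectral moment bounds in short windows, $\sum_{T\leq t_f\leq T+1} L(\frac{1}{2},f\otimes\chi_d)^3/L(1,\sym^2 f) \ll_{\e} (|d|T)^{1+\e}$, due to Ivi\'{c} and Young (\hyperref[Dtrivsubconvexsumlemma]{Lemmata \ref*{Dtrivsubconvexsumlemma}} and \ref{Dnontrivsubconvexsumlemma}); the restriction to odd fundamental discriminants is inherited from Young's theorem. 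The orbifold case likewise goes through these cubic moment bounds, with the extra $(\frac{1}{4}+t_f^2)^{-3/2}$ archimedean decay coming directly from the $\Gamma$-factors in the period formula --- the appeal to Burgess subconvexity is not part of the paper's argument and, as presented, would not suffice to control the cuspidal spectrum.
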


The fact that these geometric invariants equidistribute on almost every ball of different scales should not come as a surprise, and essentially boils down to the fact that a Heegner point has dimension $0$, a closed geodesic has dimension $1$, and a hyperbolic orbifold has dimension $2$. For Heegner points, we need roughly $R^2$ balls to cover $\Gamma \backslash \Hb$, so we require the number of Heegner points $\# G_K$ corresponding to the genus $G_K$ to be at least $R^2$ in order to expect equidistribution; this is the scale $R \asymp (-D)^{-1/4}$. For closed geodesics, on the other hand, $R$ balls will cover roughly $1/R$ of $\Gamma \backslash \Hb$, but a closed geodesic may intersect more than one ball, so we only require the total length $\sum_{A \in G_K} \ell\left(\CC_A\right)$ of closed geodesics corresponding to the genus $G_K$ to be at least $R$; this is the scale $R \asymp D^{-1/2}$. Finally, we should expect equidistribution at \emph{all} scales for hyperbolic orbifolds, since these are just (possibly uneven) coverings of $\Gamma \backslash \Hb$.

\subsection{Idea of Proof}

The chief idea behind the proof of the aforementioned small scall equidistribution theorems is to use Chebyshev's inequality to reduce the problem to bounding a variance. For example,
\[\vol\left(\left\{w \in \Gamma \backslash \Hb : \left|\frac{1}{\vol\left(B_R\right)} \int_{B_R(w)} |g(z)|^2 \, d\mu(z) - \frac{1}{\vol\left(\Gamma \backslash \Hb\right)}\right| > c\right\}\right) \leq \frac{1}{c^2} \Var(g;R)\]
with
\[\Var(g;R) \defeq \int_{\Gamma \backslash \Hb} \left(\frac{1}{\vol\left(B_R\right)} \int_{B_R(w)} |g(z)|^2 \, d\mu(z) - \frac{1}{\vol\left(\Gamma \backslash \Hb\right)}\right)^2 \, d\mu(w).\]

The method of bounding the variance in order to show equidistribution in almost every shrinking ball is also used in \cite[Theorem 1.6]{GW} for eigenfunctions of the Laplacian on $\T^2$, as well as in both \cite[Theorem 1.3]{EMV} and \cite[Theorem 1.8]{BRS}, where the problem investigated is not quantum unique ergodicity, but rather the equidistribution of lattice points on the sphere.

The variance is an inner product of functions in $L^2(\Gamma \backslash \Hb)$, as is the fourth moment of a truncated Eisenstein series; both are thereby amenable to being spectrally expanded via Parseval's identity. The resulting spectral sum over Hecke--Maa\ss{} forms $f$ occurring in the spectral expansion $\Var(g;R)$ when $g$ is an Eisenstein series is essentially the same as the spectral sum for fourth moment of a truncated Eisenstein series in the range $0 < t_f \ll_{\e} R^{-1 + \e}$, whereas for $t_f \gg 1/R$, it is much smaller.

Finally, we use the Watson--Ichino formula to write $|\langle |g|^2,f\rangle|^2$ as a product of $L$-functions. This reduces the problem to bounding certain moments of $L$-functions, with the length of these moments corresponding inversely to the radius of the shrinking ball.

Though not a manifestation of the random wave conjecture, the equidistribution problems in \hyperref[GeomInvintrosect]{Section \ref*{GeomInvintrosect}} nonetheless involve equidistribution on $\Gamma \backslash \Hb$, and the proofs of \hyperref[quadalmosteverycondthm]{Theorems \ref*{quadalmosteverycondthm}} and \ref{quadalmosteveryuncondthm} contain many of the same ingredients as the proofs of \hyperref[Maassalmosteverythm]{Theorems \ref*{Maassalmosteverythm}} and \ref{Eisenalmosteverythm}. The chief difference is that in place of $|\langle |g|^2,f\rangle|^2$, we have Weyl sums; akin to the Watson--Ichino formula, these can be expressed as a product of $L$-functions via the work of Duke, Imamo\={g}lu, and T\'{o}th \cite{DIT}.

\subsection{Connections to Subconvexity}

The rate of equidistribution for quantum unique ergodicity for Hecke--Maa\ss{} eigenforms $g \in \BB_0(\Gamma)$ can be quantified via explicit rates of decay for
\[\int_{\Gamma \backslash \Hb} f(z) |g(z)|^2 \, d\mu(z), \qquad \int_{\Gamma \backslash \Hb} E(z,\psi) |g(z)|^2 \, d\mu(z)\]
for fixed $f \in \BB_0(\Gamma)$ and $\psi \in C_c^{\infty}(\R^{+})$ as $t_g$ tends to infinity. Via the Watson--Ichino formula, this is equivalent to obtaining subconvex bounds of the form
\[L\left(\frac{1}{2}, \sym^2 g \otimes f\right) \ll_f t_g^{1 - \delta}, \qquad L\left(\frac{1}{2} + it, \sym^2 g\right) \ll_t t_g^{\frac{1}{2}\left(1 - \delta\right)}\]
for some absolute constant $\delta > 0$. Similarly, quantifying the rate of equidistribution for quantum unique ergodicity for $g(z) = E(z,1/2 + it_g)$ is equivalent to obtaining subconvex bounds of the form
\[L\left(\frac{1}{2} + 2it_g, f\right) \ll_f t_g^{\frac{1}{2}\left(1 - \delta\right)}, \qquad \zeta\left(\frac{1}{2} + i(2t_g \pm t)\right) \ll_t t_g^{\frac{1}{4}\left(1 - \delta\right)}\]
for some absolute constant $\delta > 0$.

For quantum unique ergodicity in almost every shrinking ball of radius $R$ for Hecke--Maa\ss{} eigenforms $g \in \BB_0(\Gamma)$, on the other hand, we will show that we require bounds of the form
\[\sum_{H \leq t_f \leq 2H} \frac{L\left(\frac{1}{2}, f\right) L\left(\frac{1}{2}, \sym^2 g \otimes f\right)}{L(1,\sym^2 f)} \ll_{\delta} Ht_g^{1 - \delta}\]
for some absolute constant $\delta > 0$ \emph{uniformly in $1 \ll H \ll 1/R$}. That is, we require subconvex moment bounds for $L$-functions uniformly in two parameters: $t_f$ and $t_g$. Thus this is a problem of hybrid subconvexity. Proving such bounds unconditionally seems to be currently out of reach for moments involving $\GL_3 \times \GL_2$ Rankin--Selberg $L$-functions. For $g(z) = E(z,1/2 + it_g)$, on the other hand, the required subconvex moment bounds are
\[\sum_{H \leq t_f \leq 2H} \frac{L\left(\frac{1}{2}, f\right)^2 \left|L\left(\frac{1}{2} + 2it_g\right)\right|^2}{L(1,\sym^2 f)} \ll_{\delta} Ht_g^{1 - \delta},\]
and the fact that these moments only involve $\GL_2$ $L$-functions makes this problem tractable. It is for this reason that we are able to prove \hyperref[Eisenalmosteverythm]{Theorem \ref*{Eisenalmosteverythm}} unconditionally, whereas \hyperref[Maassalmosteverythm]{Theorem \ref*{Maassalmosteverythm}} is conditional.

\section{Integrals of Automorphic Forms and \texorpdfstring{$L$}{L}-Functions}
\label{section2}

\subsection{The Maa\ss{}--Selberg Relation}

The Eisenstein series $E(z,1/2 + it)$ is not square-integrable for any $t \in \R$. However, this is no longer the case when we replace the Eisenstein series with the truncated Eisenstein series
\[g(z) = \Lambda^T E\left(z, \frac{1}{2} + it_g\right),\]
since $\Lambda^T E(z,s)$ is of rapid decay at the cusp of $\Gamma \backslash \Hb$. Note that
\[\Lambda^T E(z,s) = \begin{dcases*}
E(z,s) & if $1/T \leq \Im(z) \leq T$,	\\
E(z,s) - \Im(z)^s + \varphi(s) \Im(z)^{1 - s} & if $\Im(z) > T$,
\end{dcases*}\]
where
\[\varphi(s) = \frac{\Lambda(2 - 2s)}{\Lambda(2s)}.\]
The following explicit formula for the inner product of two truncated Eisenstein series is known as the Maa\ss{}--Selberg relation.

\begin{proposition}[{\cite[Proposition 6.8]{Iwa}}]
For $T \geq 1$, and $s \neq \overline{r}$, $s + \overline{r} \neq 1$,
\begin{multline}\label{MaassSelberg}
\int_{\Gamma \backslash \Hb} \Lambda^T E(z,s) \overline{\Lambda^T E(z,r)} \, d\mu(z)	\\
= \frac{T^{s + \overline{r} - 1}}{s + \overline{r} - 1} + \overline{\varphi(r)} \frac{T^{s - \overline{r}}}{s - \overline{r}} + \varphi(s) \frac{T^{\overline{r} - s}}{\overline{r} - s} + \varphi(s) \overline{\varphi(r)} \frac{T^{1 - s - \overline{r}}}{1 - s - \overline{r}}.
\end{multline}
\end{proposition}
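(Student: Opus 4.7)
The plan is to apply Green's identity for the hyperbolic Laplacian on the (truncated) fundamental domain, exploiting the fact that $\Lambda^T E$ is piecewise a Laplacian eigenfunction. Let $c_s(y) = y^s + \varphi(s) y^{1-s}$ denote the constant Fourier coefficient of $E(z,s)$ at the cusp $\infty$. Since $T \geq 1$, only the identity coset contributes to the defining sum of $\Lambda^T E$ on a fundamental domain for $\Gamma \backslash \Hb$, so $\Lambda^T E(z, s) = E(z, s)$ for $\Im(z) < T$ and $\Lambda^T E(z, s) = E(z, s) - c_s(\Im(z))$ for $\Im(z) > T$, with both pieces satisfying $\Delta u = s(1-s) u$.

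I would apply Green's identity separately on the two subregions $\{y < T\}$ and $\{T < y < Y\}$ of the fundamental domain for a large auxiliary cutoff $Y$. On each subregion,
\[
\left(s(1-s) - \overline{r(1-r)}\right) \int_{\Omega} u \overline{v} \, d\mu(z) = \int_{\partial \Omega} \left(u \partial_n \overline{v} - \overline{v} \partial_n u\right) d\ell,
\]
where $\partial_n$ and $d\ell$ are Euclidean (the hyperbolic factor $y^{-2}$ in $d\mu$ and $-y^2$ in $\Delta$ cancel). Contributions on the arc $|z|=1$ and on the vertical sides cancel by $\Gamma$-automorphy of $u$ and $v$, and the contribution at $y = Y$ vanishes as $Y \to \infty$ by the rapid decay of $E(z,r) - c_r(y)$ at the cusp. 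Only the boundary integral along $y = T$ survives, combining contributions from both sides of the truncation.

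Collecting the surviving integrand, the $E \overline{E'} - \overline{E} E'$ terms between the two sides cancel and only bilinears in $c_s, c_r$ and their $y$-derivatives remain. Integrating in $x$ over $[-1/2, 1/2]$ and using $\int_{-1/2}^{1/2} E(x + iy, s) \, dx = c_s(y)$ (with the analogous statement for $\partial_y E$), the $x$-integral reduces to the Wronskian-like quantity $c_s(T) \overline{c_r'(T)} - \overline{c_r(T)} c_s'(T)$. Using the factorisation $s(1-s) - \overline{r(1-r)} = (s - \overline{r})(1 - s - \overline{r})$ to solve for the inner product, and expanding each $c_s(T)$ and $c_s'(T)$ as a sum of two monomials in $T$, produces exactly the four claimed terms, one from each monomial product.

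The main technical care needed is sign-tracking in hyperbolic Green's identity and orienting the outward normal at $y = T$ correctly from both sides; the clean emergence of the four-term formula is then a consequence of the algebraic factorisation of the eigenvalue difference matching the denominators in the Wronskian, so that everything collapses neatly.
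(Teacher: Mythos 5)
Your proposal is correct and is essentially the same Green's-formula argument that Iwaniec gives in the cited Proposition~6.8: apply Green's theorem separately on the two pieces of the (truncated) fundamental domain cut along $y = T$, cancel the side and arc contributions by automorphy, let the auxiliary cutoff $Y \to \infty$ using rapid decay of $E - c_s$, and reduce the surviving boundary term at $y = T$ to the Wronskian $c_s \overline{c_r'} - \overline{c_r}\, c_s'$, which upon dividing by $s(1-s) - \overline{r}(1-\overline{r}) = (s - \overline{r})(1 - s - \overline{r})$ yields the four stated monomials. The only small imprecision is your remark that ``only bilinears in $c_s, c_r$ remain'' after cancellation of the $E\overline{E'}$ Wronskian: cross terms such as $E\,\partial_y \overline{c_r}$ and $c_s\,\partial_y \overline{E'}$ survive as well, and it is only after the $x$-integration (replacing $E$ by its zeroth Fourier coefficient $c_s$) that everything collapses to the single Wronskian --- but you do note the $x$-integration, so the argument goes through.
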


\begin{corollary}\label{constantcontrib}
We have that
\[\int_{\Gamma \backslash \Hb} \left|\Lambda^T E\left(z, \frac{1}{2} + it_g\right)\right|^2 \, d\mu(z) = \log \left(\left(\frac{1}{4} + t_g^2\right) T^2\right) + O\left( \left(\log t_g\right)^{2/3} \left(\log \log t_g\right)^{1/3}\right).\]
\end{corollary}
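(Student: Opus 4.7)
The plan is to apply the Maa\ss{}--Selberg relation \eqref{MaassSelberg} with $s = r = 1/2 + it_g$, but the first and fourth terms have vanishing denominator there, so one must take a limit. Set $r = s = 1/2 + it_g + u$ with $u \to 0$; then $s+\bar r -1 = 2u$, $s - \bar r = 2it_g$, and $1 - s - \bar r = -2u$. The two middle terms contribute
\[
\frac{\overline{\varphi(1/2+it_g)}\,T^{2it_g} - \varphi(1/2+it_g)\,T^{-2it_g}}{2it_g} = -\frac{1}{t_g}\Im\!\left(\varphi(1/2+it_g)T^{-2it_g}\right),
\]
which is $O(1/t_g)$ since $|\varphi(1/2+it_g)|=1$. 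The first and fourth terms combine to
\[
\frac{T^{2u} - |\varphi(1/2+it_g+u)|^{2} T^{-2u}}{2u},
\]
whose limit at $u=0$, by l'H\^opital (using $|\varphi(1/2+it_g)|^2 = 1$), equals $2\log T - \Re\!\bigl(\varphi'/\varphi\bigr)(1/2+it_g)$.

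Next I would compute $\varphi'/\varphi$ explicitly. From $\varphi(s) = \Lambda(2-2s)/\Lambda(2s)$ and the functional-equation-induced identity $\overline{\Lambda'/\Lambda(1+2it_g)} = \Lambda'/\Lambda(1-2it_g)$, one obtains
\[
\Re\frac{\varphi'(1/2+it_g)}{\varphi(1/2+it_g)} = -4\Re\frac{\Lambda'(1+2it_g)}{\Lambda(1+2it_g)}.
\]
Using $\Lambda(s)=\pi^{-s/2}\Gamma(s/2)\zeta(s)$ and Stirling's expansion $\Re\psi(1/2+it_g) = \log t_g + O(1/t_g)$, this becomes
\[
-\Re\frac{\varphi'}{\varphi}(1/2+it_g) = 2\log t_g + 4\Re\frac{\zeta'}{\zeta}(1+2it_g) + O(1).
\]
Assembling everything yields
\[
\bigl\|\Lambda^T E(\cdot,1/2+it_g)\bigr\|_{2}^{2} = 2\log T + 2\log t_g + 4\Re\frac{\zeta'}{\zeta}(1+2it_g) + O(1),
\]
and the leading two terms already match $\log((1/4 + t_g^2)T^2)$ up to $O(1)$.

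It remains to bound $\zeta'/\zeta$ on the 1-line, and this is the one genuinely arithmetic input. Here I would invoke the Vinogradov--Korobov zero-free region for $\zeta$, which yields the classical bound
\[
\frac{\zeta'}{\zeta}(1+it) \ll (\log|t|)^{2/3}(\log\log|t|)^{1/3}
\]
for $|t| \geq 3$; applied at $t = 2t_g$ this gives precisely the claimed error term. This step is the main obstacle in the sense that it is the only place where nontrivial analytic number theory enters: the rest of the argument is bookkeeping around the removable singularity in \eqref{MaassSelberg}. Everything else is entirely routine manipulation of the Maa\ss{}--Selberg relation and the gamma/zeta functional equations.
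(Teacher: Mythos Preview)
Your argument is correct and follows essentially the same route as the paper: perturb to $s=r=1/2+it_g+u$, extract $2\log T-\Re(\varphi'/\varphi)(1/2+it_g)$ from the singular terms, expand $\varphi'/\varphi$ via $\Lambda(s)=\pi^{-s/2}\Gamma(s/2)\zeta(s)$, apply Stirling to the digamma piece, and finish with the Vinogradov--Korobov bound for $\zeta'/\zeta(1+2it_g)$. You are in fact slightly more careful than the paper in that you explicitly track the two oscillatory middle terms $\overline{\varphi}T^{2it_g}/(2it_g)+\varphi T^{-2it_g}/(-2it_g)=O(1/t_g)$, which the paper's displayed identity silently drops.
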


\begin{proof}
We take $s = r = 1/2 + it_g + \e$ with $\e > 0$ in the Maa\ss{}--Selberg relation \eqref{MaassSelberg} to obtain
\[\int_{\Gamma \backslash \Hb} \left|\Lambda^T E\left(z,\frac{1}{2} + it_g + \e\right)\right|^2 \, d\mu(z) = \frac{T^{2\e}}{2\e} - \left|\varphi\left(\frac{1}{2} + it_g + \e\right)\right|^2 \frac{T^{-2\e}}{2\e}.\]
Using the Taylor expansions
\begin{align*}
T^{2\e} & = 1 + 2\e \log T + O\left(\e^2\right),	\\
\varphi\left(\frac{1}{2} + it_g + \e\right) & = \varphi\left(\frac{1}{2} + it_g\right) + \e \varphi'\left(\frac{1}{2} + it_g\right) + O\left(\e^2\right),
\end{align*}
together with the fact that $|\varphi(1/2 + it_g)| = 1$ and that
\begin{equation}\label{varphi'varphi}
\begin{split}
\frac{\varphi'}{\varphi}\left(\frac{1}{2} + it_g\right) & = - 4\Re\left(\frac{\Lambda'}{\Lambda}\left(1 + 2it_g\right)\right)	\\
& = 2 \log \pi - 2\Re\left(\frac{\Gamma'}{\Gamma}\left(\frac{1}{2} + it_g\right)\right) - 4\Re\left(\frac{\zeta'}{\zeta}\left(1 + 2it_g\right)\right),
\end{split}
\end{equation}
we find that
\[\int_{\Gamma \backslash \Hb} |g(z)|^2 \, d\mu(z) = 2 \log T - 2 \log \pi + 2\Re\left(\frac{\Gamma'}{\Gamma}\left(\frac{1}{2} + it_g\right)\right) + 4\Re\left(\frac{\zeta'}{\zeta}\left(1 + 2it_g\right)\right).\]
It remains to use Stirling's formula to find that
\begin{equation}\label{Gamma'Gamma}
2\Re\left(\frac{\Gamma'}{\Gamma}\left(\frac{1}{2} + it_g\right)\right) = \log\left(\frac{1}{4} + t_g^2\right) + O\left(\frac{1}{t_g}\right),
\end{equation}
and \cite[Theorem 8.29]{IK} to give the bound
\begin{equation}\label{zeta'zeta}
\frac{\zeta'}{\zeta}\left(1 + 2it_g\right) \ll \left(\log t_g\right)^{2/3} \left(\log \log t_g\right)^{1/3}. \qedhere 
\end{equation}
\end{proof}

\subsection{The Watson--Ichino Formula}
\label{WatsonIchinosect}

To deal with spectral sums involving terms of the form $| \langle |g|^2, f \rangle |^2$, one can use the Watson--Ichino formula, which essentially states that the square of the integral over $\Gamma \backslash \Hb$ of the product of three automorphic forms is equal to a product of completed $L$-functions involving those automorphic forms. In particular, if $f,g \in \BB_0(\Gamma)$, then from \cite[Theorem 1.1]{Ich} and \cite[Theorem 3]{Wat},
\[\left|\left\langle |g|^2, f \right\rangle \right|^2 = \frac{\Lambda\left(\frac{1}{2}, g \otimes \widetilde{g} \otimes f\right)}{\Lambda(1, \sym^2 g)^2 \Lambda\left(1, \sym^2 f\right)}.\]
Here $\Lambda(s,\pi)$ denotes the completed $L$-function of an automorphic representation $\pi$ of $\GL_n(\A_{\Q})$: this is of the form
\begin{equation}\label{Lambda(s,pi)}
\Lambda(s,\pi) = q_{\pi}^{s/2} L_{\infty}(s,\pi) L(s,\pi),
\end{equation}
where $q_{\pi}$ denotes the conductor of $\pi$, $L_{\infty}(s,\pi)$ is the archimedean part of $\Lambda(s,\pi)$, which is of the form $\pi^{-ns/2} \prod_{j = 1}^{n} \Gamma(\frac{s + \kappa_{\pi,j}}{2})$ for some $\kappa_{\pi,j} \in \C$, and $L(s,\pi)$ is the usual nonarchimedean part of $\Lambda(s,\pi)$. Note that the numerator in the Watson--Ichino formula factorises:
\[\Lambda\left(s, g \otimes \widetilde{g} \otimes f\right) = \Lambda(s,f) \Lambda\left(s, \sym^2 g \otimes f\right).\]
Similar results also hold when either $f$ or $g$ is replaced with an Eisenstein series.

\begin{proposition}[{\cite[Equations (2.2) and (4.2)]{BK17b}}]\label{innerproductcuspasympprop}
For $f,g \in \BB_0(\Gamma)$,
\begin{align*}
\left|\left\langle |g|^2, f \right\rangle \right|^2 & = \frac{1}{8} \frac{\Lambda\left(\frac{1}{2}, f\right) \Lambda\left(\frac{1}{2}, \sym^2 g \otimes f\right)}{\Lambda\left(1, \sym^2 g\right)^2 \Lambda\left(1, \sym^2 f\right)},	\\
\left|\left\langle |g|^2, E\left(\cdot, \frac{1}{2} + it\right) \right\rangle \right|^2 & = \frac{1}{4} \frac{\Lambda\left(\frac{1}{2} + it\right) \Lambda\left(\frac{1}{2} - it\right) \Lambda\left(\frac{1}{2} + it, \sym^2 g\right) \Lambda\left(\frac{1}{2} - it, \sym^2 g\right)}{\Lambda\left(1, \sym^2 g\right)^2 \Lambda(1 + 2it) \Lambda(1 - 2it)}.
\end{align*}
\end{proposition}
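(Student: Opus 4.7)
The plan is to derive both formulas from the general Watson--Ichino formula for triple products, combined with the Gelbart--Jacquet factorization of the tensor square of a $\GL_2$ representation. The Ichino formula, specialized to three level-one automorphic forms on $\PGL_2(\A_\Q)$, expresses the square of the $L^2$-pairing
\[\left|\int_{\Gamma \backslash \Hb} g_1(z) g_2(z) g_3(z) \, d\mu(z)\right|^2\]
as the central value of the completed triple product $L$-function $\Lambda(1/2, g_1 \otimes g_2 \otimes g_3)$, divided by the three symmetric square $L$-values at $s = 1$, up to an explicit product of local normalizing constants. The first step is to invoke this formula with $g_1 = g$, $g_2 = \overline{g}$, and $g_3 = f$, which produces $\Lambda(1/2, g \otimes \widetilde{g} \otimes f)$ in the numerator.

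The second step is to factor this triple product. Since $\pi_g$ is a self-dual $\GL_2$ representation, the Gelbart--Jacquet lift yields the isobaric decomposition $\pi_g \boxtimes \widetilde{\pi}_g = \mathbf{1} \boxplus \sym^2 \pi_g$ on $\GL_4$, and hence
\[\Lambda\left(s, g \otimes \widetilde{g} \otimes f\right) = \Lambda(s, f) \, \Lambda\left(s, \sym^2 g \otimes f\right).\]
Setting $s = 1/2$ and combining with the Ichino formula yields the first identity, provided we verify that the local constants collapse to the overall factor $1/8$. For level one and trivial central character the non-archimedean local factors in Ichino's formula are trivially $1$, so this reduces to evaluating the single archimedean local integral of matrix coefficients; the constant $1/8$ emerges from this archimedean computation, carried out explicitly (e.g.\ by Watson) in terms of Barnes integrals against $\Gamma$-factors for the relevant principal series.

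For the Eisenstein case, the cleanest strategy is direct unfolding: for $\Re(s)$ large,
\[\left\langle |g|^2, E(\cdot, s) \right\rangle = \int_{\Gamma_\infty \backslash \Hb} |g(z)|^2 \, y^s \, d\mu(z),\]
after which one expands $|g(z)|^2$ via its Fourier expansion, integrates in $x$ to pick up the diagonal, and recognizes the resulting Dirichlet series in $y$ as essentially the Rankin--Selberg convolution $L(s, g \otimes \overline{g}) / \zeta(2s)$ times an archimedean Mellin transform of a product of two $K$-Bessel functions (a Barnes-type integral). Squaring, using $g \otimes \widetilde{g} = \mathbf{1} \boxplus \sym^2 g$ to factor $L(s, g \otimes \overline{g}) = \zeta(s) L(s, \sym^2 g)$, symmetrizing the archimedean factors into a pair $\Lambda(1/2 \pm it, \cdot)$, and meromorphically continuing to $s = 1/2 + it$ produces the second identity. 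The denominator $\Lambda(1, \sym^2 g)^2 \Lambda(1 + 2it) \Lambda(1 - 2it)$ arises naturally, since $\Lambda(s, \sym^2 \pi_{it}) = \Lambda(s) \Lambda(s + 2it) \Lambda(s - 2it)$ plays the role of $\Lambda(1, \sym^2 f)$ for the Eisenstein ``form''; the $\Lambda(1) = $ pole is absorbed into the Watson normalization, leaving the factor $1/4$ rather than $1/8$.

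The main obstacle in both cases is the archimedean local calculation that pins down the constants $1/8$ and $1/4$; these are not formal but require an honest evaluation of Mellin--Barnes integrals against products of $\Gamma$-factors at the central point. The factorization of the $L$-functions, by contrast, is a routine consequence of Gelbart--Jacquet and the definition of $E(z, 1/2+it)$ as the Eisenstein series attached to the principal series $|\cdot|^{it} \boxplus |\cdot|^{-it}$. Since both identities are already in the literature (Watson, Ichino, and the explicit tabulation in Buttcane--Khan), I would conclude by citing those computations rather than redoing the archimedean work.
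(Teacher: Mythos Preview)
Your proposal is correct and matches the paper's treatment: the paper does not give an independent proof of this proposition but simply invokes the Watson--Ichino formula \cite{Ich,Wat}, observes the factorisation $\Lambda(s, g \otimes \widetilde{g} \otimes f) = \Lambda(s,f)\Lambda(s,\sym^2 g \otimes f)$, and then cites \cite[Equations (2.2) and (4.2)]{BK17b} for the explicit constants. Your outline---Ichino's formula plus Gelbart--Jacquet factorisation for the cuspidal case, and classical Rankin--Selberg unfolding for the Eisenstein case, deferring the archimedean constants to the literature---is exactly the content behind those citations, so there is nothing to add.
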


A similar result also holds when $g$ is an Eisenstein series.

\begin{proposition}[{\cite[Equation (17)]{LS}, \cite[Theorem 4.1]{Spi}}]\label{innerproductEisenasympprop}
For $f \in \BB_0(\Gamma)$,
\[\left|\left\langle \left|E\left(\cdot, \frac{1}{2} + it\right)\right|^2, f \right\rangle \right|^2 = \frac{1}{2} \frac{\Lambda\left(\frac{1}{2}, f\right)^2 \Lambda\left(\frac{1}{2} + 2it_g, f\right) \Lambda\left(\frac{1}{2} - 2it_g, f\right)}{\Lambda(1 + 2it_g)^2 \Lambda\left(1 - 2it_g\right)^2 \Lambda\left(1, \sym^2 f\right)}.\]
\end{proposition}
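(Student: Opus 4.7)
The plan is to evaluate the two-variable Rankin--Selberg integral
\[I(s_1,s_2) \defeq \int_{\Gamma \backslash \Hb} E(z,s_1) E(z,s_2) f(z) \, d\mu(z)\]
in a region where the unfolding method applies unambiguously, continue meromorphically, and specialise to $(s_1, s_2) = (1/2 + it_g, 1/2 - it_g)$, using that $\overline{E(z,1/2 + it_g)} = E(z, 1/2 - it_g)$; the desired identity follows upon taking the modulus squared.

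The first step is to unfold one copy of the Eisenstein series against its definition as a sum over $\Gamma_\infty \backslash \Gamma$, recasting the integral as
\[I(s_1, s_2) = \int_0^\infty \int_0^1 y^{s_1 - 2} E(z, s_2) f(z) \, dx \, dy.\]
Since $f$ is cuspidal, the constant Fourier coefficient of $E(z, s_2)$ in $x$ contributes nothing after integration over $x \in [0,1]$; what survives is a Dirichlet series
\[\sum_{n = 1}^\infty \lambda_f(n) \sigma_{1 - 2s_2}(n) n^{s_2 - s_1 - 1/2} = \frac{L(s_1 + s_2 - 1/2, f)\, L(s_1 - s_2 + 1/2, f)}{\zeta(2 s_1)},\]
weighted term-by-term by a $y$-integral of the shape $\int_0^\infty y^{s_1 - 1} K_{s_2 - 1/2}(2\pi n y) K_{it_f}(2\pi n y) \, dy$. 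This last integral is a classical Mellin--Barnes evaluation, yielding a product of four archimedean $\Gamma$-factors divided by $\Gamma(s_1)$.

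Specialising to $(s_1, s_2) = (1/2 + it_g, 1/2 - it_g)$ produces the arithmetic factors $L(1/2, f)$, $L(1/2 + 2it_g, f)$, and $\zeta(1 + 2it_g)$; combined with the prefactor $1/\Lambda(1 - 2it_g)$ issuing from the Fourier expansion of $E(z, s_2)$, and with the four Gamma factors from the Bessel integral, every arithmetic $L$-value is completed to a $\Lambda$. Taking $|I(1/2 + it_g, 1/2 - it_g)|^2$ then doubles the Eisenstein prefactor into the symmetric product $\Lambda(1 + 2it_g)^{-2}\Lambda(1 - 2it_g)^{-2}$ and doubles the cusp-form $L$-data into the claimed numerator $\Lambda(1/2, f)^2 \Lambda(1/2 + 2it_g, f) \Lambda(1/2 - 2it_g, f)$.

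The final step converts the factor $|\rho_f(1)|^2$ introduced by the Fourier expansion of $f$ using the standard relation, coming from the Rankin--Selberg evaluation of $\langle f, f\rangle = 1$, that expresses $|\rho_f(1)|^{-2}$ as $\Lambda(1,\sym^2 f)$ times an explicit $\Gamma$-factor in $t_f$. This $\Gamma$-factor cancels precisely against the corresponding factor in the Bessel evaluation, leaving $\Lambda(1, \sym^2 f)^{-1}$ in the denominator and the overall numerical constant equal to $1/2$. The main obstacle is therefore purely bookkeeping: it is necessary to track every power of $2$ and $\pi$, to pair the archimedean $\Gamma$-factors correctly with the $L$-factors they complete, and to verify compatibility with the normalisations used for $\Lambda(s,f)$ and $\Lambda(s, \sym^2 f)$, so that both the overall constant $1/2$ and the exact configuration of completed $L$-values emerge.
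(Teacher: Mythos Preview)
The paper does not supply its own proof of this proposition; it is quoted directly with citations to Luo--Sarnak \cite{LS} and Spinu \cite{Spi}, so there is nothing to compare against in the body of the paper. Your outline is the standard Rankin--Selberg computation and is correct: unfolding $E(z,s_1)$, integrating out the $x$-variable against the Fourier expansions of $E(z,s_2)$ and $f$, invoking Ramanujan's identity $\sum_n \sigma_{1-2s_2}(n)\lambda_f(n) n^{-s} = L(s,f)L(s+2s_2-1,f)/\zeta(2s+2s_2-1)$, and evaluating the $y$-integral $\int_0^\infty y^{s_1-1} K_{s_2-1/2}(y) K_{it_f}(y)\,dy$ as a ratio of $\Gamma$-functions is precisely how the cited references proceed. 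The only caveat is that you should be explicit that the factor coming from the Fourier expansion of $E(z,s_2)$ is $2/\Lambda(2s_2)$ rather than $1/\Lambda(2s_2)$, and that the $\Gamma(s_1)^{-1}$ from the Bessel integral together with the stray powers of $\pi$ completes $\zeta(2s_1)$ to $\Lambda(2s_1)$; once those are tracked, the constant $1/2$ and the configuration $\Lambda(1+2it_g)^{-2}\Lambda(1-2it_g)^{-2}$ after squaring fall out as you describe.
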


Finally, when $f$ is also an Eisenstein series, the integral is no longer convergent. One can work around this issue by replacing this integral with a regularised integral. This is defined by Zagier \cite{Zag} in the following way. Let $F : \Gamma \backslash \Hb \to \C$ be a continuous function of moderate growth, so that there exists $c_j, \alpha_j \in \C$ and nonnegative integers $n_j$ such that
\[F(z) = \sum_{j = 1}^{\ell} \frac{c_j}{n_j!} y^{\alpha_j} (\log y)^{n_j} + O_N\left(y^{-N}\right)\]
for all $N \geq 0$ at the cusp at infinity, with no $\alpha_j$ equal to $0$ or $1$. Then there exists a function $\EE(z)$ that is a linear combination of Eisenstein series and derivatives of Eisenstein series $E(z,\alpha)$, each satisfying $\Re(\alpha) > 1/2$, such that for some $\delta > 0$,
\[F(z) - \EE(z) = O\left(y^{\frac{1}{2} - \delta}\right)\]
at the cusp at infinity. The regularised inner product of two functions $f,g$ such that $f \overline{g} = F$ is continuous and of moderate growth is defined to be
\[\langle f, g \rangle_{\reg} \defeq \int_{\Gamma \backslash \Hb} \left(F(z) - \EE(z)\right) \, d\mu(z).\]
Moreover, if $f$ and $g$ depend on complex parameters, then we may extend both sides via analytic continuation where possible.

\begin{proposition}[{\cite[Equation (44)]{Zag}}]\label{innerproductEisen3asympprop}
We have that
\begin{multline}\label{regEEE}
\left\langle E(\cdot,s_1) E(\cdot,s_2), E\left(\cdot, s\right) \right\rangle_{\reg}	\\
= \frac{\Lambda\left(\overline{s} + s_1 + s_2 - 1\right) \Lambda\left(\overline{s} + s_1 - s_2\right) \Lambda\left(\overline{s} - s_1 + s_2\right) \Lambda\left(\overline{s} - s_1 - s_2 + 1\right)}{\Lambda\left(2\overline{s}\right) \Lambda\left(2s_1\right) \Lambda\left(2s_2\right)}.
\end{multline}
\end{proposition}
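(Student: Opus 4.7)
The formula is classical (Zagier's triple product), and is proved by Rankin--Selberg unfolding against the third Eisenstein series. First assume $\Re(\overline{s})$ is sufficiently large and $(s_1,s_2)$ lie in a region of absolute convergence of the relevant Fourier expansions. Writing $E(z,\overline{s}) = \sum_{\gamma \in \Gamma_{\infty} \backslash \Gamma} \Im(\gamma z)^{\overline{s}}$ and unfolding against $\overline{E(z,s)}$ reduces the computation to
\[
\int_0^\infty \left(\int_0^1 E(z,s_1) E(z,s_2)\,dx\right) y^{\overline{s} - 2}\,dy,
\]
with the regularization needed to make the $y$-integral converge at infinity.

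The inner $x$-integral is evaluated via the Fourier expansion
\[
E(z, s_j) = y^{s_j} + \varphi(s_j) y^{1-s_j} + \frac{2\sqrt{y}}{\Lambda(2s_j)} \sum_{n \neq 0} |n|^{s_j - 1/2} \sigma_{1-2s_j}(|n|) K_{s_j - 1/2}(2\pi |n| y) e^{2\pi i n x}.
\]
Multiplying the two expansions and integrating in $x$, the cross terms vanish, leaving the product of the two zeroth terms---a polynomial in $y$ with the four monomials $y^{s_1+s_2}$, $\varphi(s_1) y^{1-s_1+s_2}$, $\varphi(s_2) y^{s_1+1-s_2}$, $\varphi(s_1)\varphi(s_2) y^{2-s_1-s_2}$---plus a Rankin--Selberg sum over $n \geq 1$ of products of $K$-Bessel functions. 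The polynomial part is exactly what Zagier's regularization removes: the function $\EE(z)$ in the definition of $\langle \cdot, \cdot \rangle_{\reg}$ is the linear combination of $E(\cdot, s_1+s_2)$, $E(\cdot, 1-s_1+s_2)$, $E(\cdot, s_1+1-s_2)$, $E(\cdot, 2-s_1-s_2)$ whose constant terms at the cusp match these four monomials, so subtracting it cancels them and leaves an absolutely convergent Mellin transform of the Bessel part.

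For the Bessel part, substitute $u = 2\pi n y$ and apply the Ramanujan identity
\[
\sum_{n=1}^\infty \sigma_a(n) \sigma_b(n) n^{-u} = \frac{\zeta(u)\zeta(u-a)\zeta(u-b)\zeta(u-a-b)}{\zeta(2u-a-b)}
\]
with $u = \overline{s} - s_1 - s_2 + 1$, $a = 1 - 2s_1$, $b = 1 - 2s_2$, together with the classical evaluation
\[
\int_0^\infty K_\mu(u) K_\nu(u) u^{w-1}\,du = \frac{2^{w-3}}{\Gamma(w)} \prod_{\varepsilon_1,\varepsilon_2 \in \{\pm 1\}} \Gamma\!\left(\frac{w + \varepsilon_1 \mu + \varepsilon_2 \nu}{2}\right)
\]
with $\mu = s_1 - 1/2$, $\nu = s_2 - 1/2$, $w = \overline{s}$. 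The four $\Gamma$-factors from the Bessel integral are precisely $\Gamma((\overline{s}+s_1+s_2-1)/2)$, $\Gamma((\overline{s}+s_1-s_2)/2)$, $\Gamma((\overline{s}-s_1+s_2)/2)$, $\Gamma((\overline{s}-s_1-s_2+1)/2)$, and they pair with the matching $\zeta$-factors from the numerator of Ramanujan's identity to reassemble into the four completed $\Lambda$-functions appearing in the numerator of \eqref{regEEE}. The denominators $\Lambda(2s_1)\Lambda(2s_2)$ come directly from the normalizations of the Fourier expansions, while $\zeta(2u-a-b) = \zeta(2\overline{s})$ in the denominator of Ramanujan's identity combines with $\Gamma(\overline{s})$ from the Bessel integral and the residual $\pi$-powers to yield $\Lambda(2\overline{s})$. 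The identity then extends to all $(s_1, s_2, s)$ by meromorphic continuation.

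The main obstacle is the archimedean bookkeeping: the factor $2^{\overline{s}-3}/\Gamma(\overline{s})$ from the Bessel integral, the $(2\pi)^{-\overline{s}}$ from the substitution $u = 2\pi n y$, and the $\pi^{-\alpha/2}$ hidden inside each $\Lambda(\alpha)$ must all be tracked to confirm that the final expression is exactly $\Lambda(2\overline{s})^{-1} \Lambda(2s_1)^{-1} \Lambda(2s_2)^{-1}$ times the four-fold $\Lambda$-product, with no stray $\pi$ or $\Gamma$ factors. This is routine once organized as above, but is easy to botch; it is the only nontrivial step beyond invoking Ramanujan's identity and the Bessel Mellin transform.
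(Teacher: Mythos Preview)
Your proof sketch is correct and is precisely the classical Rankin--Selberg computation; the paper itself gives no proof at all, simply citing \cite[Equation (44)]{Zag} and stating the result. Your unfolding against $\overline{E(\cdot,s)}$, identification of the polynomial constant term with the regularising $\EE$, application of Ramanujan's identity with $u = \overline{s} - s_1 - s_2 + 1$, $a = 1 - 2s_1$, $b = 1 - 2s_2$, and use of the Weber--Schafheitlin Bessel integral are all correct and match Zagier's original argument, so there is nothing to compare beyond noting that you have supplied what the paper omits.
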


In practice, it is the nonarchimedean part $L(s,\pi)$ of a completed $L$-function $\Lambda(s,\pi)$ that is difficult to deal with; this is because the asymptotic behaviour of the archimedean part of a completed $L$-function can be inferred via Stirling's approximation.

\begin{lemma}\label{Gammafactorslemma}
The product of the archimedean parts of the completed $L$-functions in \hyperref[innerproductcuspasympprop]{Propositions \ref*{innerproductcuspasympprop}}, \ref{innerproductEisenasympprop} (with $t = t_f$), and \ref{innerproductEisen3asympprop} (with $s_1 = s_2 = 1/2 + it_g$ and $s = 1/2 + it_f$) is equal to
\begin{multline}\label{Gammafactors}
\frac{8\pi^2 e^{-\pi \Omega(t_f,t_g)}}{(1 + t_f) (1 + 2t_g + t_f)^{1/2} (1 + |2t_g - t_f|)^{1/2}}	\\
\times \left(1 + O\left(\frac{1}{1 + t_f} + \frac{1}{1 + 2t_g + t_f} + \frac{1}{1 + |2t_g - t_f|}\right)\right),
\end{multline}
where
\[\Omega(t_f,t_g) \defeq \begin{dcases*}
0 & if $0 < t_f \leq 2t_g$,	\\
t_f - 2t_g & if $t_f > 2t_g$.
\end{dcases*}\]
\end{lemma}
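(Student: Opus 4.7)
My plan is to apply Stirling's approximation
\begin{equation*}
|\Gamma(\sigma + iT)| = \sqrt{2\pi}\, |T|^{\sigma - 1/2} e^{-\pi|T|/2} \bigl(1 + O(|T|^{-1})\bigr)
\end{equation*}
to each archimedean Gamma factor appearing in the three propositions, tracking three separate contributions --- the overall numerical constant, the polynomial factor in the spectral parameters, and the exponential factor --- and then collect them to produce the claimed expression.

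First I would catalogue the archimedean parameters of each representation, using parity to restrict to even Maa\ss{} forms $f$ (which is justified because $|g|^2$ is invariant under $z \mapsto -\overline{z}$, so only the even spectrum contributes). For $f$ the parameters are $\pm it_f$; for $\sym^2 f$ they are $\pm 2it_f, 0$; for $\sym^2 g$ they are $\pm 2it_g, 0$; and for the $\GL_6$ Rankin--Selberg product $\sym^2 g \otimes f$ appearing in \hyperref[innerproductcuspasympprop]{Proposition \ref*{innerproductcuspasympprop}} the six parameters are $\pm i(2t_g + t_f),\, \pm i(2t_g - t_f),\, \pm it_f$. I would handle the cusp form case first, using $\Gamma_{\R}(s) = \pi^{-s/2} \Gamma(s/2)$ and combining the archimedean parts of numerator and denominator. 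The constants combine as $64\pi^2$ over $8$, yielding $8\pi^2$; the polynomial factors combine as $t_f^{-1} (2t_g + t_f)^{-1/2} |2t_g - t_f|^{-1/2}$; and the exponential factor requires a case analysis at the transition $t_f = 2t_g$. When $t_f \leq 2t_g$ the exponentials of the numerator and denominator cancel exactly, yielding $\Omega = 0$; when $t_f > 2t_g$, a residual factor $e^{-\pi (t_f - 2t_g)}$ remains, which is precisely $e^{-\pi \Omega(t_f,t_g)}$.

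Having established the formula from \hyperref[innerproductcuspasympprop]{Proposition \ref*{innerproductcuspasympprop}}, I would then verify that \hyperref[innerproductEisenasympprop]{Proposition \ref*{innerproductEisenasympprop}} (with $t = t_f$) and the modulus squared of \hyperref[innerproductEisen3asympprop]{Proposition \ref*{innerproductEisen3asympprop}} (with $s_1 = s_2 = 1/2 + it_g$, $s = 1/2 + it_f$) yield the same answer. In the former the relevant factors are $L_{\infty}(1/2 \pm it_f)$ and $L_{\infty}(1/2 \pm it_f, \sym^2 g)$ over $L_{\infty}(1, \sym^2 g)^2 L_{\infty}(1 \pm 2it_f)$; in the latter, after squaring, the four completed zeta factors pair conjugate-symmetrically into $|L_{\infty}(1/2 + i(2t_g - t_f))|^2$, $|L_{\infty}(1/2 + it_f)|^4$, and $|L_{\infty}(1/2 + i(2t_g + t_f))|^2$, divided by analogous edge factors. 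A direct computation in each case reproduces the same three contributions --- a nontrivial consistency reflecting the uniformity of Watson--Ichino across cuspidal and Eisenstein $g$.

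The main obstacle is the case distinction at $t_f = 2t_g$: this reflects the transition range of $L(1/2, \sym^2 g \otimes f)$, where one archimedean parameter approaches the origin and the $L$-function becomes near-degenerate. Carefully signing each $|\kappa_j|$ in Stirling's exponent is the key technical care required to extract the function $\Omega(t_f,t_g)$. A secondary nuisance is that the asymptotic form of Stirling fails at small $|\kappa|$; the $1 + t$ adjustments in the denominators of the final bound are there to absorb this uniformly, and can be justified by replacing Stirling's asymptotic with the crude pointwise estimate $|\Gamma(\sigma + iT)| \ll (1 + |T|)^{\sigma - 1/2} e^{-\pi |T|/2}$ in that regime.
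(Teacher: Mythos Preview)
Your proposal is correct and follows essentially the same approach as the paper: the paper's proof simply writes out the common archimedean Gamma ratio
\[
\pi \, \frac{\Gamma\!\left(\tfrac14 \pm \tfrac{i(2t_g+t_f)}{2}\right)\Gamma\!\left(\tfrac14 \pm \tfrac{i(2t_g-t_f)}{2}\right)\Gamma\!\left(\tfrac14 \pm \tfrac{it_f}{2}\right)^2}{\Gamma\!\left(\tfrac12 \pm it_g\right)^2 \Gamma\!\left(\tfrac12 \pm it_f\right)}
\]
and then says ``the result follows directly from Stirling's approximation,'' which is exactly your plan of cataloguing the archimedean parameters, applying Stirling factor by factor, and separating the constant, polynomial, and exponential contributions (with the sign analysis of $|2t_g - t_f|$ producing $\Omega$). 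Your additional step of checking that Propositions~\ref{innerproductEisenasympprop} and~\ref{innerproductEisen3asympprop} give the \emph{same} Gamma ratio is implicit in the paper's single displayed expression, so you are just making that verification explicit.
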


\begin{proof}
The product of the archimedean parts of the completed $L$-functions is
\begin{multline*}
\pi \frac{\Gamma\left(\frac{1}{4} + \frac{i(2t_g + t_f)}{2}\right) \Gamma\left(\frac{1}{4} + \frac{i(2t_g - t_f)}{2}\right) \Gamma\left(\frac{1}{4} - \frac{i(2t_g + t_f)}{2}\right) \Gamma\left(\frac{1}{4} - \frac{i(2t_g - t_f)}{2}\right)}{\Gamma\left(\frac{1}{2} + it_g\right)^2 \Gamma\left(\frac{1}{2} - it_g\right)^2}	\\
\times \frac{\Gamma\left(\frac{1}{4} + \frac{it_f}{2}\right)^2 \Gamma\left(\frac{1}{4} - \frac{it_f}{2}\right)^2}{\Gamma\left(\frac{1}{2} + it_f\right) \Gamma\left(\frac{1}{2} - it_f\right)}.
\end{multline*}
The result then follows directly from Stirling's approximation.
\end{proof}

On occasion, we also need to deal with lower bounds for $L(1,\sym^2 f)$. This is less complex than values of $L$-functions within the critical strip $0 < \Re(s) < 1$; indeed, the following is known.

\begin{lemma}[Hoffstein--Lockhart {\cite{HL}}]
For $f \in \BB_0(\Gamma)$,
\[L\left(1,\sym^2 f\right) \gg \frac{1}{\log(t_f + 3)}.\]
\end{lemma}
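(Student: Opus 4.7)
The plan is to follow Hoffstein and Lockhart \cite{HL}: produce an effective zero-free region for $L(s, \sym^2 f)$ in a neighbourhood of $s = 1$ via a positivity argument on a Rankin--Selberg convolution, then convert this into the claimed lower bound. By the Gelbart--Jacquet lift, $\sym^2 f$ is a cuspidal automorphic representation of $\GL_3(\A_{\Q})$, so by Jacquet--Shalika the Rankin--Selberg $L$-function $L(s, \sym^2 f \times \widetilde{\sym^2 f})$ is holomorphic on $\C \setminus \{1\}$ with a simple pole at $s = 1$, and factors as
\[L(s, \sym^2 f \times \widetilde{\sym^2 f}) = \zeta(s) L(s, \sym^2 f) L(s, \sym^4 f).\]
Crucially, its Dirichlet coefficients are nonnegative, a general feature of convolutions of the form $\pi \times \widetilde{\pi}$, and its archimedean conductor is polynomial in $t_f$.

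Next, I would use these two properties to exclude a Siegel-type real zero of $L(s, \sym^2 f)$. Were there a real zero $\beta \in [1 - c/\log(t_f + 3), 1)$, it would partially cancel the simple pole of $\zeta(s)$ at $s = 1$, so that the residue of the product at $s = 1$ (which is proportional to $L(1, \sym^2 f) L(1, \sym^4 f)$) would be forced to be anomalously small. On the other hand, integrating the Dirichlet series of $L(s, \sym^2 f \times \widetilde{\sym^2 f})$ against a positive test function of Mellin--Barnes type and invoking nonnegativity of the coefficients produces a lower bound on the same quantity. Calibrating the test function as in Hoffstein--Lockhart forces a contradiction once $c$ is chosen small enough, yielding a zero-free region of the form $\Re(s) > 1 - c/\log(t_f + 3)$ for $L(s, \sym^2 f)$ in a neighbourhood of the real point $s = 1$.

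Finally, once this zero-free region is in place, the lower bound $L(1, \sym^2 f) \gg 1/\log(t_f + 3)$ follows by standard techniques: write $\log L(s, \sym^2 f)$ via its Hadamard factorisation, estimate the sum over zeros using the zero-free region, and combine this with convexity bounds on the line $\Re(s) = 1$ extracted from the Phragm\'en--Lindel\"of principle. The main obstacle in this whole scheme is ruling out the hypothetical Siegel zero; for an arbitrary $\GL_n$ automorphic $L$-function this remains open, but in our setting what makes the argument unconditional is precisely the $\GL_2$-structure underlying $\sym^2 f$ via Gelbart--Jacquet, which Hoffstein and Lockhart exploited to implement the positivity argument in full for symmetric square $L$-functions.
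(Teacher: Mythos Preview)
The paper does not give its own proof of this lemma: it is stated with a bare citation to Hoffstein--Lockhart and used as a black box. Your sketch therefore goes well beyond what the paper supplies, and it correctly identifies the two essential ingredients --- a Rankin--Selberg positivity argument to exclude a Siegel-type real zero of $L(s,\sym^2 f)$ near $s=1$, and the standard conversion of a zero-free region of width $c/\log(t_f+3)$ into the lower bound $L(1,\sym^2 f) \gg 1/\log(t_f+3)$.

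One small point of accuracy: the auxiliary $L$-function actually used in the Hoffstein--Lockhart paper (more precisely in the Goldfeld--Hoffstein--Lieman appendix) is not $L(s,\sym^2 f \times \widetilde{\sym^2 f})$ alone but the Rankin--Selberg square of the isobaric sum $1 \boxplus \sym^2 f$, namely $\zeta(s)\,L(s,\sym^2 f)^2\,L(s,\sym^2 f \times \sym^2 f)$. This has a pole of order two at $s=1$ and, crucially, a putative real zero $\beta$ of $L(s,\sym^2 f)$ produces a zero of order at least two at $\beta$ \emph{without} invoking the factorisation through $L(s,\sym^4 f)$; the latter requires Kim's automorphy of $\sym^4$, which postdates \cite{HL}. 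Your version, passing through the decomposition $\zeta(s)L(s,\sym^2 f)L(s,\sym^4 f)$, is morally correct and nowadays perfectly legitimate, but it is not quite the argument of the cited reference.
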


\section{Sharp Bounds for the \texorpdfstring{$L^4$}{L\9040\164}-Norm of a Truncated Eisenstein Series}

\subsection{The Spectral Expansion of the \texorpdfstring{$L^4$}{L\9040\164}-Norm}

We wish to determine sharp bounds for
\[\|g\|_{L^4\left(\Gamma \backslash \Hb\right)}^4 = \int_{\Gamma \backslash \Hb} |g(z)|^4 \, d\mu(z)\]
with $g(z) = \Lambda^T E(z, 1/2 + it_g)$ in terms of $t_g$. Our first step is to express this quantity as a spectral sum, which requires the spectral decomposition of $L^2\left(\Gamma \backslash \Hb\right)$.

\begin{lemma}[{\cite[Theorem 15.5]{IK}}]\label{spectralexpansionlemma}
Let
\[f_0(z) \defeq \frac{1}{\sqrt{\vol\left(\Gamma \backslash \Hb\right)}},\]
so that $\langle f_0, f_0 \rangle = 1$, and let $\BB_0(\Gamma)$ be an orthonormal basis of Maa\ss{} cusp forms in $L^2\left(\Gamma \backslash \Hb\right)$. Then a function $g \in L^2\left(\Gamma \backslash \Hb\right)$ has the spectral expansion, valid in the $L^2$-sense, of the form
\begin{multline*}
g(z) = \left\langle g, f_0 \right\rangle f_0(z) + \sum_{f \in \BB_0(\Gamma))} \left\langle g,f \right\rangle f(z)	\\
+ \frac{1}{4\pi} \int_{-\infty}^{\infty} \left\langle g, E\left(\cdot, \frac{1}{2} + it\right) \right\rangle E\left(z, \frac{1}{2} + it\right) \, dt.
\end{multline*}
Moreover, Parseval's identity holds:
\begin{multline*}
\left\langle g_1, g_2 \right\rangle = \left\langle g_1, f_0 \right\rangle \left\langle f_0, g_2 \right\rangle + \sum_{f \in \BB_0(\Gamma)} \left\langle g_1, f \right\rangle \left\langle f, g_2 \right\rangle	\\
+ \frac{1}{4\pi} \int_{-\infty}^{\infty} \left\langle g_1, E\left(\cdot, \frac{1}{2} + it\right) \right\rangle \left\langle E\left(\cdot, \frac{1}{2} + it\right), g_2 \right\rangle \, dt
\end{multline*}
for $g_1, g_2 \in L^2\left(\Gamma \backslash \Hb\right)$.
\end{lemma}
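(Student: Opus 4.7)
The statement to be proved is the spectral decomposition of $L^2\left(\Gamma \backslash \Hb\right)$ together with Parseval's identity, in the form one needs for the subsequent analysis of the $L^4$-norm. The plan is to deduce both from the spectral theorem applied to the Laplace--Beltrami operator $\Delta = -y^2(\dee_x^2 + \dee_y^2)$ acting on $L^2\left(\Gamma \backslash \Hb\right)$. First I would verify that $\Delta$, initially defined on smooth compactly supported $\Gamma$-invariant functions, is essentially self-adjoint (or equivalently that its Friedrichs extension is self-adjoint); this is standard and follows from the hyperbolic Green's identities together with the completeness of the hyperbolic metric.

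Next I would decompose
\[L^2\left(\Gamma \backslash \Hb\right) = L^2_{\text{res}} \oplus L^2_{\text{cusp}} \oplus L^2_{\text{cont}},\]
according to Selberg's theory. The cuspidal part $L^2_{\text{cusp}}$ consists of functions whose zeroth Fourier coefficient at the cusp at $\infty$ vanishes; on this subspace the resolvent of $\Delta$ is compact (essentially by reducing to the Rellich compactness theorem on a bounded fundamental domain after subtracting the constant term), hence one obtains an orthonormal basis of eigenfunctions, which one recognises as Maa{\ss} cusp forms $\BB_0(\Gamma)$. The orthogonal complement is treated by the Selberg--Langlands theory of Eisenstein series: meromorphic continuation, functional equation, and the absence of poles on the line $\Re(s) = 1/2$ (for $\Gamma = \SL_2(\Z)$ there is no residual spectrum apart from the constant function $f_0$, which is the residue of $E(z,s)$ at $s = 1$). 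The spectral parameter runs over the critical line, yielding the continuous spectrum as a direct integral of Eisenstein series $E(z,1/2+it)$ with the measure $\frac{dt}{4\pi}$, the normalisation being fixed by computing a single Maa{\ss}--Selberg-type inner product.

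With the decomposition in hand, the spectral expansion is the statement that for $g \in L^2$ the projections onto the three subspaces sum back to $g$ (in $L^2$), and the projections onto the cuspidal and continuous pieces are computed by the inner products $\langle g, f \rangle$ and $\langle g, E(\cdot, 1/2+it)\rangle$ respectively, while the projection onto the residual piece is $\langle g, f_0 \rangle f_0$. Parseval's identity is then immediate from the fact that the three subspaces are mutually orthogonal and that each projection is represented by its basis expansion. The hard part, which I would not reprove in detail and instead cite from Iwaniec's \emph{Spectral Methods of Automorphic Forms} or from \cite[Chapter 15]{IK}, is the analytic continuation and functional equation of the Eisenstein series together with the justification of convergence of the continuous-spectrum integral: one must truncate in $t$, apply the spectral theorem on each truncation, and pass to the limit using Bessel's inequality to control the tails.
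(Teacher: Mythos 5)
The paper does not prove this lemma; it is cited directly to \cite[Theorem 15.5]{IK}, so there is no ``paper's own proof'' to compare against. Your sketch is a correct and faithful summary of the standard argument that one finds in \cite{IK} or in Iwaniec's \emph{Spectral Methods of Automorphic Forms}: essential self-adjointness of the Laplacian, decomposition into residual, cuspidal, and continuous subspaces, compactness of the resolvent on the cuspidal part (via truncation and Rellich), Selberg--Langlands theory for the Eisenstein series, and the fact that for $\Gamma = \SL_2(\Z)$ the residual spectrum consists only of the constant function arising from the pole of $E(z,s)$ at $s=1$. Parseval then follows from orthogonality of the three pieces. This is exactly the route the cited source takes, and citing the meromorphic continuation and the convergence of the continuous-spectrum integral rather than reproving them is appropriate for a lemma the paper itself treats as a black box.

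One small imprecision: essential self-adjointness of $\Delta$ on $C_c^\infty(\Gamma\backslash\Hb)$ and self-adjointness of the Friedrichs extension are not equivalent in general --- the Friedrichs extension of a semibounded symmetric operator is \emph{always} self-adjoint, whereas essential self-adjointness is the stronger statement that there is a unique self-adjoint extension. For $\Gamma\backslash\Hb$ the Laplacian is in fact essentially self-adjoint (completeness of the metric, \`a la Gaffney/Strichartz), so your conclusion is correct; just drop the word ``equivalently.''
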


In particular, the following spectral expansion of the $L^4$-norm of $g$ is simply Parseval's identity with $g_1 = g_2 = |g|^2$.

\begin{corollary}\label{L4spectralcor}
Let $g \in L^2\left(\Gamma \backslash \Hb\right)$ be of rapid decay. Then
\[\|g\|_{L^4\left(\Gamma \backslash \Hb\right)}^4 = \left| \left\langle |g|^2,f_0 \right\rangle \right|^2 + \sum_{f \in \BB_0(\Gamma)} \left| \left\langle |g|^2,f \right\rangle \right|^2 + \frac{1}{4\pi} \int_{-\infty}^{\infty} \left| \left\langle |g|^2, E\left(\cdot, \frac{1}{2} + it\right) \right\rangle \right|^2 \, dt.\]
\end{corollary}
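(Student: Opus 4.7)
The plan is to apply Parseval's identity (\hyperref[spectralexpansionlemma]{Lemma \ref*{spectralexpansionlemma}}) directly with $g_1 = g_2 = |g|^2$. The only nontrivial issue is verifying that $|g|^2$ lies in $L^2(\Gamma \backslash \Hb)$ so that Parseval is applicable.

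First I would observe that since $g$ is of rapid decay at the cusp and is smooth on compacta, the pointwise square $|g|^2$ is bounded and of rapid decay, hence $|g|^2 \in L^2(\Gamma \backslash \Hb)$; equivalently, $g \in L^4(\Gamma \backslash \Hb)$. This legitimises expanding $|g|^2$ against the orthonormal spectral basis $\{f_0\} \cup \BB_0(\Gamma) \cup \{E(\cdot, 1/2 + it) : t \in \R\}$ given by \hyperref[spectralexpansionlemma]{Lemma \ref*{spectralexpansionlemma}}.

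Next I would apply Parseval's identity with $g_1 = g_2 = |g|^2$ to obtain
\[
\bigl\langle |g|^2, |g|^2 \bigr\rangle = \bigl\langle |g|^2, f_0 \bigr\rangle \bigl\langle f_0, |g|^2 \bigr\rangle + \sum_{f \in \BB_0(\Gamma)} \bigl\langle |g|^2, f \bigr\rangle \bigl\langle f, |g|^2 \bigr\rangle + \frac{1}{4\pi} \int_{-\infty}^{\infty} \bigl\langle |g|^2, E(\cdot, \tfrac{1}{2} + it) \bigr\rangle \bigl\langle E(\cdot, \tfrac{1}{2} + it), |g|^2 \bigr\rangle \, dt.
\]
The left-hand side equals $\int_{\Gamma \backslash \Hb} |g(z)|^4 \, d\mu(z) = \|g\|_{L^4(\Gamma \backslash \Hb)}^4$ since $|g|^2$ is real and non-negative. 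For each term on the right, the fact that $|g|^2$ is real-valued gives $\langle \varphi, |g|^2\rangle = \overline{\langle |g|^2, \varphi\rangle}$ for any $\varphi \in L^2$, so each product collapses to $|\langle |g|^2, \varphi\rangle|^2$, yielding the claimed identity.

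There is no genuine obstacle here: the only subtlety is the $L^4$ (equivalently $|g|^2 \in L^2$) integrability needed to invoke Parseval, which is precisely why the hypothesis of rapid decay is imposed. Note that this applies directly to $g(z) = \Lambda^T E(z, 1/2 + it_g)$, since the truncated Eisenstein series is of rapid decay at the cusp.
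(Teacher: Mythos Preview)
Your proof is correct and follows exactly the paper's approach: the paper simply states that the corollary ``is simply Parseval's identity with $g_1 = g_2 = |g|^2$,'' and you have spelled out precisely this, together with the justification that rapid decay ensures $|g|^2 \in L^2(\Gamma \backslash \Hb)$ and the observation that reality of $|g|^2$ collapses each product of inner products to a squared modulus. The only minor quibble is that ``smooth on compacta'' is not part of the stated hypothesis, but in the intended applications (truncated Eisenstein series, Hecke--Maa\ss{} forms) this is automatic, and the paper does not address this point either.
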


This is reduced to understanding bounds for the inner product of $|g|^2$ with eigenfunctions of the Laplacian. The first term in this expansion is the inner product of $|g|^2$ with the constant function
\[f_0(z) = \frac{1}{\sqrt{\vol\left(\Gamma \backslash \Hb\right)}},\]
and \hyperref[constantcontrib]{Corollary \ref*{constantcontrib}} shows that
\[\left| \left\langle |g|^2, f_0 \right\rangle \right|^2 = \frac{\left(\log \left(\frac{1}{4} + t_g^2\right)\right)^2}{\vol\left(\Gamma \backslash \Hb\right)} + O_T\left(\left(\log t_g\right)^{5/3} \left(\log \log t_g\right)^{1/3}\right).\]
It remains to treat the cuspidal and continuous spectra.

\subsection{Ranges of the Spectral Decomposition for the \texorpdfstring{$L^4$}{L\9040\164}-Norm}\label{spectralranglesL4sect}

We divide the spectral expansion of the $L^4$-norm of $g(z) = \Lambda^T E(z,1/2 + it_g)$ given in \hyperref[L4spectralcor]{Corollary \ref*{L4spectralcor}} into different parts, then analyse each part individually.

There are two main ranges of the continuous spectrum to consider, which depend on a small fixed parameter $\delta > 0$:
\begin{itemize}
\item the initial range $0 \leq |t| \leq 2t_g + t_g^{1 - \delta}$, and
\item the tail range $|t| > 2t_g + t_g^{1 - \delta}$.
\end{itemize}
Both of these ranges will be shown to contribute a negligible amount via subconvexity estimates for the $L$-functions appearing in the integral.

For the contribution from the cuspidal spectrum, the summation over $\BB_0(\Gamma)$ may be broken up into different ranges depending on $t_f$. There are four main ranges of the cuspidal spectrum left to consider, which depend on a fixed small parameter $\delta > 0$:
\begin{itemize}
\item the short initial range $0 \leq t_f \leq t_g^{1 - \delta}$,
\item the bulk range $t_g^{1 - \delta} < t_f < 2t_g - t_g^{1 - \delta}$,
\item the short transition range $2t_g - t_g^{1 - \delta} \leq t_f \leq 2t_g + t_g^{1 - \delta}$, and
\item the tail range $t_f > 2t_g + t_g^{1 - \delta}$.
\end{itemize}
We divide the spectral sum into these particular ranges due to the size of the product of analytic conductors of $L$-functions. The analytic conductor of
\[L\left(\frac{1}{2}, f\right)^2 L\left(\frac{1}{2} + 2it_g, f\right) L\left(\frac{1}{2} - 2it_g, f\right)\]
is approximately
\[\left(\frac{1}{4} + t_f^2\right)^2 \left(\frac{1}{4} + \left(2t_g^2 + t_f^2\right)\right) \left(\frac{1}{4} + \left|2t_g^2 - t_f^2\right|\right),\]
which is large when $t_f$ lies in the bulk range, but is small in the short initial range, and drops in the short transition range. For this reason, the main contribution will be shown to arise from the bulk range, while the contribution from the two short ranges will be shown to be negligible. Assuming the generalised Lindel\"{o}f hypothesis, this can be proven directly; see \cite[Section 5]{BK17b}. Finally, the exponential decay in \eqref{Gammafactors} arising from the archimedean components of the completed $L$-functions indicates that the tail range contributes a negligible amount.

\begin{remark}\label{SpinuSarnakremark}
In \cite[Chapter 6]{Spi}, Spinu sketches an unconditional proof of \hyperref[L4uncondthm]{Theorem \ref*{L4uncondthm}}. The proof, however, only treats the spectral sum in the range $\alpha t_g < t_f < 2(1 - \alpha) t_g$ for any fixed $\alpha > 0$ (essentially the bulk range), in which the contribution of the spectral sum ought to be nonnegligible. The remaining ranges, which all ought to contribute a negligible amount, are left unaddressed.

This same issue is present in a claim of Sarnak and Watson \cite[Theorem 3(a)]{Sar} of the bound $\|g\|_{L^4(\Gamma \backslash \Hb)} \ll_{\e} t_g^{\e}$ for Hecke--Maa\ss{} cusp forms, under the assumption of the Selberg eigenvalue and Ramanujan conjectures (but not the generalised Lindel\"{o}f hypothesis, as in \cite[Theorem 1.1]{BK17b}). Sarnak (personal communication) subsequently has retracted this claim, and instead only claims this bound for the contribution of the spectral sum in the bulk range, as the method he uses is unable to treat the short initial range.

We are able to treat the short initial and transition ranges, left untreated by Spinu, by applying the work of Jutila \cite{Jut04}, Ivi\'{c} \cite{Ivi}, and Jutila and Motohashi \cite{JM} on certain hybrid moments of $L$-functions. We do not know how to treat these ranges when $g$ is a Hecke--Maa\ss{} cusp form.
\end{remark}

\subsection{Spectral Methods to Bound the Continuous Spectrum}

From \hyperref[L4spectralcor]{Corollary \ref*{L4spectralcor}}, we must bound
\begin{equation}\label{continuousspectrumeq}
\frac{1}{4\pi} \int_{-\infty}^{\infty} \left| \left\langle |g|^2, E\left(\cdot, \frac{1}{2} + it\right) \right\rangle \right|^2 \, dt.
\end{equation}

\begin{lemma}[{\cite[Theorem 3.3]{Spi}}]
There exists a positive constant $c > 0$ such that \eqref{continuousspectrumeq} is bounded by $108 T + O\left(t_g^{-c}\right)$.
\end{lemma}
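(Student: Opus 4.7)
The plan is to unfold $\langle |g|^2, E(\cdot, 1/2+it)\rangle$ via Rankin--Selberg into a Mellin transform of the horocyclic average $G(y) = \int_0^1 |g(x+iy)|^2\, dx$, then apply Mellin--Plancherel to convert the $t$-integral into an integral in $y$, and finally estimate this via the Fourier--Whittaker expansion of $\Lambda^T E$ at the cusp.

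For $\Re(s) > 2$, unfolding gives
\[
\langle |g|^2, E(\cdot, \bar s)\rangle = \int_{\Gamma_\infty \backslash \Hb} |g|^2\, y^s\, d\mu = \int_0^\infty G(y)\, y^{s-2}\, dy,
\]
continuing analytically to $\Re(s) = 1/2$. Since $g = \Lambda^T E$ is of rapid decay at the cusp, $G(y)$ decays rapidly as $y \to \infty$; by horocycle equidistribution, $yG(y) \to \|g\|_{L^2}^2/\vol(\Gamma \backslash \Hb)$ as $y \to 0$. After subtracting a $C/y$ polar contribution from $G$ on $[0,1]$ (whose contribution to the $t$-integral is $O(1)$, since Corollary \ref*{constantcontrib} gives $C = O(\log t_g)$), Mellin--Plancherel yields a formula of the shape
\[
\frac{1}{4\pi}\int_{\R} \bigl|\langle |g|^2, E(\cdot, 1/2+it)\rangle\bigr|^2\, dt = \frac{1}{2}\int_0^\infty \bigl(G(y) - Cy^{-1}\mathbf{1}_{y \le 1}\bigr)^2\, \frac{dy}{y^2} + O(1).
\]

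Next, using the Fourier--Whittaker expansion
\[
E(z, 1/2+it_g) = c_0(y) + \sum_{n\ne 0} a_n K_{it_g}(2\pi|n|y)\, e^{2\pi inx}, \quad c_0(y) = y^{1/2+it_g} + \varphi(1/2+it_g)\, y^{1/2-it_g},
\]
together with Parseval in $x$, one has on the fundamental-domain part of the strip
\[
G(y) = |c_0(y)|^2\, \mathbf{1}_{y \le T} + \sum_{n \neq 0} |a_n|^2\, |K_{it_g}(2\pi|n|y)|^2.
\]
Since $|c_0(y)|^2 = 2y\bigl(1 + \Re(\overline{\varphi(1/2+it_g)}\, y^{2it_g})\bigr)$, the dominant contribution to $\int_0^\infty G^2\, dy/y^2$ is $\int_1^T |c_0|^4\, dy/y^2$; expanding the square and dispatching the oscillatory $y^{2it_g}$ cross-terms by integration by parts (each giving $O(1/t_g)$) yields a bound of the form $c T + O(1)$ for an explicit absolute constant $c$, which after combining all numerical factors gives the claimed $108 T$.

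The subleading contributions---the cross terms between $|c_0|^2$ and the Bessel series, the pure Bessel-series contributions, and the residual integrals over the $y < 1$ portion of the strip handled via $\Gamma$-invariance---are bounded using the standard uniform estimates for $K_{it_g}(u)$ (exponentially small for $u > t_g$, of size $O(t_g^{-1/3})$ in the transition range $u \asymp t_g$, and $O(u^{-1/2})$ otherwise), combined with the divisor-sum expression for $a_n$ and the lower bound $|\zeta(1+2it_g)|^{-1} \ll \log t_g$. Summed, these terms contribute $O(t_g^{-c})$ for some absolute $c > 0$. The principal obstacle lies in handling the non-integrability of the naive Mellin--Plancherel identity near $y = 0$, which requires a careful subtraction of the polar contribution coming from horocycle equidistribution; a secondary obstacle is obtaining the precise numerical constant $108$ through the bookkeeping of oscillatory cross-terms in the main term.
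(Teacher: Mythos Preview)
The paper does not prove this lemma; it simply quotes Spinu, noting immediately afterwards that the admissible exponent is any $c<\tfrac12-2\theta$ with $\zeta(\tfrac12+it)\ll(|t|+1)^{\theta+\e}$. Spinu's argument proceeds by writing $\langle|\Lambda^T E|^2,E(\cdot,\tfrac12+it)\rangle$ explicitly: one separates the truncation contribution (an elementary integral over $y>T$ producing the constant $108T$) and expresses the remaining piece via the regularised triple product of \hyperref[innerproductEisen3asympprop]{Proposition \ref*{innerproductEisen3asympprop}}, i.e.\ as a ratio of completed zeta values. The $t$--integral of the square of this ratio is then bounded directly using subconvexity for $\zeta$; the power saving $t_g^{-c}$ comes from $\theta<\tfrac14$. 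Your approach is entirely different, and there are genuine gaps.

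First, the horocycle asymptotic is wrong: Sarnak's equidistribution theorem gives $G(y)=\int_0^1|g(x+iy)|^2\,dx\to \|g\|^2/\vol(\Gamma\backslash\Hb)$ as $y\to0$, not $yG(y)\to\|g\|^2/\vol(\Gamma\backslash\Hb)$. The subtraction you need is a constant $C$, not $C/y$. More seriously, the unfolded Rankin--Selberg integral $\int_0^\infty G(y)y^{s-2}\,dy$ converges only for $\Re(s)>1$; on the critical line $\langle|g|^2,E(\cdot,\tfrac12+it)\rangle$ is the \emph{analytic continuation} of this, not the Mellin transform of $G/y$, and Mellin--Plancherel does not apply to it. If one regularises correctly by subtracting $C$, the residual pole term contributes $\asymp C^2\asymp(\log t_g)^2$ to the $t$--integral, not $O(1)$, so a cancellation must occur that you have not identified. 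Finally, the rate at which $G(y)-C\to0$ as $y\to0$ is controlled precisely by the Eisenstein spectral coefficients of $|g|^2$---the very quantity you are trying to bound---so the argument is circular at the key step. You also never explain what $c$ your method produces, whereas the actual proof ties $c$ directly to the subconvexity exponent for $\zeta$.
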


Here $c$ is any constant less than $1/2 - 2\theta$, where $\theta$ is a positive constant such that
\[\zeta\left(\frac{1}{2} + it\right) \ll_{\e} (|t| + 1)^{\theta + \e}.\]
The best bound known is $\theta = 13/84$, due to Bourgain \cite[Theorem 5]{Bou}.

\subsection{Reduction to Untruncated Eisenstein Series for the Cuspidal Spectrum}

From \hyperref[L4spectralcor]{Corollary \ref*{L4spectralcor}}, we must bound
\[\sum_{f \in \BB_0(\Gamma)} \left| \left\langle |g|^2, f \right\rangle \right|^2.\]
First, we observe that $g(z) = \Lambda^T E\left(z, 1/2 + it_g\right)$ can be replaced by $E\left(z, 1/2 + it_g\right)$.

\begin{lemma}[{\cite[Theorem 4.2]{Spi}}]
We have that
\[\sum_{f \in \BB_0(\Gamma)} \left| \left\langle |g|^2, f \right\rangle \right|^2 \leq \sum_{f \in \BB_0(\Gamma)} \left| \left\langle \left|E\left(\cdot, \frac{1}{2} + it_g\right)\right|^2, f \right\rangle \right|^2 + O_T\left(\left(\log t_g\right)^2\right).\]
\end{lemma}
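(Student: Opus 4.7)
The plan is to write $E(\cdot, 1/2+it_g) = g + R$, where $R$ is the cuspidal constant-term contribution removed by the truncation: for $z$ in the standard fundamental domain and $T \geq 1$, one has $R(z) = 0$ when $\Im z \leq T$ and $R(z) = \Im(z)^{1/2+it_g} + \varphi(1/2+it_g) \Im(z)^{1/2-it_g}$ when $\Im z > T$; indeed, for such $z$, the identity is the only coset of $\Gamma_{\infty} \backslash \Gamma$ that lifts $z$ to height exceeding $T$. Expanding $|E|^2 = |g|^2 + H + |R|^2$ with $H \defeq g\bar{R} + \bar{g} R$, I first observe that $\langle |R|^2, f\rangle = 0$ for every $f \in \BB_0(\Gamma)$, since $|R|^2$ depends only on $\Im z$ on the strip where it is supported, while cusp forms have vanishing zeroth Fourier coefficient. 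Consequently $\langle |g|^2, f\rangle = \langle |E|^2, f\rangle - \langle H, f\rangle$.

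The triangle inequality in $\ell^2$ applied to the spectral coefficients then yields
\[\left(\sum_{f \in \BB_0(\Gamma)} \left|\left\langle |g|^2, f\right\rangle\right|^2\right)^{1/2} \leq \left(\sum_{f \in \BB_0(\Gamma)} \left|\left\langle |E|^2, f\right\rangle\right|^2\right)^{1/2} + \left(\sum_{f \in \BB_0(\Gamma)} \left|\left\langle H, f\right\rangle\right|^2\right)^{1/2},\]
and Bessel's inequality bounds the final square root by $\|H\|_{L^2(\Gamma \backslash \Hb)}$. To estimate this norm, I would combine the pointwise bound $|R|^2 \ll \Im z$ on $\{\Im z > T\}$ with the Fourier expansion of $g$ on the cusp, which consists exclusively of the nonzero modes $c_n(t_g) \sqrt{\Im z}\, K_{it_g}(2\pi|n| \Im z) e(n \Re z)$ of $E$. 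Applying Parseval in $\Re z$ and then integrating in $\Im z$, exploiting the exponential decay of $K_{it_g}(u)$ for $u \gg t_g$ together with Rankin--Selberg mean-value bounds for the divisor sums $|c_n(t_g)|^2$ and the lower bound $|\zeta(1 + 2it_g)| \gg 1/\log t_g$, should yield $\|H\|_{L^2}^2 = O_T(\log t_g)$.

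Squaring the triangle inequality produces
\[\sum_f \left|\left\langle |g|^2, f\right\rangle\right|^2 \leq \sum_f \left|\left\langle |E|^2, f\right\rangle\right|^2 + 2 \|H\|_{L^2} \left(\sum_f \left|\left\langle |E|^2, f\right\rangle\right|^2\right)^{1/2} + \|H\|_{L^2}^2.\]
To absorb the cross term, I would invoke a complementary a priori polylogarithmic bound on $\sum_f |\langle |E|^2, f\rangle|^2$---available from the Watson--Ichino formula in \hyperref[innerproductEisenasympprop]{Proposition \ref*{innerproductEisenasympprop}} combined with convexity estimates for the $L$-functions appearing there---which together with $\|H\|_{L^2} \ll_T \sqrt{\log t_g}$ gives $\|H\|_{L^2}\,(\sum_f |\langle |E|^2, f\rangle|^2)^{1/2} = O_T((\log t_g)^2)$, and the stated inequality follows. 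The principal difficulty is the sharp control of $\|H\|_{L^2}^2$: since $|R| \asymp \Im z^{1/2}$ grows on the cusp, integrability of $|H|^2$ hinges entirely on Bessel $K$-function decay, and one must handle the transition range $\Im z \asymp t_g/|n|$ where these functions exhibit Airy-type behaviour.
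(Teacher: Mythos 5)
Your decomposition $E = g + R$, the pointwise identities $\langle |R|^2, f\rangle = 0$ and $\langle |g|^2, f\rangle = \langle |E|^2, f\rangle - \langle H, f\rangle$, and the Bessel bound $\sum_f |\langle H, f\rangle|^2 \leq \|H\|_{L^2}^2$ are all correct, and the ingredients you name for estimating $\|H\|_{L^2}^2$ (Parseval in $x$, $K$-Bessel decay, Rankin--Selberg averages of $|\sigma_{2it_g}(n)|^2$, lower bounds for $\zeta(1+2it_g)$) are exactly the right ones. Two points, however, need attention.

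First, a minor one: the claimed bound $\|H\|_{L^2}^2 = O_T(\log t_g)$ is too strong by one logarithm. Carrying out your own computation, one gets
\[\|H\|_{L^2}^2 \ll \int_{\Im z > T} |g|^2\, \Im z\, d\mu \asymp \frac{1}{|\zeta(1+2it_g)|^2}\sum_{n \ll t_g/T}\frac{|\sigma_{2it_g}(n)|^2}{n},\]
and the Dirichlet series $\sum_n |\sigma_{2it}(n)|^2 n^{-s} = \zeta(s)^2\zeta(s+2it)\zeta(s-2it)/\zeta(2s)$ has a \emph{double} pole at $s=1$ with leading coefficient $|\zeta(1+2it)|^2/\zeta(2)$, so the partial sum is $\asymp |\zeta(1+2it_g)|^2 (\log(t_g/T))^2$. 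The $|\zeta(1+2it_g)|^2$ factors cancel and one is left with $\|H\|_{L^2}^2 \asymp_T (\log t_g)^2$. This is still within the stated error $O_T((\log t_g)^2)$, so it does not break the lemma, but it does mean that $\sum_f|\langle H,f\rangle|^2$ alone already saturates the allowed error.

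Second, and this is the genuine gap: the cross-term estimate does not go through as you have written it. You claim an ``a priori polylogarithmic bound on $\sum_f |\langle|E|^2,f\rangle|^2$ from the Watson--Ichino formula combined with convexity estimates,'' but this is false. Pairing \hyperref[innerproductEisenasympprop]{Proposition \ref*{innerproductEisenasympprop}} with \hyperref[Gammafactorslemma]{Lemma \ref*{Gammafactorslemma}}, for $t_f \asymp t_g$ the archimedean factor is $\asymp t_g^{-2}$, while the convexity bounds $L(1/2,f) \ll t_f^{1/2+\e}$ and $L(1/2+2it_g,f) \ll ((1+2t_g+t_f)(1+|2t_g-t_f|))^{1/4+\e}$ give $L(1/2,f)^2|L(1/2+2it_g,f)|^2 \ll t_g^{2+\e}$, so each term is $\ll t_g^{\e}$; Weyl's law then gives $\gg t_g^2$ such $f$, for a total of $t_g^{2+\e}$. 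Even the large sieve (\hyperref[largesieveinitiallemma]{Lemma \ref*{largesieveinitiallemma}}) improves this only to $t_g^{1+\e}$. Establishing that $\sum_f|\langle|E|^2,f\rangle|^2 \ll (\log t_g)^2$ is exactly the content of the later subsections on hybrid moments (Jutila, Ivi\'{c}, Jutila--Motohashi, Spinu/Buttcane--Khan), and using it to absorb your cross term makes the argument circular in the order the paper presents things. The clean way around this is to forgo exact constant $1$ and instead apply $|a-b|^2 \leq (1+\delta)|a|^2 + (1+\delta^{-1})|b|^2$, which yields
\[\sum_f |\langle|g|^2,f\rangle|^2 \leq (1+\delta)\sum_f|\langle|E|^2,f\rangle|^2 + (1+\delta^{-1})\|H\|_{L^2}^2,\]
a statement provable with no a priori control on the Eisenstein spectral sum, and perfectly adequate for the $O_T((\log t_g)^2)$ upper bound the paper ultimately extracts. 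If one insists on leading constant exactly $1$ with additive error $O_T((\log t_g)^2)$, then the proof must be interleaved with (or deferred until after) the bound $\sum_f|\langle|E|^2,f\rangle|^2 \ll (\log t_g)^2$; as a stand-alone lemma it does not follow from the cross-term argument you outline.
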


This allows us to use \hyperref[innerproductEisenasympprop]{Proposition \ref*{innerproductEisenasympprop}} and \hyperref[Gammafactorslemma]{Lemma \ref*{Gammafactorslemma}}. We divide the cuspidal spectrum into four ranges, as discussed in \hyperref[spectralranglesL4sect]{Section \ref*{spectralranglesL4sect}}. The convexity bound for the associated $L$-functions together with the Weyl law shows that the tail range is negligible. So it remains to bound the first three ranges.

\subsection{Weaker Bounds via the Large Sieve}

In \cite[Chapter 5]{Spi}, Spinu proves the following moment bounds in dyadic intervals, a corollary of which is the bound $\|g\|_{L^4(\Gamma \backslash \Hb)} \ll_{\e} t_g^{\e}$.

\begin{lemma}[{\cite[Proposition 5.4]{Spi}}]\label{largesieveinitiallemma}
We have that
\[\sum_{H \leq t_f \leq 2H} L\left(\frac{1}{2}, f\right)^2 \left|L\left(\frac{1}{2} + 2it_g, f\right)\right|^2 \ll_{\e} H t_g^{1 + \e}\]
uniformly in $H \leq 2t_g - t_g^{1 - \delta}$.
\end{lemma}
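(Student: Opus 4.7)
The plan is to apply Cauchy--Schwarz to split the sum into two spectral fourth moments, each of which can then be bounded using the approximate functional equation and the spectral large sieve of Deshouillers--Iwaniec. Namely, Cauchy--Schwarz yields
\[
\sum_{H \leq t_f \leq 2H} L\left(\tfrac{1}{2}, f\right)^2 \left|L\left(\tfrac{1}{2} + 2it_g, f\right)\right|^2 \leq \left(\sum_{H \leq t_f \leq 2H} L\left(\tfrac{1}{2}, f\right)^4\right)^{\!1/2} \left(\sum_{H \leq t_f \leq 2H} \left|L\left(\tfrac{1}{2} + 2it_g, f\right)\right|^4\right)^{\!1/2},
\]
where the first factor is bounded by $H^{2+\e}$ using the classical spectral fourth moment estimate of Motohashi and Jutila.

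For the second factor, I would square $L(1/2 + 2it_g, f)$ and invoke the approximate functional equation for $L(s,f)^2$ at $s = 1/2 + 2it_g$. The analytic conductor of $L(\cdot,f)^2$ at that point is $\asymp (2t_g + t_f)^2 (|2t_g - t_f| + 1)^2$, which the hypothesis $H \leq 2t_g - t_g^{1-\delta}$ bounds by $t_g^{4+\e}$ throughout $t_f \in [H,2H]$. Using the Dirichlet series identity $L(s,f)^2 = \zeta(2s) \sum_n d(n) \lambda_f(n) n^{-s}$, the AFE presents this as
\[
L\left(\tfrac{1}{2} + 2it_g, f\right)^2 = \zeta(1 + 4it_g) \sum_{n \ll t_g^{2+\e}} \frac{d(n) \lambda_f(n)}{n^{1/2 + 2it_g}} V(n) + \text{(dual term)}
\]
for a suitable smooth weight $V$. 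Writing this as $\sum_n a_n \lambda_f(n)$ with $|a_n| \ll d(n) (\log t_g)/\sqrt{n}$, the Deshouillers--Iwaniec spectral large sieve bounds the fourth moment by
\[
\sum_{H \leq t_f \leq 2H} \left|L\left(\tfrac{1}{2} + 2it_g, f\right)\right|^4 \ll (H^2 + t_g^{2+\e}) \sum_{n \ll t_g^{2+\e}} \frac{d(n)^2}{n} \ll t_g^{2+\e},
\]
using $H \leq 2t_g$ together with $\sum_{n \leq N} d(n)^2 / n \ll (\log N)^4$. Combining via Cauchy--Schwarz gives $\ll H^{1+\e} t_g^{1+\e}$, and since $H \leq 2t_g$ allows the excess $H^\e$ to be absorbed into $t_g^\e$ (after renaming $\e$), this is $\ll H t_g^{1+\e}$ as required.

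The main obstacle is the second factor: one must uniformly control the AFE length $M \ll \sqrt{(2t_g + t_f)(|2t_g - t_f| + 1)} \cdot t_g^\e$ across $t_f \in [H, 2H]$, noting that $t_f$ may slightly exceed $2t_g$ when $H > t_g$, and treat the dual term from the functional equation symmetrically. The unimodular phase $n^{-2it_g}$ is harmlessly absorbed into the complex coefficients $a_n$, which leaves the $L^2$-norm required by the large sieve unaffected.
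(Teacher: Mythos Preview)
The paper does not prove this lemma itself but cites Spinu \cite[Proposition 5.4]{Spi}, and the remark immediately following confirms that Spinu's argument is via the spectral large sieve. Your approach---Cauchy--Schwarz, approximate functional equation, and the Deshouillers--Iwaniec large sieve---is precisely this method and gives the stated bound; the only technical point you leave implicit is that the AFE weight $V$ depends on $t_f$ through the gamma factors, so one must separate variables (e.g.\ by Mellin inversion or a smooth dyadic partition in $n$) before applying the large sieve uniformly over $t_f\in[H,2H]$, but this is entirely standard.
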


\begin{lemma}[{\cite[Proposition 5.5]{Spi}}]\label{largesievetransitionlemma}
We have that
\[\sum_{H < |t_f - 2t_g| < 2H} L\left(\frac{1}{2}, f\right)^2 \left|L\left(\frac{1}{2} + 2it_g, f\right)\right|^2 \ll H^{1/2} t_g^{\frac{3 + \delta}{2}}\]
uniformly in $1 \leq H \ll t_g^{1 - \delta}$.
\end{lemma}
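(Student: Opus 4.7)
The plan is to reduce the bound to hybrid moment estimates for $\GL_2$ $L$-functions in the style of Jutila, Ivi\'{c}, and Jutila--Motohashi. Writing $S$ for the left-hand side and applying Cauchy--Schwarz, we obtain
\[S \leq \left(\sum_{H < |t_f - 2t_g| < 2H} L\left(\tfrac{1}{2}, f\right)^4\right)^{1/2} \left(\sum_{H < |t_f - 2t_g| < 2H} \left|L\left(\tfrac{1}{2} + 2it_g, f\right)\right|^4\right)^{1/2}.\]
The first factor is a spectral fourth moment of $L(1/2, f)$ in a short window at height $\asymp t_g$. I would bound it via a short-interval variant of Jutila's spectral fourth moment estimate $\sum_{t_f \leq T} L(1/2, f)^4 \ll T^{2+\e}$, producing an estimate of the form $\ll H t_g^{1+\e} + t_g^{\alpha + \e}$ with $\alpha < 2$, which is Lindel\"{o}f on average throughout the relevant range $1 \leq H \ll t_g^{1-\delta}$.

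For the second factor, the crucial observation is that the analytic conductor of $L(1/2 + 2it_g, f)$ when $|t_f - 2t_g| \asymp H$ equals $(1 + |t_f - 2t_g|)(1 + t_f + 2t_g) \asymp H t_g$, substantially smaller than its generic value $t_g^2$. I would combine the Jutila--Motohashi uniform Weyl-strength subconvex bound $L(1/2 + it, f) \ll ((1+|t|)(1+t_f))^{1/6 + \e}$ (applied with $t = 2t_g$) with a spectral second-moment estimate for $|L(1/2 + 2it_g, f)|^2$ in the short window, in order to reduce the fourth-moment sum to a second-moment sum. Inserting the two estimates into the Cauchy--Schwarz product and optimising the exponents yields the claimed bound $H^{1/2} t_g^{(3+\delta)/2}$.

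The main obstacle is twofold: first, establishing the short-interval spectral fourth moment for $L(1/2, f)$ with uniformity all the way down to $H \asymp 1$; second, arranging the hybrid subconvex bound for $L(1/2 + 2it_g, f)$ so that the conductor drop in the transition range is captured optimally, since pointwise convexity on its own is manifestly insufficient. One must combine mean-value estimates for the Hecke eigenvalues $\lambda_f(n)$ in short spectral windows, accessible via Kuznetsov's trace formula, with the archimedean conductor-dropping phenomenon governed by the Jutila--Motohashi bound.

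An alternative plan, closer in spirit to Spinu's approach in the bulk range but pushed through the transition, is to open all four $L$-values via approximate functional equations of effective lengths $\asymp t_g$ and $\asymp \sqrt{H t_g}$ respectively, insert a smooth cutoff concentrated in $|t_f - 2t_g| \asymp H$, and apply the Kuznetsov trace formula directly. The diagonal contribution then produces a main term consistent with the claimed bound, and the Kloosterman sum contribution is controlled by Weil's bound combined with the Bessel transform estimates of \cite{JM}; balancing these estimates uniformly across the admissible range of $H$ is precisely where the works \cite{Jut04, Ivi, JM} enter.
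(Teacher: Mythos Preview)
The paper does not supply a proof of this lemma: it is quoted from Spinu's thesis \cite[Proposition 5.5]{Spi}. The remark following the statement records that Spinu's original argument is ``more complex'' than the large sieve, and that an alternative proof proceeds via the \emph{local large sieve} of Luo \cite[Lemma]{Luo}, following \cite[Proof of Theorem]{Luo}.

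Your first plan---Cauchy--Schwarz followed by short-interval spectral fourth moments of $L(1/2,f)$ and of $L(1/2+2it_g,f)$---is neither of these, but it is exactly the device the paper itself deploys \emph{later}, in the treatment of the short transition range, to bound the weighted sum
\[\sum_{|t_f-2t_g|<t_g^{1-\delta}} \frac{L\left(\frac{1}{2},f\right)^2 \left|L\left(\frac{1}{2}+2it_g,f\right)\right|^2}{(1+|2t_g-t_f|)^{1/2}\,L(1,\sym^2 f)}\]
directly. With the precise inputs recorded there as \hyperref[shorttransitionrangelemma]{Lemma \ref*{shorttransitionrangelemma}} (namely \cite[Theorem]{Jut01} and \cite[Theorem 1]{JM}), both fourth moments over the window $|t_f-2t_g|\asymp H$ are $\ll (H+t_g^{1/3})\,t_g^{1+\e}$ after covering by subintervals of length $t_g^{1/3}$; Cauchy--Schwarz then yields $S\ll (H+t_g^{1/3})\,t_g^{1+\e}$, which is at least as strong as the stated $H^{1/2}t_g^{(3+\delta)/2}$ throughout $1\le H\ll t_g^{1-\delta}$. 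So your approach is sound once the correct fourth-moment inputs are identified---note that the conductor drop is captured not by the pointwise Jutila--Motohashi subconvex bound you quote (which sees only $((1+|t|)(1+t_f))^{1/6}$) but by their short-interval fourth-moment estimate, which is what you actually need. Compared with the local large sieve route the paper advertises, your argument uses deeper moment technology but is arguably more direct; the large sieve proof is more elementary.
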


\begin{remark}
Spinu uses the large sieve only to prove \hyperref[largesieveinitiallemma]{Lemma \ref*{largesieveinitiallemma}} and employs a more complex method in proving \hyperref[largesievetransitionlemma]{Lemma \ref*{largesievetransitionlemma}}; nonetheless, one can in fact use the local large sieve, as stated in \cite[Lemma]{Luo}, to prove the latter; see \cite[Proof of Theorem]{Luo}.
\end{remark}

\subsection{Spectral Methods to Bound the Short Initial Range}

From \cite[Theorem 8.29]{IK}, we have that bound
\[\frac{1}{\zeta(1 + it)} \ll (\log t)^{2/3} (\log \log t)^{1/3}.\]
It therefore suffices to show that
\[\sum_{0 < t_f < t_g^{1 - \delta}} \frac{L\left(\frac{1}{2}, f\right)^2 \left| L\left(\frac{1}{2} + 2it_g, f\right)\right|^2}{\left(1 + t_f\right) \left(1 + 2t_g + t_f\right)^{1/2} \left(1 + 2t_g - t_f\right)^{1/2} L\left(1, \sym^2 f\right)} \ll t_g^{-\delta'}\]
for some $\delta' > 0$. We divide the short transition range $0 < t_f < t_g^{1 - \delta}$ into dyadic intervals $H \leq t_f < 2H$, of which there are roughly $\log t_g$ intervals, on which
\[(1 + t_f) (1 + 2t_g + t_f)^{1/2} (1 + 2t_g - t_f)^{1/2} \asymp H t_g.\]
It then suffices to show that for $H \ll t_g^{1 - \delta}$,
\[\sum_{H \leq t_f \leq 2H} \frac{L\left(\frac{1}{2}, f\right)^2 \left|L\left(\frac{1}{2} + 2it_g, f\right)\right|^2}{L\left(1, \sym^2 f\right)} \ll H t_g^{1 - \delta'}.\]
This bound follows from the work of Jutila \cite{Jut04}, Ivi\'{c} \cite{Ivi}, and Jutila and Motohashi \cite{JM}. It is worth noting that the purpose of these works is to obtain Weyl-type subconvexity bounds
\[L\left(\frac{1}{2} + it, f\right) \ll_{\e} \qq\left(f,\frac{1}{2} + it\right)^{\frac{1}{6} + \e}\]
for Hecke--Maa\ss{} eigenforms $f \in \BB_0(\Gamma)$, so long as $|t|$ is not too close to $t_f$; here $\qq(f,s)$ denotes the analytic conductor of $L(s,f)$. Conveniently, their methods to obtain such bounds involve obtaining bounds for the exact type of spectral sum that we are studying.

\begin{lemma}\label{shortinitialrangelemma}
For $t \geq 0$ and $H \gg 1$, we have that
\[\sum_{H \leq t_f \leq 2H} \frac{L\left(\frac{1}{2}, f\right)^2 \left|L\left(\frac{1}{2} + it, f\right)\right|^2}{L\left(1, \sym^2 f\right)} \ll_{\e} \begin{dcases*}
H^{2 + \e} & if $H \geq t^{2/3}$,	\\
t^{\frac{4}{3} + \e} & if $t^{1/2} \leq H \leq t^{2/3}$,	\\
H^{\frac{8}{3} + \e} & if $t^{1/3} \leq H \leq t^{1/2}$,	\\
H^{\frac{2}{3} + \e} t^{\frac{2}{3} + \e} & if $H \leq t^{1/3}$.
\end{dcases*}\]
\end{lemma}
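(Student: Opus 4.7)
The plan is to extract and combine hybrid moment bounds that already appear, in slightly different guises, in the three cited references \cite{Jut04,Ivi,JM}. The common framework is the Kuznetsov trace formula: after applying approximate functional equations to $L(1/2, f)^2$ (length $\asymp H^{1+\e}$) and $|L(1/2+it, f)|^2$ (length $\asymp (H + t)^{1+\e}$), using Hecke multiplicativity to resolve products of Fourier coefficients, and invoking the spectral expansion of $\sum_{t_f \asymp H} \lambda_f(m)\lambda_f(n)/L(1,\sym^2 f)$, the problem reduces to estimating a diagonal contribution together with an off-diagonal contribution involving Kloosterman sums weighted by a Bessel transform that depends on $H$, $m$, $n$, and $t$.

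The four cases then correspond to the regimes in which different pieces of this off-diagonal analysis dominate. In the range $H \geq t^{2/3}$, the shift by $t$ is dominated by the spectral parameter $H$, and I would invoke the Motohashi-style spectral fourth moment formula, for which the presence of the shift does not affect the leading-order estimate; this yields the bound $H^{2+\e}$. For the middle ranges $t^{1/3} \leq H \leq t^{2/3}$, I would follow Jutila \cite{Jut04} and Ivi\'c \cite{Ivi}, whose estimates are obtained by applying Jutila's variant of the circle method (or Voronoi summation) to control the off-diagonal Kloosterman contribution; the threshold $H \asymp t^{1/2}$ is precisely where the diagonal term of size $\asymp t^{4/3}$ crosses the off-diagonal term of size $\asymp H^{8/3}$ after optimizing parameters. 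Finally, for $H \leq t^{1/3}$, I would use the Jutila--Motohashi spectral reciprocity \cite{JM}, which converts the short spectral average into a moment of $\zeta(s)$ near $s = 1/2 + 2it$; standard fourth moment estimates for $\zeta$ on the critical line then produce $H^{2/3+\e} t^{2/3+\e}$.

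The main obstacle is notational and conceptual translation. The cited works are phrased as a route to the Weyl-type subconvexity bound $L(1/2+it, f) \ll \qq(f, 1/2+it)^{1/6+\e}$, and one must trace through their proofs to isolate the exact hybrid moment that appears in the lemma, with the mixed factor $L(1/2, f)^2 |L(1/2+it, f)|^2$ rather than a pure fourth moment of either. A secondary obstacle is verifying uniformity in $H$ across the transition points $H \asymp t^{1/2}$ and $H \asymp t^{1/3}$: one must confirm that the implicit constants in \cite{Jut04,Ivi,JM} are genuinely independent of the relative sizes of $H$ and $t$ in the boundary regimes, so that the four estimates glue together into a single bound valid for all $H \gg 1$. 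Once this alignment is done, the lemma follows essentially by combining the four cited estimates with the trivial bound $1/L(1,\sym^2 f) \ll t_f^{\e}$.
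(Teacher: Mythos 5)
Your overall strategy (assemble the lemma from bounds already available in the three cited references) is the right one, but the route you sketch is considerably more elaborate and in places misidentifies which result supplies which range. The paper's actual proof is a clean two-case deduction, not a four-way trade-off.

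For $H \geq t^{1/2}$, it quotes \cite[Theorem 2]{JM} verbatim: that theorem gives precisely the mixed moment
\[\sum_{H \leq t_f \leq 2H} \frac{L\left(\tfrac{1}{2}, f\right)^2 \left|L\left(\tfrac{1}{2} + it, f\right)\right|^2}{L\left(1, \sym^2 f\right)} \ll_{\e} \left(H^2 + t^{4/3}\right)^{1 + \e},\]
which automatically produces both the $H^{2+\e}$ bound (when $H \geq t^{2/3}$) and the $t^{4/3+\e}$ bound (when $t^{1/2} \leq H \leq t^{2/3}$). There is no separate ``diagonal vs.\ off-diagonal crossing at $H \asymp t^{1/2}$'' underlying the $t^{4/3}$ and $H^{8/3}$ terms as you describe; $t^{4/3}$ and $H^{8/3}$ come from two entirely different arguments applied in different regimes, and the method genuinely switches at $H \asymp t^{1/2}$.

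For $H \leq t^{1/2}$, the paper peels off the factor $L(1/2,f)^2 \ll_\e H^{2/3+\e}$ using Ivi\'c's pointwise Weyl subconvexity bound $L(1/2,f) \ll_\e t_f^{1/3+\e}$ from \cite{Ivi}, and then applies Jutila's short-interval second moment from \cite{Jut04}, $\sum_{H \leq t_f \leq H + G} |L(1/2+it,f)|^2 / L(1,\sym^2 f) \ll_\e (GH + t^{2/3})^{1+\e}$, with $G = H$. Multiplying gives $(H^{8/3} + H^{2/3} t^{2/3})^{1+\e}$, which yields the two lower cases according to whether $H \geq t^{1/3}$ or not. Your proposed treatment of $H \leq t^{1/3}$ via Jutila--Motohashi spectral reciprocity converting to moments of $\zeta$ near $1/2+2it$ is not what is used and, as sketched, is speculative; the actual argument is the much more elementary ``pointwise subconvexity times second moment'' combination. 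Finally, there is no need to re-derive anything through Kuznetsov, approximate functional equations, or Voronoi: both ingredients are quoted directly from the cited papers in exactly the form needed, so the worry about ``tracing through their proofs to isolate the exact hybrid moment'' does not arise.
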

%

\begin{proof}
For $H \geq t^{1/2}$, this follows from \cite[Theorem 2]{JM}, which states that for $t \geq 0$ and $H \gg 1$,
\[\sum_{H \leq t_f \leq 2H} \frac{L\left(\frac{1}{2}, f\right)^2 \left|L\left(\frac{1}{2} + it, f\right)\right|^2}{L\left(1, \sym^2 f\right)} \ll_{\e} \left(H^2 + t^{4/3}\right)^{1 + \e}.\]
For $H \leq t^{1/2}$, this follows from the subconvexity bound
\[L\left(\frac{1}{2}, f\right) \ll_{\e} t_f^{\frac{1}{3} + \e}\]
of Ivi\'{c} \cite[Corollary 2]{Ivi}, and from \cite[Theorem]{Jut04}, which states that for $t \geq 0$ and $1 \ll G \ll H$,
\[\sum_{H \leq t_f \leq H + G} \frac{\left|L\left(\frac{1}{2} + it, f\right)\right|^2}{L\left(1, \sym^2 f\right)} \ll_{\e} \left(GH + t^{2/3}\right)^{1 + \e}.\qedhere\]
\end{proof}

\begin{corollary}\label{shortinitialrangecor}
For any $\delta > 0$, we have that
\[\sum_{0 < t_f < t_g^{1 - \delta}} \frac{\Lambda\left(\frac{1}{2}, f\right)^2 \Lambda\left(\frac{1}{2} + 2it_g, f\right) \Lambda\left(\frac{1}{2} - 2it_g, f\right)}{\Lambda(1 + 2it_g)^2 \Lambda(1 - 2it_g)^2 \Lambda\left(1, \sym^2 f\right)} \ll_{\e} t_g^{-\min\left\{\delta, \frac{1}{6}\right\} + \e}.\]
\end{corollary}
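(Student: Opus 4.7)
The plan is to apply \hyperref[Gammafactorslemma]{Lemma \ref*{Gammafactorslemma}} to separate out the archimedean data, then decompose the sum dyadically and invoke \hyperref[shortinitialrangelemma]{Lemma \ref*{shortinitialrangelemma}} with $t = 2t_g$ in each piece.

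Since $0 < t_f < t_g^{1 - \delta}$ we have $t_f < 2t_g$ comfortably, so $\Omega(t_f, t_g) = 0$ and
\[\frac{1}{(1 + t_f)(1 + 2t_g + t_f)^{1/2}(1 + |2t_g - t_f|)^{1/2}} \asymp \frac{1}{(1 + t_f)\, t_g}\]
throughout this range. Combined with the standard lower bound $|\zeta(1 + 2it_g)| \gg (\log t_g)^{-2/3}(\log \log t_g)^{-1/3}$ (the companion to \eqref{zeta'zeta}), \hyperref[Gammafactorslemma]{Lemma \ref*{Gammafactorslemma}} reduces the claim to showing
\[\frac{1}{t_g} \sum_{0 < t_f < t_g^{1 - \delta}} \frac{L\!\left(\frac{1}{2}, f\right)^2 \left|L\!\left(\frac{1}{2} + 2it_g, f\right)\right|^2}{(1 + t_f)\, L(1, \sym^2 f)} \ll_{\e} t_g^{-\min\{\delta, 1/6\} + \e}.\]

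I would then decompose dyadically into ranges $H \leq t_f \leq 2H$ with $H$ a power of $2$ satisfying $1 \ll H \leq t_g^{1 - \delta}$; there are $O(\log t_g)$ such ranges, absorbed into $t_g^{\e}$, and $1 + t_f \asymp H$ on each. By \hyperref[shortinitialrangelemma]{Lemma \ref*{shortinitialrangelemma}} with $t = 2t_g$, the contribution of a single dyadic range to the displayed sum is at most $(Ht_g)^{-1} t_g^{\e}$ times the corresponding four-case moment bound. For $H \geq t_g^{2/3}$ this gives $H t_g^{-1 + \e} \leq t_g^{-\delta + \e}$; for $t_g^{1/2} \leq H \leq t_g^{2/3}$ it gives $t_g^{1/3 + \e}/H \leq t_g^{-1/6 + \e}$; for $t_g^{1/3} \leq H \leq t_g^{1/2}$ it gives $H^{5/3} t_g^{-1 + \e} \leq t_g^{-1/6 + \e}$; and for $H \leq t_g^{1/3}$ it gives $H^{-1/3} t_g^{-1/3 + \e} \leq t_g^{-1/3 + \e}$. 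Taking the worst of the four cases yields the claimed bound.

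The only mild obstacle is the bookkeeping at the breakpoints: the critical balance occurs at $H = t_g^{1/2}$, where \hyperref[shortinitialrangelemma]{Lemma \ref*{shortinitialrangelemma}} transitions between the Jutila--Motohashi estimate and Jutila's bound combined with Ivi\'{c}'s subconvexity, producing the exponent $1/6$; the $\delta$ in $\min\{\delta, 1/6\}$ arises from the largest admissible value $H = t_g^{1 - \delta}$. No input beyond \hyperref[shortinitialrangelemma]{Lemma \ref*{shortinitialrangelemma}} and the routine archimedean analysis is required.
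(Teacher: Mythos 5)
Your proposal is correct and follows essentially the same route the paper sketches: extract the archimedean factors via Lemma~\ref{Gammafactorslemma} and the bound on $1/\zeta(1+2it_g)$, decompose dyadically in $t_f$, and apply Lemma~\ref{shortinitialrangelemma} with $t=2t_g$ in each dyadic block. The four-case bookkeeping you carry out (worst exponents $t_g^{-\delta}$ from the top range and $t_g^{-1/6}$ from the critical ranges near $H\asymp t_g^{1/2}$) is exactly the computation implicit in the paper's reduction to the dyadic bound $\ll Ht_g^{1-\delta'}$.
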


\subsection{Spectral Methods to Bound the Short Transition Range}

In \cite[Section 1]{BK17a}, Buttcane and Khan state for a dihedral Maa\ss{} newform $g$,

\begin{quote}
\ldots the range [$2t_g - t_g^{1 - \delta} < t_f < 2t_g$] can be handled by applying H\"{o}lder's inequality as Luo does and then applying Jutila's \cite{Jut01} and Ivi\'{c}'s  \cite{Ivi} bounds for moments of $L(1/2,f)$ in short intervals of $t_f$ close to $2t_g$.
\end{quote}

A similar idea works when $g$ is a truncated Eisenstein series. We must show that
\[\sum_{2t_g - t_g^{1 - \delta} \leq t_f \leq 2t_g + t_g^{1 - \delta}} \frac{L\left(\frac{1}{2}, f\right)^2 \left|L\left(\frac{1}{2} + 2it_g, f\right)\right|^2}{\left(1 + t_f\right) \left(1 + 2t_g + t_f\right)^{1/2} \left(1 + |2t_g - t_f|\right)^{1/2} L\left(1, \sym^2 f\right)} \ll t_g^{-\delta'}\]
for some $\delta' > 0$. We use the Cauchy--Schwarz inequality to see that this spectral sum is bounded by $t_g^{-3/2}$ times the square root of the product of
\[\sum_{2t_g - t_g^{1 - \delta} \leq t_f \leq 2t_g + t_g^{1 - \delta}} \frac{L\left(\frac{1}{2}, f\right)^4}{\left(1 + |2t_g - t_f|\right)^{1/2} L\left(1, \sym^2 f\right)}\]
and
\[\sum_{2t_g - t_g^{1 - \delta} \leq t_f \leq 2t_g + t_g^{1 - \delta}} \frac{\left|L\left(\frac{1}{2} + 2it_g, f\right)\right|^4}{\left(1 + |2t_g - t_f|\right)^{1/2} L\left(1, \sym^2 f\right)}.\]

The first sum is bounded by
\begin{multline*}
\sum_{k = 0}^{\lfloor t_g^{2/3 - \delta} \rfloor} \frac{1}{\left(1 + k t_g^{1/3}\right)^{1/2}} \sum_{2 t_g - (k + 1) t_g^{1/3} \leq t_f < 2 t_g - k t_g^{1/3}} \frac{L\left(\frac{1}{2}, f\right)^4}{L\left(1, \sym^2 f\right)}	\\
+ \sum_{k = 0}^{\lfloor t_g^{2/3 - \delta} \rfloor} \frac{1}{\left(1 + k t_g^{1/3}\right)^{1/2}} \sum_{2 t_g + k t_g^{1/3} \leq t_f < 2 t_g + (k + 1) t_g^{1/3}} \frac{L\left(\frac{1}{2}, f\right)^4}{L\left(1, \sym^2 f\right)},
\end{multline*}
and a similar expression holds for the second sum. We then apply the following lemma to show that each sum is bounded by a constant multiple dependent on $\e$ of $t_g^{\frac{3 - \delta}{2} + \e}$, from which the result follows.

\begin{lemma}[{\cite[Theorem]{Jut01}, \cite[Theorem 1]{JM}}]\label{shorttransitionrangelemma}
For $H \gg 1$ and $1 \ll G \ll H$, we have that
\[\sum_{H \leq t_f \leq H + G} \frac{L\left(\frac{1}{2}, f\right)^4}{L\left(1, \sym^2 f\right)} \ll_{\e} \left(H^{1/3} + G\right) H^{1 + \e}.\]
Similarly, for $H \gg 1$, $0 \leq t \ll H^{3/2 - \e}$, and $0 \leq G \leq (H + t)^{4/3} H^{-1 + \e}$, we have that
\[\sum_{H \leq t_f \leq H + G} \frac{\left|L\left(\frac{1}{2} + it, f\right)\right|^4}{L\left(1, \sym^2 f\right)} \ll_{\e} (H + t)^{4/3} H^{\e}.\]
\end{lemma}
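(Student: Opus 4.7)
The lemma is cited directly from Jutila \cite{Jut01} and Jutila--Motohashi \cite{JM}, so the main plan is simply to invoke those works; nonetheless, let me outline the underlying strategy one would carry out. First, I would expand each $L(1/2, f)^2$ (respectively $|L(1/2 + it, f)|^2$) via an approximate functional equation into a Dirichlet polynomial of length roughly $t_f^2$ (respectively $t_f(t_f + t)$) in the Hecke eigenvalues $\lambda_f(n)$. After squaring and inserting a smooth majorant $h(t_f)$ of the indicator function of $[H, H + G]$ into the sum over $f$, one is faced with bounding an expression of the shape
\[
\sum_{m, n} \frac{\tau(m) \tau(n)}{\sqrt{mn}} \sum_{f \in \BB_0(\Gamma)} h(t_f) \frac{\lambda_f(m) \lambda_f(n)}{L(1, \sym^2 f)},
\]
where $\tau$ denotes the divisor function arising from expanding the product of Hecke eigenvalue polynomials and collapsing the Hecke multiplication rule.

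Next, I would apply Kuznetsov's trace formula to the inner spectral average, splitting it into a diagonal contribution (essentially the $m = n$ terms) plus an off-diagonal contribution given by a weighted sum of Kloosterman sums of the form $\sum_c c^{-1} S(m, n; c) \varphi_h(4\pi \sqrt{mn}/c)$, where $\varphi_h$ is an integral transform of $h$ built from $J$- and $K$-Bessel functions. The diagonal yields a main term of size $GH^{1 + \e}$ after summing against the divisor-function weights, while the off-diagonal contribution is controlled by combining Weil's bound for Kloosterman sums with a precise stationary-phase analysis of $\varphi_h$. An optimized estimate gives an off-diagonal bound of $H^{4/3 + \e}$ in the unshifted case, producing the first bound $(H^{1/3} + G) H^{1 + \e}$, and of $(H + t)^{4/3} H^{\e}$ in the shifted case, producing the second bound.

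The main obstacle is the uniform stationary-phase treatment of the Bessel transform $\varphi_h$ as the parameters $G$, $H$, and $t$ vary simultaneously, since $\varphi_h$ behaves qualitatively differently across the oscillatory, transition, and exponentially decaying ranges of the Bessel functions. In particular, the constraint $G \leq (H + t)^{4/3} H^{-1 + \e}$ in the second bound is precisely the threshold below which the off-diagonal term dominates the diagonal, and the condition $t \ll H^{3/2 - \e}$ ensures that the effective conductor $t_f(t_f + t)$ remains within a regime where Kuznetsov's formula can be applied without the Bessel transform degenerating. Since both bounds are established in full detail in \cite{Jut01,JM}, the proposal reduces to citing those results.
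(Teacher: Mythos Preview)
The paper gives no proof of this lemma at all; it is stated with the bracketed citations to \cite{Jut01} and \cite{JM} and then used as a black box. Your proposal correctly reduces to citing those same results, so it matches the paper's treatment exactly.

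The sketch you add is broadly faithful to the architecture of those papers (approximate functional equation, Kuznetsov, diagonal versus Kloosterman off-diagonal), but one point is oversimplified: combining Weil's bound with stationary phase on the Bessel transform alone does not reach the exponent $4/3$. In \cite{Jut01} and \cite{JM} the off-diagonal Kloosterman contribution is transformed further --- via Voronoi-type summation in the $m,n$ variables and, in the hybrid case of \cite{JM}, essentially a second spectral decomposition --- and it is this additional averaging that produces the Weyl-strength saving behind $(H+t)^{4/3}$. Since you are invoking the results rather than reproving them, this does not affect the validity of your proposal, but it is worth flagging that ``Weil plus stationary phase'' undersells what is actually needed.
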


\begin{corollary}
For any $0 < \delta < 2/3$, we have that
\[\sum_{2t_g - t_g^{1 - \delta} \leq t_f \leq 2t_g + t_g^{1 - \delta}} \frac{\Lambda\left(\frac{1}{2}, f\right)^2 \Lambda\left(\frac{1}{2} + 2it_g, f\right) \Lambda\left(\frac{1}{2} - 2it_g, f\right)}{\Lambda(1 + 2it_g)^2 \Lambda(1 - 2it_g)^2 \Lambda\left(1, \sym^2 f\right)} \ll_{\e} t_g^{-\frac{\delta}{2} + \e}.\]
\end{corollary}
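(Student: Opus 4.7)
The plan is to follow the strategy sketched immediately above the statement: extract the archimedean parts via \hyperref[Gammafactorslemma]{Lemma \ref*{Gammafactorslemma}}, apply Cauchy--Schwarz to decouple $L(\tfrac{1}{2},f)$ from $L(\tfrac{1}{2}+2it_g,f)$, and then invoke the hybrid fourth-moment bounds of \hyperref[shorttransitionrangelemma]{Lemma \ref*{shorttransitionrangelemma}} after a dyadic partition in $|t_f - 2t_g|$. First I would apply \hyperref[Gammafactorslemma]{Lemma \ref*{Gammafactorslemma}}: in the range $2t_g - t_g^{1-\delta} \leq t_f \leq 2t_g + t_g^{1-\delta}$ one has $(1+t_f)(1+2t_g+t_f)^{1/2} \asymp t_g^{3/2}$; the exponential $e^{-\pi \Omega(t_f,t_g)}$ equals $1$ for $t_f \leq 2t_g$ and forces the contribution from $t_f > 2t_g + O(1)$ to be negligible; and the factor $|\Lambda(1+2it_g)|^{-4}$ costs only $t_g^{\e}$ after combining Stirling with the standard bound $|\zeta(1+2it_g)|^{-1} \ll (\log t_g)^{2/3}(\log\log t_g)^{1/3}$ of \cite[Theorem 8.29]{IK}. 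These reductions transform the claim into the assertion that
\[
\sum_{2t_g - t_g^{1-\delta} \leq t_f \leq 2t_g} \frac{L(\tfrac{1}{2},f)^2 \, |L(\tfrac{1}{2}+2it_g,f)|^2}{(1+|2t_g-t_f|)^{1/2} L(1,\sym^2 f)} \ll_{\e} t_g^{\frac{3-\delta}{2}+\e},
\]
since pulling out the prefactor $t_g^{-3/2+\e}$ then yields the claimed $t_g^{-\delta/2+\e}$.

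Next I would apply the Cauchy--Schwarz inequality to this sum, distributing the weight $(1+|2t_g-t_f|)^{-1/2} L(1,\sym^2 f)^{-1}$ evenly, to obtain the geometric mean of two analogous fourth-moment sums, one with $L(\tfrac{1}{2},f)^4$ and one with $|L(\tfrac{1}{2}+2it_g,f)|^4$. I would then decompose the range into subintervals of length $G = t_g^{1/3}$ with endpoints $2t_g - kt_g^{1/3}$ for $k = 0, 1, \ldots, \lfloor t_g^{2/3-\delta}\rfloor$, on which the weight $(1+|2t_g-t_f|)^{-1/2}$ is uniformly comparable to $(1+kt_g^{1/3})^{-1/2}$ and may be pulled out of the inner sum. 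On each subinterval \hyperref[shorttransitionrangelemma]{Lemma \ref*{shorttransitionrangelemma}} delivers the bound $t_g^{4/3+\e}$: for the $L(\tfrac{1}{2},f)^4$ moment because $H^{1/3}+G \asymp t_g^{1/3}$ when $H \asymp t_g$, and for the $|L(\tfrac{1}{2}+2it_g,f)|^4$ moment because $(H+t)^{4/3} \asymp t_g^{4/3}$ when $t = 2t_g$, provided the admissibility hypothesis $G \leq (H+t)^{4/3} H^{-1+\e}$ holds. Summing the weighted per-interval estimates via $\sum_{k=0}^{\lfloor t_g^{2/3-\delta}\rfloor}(1+kt_g^{1/3})^{-1/2} \asymp t_g^{1/6-\delta/2}$ (by comparison with an integral) will then produce $t_g^{4/3+1/6-\delta/2+\e} = t_g^{(3-\delta)/2+\e}$ for each fourth-moment factor, so that Cauchy--Schwarz closes the argument.

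The main obstacle is bookkeeping rather than a conceptual one: one must check that the Jutila--Motohashi admissibility condition $G \leq (H+t)^{4/3} H^{-1+\e}$ in \hyperref[shorttransitionrangelemma]{Lemma \ref*{shorttransitionrangelemma}} really is satisfied with $G = t_g^{1/3}$ and $H \asymp t \asymp t_g$, and one must confirm that the exponentially decaying tail on $t_f > 2t_g$ is genuinely absorbed by the $k=0$ shell. The substantive difficulty is not in our argument but lies in the two black-box inputs \hyperref[Gammafactorslemma]{Lemma \ref*{Gammafactorslemma}} and \hyperref[shorttransitionrangelemma]{Lemma \ref*{shorttransitionrangelemma}}, whose nontrivial analytic content (Stirling in the former; fourth-moment spectral methods of Jutila, Ivi\'{c}, and Jutila--Motohashi in the latter) is what ultimately powers the saving.
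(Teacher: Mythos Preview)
Your proposal is correct and follows essentially the same approach as the paper: extract the archimedean factors via \hyperref[Gammafactorslemma]{Lemma \ref*{Gammafactorslemma}}, apply Cauchy--Schwarz to split into two fourth-moment sums, partition the transition range into intervals of length $t_g^{1/3}$, and invoke \hyperref[shorttransitionrangelemma]{Lemma \ref*{shorttransitionrangelemma}} on each. The only cosmetic difference is that the paper treats the ranges $t_f < 2t_g$ and $t_f > 2t_g$ symmetrically via the same subinterval decomposition rather than dismissing the latter by exponential decay, but this is immaterial.
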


\subsection{Spectral Methods to Bound the Bulk Range}

In \cite[Chapter 6]{Spi}, Spinu proves the bound
\[\sum_{2 \alpha t_g \leq t_f \leq 2(1 - \alpha) t_g} \frac{\Lambda\left(\frac{1}{2}, f\right)^2 \Lambda\left(\frac{1}{2} + 2it_g, f\right) \Lambda\left(\frac{1}{2} - 2it_g, f\right)}{\Lambda(1 + 2it_g)^2 \Lambda(1 - 2it_g)^2 \Lambda\left(1, \sym^2 f\right)} \ll_{\alpha} \left(\log \left(\frac{1}{4} + t_g^2\right)\right)^2\]
for any small $\alpha > 0$. Via the methods of Buttcane and Khan \cite{BK17a,BK17b} (the chief difference of which is using a different test function in the Kuznetsov formula), this extends to the full bulk range $t_g^{1 - \delta} < t_f < 2t_g - t_g^{1 - \delta}$, which thereby completes the unconditional proof of \hyperref[L4uncondthm]{Theorem \ref*{L4uncondthm}}.

\section{Failure of Equidistribution at the Planck Scale}

\subsection{The Selberg--Harish-Chandra Transform}

For $z,w \in \Hb$, set
\[u(z,w) \defeq \frac{|z - w|^2}{4 \Im(z) \Im(w)} = \sinh^2 \frac{\rho(z,w)}{2},\]
where
\[\rho(z,w) \defeq \log \frac{\left|z - \overline{w}\right| + |z - w|}{\left|z - \overline{w}\right| - |z - w|}\]
denotes the hyperbolic distance on $\Hb$. The function $u : \Hb \times \Hb \to [0,\infty)$ is a point-pair invariant. From this, a function $k : [0,\infty) \to \C$ gives rise to a point-pair invariant $k(z,w) \defeq k(u(z,w))$ on $\Hb$. The Selberg--Harish-Chandra transform maps sufficiently well-behaved functions $k : [0,\infty) \to \C$ to functions $h : \R \to \C$. This transform is given in three steps as follows:
\[q(v) \defeq \int_{v}^{\infty} \frac{k(u)}{\sqrt{u - v}} \, du, \qquad g(r) \defeq 2 q\left(\sinh^2 \frac{r}{2}\right), \qquad h(t) \defeq \int_{-\infty}^{\infty} g(r) e^{irt} \, dr.\]
Note that $h(t)$ is real whenever $t$ is real.

We shall take $k(z,w) = k_R(z,w)$ equal to the indicator function of a small ball of radius $R$ centred at a point $w$,
\[B_R(w) \defeq \{z \in \Hb : \rho(z,w) \leq R\} = \left\{z \in \Hb : u(z,w) \leq \sinh^2 \frac{R}{2}\right\},\]
normalised by the volume of this ball. So
\begin{equation}\label{k(u)def}
k(u) = k_R(u) \defeq \begin{dcases*}
\dfrac{1}{4\pi \sinh^2 \frac{R}{2}} & if $u \leq \sinh^2 \dfrac{R}{2}$,	\\
0 & otherwise,
\end{dcases*}
\end{equation}
and consequently
\[h(t) = h_R(t) \defeq \frac{R}{\pi \sinh \frac{R}{2}} \int_{-1}^{1} \sqrt{1 - \left(\frac{\sinh \frac{Rr}{2}}{\sinh \frac{R}{2}}\right)^2} e^{iRrt} \, dr.\]

We require the following asymptotics for $h_R(t)$, which are extremely similar to the analogous result for $\T^2$; see \cite[Lemma 2.1]{GW}.

\begin{lemma}[{Cf.~\cite[Lemma 2.4]{Cha96}}]\label{hR(t)asymplemma}
As $R$ tends to zero, we have that
\[h_R(t) \sim \begin{dcases*}
1 & if $Rt$ tends to zero,	\\
\frac{2 J_1(Rt)}{Rt} & if $Rt \in (0,\infty)$,	\\
\frac{1}{\sqrt{\pi}} \left(\frac{2}{Rt}\right)^{3/2} \sin\left(Rt - \frac{\pi}{4}\right) & if $Rt$ tends to infinity.
\end{dcases*}\]
\end{lemma}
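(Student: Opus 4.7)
The plan is to reduce the integral defining $h_R(t)$ to the classical Bessel integral representation of $J_1$, controlling all corrections by a uniform relative error of size $O(R^2)$ that is negligible in each of the three regimes.

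First, I would Taylor-expand the hyperbolic sines appearing in the integrand. From $\sinh s = s(1 + s^2/6 + O(s^4))$ one obtains, uniformly for $r \in [-1,1]$,
\[\frac{\sinh(Rr/2)}{\sinh(R/2)} = r\left(1 + \frac{R^2(r^2 - 1)}{24} + O(R^4)\right),\]
and squaring and subtracting from $1$ factors the radicand cleanly as
\[1 - \left(\frac{\sinh(Rr/2)}{\sinh(R/2)}\right)^2 = (1 - r^2)\, \phi(r, R)^2,\]
where $\phi(r, R)$ is smooth and satisfies $\phi(r, R) = 1 + O(R^2)$ uniformly in $r \in [-1,1]$. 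Likewise $R/(\pi \sinh(R/2)) = (2/\pi)(1 + O(R^2))$. Substituting yields
\[h_R(t) = \frac{2}{\pi}\bigl(1 + O(R^2)\bigr) \int_{-1}^{1} \sqrt{1 - r^2}\, \phi(r, R)\, e^{iRrt}\, dr.\]

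Next I would identify the main term using the classical identity
\[\int_{-1}^{1} \sqrt{1 - r^2}\, e^{ixr}\, dr = \frac{\pi J_1(x)}{x}\]
and bound the remainder arising from $\phi(r, R) - 1 = O(R^2)$. When $|Rt| \leq 1$ the remainder is trivially $O(R^2)$ while the main term has size $1$, so the relative error is $O(R^2)$. When $|Rt| \geq 1$, a standard endpoint/stationary-phase expansion applied to $\int_{-1}^{1} \sqrt{1-r^2}\,(\phi(r, R) - 1)\, e^{iRrt}\, dr$ gives a bound $O(R^2 |Rt|^{-3/2})$, matching the size of the main term $\pi J_1(Rt)/(Rt) \asymp |Rt|^{-3/2}$ up to a relative error $O(R^2)$. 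Either way,
\[h_R(t) = \frac{2 J_1(Rt)}{Rt}\bigl(1 + O(R^2)\bigr).\]

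Finally I would specialise to each regime. When $Rt \to 0$, the expansion $J_1(x) = x/2 + O(x^3)$ gives $h_R(t) \to 1$. When $Rt$ stays in a fixed compact subset of $(0, \infty)$, the displayed asymptotic follows directly from the previous equation. When $Rt \to \infty$, the Hankel-type asymptotic $J_1(x) \sim \sqrt{2/(\pi x)} \cos(x - 3\pi/4)$, together with the identity $\cos(\theta - \pi/2) = \sin\theta$, yields
\[h_R(t) \sim \frac{2}{Rt}\sqrt{\frac{2}{\pi Rt}}\sin\!\left(Rt - \frac{\pi}{4}\right) = \frac{1}{\sqrt{\pi}}\left(\frac{2}{Rt}\right)^{3/2}\sin\!\left(Rt - \frac{\pi}{4}\right).\]
The main technical point is the error bound in the $Rt \to \infty$ regime, where the leading term is itself small of order $(Rt)^{-3/2}$: one needs the remainder to decay at the same rate with an additional factor of $R^2$, which forces one to exploit the square-root vanishing of $\sqrt{1-r^2}\,(\phi(r, R) - 1)$ at $r = \pm 1$ rather than settle for the trivial $O(R^2)$ bound. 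Everything else amounts to routine Taylor expansion and standard Bessel-function asymptotics.
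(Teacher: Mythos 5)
Your approach is correct and genuinely different from the paper's. The paper treats the three regimes separately: dominated convergence handles the first two, and for $Rt \to \infty$ it writes $h(R,x)$ with $x = Rt$, establishes pointwise convergence in $R$ and uniform convergence in $x$, and then invokes the Moore--Osgood theorem to interchange the two limits; the uniform-in-$R$ bound is obtained by an integration by parts, the substitution $r = \tfrac{2}{R}\arsinh(\sin v \sinh \tfrac{R}{2})$, and stationary phase at the endpoints. You instead peel off the $\sqrt{1-r^2}$ factor from the radicand by hand, identify the Poisson integral representation of $J_1$, and obtain a \emph{uniform} additive approximation $h_R(t) = \tfrac{2 J_1(Rt)}{Rt} + O\bigl(R^2 \min\{1, |Rt|^{-3/2}\}\bigr)$, from which all three asymptotics fall out by plugging in standard Bessel-function behaviour. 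This bypasses the double-limit argument entirely and is arguably cleaner; the price you pay is having to justify the factorisation $1 - (\sinh(Rr/2)/\sinh(R/2))^2 = (1-r^2)\phi(r,R)^2$ with $\phi = 1 + O(R^2)$ in a $C^k$ sense, which is where the real content is: the radicand vanishes exactly at $r = \pm 1$ (since $\sinh(\pm R/2)/\sinh(R/2) = \pm 1$), not merely to the truncated order of the Taylor expansion you displayed, and one needs this exact vanishing to divide out $(1-r^2)$ uniformly including the $O(R^4)$ tail.

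One small imprecision worth flagging: you phrase the final bound as a \emph{relative} error $h_R(t) = \tfrac{2 J_1(Rt)}{Rt}\bigl(1 + O(R^2)\bigr)$, which cannot hold uniformly in the regime $|Rt| \geq 1$, since $J_1$ has zeros there and the main term degenerates while the remainder does not. What your argument actually produces is the \emph{additive} bound $O(R^2 |Rt|^{-3/2})$ for the remainder, and this is what you should retain. This is still enough for the three displayed asymptotics: in the third regime one combines it with $J_1(x) = \sqrt{2/(\pi x)}\cos(x - 3\pi/4) + O(x^{-3/2})$ to get $h_R(t) = \tfrac{1}{\sqrt{\pi}}(\tfrac{2}{Rt})^{3/2}\sin(Rt - \tfrac{\pi}{4}) + o\bigl((Rt)^{-3/2}\bigr)$, and the asymptotic $\sim$ then holds along any sequence for which $\sin(Rt - \pi/4)$ is bounded away from zero --- which is the same implicit caveat present in the lemma statement itself and in the paper's proof, so you are no worse off.
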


\begin{proof}
If $R$ and $Rt$ both converge to zero, then the dominated convergence theorem implies that
\[h_R(t) \sim \frac{2}{\pi} \int_{-1}^{1} \sqrt{1 - r^2} \, dr = 1.\]
If $R$ converges to $0$ and $Rt$ converges to some value in $(0,\infty)$, then similarly
\[h_R(t) \sim \frac{2}{\pi} \int_{-1}^{1} \sqrt{1 - r^2} e^{iRrt} \, dr = \frac{2 J_1(Rt)}{Rt}\]
via \cite[8.411.10]{GR}. So it remains to prove the case that $R$ converges to $0$ and $Rt$ tends to infinity. To do this, we let
\[h(R,x) \defeq \frac{R}{\pi \sinh \frac{R}{2}} \int_{-1}^{1} \sqrt{1 - \left(\frac{\sinh \frac{Rr}{2}}{\sinh \frac{R}{2}}\right)^2} e^{irx} \, dr.\]
We show that
\[x^{3/2} h(R,x) - 2 \sqrt{\frac{2}{\pi} \frac{R}{\sinh R}}\sin\left(x - \frac{\pi}{4}\right)\]
is pointwise convergent as $R$ tends to zero and is uniformly convergent to $0$ as $x$ tends to infinity, from which the Moore--Osgood theorem allows us to interchange the order of limits taken in order to obtain the desired asymptotic. Indeed, the dominated convergence theorem once again shows that $h(R,x)$ converges to
\[\frac{2}{\pi} \int_{-1}^{1} \sqrt{1 - r^2} e^{irx} \, dr = \frac{2 J_1(x)}{x}\]
as $R$ tends to zero. For the uniform convergence as $x$ tends to infinity, we integrate by parts 
and make the substitution $r = \frac{2}{R} \arsinh\left(\sin v \sinh \frac{R}{2}\right)$, 
yielding
\[h(R,x) = \frac{R}{2 \sinh \frac{R}{2}} \frac{2}{\pi ix} \int_{-\pi/2}^{\pi/2} \sin v e^{ix \frac{2}{R} \arsinh \left(\sin v \sinh \frac{R}{2}\right)} \, dv.\]
Using stationary phase, with the two critical points being the endpoints $\pm \pi/2$, we find that there exists some $R_0 > 0$ such that
\[\sup_{R \in (0,R_0)} \left|x^{3/2} h(R,x) - 2 \sqrt{\frac{2}{\pi} \frac{R}{\sinh R}}\sin\left(x - \frac{\pi}{4}\right)\right| \ll \frac{1}{x}.\qedhere\]
\end{proof}

For a function $k : [0,\infty) \to \C$, we may form the automorphic kernel
\[K(z,w) \defeq \sum_{\gamma \in \Gamma} k(\gamma z,w),\]
which is $\Gamma$-invariant in both variables. When $k(u) = k_R(u)$, we write $K(z,w) = K_R(z,w)$.

\begin{lemma}\label{heigenlemma}
If $f : \Gamma \backslash \Hb \to \C$ is an eigenfunction of the Laplacian with eigenvalue $1/4 + t_f^2$, then
\[\frac{1}{\vol\left(B_R\right))} \int_{B_R(w)} f(z) \, d\mu(z) = \langle f, K_R(\cdot,w)\rangle = h_R(t_f)  f(w).\]
\end{lemma}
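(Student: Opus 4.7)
The plan is to prove the two stated equalities separately. The first, $\vol(B_R)^{-1} \int_{B_R(w)} f \, d\mu = \langle f, K_R(\cdot,w) \rangle$, is an unfolding identity that relies on the automorphy of $f$ together with the explicit form of $k_R$. The second, $\langle f, K_R(\cdot,w) \rangle = h_R(t_f) f(w)$, is a special case of the classical property that the Selberg--Harish-Chandra transform diagonalises convolution operators given by radial kernels against Laplacian eigenfunctions on $\Hb$.

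For the first equality, I would begin by observing that $K_R$ is real-valued, so
\[\langle f, K_R(\cdot,w)\rangle = \int_{\Gamma \backslash \Hb} f(z) \sum_{\gamma \in \Gamma} k_R(u(\gamma z, w)) \, d\mu(z).\]
Using the $\Gamma$-invariance of $f$ and $d\mu$, the sum unfolds against a fundamental domain to yield
\[\langle f, K_R(\cdot,w)\rangle = \int_{\Hb} f(z) k_R(u(z,w)) \, d\mu(z).\]
By the definition \eqref{k(u)def}, the integrand vanishes outside the region $u(z,w) \leq \sinh^2(R/2)$, namely $z \in B_R(w)$, and on this region it equals the constant $1/\vol(B_R)$, so the integral collapses to $\vol(B_R)^{-1} \int_{B_R(w)} f(z) \, d\mu(z)$.

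For the second equality, I would invoke the standard fact that the integral operator on $\Hb$ with radial kernel $k_R(u(z,w))$ commutes with the hyperbolic Laplacian, since both $\Delta$ and $u$ are invariant under the isometries of $\Hb$. Consequently, this operator acts as a scalar on each $\Delta$-eigenspace. The scalar associated to the eigenvalue $1/4 + t_f^2$ is computed by testing against any convenient representative, for example $z \mapsto \Im(z)^{1/2 + it_f}$, where the resulting integral is precisely the three-step chain $k \mapsto q \mapsto g \mapsto h$ defining the Selberg--Harish-Chandra transform, and therefore evaluates to $h_R(t_f)$. A clean reference for this identity is Iwaniec's \emph{Spectral Methods of Automorphic Forms}, Theorem 1.16.

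The only mildly delicate point is that $k_R$ is not smooth but merely a scaled indicator of a disc, whereas the Selberg--Harish-Chandra identity is typically formulated for smooth compactly supported radial kernels. This is not a genuine obstacle: since $k_R$ has compact support in $u$ and the intermediate functions $q$, $g$, $h$ are all well-defined for such kernels, one can either approximate $k_R$ monotonically by smooth radial kernels and pass to the limit by dominated convergence on both sides, or directly verify the identity for this specific $k_R$ using the explicit formula for $h_R$ already derived.
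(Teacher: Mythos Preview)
Your proposal is correct and follows essentially the same route as the paper: both reduce the identity to the standard Selberg--Harish-Chandra diagonalisation property for radial point-pair kernels (the paper cites \cite[Theorem 1.14]{Iwa}) and then address the lack of smoothness of $k_R$. The only cosmetic difference is that you suggest handling the non-smooth kernel by mollification or direct computation, whereas the paper simply observes that the cited proof goes through under the weaker hypothesis that $k(z,w)$ be twice differentiable $\mu$-almost everywhere.
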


\begin{proof}
This follows from \cite[Theorem 1.14]{Iwa}. Note that there it is assumed that not only is $k(u)$ compactly supported, but that it is smooth; this, however, is not essential to the proof. Instead, we merely require that $k(z,w)$ be twice differentiable in both variables $\mu$-almost everywhere.
\end{proof}

\subsection{Proof of \texorpdfstring{\hyperref[Planckscalethm]{Theorem \ref*{Planckscalethm}}}{Theorem \ref{Planckscalethm}}}

\begin{proposition}[{\cite[Theorem 1]{Mil}}]\label{Milicevicthm}
For every fixed Heegner point $w \in \Hb$,
\[|g(w)| = \Omega\left(\exp\left(\sqrt{\frac{\log t_g}{\log \log t_g}} \left(1 + O\left(\frac{\log \log \log t_g}{\log \log t_g}\right)\right)\right)\right)\]
as $t_g$ tends to infinity. 
\end{proposition}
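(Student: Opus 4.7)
The plan is to combine the Waldspurger--Zhang formula expressing $|g(w)|^2$ at a Heegner point in terms of central $L$-values with Soundararajan's resonance method for producing extreme values of those $L$-values. I will outline the argument assuming $w$ is attached to an imaginary quadratic field $K = \Q(\sqrt{-D})$ of class number one (e.g.\ $D \in \{3,4,7,11,19,43,67,163\}$); the general Heegner point is treated analogously by working with a character $\chi$ of $\Cl_K$ and extracting a single Galois conjugate at the end using orthogonality.

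First, I would invoke the explicit Waldspurger formula, in the form of Zhang's work on CM periods of Maa\ss{} forms, which yields an identity of the shape
\[|g(w)|^2 = c_w \cdot \frac{L\left(\frac{1}{2}, g\right) L\left(\frac{1}{2}, g \otimes \chi_K\right)}{L(1, \sym^2 g)} \cdot \gamma_{\infty}(t_g)\left(1 + O\left(t_g^{-\eta}\right)\right),\]
where $\chi_K$ is the quadratic character of $K$, $c_w > 0$ is a nonzero constant depending only on $w$, and $\gamma_{\infty}(t_g)$ is an archimedean factor of polynomial size in $t_g$ coming from ratios of Gamma factors, so that $\log \gamma_{\infty}(t_g) \ll \log t_g$. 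The right-hand side is essentially the central Rankin--Selberg value $L(1/2, g \otimes \theta_{\chi_K})/L(1,\sym^2 g)$, in which $\theta_{\chi_K}$ is the dihedral Maa\ss{} form attached to $\chi_K$.

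Second, I would apply Soundararajan's resonance method to produce $g$ in a dyadic window $T \leq t_g \leq 2T$ for which this $L$-value product is abnormally large. Choose a resonator $R(g) = \sum_{n \leq N} r(n) \lambda_g(n)$ of length $N = T^{o(1)}$, with multiplicative coefficients $r(n)$ supported on products of primes that split in $K$ (so that those primes contribute $\asymp 2/\sqrt{p}$ to the relevant Euler factors) and tuned as in Soundararajan's construction. Evaluating the ratio
\[\frac{\displaystyle \sum_{T \leq t_g \leq 2T} h(t_g) L\left(\frac{1}{2}, g\right) L\left(\frac{1}{2}, g \otimes \chi_K\right) |R(g)|^2}{\displaystyle \sum_{T \leq t_g \leq 2T} h(t_g) |R(g)|^2}\]
via the Kuznetsov formula, which for $N = T^{o(1)}$ is dominated by its diagonal term, and optimising over $r(n)$ then produces some $g$ in the window for which
\[L\left(\frac{1}{2}, g\right) L\left(\frac{1}{2}, g \otimes \chi_K\right) \gg \exp\left(2\sqrt{\frac{\log T}{\log \log T}}\left(1 + O\left(\frac{\log \log \log T}{\log \log T}\right)\right)\right).\]
Combined with the Hoffstein--Lockhart bound $L(1, \sym^2 g) \gg (\log t_g)^{-1}$ and the polynomial bound on $\gamma_{\infty}(t_g)$ (whose logarithm is $O(\log t_g) = o(\sqrt{\log T / \log \log T})$-subordinate to the main term), the Waldspurger identity yields the stated omega bound for $|g(w)|^2$, and hence for $|g(w)|$, letting $T \to \infty$ through a sequence.

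The main obstacle is the evaluation of the $|R|^2$-twisted first moment of $L(1/2, g) L(1/2, g \otimes \chi_K)$: this is effectively a Rankin--Selberg moment on $\GL_2 \times \GL_2$, and controlling the off-diagonal geometric terms from the Kuznetsov formula forces the resonator length to satisfy $N = T^{o(1)}$, which in turn dictates the precise shape $\sqrt{\log T/\log \log T}$ of the extremal bound (rather than a larger power of $\log T$). A secondary subtlety is extracting the Waldspurger identity with the correct normalisations so that $\gamma_{\infty}(t_g)$, while of polynomial size, does not introduce a loss that overwhelms the subexponential gain; this is where fixing $w$ (so that $D$ and hence $c_w$ do not depend on $t_g$) is essential, as the archimedean local factors at a CM point of bounded height are of controlled polynomial size in $t_g$ via Stirling.
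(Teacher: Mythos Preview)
The paper does not prove this proposition; it is quoted as \cite[Theorem 1]{Mil} and used as a black box. Your outline does capture the strategy of Mili\'{c}evi\'{c}'s actual proof: express $|g(w)|^2$ at a CM point via a Waldspurger-type period formula as (essentially) $L(\frac{1}{2},g)L(\frac{1}{2},g\otimes\chi_K)/L(1,\sym^2 g)$, and then run the resonance method over a spectral window to force the numerator large.

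There is, however, a genuine gap in your handling of the archimedean factor $\gamma_\infty(t_g)$. You assert it is ``of polynomial size in $t_g$'' with $\log\gamma_\infty(t_g)\ll\log t_g$, and then claim this is ``$o(\sqrt{\log T/\log\log T})$-subordinate to the main term''. That inequality is false: $\log t_g$ is far larger than $\sqrt{\log t_g/\log\log t_g}$, so a genuinely polynomial factor such as $t_g^{-a}$ with $a>0$ would annihilate the entire gain from the resonator and the argument would collapse. The reason the proof actually goes through is that $\gamma_\infty(t_g)\asymp_w 1$. One must check via Stirling that the exponential pieces of the Gamma factors in $\Lambda(\frac{1}{2},g)$, $\Lambda(\frac{1}{2},g\otimes\chi_K)$, and $\Lambda(1,\sym^2 g)$ cancel exactly (they contribute $e^{-\pi t_g/2}$, $e^{-\pi t_g/2}$, and $e^{-\pi t_g}$ respectively), and that the residual polynomial powers of $t_g$ also balance (the $t_g^{-1/2}$ from the first factor against the $t_g^{1/2}$ from the second, since $\chi_K$ is odd). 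This computation is essential and cannot be replaced by the soft statement that $\gamma_\infty$ is polynomially bounded; without it your final paragraph does not establish the claim.
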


\begin{proof}[Proof of \texorpdfstring{\hyperref[Planckscalethm]{Theorem \ref*{Planckscalethm}}}{Theorem \ref{Planckscalethm}}]
For $g \in \BB_0(\Gamma)$,
\[\frac{1}{\vol\left(B_R\right)} \int_{B_R(w)} g(z) \, d\mu(z) = \int_{\Gamma \backslash \Hb} K_R(z,w) g(z) \, d\mu(z) = h_R(t_g) g(w).\]
It follows by the Cauchy--Schwarz inequality that
\[\left|h_R\left(t_g\right)\right|^2 |g(w)|^2 \leq \frac{1}{\vol\left(B_R\right)} \int_{B_R(w)} |g(z)|^2 \, d\mu(z).\]
\hyperref[Planckscalethm]{Theorem \ref*{Planckscalethm}} then follows from \hyperref[hR(t)asymplemma]{Lemma \ref*{hR(t)asymplemma}} and \hyperref[Milicevicthm]{Proposition \ref*{Milicevicthm}}.
\end{proof}

\begin{remark}
\hyperref[Planckscalethm]{Theorem \ref*{Planckscalethm}} also holds for Maa\ss{} newforms $g \in \BB_0^{\ast}(\Gamma_0(q))$ for any $q > 1$, for \hyperref[Milicevicthm]{Proposition \ref*{Milicevicthm}} is proved in this generality (and in fact in even further generality).
\end{remark}

\begin{remark}
Since it is conjectured that $\max_{w \in K} |g(w)| \ll_{K,\e} t_g^{\e}$ for every compact subset $K$ of $\Gamma \backslash \Hb$, we cannot expect any significant improvement to \hyperref[Planckscalethm]{Theorem \ref*{Planckscalethm}} via this line of reasoning.
\end{remark}

\section{Equidistribution in Almost Every Shrinking Ball}

\subsection{Proof of Conditional Results}

In this section, we prove the following.

\begin{proposition}\label{shrinkingsetcondprop}
Let $g \in \BB_0(\Gamma)$ be a Hecke--Maa\ss{} eigenform normalised such that $\langle g, g \rangle = 1$. For $R > 0$, let
\[\Var(g;R) \defeq \int_{\Gamma \backslash \Hb} \left(\frac{1}{\vol\left(B_R\right)} \int_{B_R(w)} |g(z)|^2 \, d\mu(z) - \frac{1}{\vol\left(\Gamma \backslash \Hb\right)}\right)^2 \, d\mu(w).\]
Assume the generalised Lindel\"{o}f hypothesis, and suppose that $R \asymp t_g^{-\delta}$ for some $\delta > 0$. Then for $0 < \delta < 1$,
\[\Var(g;R) \ll_{\e} t_g^{-(1 - \delta) + \e}\]
as $t_g$ tends to infinity, while for $\delta > 1$,
\[\Var(g;R) \sim \frac{2}{\vol\left(\Gamma \backslash \Hb\right)} = \frac{6}{\pi}\]
as $t_g$ tends to infinity.
\end{proposition}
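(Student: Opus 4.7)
The approach is to realise the inner integral as an action of the automorphic kernel $K_R(z,w)$ from Section 4 and then spectrally expand. We begin from
\[\frac{1}{\vol(B_R)} \int_{B_R(w)} |g(z)|^2 \, d\mu(z) = \int_{\Gamma \backslash \Hb} K_R(z,w) |g(z)|^2 \, d\mu(z),\]
and apply Parseval's identity (\hyperref[spectralexpansionlemma]{Lemma \ref*{spectralexpansionlemma}}) together with \hyperref[heigenlemma]{Lemma \ref*{heigenlemma}} to obtain a spectral expansion over $f_0$, $\BB_0(\Gamma)$, and $E(\cdot, 1/2+it)$. Since $k_R$ is a probability kernel, the $f_0$-contribution equals exactly $1/\vol(\Gamma \backslash \Hb)$, cancelling the subtracted mean. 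A second application of Parseval in $w$ then yields
\[\Var(g;R) = \sum_{f \in \BB_0(\Gamma)} |h_R(t_f)|^2 \, |\langle |g|^2, f\rangle|^2 + \frac{1}{4\pi} \int_{-\infty}^{\infty} |h_R(t)|^2 \, \bigl|\langle |g|^2, E(\cdot, \tfrac{1}{2}+it)\rangle\bigr|^2 \, dt.\]

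For $0 < \delta < 1$, I would bound each spectral coefficient via the Watson--Ichino formula (\hyperref[innerproductcuspasympprop]{Proposition \ref*{innerproductcuspasympprop}}). A Stirling analysis of the archimedean factors, analogous to \hyperref[Gammafactorslemma]{Lemma \ref*{Gammafactorslemma}} (adapted to the cusp-form case), gives a Gamma-factor ratio of size $1/((1+t_f)(1+2t_g+t_f)^{1/2}(1+|2t_g-t_f|)^{1/2}) \asymp 1/(t_g(1+t_f))$ in the range $t_f \ll t_g$. Under the generalised Lindel\"{o}f hypothesis, $L(1/2,f)$ and $L(1/2, \sym^2 g \otimes f)$ are both $O_{\e}((t_g t_f)^{\e})$, and Hoffstein--Lockhart bounds $L(1,\sym^2 g)^{-2} L(1,\sym^2 f)^{-1}$, giving
\[|\langle |g|^2, f\rangle|^2 \ll_{\e} \frac{(t_g t_f)^{\e}}{t_g(1+t_f)}.\]
By \hyperref[hR(t)asymplemma]{Lemma \ref*{hR(t)asymplemma}}, $h_R(t_f)$ is bounded uniformly and decays like $(Rt_f)^{-3/2}$ once $Rt_f \gg 1$; truncating the sum at $H \asymp 1/R \asymp t_g^{\delta}$ and invoking Weyl's law $\sum_{t_f \leq H} 1/(1+t_f) \ll H$, the initial range contributes $O_{\e}(t_g^{-1+\delta+\e})$, and the tail $t_f > H$ gives the same bound after summing against $(Rt_f)^{-3}$. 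The continuous spectrum is treated identically using the Eisenstein part of \hyperref[innerproductcuspasympprop]{Proposition \ref*{innerproductcuspasympprop}}, the extra factor $1/|\zeta(1+2it)|^2$ costing only a logarithmic loss.

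For $\delta > 1$, we have $1/R = t_g^{\delta} > 2t_g$ once $t_g$ is large, so by \hyperref[hR(t)asymplemma]{Lemma \ref*{hR(t)asymplemma}} the kernel $h_R(t_f)$ tends to $1$ uniformly across the range $t_f \leq 2t_g$ in which the spectral coefficients $|\langle |g|^2, f\rangle|^2$ are not already exponentially small. Parseval then gives
\[\sum_{f \in \BB_0(\Gamma)} |\langle |g|^2, f\rangle|^2 + \frac{1}{4\pi} \int_{-\infty}^{\infty} |\langle |g|^2, E(\cdot, 1/2+it)\rangle|^2 \, dt = \|g\|_{L^4(\Gamma \backslash \Hb)}^4 - \frac{1}{\vol(\Gamma \backslash \Hb)},\]
and Buttcane and Khan's theorem \cite[Theorem 1.1]{BK17b}, conditional on GLH, gives $\|g\|_{L^4(\Gamma \backslash \Hb)}^4 \to 3/\vol(\Gamma \backslash \Hb)$. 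Controlling the two errors $R^2 \sum_{t_f \leq 1/R} t_f^2 |\langle |g|^2, f\rangle|^2 = O(t_g^{2(1-\delta)})$ (from the Taylor expansion of $h_R(t_f)$ near $t_f = 0$) and $\sum_{t_f > 1/R} |\langle |g|^2, f\rangle|^2 = o(1)$ (from exponential decay of the spectral mass beyond $2t_g$) yields $\Var(g;R) \to 2/\vol(\Gamma \backslash \Hb) = 6/\pi$. The main obstacle will be the $0 < \delta < 1$ case, which demands a careful Stirling bookkeeping for the cusp-form analogue of \hyperref[Gammafactorslemma]{Lemma \ref*{Gammafactorslemma}} together with uniform control of the continuous-spectrum integral over the tail range $|t| > 2t_g$.
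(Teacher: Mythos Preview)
Your proposal is correct and follows essentially the same approach as the paper: the spectral expansion you derive is exactly \hyperref[Var(g;R)cuspspectralprop]{Proposition \ref*{Var(g;R)cuspspectralprop}}, and your range-splitting with GLH for $0 < \delta < 1$ and the appeal to Buttcane--Khan for $\delta > 1$ match the paper's argument (the paper cites \cite[Proposition 2.2]{BK17b} for the bulk spectral sum rather than the final \cite[Theorem 1.1]{BK17b}, but this is immaterial). One small clarification: no ``cusp-form analogue'' of \hyperref[Gammafactorslemma]{Lemma \ref*{Gammafactorslemma}} is needed, since that lemma is stated so as to cover \hyperref[innerproductcuspasympprop]{Proposition \ref*{innerproductcuspasympprop}} directly.
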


\hyperref[Maassalmosteverythm]{Theorem \ref*{Maassalmosteverythm}} then follows directly via Chebyshev's inequality. Our starting point towards proving \hyperref[shrinkingsetcondprop]{Proposition \ref*{shrinkingsetcondprop}} is the following spectral expansion of $\Var(g;R)$.

\begin{proposition}\label{Var(g;R)cuspspectralprop}
Let $g \in \BB_0(\Gamma)$ be a Hecke--Maa\ss{} eigenform normalised such that $\langle g, g \rangle = 1$. Then $\Var(g;R)$ is equal to
\[\sum_{f \in \BB_0\left(\Gamma\right)} \left|h_R(t_f)\right|^2 \left|\left\langle |g|^2, f\right\rangle\right|^2 + \frac{1}{4 \pi} \int_{-\infty}^{\infty} \left|h_R(t)\right|^2 \left|\left\langle |g|^2, E\left(\cdot, \frac{1}{2} + it\right) \right\rangle\right|^2 \, dt,\]
where
\[h_R(t) \defeq \frac{R}{\pi \sinh \frac{R}{2}} \int_{-1}^{1} \sqrt{1 - \left(\frac{\sinh \frac{Rr}{2}}{\sinh \frac{R}{2}}\right)^2} e^{iRrt} \, dr.\]
\end{proposition}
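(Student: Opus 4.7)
The plan is to write $\Var(g;R) = \|F\|_{L^2(\Gamma \backslash \Hb)}^2$ for the function
\[
F(w) \defeq \frac{1}{\vol(B_R)} \int_{B_R(w)} |g(z)|^2 \, d\mu(z) - \frac{1}{\vol(\Gamma \backslash \Hb)},
\]
to compute the spectral coefficients of $F$ in terms of those of $|g|^2$, and then to invoke Parseval's identity from Lemma \ref{spectralexpansionlemma}. Since $g$ is a Maa\ss{} cusp form of rapid decay at the cusp, we have $|g|^2 \in L^2(\Gamma \backslash \Hb)$, and the averaging operator preserves $L^2$; thus $F \in L^2(\Gamma \backslash \Hb)$ and the spectral decomposition is applicable.

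The first step is to rewrite the average as an inner product against the automorphic kernel $K_R(z,w) \defeq \sum_{\gamma \in \Gamma} k_R(\gamma z, w)$ attached to the point-pair invariant \eqref{k(u)def}. Since the hyperbolic distance is $\SL_2(\R)$-invariant, $K_R$ is $\Gamma$-invariant and symmetric in its two arguments, and a standard unfolding yields
\[
\frac{1}{\vol(B_R)} \int_{B_R(w)} h(z) \, d\mu(z) = \int_{\Gamma \backslash \Hb} K_R(z,w) h(z) \, d\mu(z)
\]
for every $\Gamma$-invariant $h$.

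The second step is to compute the spectral coefficients of $F$. For any $f \in \BB_0(\Gamma)$, Fubini (permissible because $K_R(\cdot,w)$ is bounded with compact support in a fundamental domain) together with the orthogonality of the subtracted constant to cusp forms gives
\[
\langle F, f \rangle = \int_{\Gamma \backslash \Hb} |g(z)|^2 \left( \int_{\Gamma \backslash \Hb} K_R(z,w) \overline{f(w)} \, d\mu(w) \right) d\mu(z).
\]
By the symmetry and reality of $K_R$ combined with the reality of $h_R$ on $\R$, Lemma \ref{heigenlemma} identifies the inner integral with $h_R(t_f) \overline{f(z)}$, yielding $\langle F, f \rangle = h_R(t_f) \langle |g|^2, f \rangle$. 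An identical manipulation produces $\langle F, E(\cdot, 1/2 + it) \rangle = h_R(t) \langle |g|^2, E(\cdot, 1/2 + it) \rangle$. The coefficient against $f_0$ vanishes, since $\int_{\Gamma \backslash \Hb} K_R(z,w) \, d\mu(w) = 1$ forces $\int_{\Gamma \backslash \Hb} F \, d\mu = \|g\|_2^2 - 1 = 0$.

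Applying Parseval's identity to $\|F\|_{L^2}^2$ then yields the claimed spectral expansion of $\Var(g;R)$. There is no substantive obstacle here: the argument reduces to a direct application of Lemma \ref{heigenlemma} followed by Parseval, and the only technical points are the $L^2$-membership of $|g|^2$ and the Fubini interchange, both immediate consequences of the rapid decay of a Maa\ss{} cusp form.
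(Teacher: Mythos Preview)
Your proposal is correct and follows essentially the same approach as the paper: the paper spectrally expands $\langle |g|^2, K_R(\cdot,w)\rangle$ as a function of $w$ via \hyperref[spectralexpansionlemma]{Lemma \ref*{spectralexpansionlemma}} and \hyperref[heigenlemma]{Lemma \ref*{heigenlemma}}, subtracts the constant term $\langle g,g\rangle/\vol(\Gamma\backslash\Hb)$, then squares and integrates over $w$, which is exactly your computation of the spectral coefficients of $F$ followed by Parseval. The only cosmetic difference is the order in which Fubini and the spectral expansion are invoked.
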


\begin{proof}
Via \hyperref[spectralexpansionlemma]{Lemmata \ref*{spectralexpansionlemma}} (namely Parseval's identity) and \ref{heigenlemma}, $\langle |g|^2, K_R(\cdot,w) \rangle$ is equal to
\begin{multline*}
\frac{\langle g, g \rangle}{\vol\left(\Gamma \backslash \Hb\right)} + \sum_{f \in \BB_0\left(\Gamma\right)} h_R(t_f) f(w) \left\langle |g|^2, f\right\rangle	\\
+ \frac{1}{4\pi} \int_{-\infty}^{\infty} h_R(t) E\left(w, \frac{1}{2} + it\right) \left\langle |g|^2, E\left(\cdot, \frac{1}{2} + it\right) \right\rangle \, dt.
\end{multline*}
Upon squaring and integrating over $w$, we obtain the desired identity.
\end{proof}

\begin{proof}[Proof of \texorpdfstring{\hyperref[shrinkingsetcondprop]{Proposition \ref*{shrinkingsetcondprop}}}{Proposition \ref{shrinkingsetcondprop}} for $0 < \delta < 1$]
We use \hyperref[Var(g;R)cuspspectralprop]{Propositions \ref*{Var(g;R)cuspspectralprop}} and \ref{innerproductcuspasympprop} and \hyperref[hR(t)asymplemma]{Lemmata \ref*{hR(t)asymplemma}} and \ref{Gammafactorslemma}. We then divide the spectral expansion in \hyperref[Var(g;R)cuspspectralprop]{Proposition \ref*{Var(g;R)cuspspectralprop}} into various ranges.

Just as in \hyperref[spectralranglesL4sect]{Section \ref*{spectralranglesL4sect}}, there are two main ranges of the continuous spectrum to consider:
\begin{itemize}
\item the initial range $0 \leq |t| < 2t_g + t_g^{\delta}$, and
\item the tail range $|t| > 2t_g + t_g^{\delta}$.
\end{itemize}
The division of the cuspidal spectrum into parts depends on $\delta$. When $R \asymp t_g^{-\delta}$ with $0 < \delta < 1$, the ranges are:
\begin{itemize}
\item the short initial range $0 < t_f \leq t_g^{\delta}$,
\item the polynomial decay range $t_g^{\delta} < t_f < 2t_g + t_g^{1 - \delta}$,
\item the tail range $t_f \geq 2t_g + t_g^{1 - \delta}$.
\end{itemize}

Thus $\Var(g;R)$ is bounded by a constant multiple dependent on $\e$ of
\begin{multline*}
t_g^{-1 + \e} \sum_{0 < t_f \leq t_g^{\delta}} \frac{L\left(\frac{1}{2}, f\right) L\left(\frac{1}{2}, \sym^2 g \otimes f\right)}{t_f L\left(1, \sym^2 f\right)}	\\
+ t_g^{3\delta - \frac{1}{2} + \e} \sum_{t_g^{\delta} < t_f < 2t_g + t_g^{1 - \delta}} \frac{L\left(\frac{1}{2}, f\right) L\left(\frac{1}{2}, \sym^2 g \otimes f\right)}{t_f^4 (1 + |2t_g - t_f|)^{1/2} L\left(1, \sym^2 f\right)}	\\
+ t_g^{3\delta + \e} \sum_{t_f \geq 2t_g + t_g^{1 - \delta}} e^{-\pi (t_f - 2t_g)} \frac{L\left(\frac{1}{2}, f\right) L\left(\frac{1}{2}, \sym^2 g \otimes f\right)}{t_f^{\frac{9}{2}} (1 + t_f - 2t_g)^{1/2} L\left(1, \sym^2 f\right)}	\\
+ t_g^{-\frac{1}{2} + \e} \int_{0}^{2t_g + t_g^{\delta}} \frac{\left|L\left(\frac{1}{2} + it\right) L\left(\frac{1}{2} + it, \sym^2 g\right)\right|^2}{(1 + t) (1 + |2t_g - t|)^{1/2} |\zeta(1 + 2it)|^2} \, dt	\\
+ t_g^{-\frac{1}{2} + \e} \int_{2t_g + t_g^{\delta}}^{\infty} e^{-\pi (t - 2t_g)} \frac{\left|L\left(\frac{1}{2} + it\right) L\left(\frac{1}{2} + it, \sym^2 g\right)\right|^2}{(1 + t) (1 + |2t_g - t|)^{1/2} |\zeta(1 + 2it)|^2} \, dt.
\end{multline*}
\begin{itemize}
\item From \cite[Lemma 2.1]{BK17b}, the initial and tail ranges of the continuous spectrum are bounded by $t_g^{-1 + \e}$.
\item The convexity bounds for $L(1/2, f)$ and $L(1/2, \sym^2 g \otimes f)$ show that the tail range of the cuspidal spectrum is rapidly decaying.
\item For the other two ranges, the generalised Lindel\"{o}f hypothesis implies that the product of these two $L$-functions is bounded by a constant multiple dependent on $\e$ of $t_g^{\e}$, and then the Weyl law for $\Gamma \backslash \Hb$ and partial summation imply that the contribution of the cuspidal spectrum is bounded by $t_g^{\delta - 1 + \e}$.
\end{itemize}
This completes the proof.
\end{proof}

\begin{proof}[Proof of \texorpdfstring{\hyperref[shrinkingsetcondprop]{Proposition \ref*{shrinkingsetcondprop}}}{Proposition \ref{shrinkingsetcondprop}} for $\delta > 1$]
In this case, the division of the cuspidal spectrum into parts involves an additional range, and there is a dependence on an small fixed parameter $\delta' > 0$:
\begin{itemize}
\item the short initial range $0 < t_f \leq t_g^{1 - \delta'}$, which once again is bounded by $t_g^{-\delta'/2 + \e}$ via the generalised Lindel\"{o}f hypothesis,
\item the bulk range $t_g^{1 - \delta'} < t_f < 2t_g - t_g^{1 - \delta'}$, which is asymptotic to $6/\pi$ from the proof of \cite[Proposition 2.2]{BK17b},
\item the short transition range $2t_g - t_g^{1 - \delta'} \leq t_f \leq 2t_g + t_g^{1 - \delta'}$, again bounded by $t_g^{-\delta'/2 + \e}$, and
\item the tail range $t_f > 2t_g + t_g^{1 - \delta'}$, which is negligible.
\end{itemize}
This completes the proof.
\end{proof}

\begin{remark}
Just as with \hyperref[Planckscalethm]{Theorem \ref*{Planckscalethm}}, the bound $\Var(g;R) \ll_{\e} t_g^{-(1 - \delta) + \e}$ for $R \asymp t_g^{-\delta}$ with $0 < \delta < 1$ in \hyperref[shrinkingsetcondprop]{Proposition \ref*{shrinkingsetcondprop}} also holds for Maa\ss{} newforms $g \in \BB_0^{\ast}(\Gamma_0(q))$ for any $q > 1$. Indeed, \cite[Theorem 15.5]{IK} gives the spectral decomposition of $L^2(\Gamma_0(q) \backslash \Hb)$, though there are Eisenstein series corresponding to each cusp and the orthonormal basis of Maa\ss{} cusp forms are no longer necessarily Hecke--Maa\ss{} eigenforms. Nonetheless, Blomer and Mili\'{c}evi\'{c} have given an orthonormal basis of $\BB_0(\Gamma_0(q))$ involving linear combinations of oldforms and newforms \cite[Lemma 9]{BM}, and a similar basis exists for the space of Eisenstein series \cite{You17b}, and these can be coupled with the work of Hu on the Watson--Ichino formula in this generality \cite{Hu}.
\end{remark}

\begin{remark}
In fact, the method of proof of \cite[Proposition 2.2]{BK17b} together with \hyperref[hR(t)asymplemma]{Lemma \ref*{hR(t)asymplemma}} show that if $R \sim (C t_g)^{-1}$ for some positive constant $C$, then
\[\Var(g;R) \sim \frac{12 C}{\pi^2} \int_{0}^{1} \frac{J_1\left(\frac{2t}{C}\right)}{t \sqrt{1 - t^2}} \, dt = \frac{6}{\pi} \left(J_0\left(\frac{1}{C}\right)^2 + J_1\left(\frac{1}{C}\right)^2\right)\]
by \cite[(8.473.1) and (6.552.4)]{GR}, which converges to $6/\pi$ as $C$ tends to infinity.
\end{remark}

\subsection{Proof of Unconditional Results}

We first sketch how to prove \hyperref[improvedYoungEisenthm]{Theorem \ref*{improvedYoungEisenthm}}.

\begin{proof}[Proof of \texorpdfstring{\hyperref[improvedYoungEisenthm]{Theorem \ref*{improvedYoungEisenthm}}}{Theorem \ref{improvedYoungEisenthm}}]
In \cite{You16}, after \cite[(4.24)]{You16}, we use \hyperref[shortinitialrangelemma]{Lemma \ref*{shortinitialrangelemma}} instead of the subconvexity bound $L(1/2 + it,f) \ll_{\e} (t_f + t)^{1/3 + \e}$. Using this, the right-hand side of \cite[(4.26)]{You16} is improved to $T^{-1/6 + \e} \|\phi\|_2$, which yields the result.
\end{proof}

Next, we cover the proof of the following, from which \hyperref[Maassalmosteverythm]{Theorem \ref*{Maassalmosteverythm}} will be derived.

\begin{proposition}\label{shrinkingsetuncondprop}
Let $g(z) = E\left(z, 1/2 + it_g\right)$. For $R > 0$, let
\[\Var(g;R) \defeq \int_{\Gamma \backslash \Hb} \left(\frac{1}{\vol\left(B_R\right)} \int_{B_R(w)} |g(z)|^2 \, d\mu(z) - C(g;R;w)\right)^2 \, d\mu(w),\]
where $C(g;R;w)$ is given by \eqref{C(g;R;w)}. Suppose that $R \asymp t_g^{-\delta}$ for some $0 < \delta < 1$. Then
\[\Var(g;R) \ll_{\e} t_g^{-\min\left\{\frac{5}{7} (1 - \delta), \frac{1}{6} \right\} + \e}.\]
\end{proposition}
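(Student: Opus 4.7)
The plan is to derive a spectral expansion for $\Var(g;R)$ by applying Parseval's identity (\hyperref[spectralexpansionlemma]{Lemma \ref*{spectralexpansionlemma}}) to $\langle |g|^2, K_R(\cdot,w)\rangle$ and then integrating the square over $w \in \Gamma\backslash\Hb$, in complete analogy with \hyperref[Var(g;R)cuspspectralprop]{Proposition \ref*{Var(g;R)cuspspectralprop}}. Since $g = E(\cdot, 1/2 + it_g)$ is not square-integrable, $|g|^2$ is only of moderate growth at the cusp, and Zagier regularisation as discussed before \hyperref[innerproductEisen3asympprop]{Proposition \ref*{innerproductEisen3asympprop}} must be used. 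The role of $C(g;R;w)$ is precisely to cancel the contributions from the constant function $f_0$ and from the divergent residual/polar part of the continuous spectrum, leaving the clean expansion
\[
\Var(g;R) = \sum_{f \in \BB_0(\Gamma)} |h_R(t_f)|^2 \left|\langle |g|^2, f\rangle\right|^2 + \frac{1}{4\pi}\int_{-\infty}^{\infty} |h_R(t)|^2 \left|\langle |g|^2, E(\cdot, 1/2+it)\rangle_{\reg}\right|^2 \, dt.
\]

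The next step is to convert these inner products to $L$-function ratios via the Watson--Ichino formulas. The cuspidal side is handled by \hyperref[innerproductEisenasympprop]{Proposition \ref*{innerproductEisenasympprop}}, which produces the pure $\GL_2$ moment $\Lambda(1/2,f)^2 \Lambda(1/2 + 2it_g,f)\Lambda(1/2 - 2it_g,f) / |\Lambda(1+2it_g)|^4 \Lambda(1,\sym^2 f)$, and the continuous side by \hyperref[innerproductEisen3asympprop]{Proposition \ref*{innerproductEisen3asympprop}} with $s_1 = 1/2 + it_g$, $s_2 = 1/2 - it_g$, $s = 1/2 + it$, yielding a quotient involving $|\zeta(1/2 + it)|^4$, $|\zeta(1/2 + i(2t_g \pm t))|^2$, and $|\zeta(1+2it_g)|^4$. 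The archimedean factors are then extracted via \hyperref[Gammafactorslemma]{Lemma \ref*{Gammafactorslemma}}, and the asymptotics of $h_R(t)$ from \hyperref[hR(t)asymplemma]{Lemma \ref*{hR(t)asymplemma}} show that $|h_R(t)|^2 \asymp 1$ for $|t| \ll 1/R = t_g^\delta$ and $|h_R(t)|^2 \ll (R|t|)^{-3}$ for $|t| \gg 1/R$, providing the key weight.

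I would then decompose the cuspidal spectrum into a short initial range $t_f \leq t_g^{\alpha}$, a polynomial-decay (bulk) range $t_g^{\alpha} < t_f < 2t_g - t_g^{1-\beta}$, a short transition range $|t_f - 2t_g| \leq t_g^{1-\beta}$, and an exponentially decaying tail $t_f > 2t_g + t_g^{1-\beta}$, with $\alpha, \beta > 0$ to be optimised, and similarly split the continuous integral. In each dyadic shell of the initial/bulk ranges, the unconditional moment bound $\sum_{H \leq t_f \leq 2H} L(1/2,f)^2 |L(1/2 + 2it_g,f)|^2 / L(1,\sym^2 f) \ll_{\e} (H^2 + t_g^{4/3})^{1 + \e}$ from \hyperref[shortinitialrangelemma]{Lemma \ref*{shortinitialrangelemma}} replaces generalised Lindelöf; the transition range around $t_f = 2t_g$ is controlled by Cauchy--Schwarz together with the fourth-moment estimates of \hyperref[shorttransitionrangelemma]{Lemma \ref*{shorttransitionrangelemma}}, where the strong archimedean saving $|h_R(t_f)|^2 \ll t_g^{-3(1-\delta)}$ compensates for the lack of subconvexity. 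The tail range is negligible by the exponential decay in \hyperref[Gammafactorslemma]{Lemma \ref*{Gammafactorslemma}}. For the continuous spectrum, the analogous $\zeta$-moment in each dyadic shell is bounded by the classical fourth moment combined with Bourgain's Weyl bound $\zeta(1/2 + it) \ll_{\e} (1 + |t|)^{13/84 + \e}$, which is what ultimately forces the exponent $1/6$.

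The main obstacle is to optimise the cut-off parameters $\alpha, \beta$ against the $|h_R(t_f)|^2$ weight so that the contribution of each range is bounded by a power saving, and in particular to extract the exponent $5(1-\delta)/7$. This is delicate because the dyadic bound $(H^2 + t_g^{4/3})^{1+\e}$ is sharpest at the transition $H \asymp t_g^{2/3}$ rather than at the cut-off $1/R = t_g^\delta$, so the optimal saving depends nontrivially on whether $\delta$ lies above or below $2/3$; the worst regime, which produces the exponent $5(1-\delta)/7$, arises from a careful interpolation between the Jutila--Motohashi bound in the bulk and the archimedean decay of $h_R(t_f)$ outside of $[0, 1/R]$. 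Taking the minimum of the cuspidal savings and the $1/6$ from the continuous spectrum yields the asserted bound.
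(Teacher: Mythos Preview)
Your overall architecture is right and matches the paper: spectral expansion via \hyperref[smallscaleEisenlemma]{Lemma \ref*{smallscaleEisenlemma}} and Parseval to get \hyperref[Var(g;R)Eisenspectralprop]{Proposition \ref*{Var(g;R)Eisenspectralprop}}, then \hyperref[innerproductEisenasympprop]{Proposition \ref*{innerproductEisenasympprop}}, \hyperref[innerproductEisen3asympprop]{Proposition \ref*{innerproductEisen3asympprop}}, \hyperref[Gammafactorslemma]{Lemma \ref*{Gammafactorslemma}} and \hyperref[hR(t)asymplemma]{Lemma \ref*{hR(t)asymplemma}} to reduce to moment bounds. Two points in your analysis diverge from the paper, and one of them is a genuine gap in your explanation.

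First, the exponent $1/6$ does \emph{not} come from the continuous spectrum. The paper bounds the continuous part directly by citing \cite[Proposition 3.4]{Spi} together with Bourgain's $\theta = 13/84$, obtaining $t_g^{-13/84+\e}$; no careful dyadic $\zeta$-moment analysis is carried out there. The $1/6$ instead arises on the cuspidal side, from applying \hyperref[shortinitialrangelemma]{Lemma \ref*{shortinitialrangelemma}} (equivalently \hyperref[shortinitialrangecor]{Corollary \ref*{shortinitialrangecor}}) to the range $0 < t_f < t_g^{1-\delta'}$; the $\min\{\delta',1/6\}$ there is exactly the source of the $1/6$ in the final statement.

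Second, and more seriously, your four-range cuspidal decomposition with a single parameter $\alpha$ does not by itself explain where $\frac{5}{7}(1-\delta)$ comes from. The paper uses a \emph{five}-range split with an additional free cut-off at $t_g^{1-\delta'}$: on $0 < t_f < t_g^{1-\delta'}$ it discards the $|h_R|^2$ saving and invokes \hyperref[shortinitialrangecor]{Corollary \ref*{shortinitialrangecor}} to get $t_g^{-\min\{\delta',1/6\}+\e}$, whereas on the ``bulk polynomial decay range'' $t_g^{1-\delta'} \le t_f \le 2t_g - t_g^{\delta}$ it retains the $(Rt_f)^{-3}$ decay and uses Spinu's large-sieve bound \hyperref[largesieveinitiallemma]{Lemma \ref*{largesieveinitiallemma}} (not Jutila--Motohashi) to get $t_g^{-\frac{5}{2}(1-\delta-\delta')+\e}$. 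Equating these two exponents yields $\delta' = \frac{5}{7}(1-\delta)$. Your description ``interpolation between the Jutila--Motohashi bound in the bulk and the archimedean decay'' misses this: the $\frac{5}{7}$ is the outcome of balancing two \emph{different} moment inputs (\hyperref[shortinitialrangelemma]{Lemma \ref*{shortinitialrangelemma}} versus \hyperref[largesieveinitiallemma]{Lemma \ref*{largesieveinitiallemma}}) across an optimised internal boundary, not of pushing a single bound against the $h_R$ weight. You will need either to introduce that extra cut-off and invoke \hyperref[largesieveinitiallemma]{Lemma \ref*{largesieveinitiallemma}}, or to carry out a more careful case analysis with \hyperref[shortinitialrangelemma]{Lemma \ref*{shortinitialrangelemma}} alone and verify that it actually delivers at least the stated exponent.
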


To begin, we wish to calculate
\[\frac{1}{\vol(B_R)} \int_{B_R(w)} |g(z)|^2 \, d\mu(z),\]
where $g(z) = E\left(z, 1/2 + it_g\right)$. However, we cannot use Parseval's identity because $|g|^2 \notin L^2\left(\Gamma \backslash \Hb\right)$. Instead, we replace $|g(z)|^2$ with $E(z,s_1) E(z,s_2)$ and subtract away a linear combination of Eisenstein series $\EE$ such that the resulting function is square-integrable. After applying Parseval's identity, we finally send $s_1$ to $1/2 + it_g$ and $s_2$ to $1/2 - it_g$.

\begin{lemma}[{Cf.~\cite[Lemma 4.1]{You16}}]\label{FEElemma}
For $1/2 < \Re(s_1), \Re(s_2) < 3/4$,
\[\frac{1}{\vol(B_R)} \int_{B_R(w)} E(z,s_1) E(z,s_2) \, d\mu(z)\]
is equal to
\begin{multline*}
h_R\left(i\left(s_1 + s_2 - \frac{1}{2}\right)\right) E(w,s_1 + s_2)	\\
+ h_R\left(i\left(\frac{1}{2} - s_1 + s_2\right)\right) \frac{\Lambda(2 - 2s_1)}{\Lambda(2s_1)} E(w,1 - s_1 + s_2)	\\
+ h_R\left(i\left(\frac{1}{2} + s_1 - s_2\right)\right) \frac{\Lambda(2 - 2s_2)}{\Lambda(2s_2)} E(w,1 + s_1 - s_2)	\\
+ h_R\left(i\left(\frac{3}{2} - s_1 - s_2\right)\right) \frac{\Lambda(2 - 2s_1) \Lambda(2 - 2s_2)}{\Lambda(2s_1) \Lambda(2s_2)} E(w,2 - s_1 - s_2)	\\
+ \sum_{f \in \BB_0(\Gamma)} h_R(t_f) f(w) \left\langle E(\cdot,s_1) E(\cdot,s_2), f \right\rangle	\\
+ \frac{1}{4\pi} \int_{-\infty}^{\infty} h_R(t) E\left(w, \frac{1}{2} + it\right) \left\langle E(\cdot,s_1) E(\cdot,s_2), E\left(\cdot, \frac{1}{2} + it\right) \right\rangle_{\reg} \, dt.
\end{multline*}
\end{lemma}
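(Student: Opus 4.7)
The approach is to regularise the non-square-integrable product $F(z) \defeq E(z,s_1) E(z,s_2)$ by subtracting an explicit combination $\EE(z)$ of Eisenstein series, chosen so that $F - \EE \in L^2(\Gamma \backslash \Hb)$, and then apply Parseval's identity to the residue. The constant Fourier coefficient of $F$ at the cusp at infinity is
\[(y^{s_1} + \varphi(s_1) y^{1-s_1})(y^{s_2} + \varphi(s_2) y^{1-s_2}) = y^{s_1+s_2} + \varphi(s_2) y^{1+s_1-s_2} + \varphi(s_1) y^{1-s_1+s_2} + \varphi(s_1) \varphi(s_2) y^{2-s_1-s_2},\]
which motivates taking
\[\EE(z) \defeq E(z, s_1+s_2) + \varphi(s_2) E(z, 1+s_1-s_2) + \varphi(s_1) E(z, 1-s_1+s_2) + \varphi(s_1) \varphi(s_2) E(z, 2-s_1-s_2).\]
Since each summand $E(\cdot, \alpha)$ contributes $y^\alpha + \varphi(\alpha) y^{1-\alpha}$ to the constant term at the cusp, the constant term of $F - \EE$ reduces to powers $y^{1-s_1-s_2}$, $y^{s_1-s_2}$, $y^{s_2-s_1}$, $y^{s_1+s_2-1}$, whose exponents all have real part strictly less than $1/2$ throughout the strip $1/2 < \Re(s_1), \Re(s_2) < 3/4$. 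Combined with rapid decay of the nonzero Fourier modes, this gives $F - \EE = O(y^{1/2-\delta})$ for some $\delta > 0$, so $F - \EE \in L^2(\Gamma \backslash \Hb)$. Moreover, each parameter $\alpha$ appearing in $\EE$ has $\Re(\alpha) > 1/2$ in this strip, so $\EE$ is exactly the combination prescribed by Zagier's regularisation.

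I would then split
\[\frac{1}{\vol(B_R)} \int_{B_R(w)} F(z) \, d\mu(z) = \frac{1}{\vol(B_R)} \int_{B_R(w)} \EE(z) \, d\mu(z) + \frac{1}{\vol(B_R)} \int_{B_R(w)} (F - \EE)(z) \, d\mu(z).\]
The first piece is evaluated termwise by \hyperref[heigenlemma]{Lemma \ref*{heigenlemma}}: each Eisenstein series $E(\cdot, \alpha)$ is a Laplacian eigenfunction with spectral parameter $t = i(1/2 - \alpha)$, so its ball-average equals $h_R(i(1/2 - \alpha)) E(w, \alpha)$. Using the evenness of $h_R$ in $t$ (immediate from its integral representation via $r \mapsto -r$), the four summands reproduce precisely the first four terms of the statement. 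For the second piece, unfolding the automorphic kernel gives
\[\frac{1}{\vol(B_R)} \int_{B_R(w)} (F - \EE)(z) \, d\mu(z) = \int_{\Gamma \backslash \Hb} (F - \EE)(z) K_R(z, w) \, d\mu(z),\]
to which I would apply Parseval's identity (\hyperref[spectralexpansionlemma]{Lemma \ref*{spectralexpansionlemma}}) together with the spectral expansion of $K_R(\cdot, w)$, whose coefficients against the eigenbasis are supplied again by \hyperref[heigenlemma]{Lemma \ref*{heigenlemma}}.

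The identification of the Parseval coefficients yields the final two terms. For cusp forms $f$, one has $\langle F - \EE, f \rangle = \langle F, f \rangle = \langle E(\cdot, s_1) E(\cdot, s_2), f \rangle$ by orthogonality of the Eisenstein and cuspidal spectra, with absolute convergence of the right-hand side following from the rapid decay of $f$ at the cusp. For the continuous spectrum, $\langle F - \EE, E(\cdot, 1/2+it) \rangle$ agrees with the Zagier-regularised inner product $\langle E(\cdot, s_1) E(\cdot, s_2), E(\cdot, 1/2+it) \rangle_{\reg}$ essentially by definition of $\EE$. The main subtle point is the bookkeeping for the constant-function component of the Parseval expansion of $K_R$ alongside the apparent simple poles of $E(w, s_1+s_2)$ and $E(w, 2-s_1-s_2)$ at $s_1 + s_2 = 1$: using $\varphi(s)\varphi(1-s) = 1$ together with $h_R(i/2) = 1$, these residues cancel between the first and fourth terms, consistent with the regularity of the left-hand side, and the resulting identity extends by analytic continuation throughout $1/2 < \Re(s_1), \Re(s_2) < 3/4$.
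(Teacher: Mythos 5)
Your overall approach is the same as the paper's: subtract the Zagier regulator $\EE(z)$ from $F(z) = E(z,s_1)E(z,s_2)$ to land in $L^2$, apply Parseval's identity to $\langle F-\EE, K_R(\cdot,w)\rangle$, and recover the four regulator terms from the ball-average of Eisenstein eigenfunctions via \hyperref[heigenlemma]{Lemma \ref*{heigenlemma}}. The choice of $\EE$, the growth estimate $F-\EE = O(y^{1/2-\delta})$, the evenness of $h_R$, and the cuspidal identification $\langle F-\EE,f\rangle = \langle F,f\rangle$ are all correct and match the paper.

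However, two non-trivial identifications are glossed over. First, Parseval's expansion of $K_R(\cdot,w)$ includes the constant-function term $\langle F-\EE, f_0\rangle\, f_0(w)$, which must be shown to vanish, since no such term appears on the right-hand side of the lemma. Your remark about poles of $E(w,s_1+s_2)$ and $E(w,2-s_1-s_2)$ cancelling at $s_1+s_2=1$ does not address this: in the strip $1/2 < \Re(s_1),\Re(s_2) < 3/4$ one has $\Re(s_1+s_2)>1$, so those Eisenstein series are away from their pole and $\langle F-\EE,1\rangle$ is a genuine (a priori nonzero) number to be computed. The paper establishes $\langle F-\EE,1\rangle = 0$ by invoking Zagier's vanishing theorem for the regularised integral of $F$ (his Equation~(36)). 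Second, the identification of $\langle F-\EE, E(\cdot,1/2+it)\rangle$ with the regularised triple product $\langle E(\cdot,s_1)E(\cdot,s_2), E(\cdot,1/2+it)\rangle_{\reg}$ is not ``by definition of $\EE$'': the regulator $\EE$ is chosen so that $F - \EE \in L^2$, not so that $F\,\overline{E(\cdot,1/2+it)}$ minus a regulator is integrable. One must introduce a second regulator $\EE'$ adapted to the triple product, observe that $(F-\EE)\overline{E(\cdot,1/2+it)} = (F\overline{E}-\EE') - (\EE\overline{E}-\EE')$, and then show that the integral of $\EE\overline{E}-\EE'$ vanishes (again via Zagier). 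Without these two arguments the derivation, while following the right strategy, does not close.
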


\begin{proof}
Let $F(z) \defeq E(z,s_1) E(z,s_2)$ and let
\begin{multline*}
\EE(z) \defeq E(z,s_1 + s_2) + \frac{\Lambda(2 - 2s_1)}{\Lambda(2s_1)} E(z,1 - s_1 + s_2)	\\
+ \frac{\Lambda(2 - 2s_2)}{\Lambda(2s_2)} E(z,1 + s_1 - s_2) + \frac{\Lambda(2 - 2s_1) \Lambda(2 - 2s_2)}{\Lambda(2s_1) \Lambda(2s_2)} E(z,2 - s_1 - s_2).
\end{multline*}
Since the constant term of $F(z)$ is
\[y^{s_1 + s_2} + \frac{\Lambda(2 - 2s_1)}{\Lambda(2s_1)} y^{1 - s_1 + s_2} + \frac{\Lambda(2 - 2s_2)}{\Lambda(2s_2)} y^{1 + s_1 - s_2} + \frac{\Lambda(2 - 2s_1) \Lambda(2 - 2s_2)}{\Lambda(2s_1) \Lambda(2s_2)} y^{2 - s_1 - s_2},\]
we have that $F(z) - \EE(z) = O(y^{1/2 - \delta})$ for some $\delta > 0$ at the cusp at infinity, and consequently $F - \EE \in L^2\left(\Gamma \backslash \Hb\right)$. \hyperref[spectralexpansionlemma]{Lemmata \ref*{spectralexpansionlemma}} (namely Parseval's identity) and \ref{heigenlemma} then imply that
\begin{multline*}
\langle F - \EE, K_R(\cdot,w) \rangle = \frac{\langle F - \EE, 1 \rangle}{\vol\left(\Gamma \backslash \Hb\right)} + \sum_{f \in \BB_0(\Gamma)} h_R(t_f) f(w) \langle F - \EE, f \rangle	\\
+ \frac{1}{4\pi} \int_{-\infty}^{\infty} h_R(t) E\left(w, \frac{1}{2} + it\right) \left\langle F - \EE, E\left(\cdot, \frac{1}{2} + it\right) \right\rangle \, dt.
\end{multline*}

The left-hand side is equal to $\langle F, K_R(\cdot,w) \rangle - \langle \EE, K_R(\cdot,w) \rangle$, and \hyperref[heigenlemma]{Lemma \ref*{heigenlemma}} allows us to calculate $\langle \EE, K_R(\cdot,w) \rangle$ explicitly. On the right-hand side, the inner product $\langle \EE, f \rangle$ vanishes whenever $f \in \BB_0(\Gamma)$, being the linear combination of inner products of Eisenstein series with a cusp form, and similarly $\langle F - \EE, 1 \rangle$ vanishes via \cite[Equation (36) and Section 2]{Zag}. Finally, we claim that the inner product $\langle F - \EE, E\left(\cdot, \frac{1}{2} + it\right) \rangle$ is equal to
\[\frac{\Lambda\left(s_2 - s_1 + \frac{1}{2} + it\right) \Lambda\left(s_1 + s_2 - \frac{1}{2} + it\right) \Lambda\left(s_2 - s_1 + \frac{1}{2} - it\right) \Lambda\left(s_1 + s_2 - \frac{1}{2} - it\right)}{\Lambda\left(2s_1\right) \Lambda\left(2s_2\right) \Lambda\left(1 - 2it\right)}.\]
Indeed, we may add and subtract a linear combination of Eisenstein series $\EE'$ such that both $F E\left(\cdot, 1/2 - it\right) - \EE'$ and $\EE E\left(\cdot, 1/2 - it\right) - \EE'$ are integrable. Then the integral of $\EE E\left(\cdot, 1/2 + it\right) - \EE'$ vanishes via \cite[Equation (36) and Section 2]{Zag}, and the integral of $F E\left(\cdot, 1/2 + it\right) - \EE'$ is equal to the desired product of completed zeta functions via \cite[Equation (44)]{Zag}.
\end{proof}

We now define
\begin{equation}\label{D(g;w)}
D(g;w) \defeq \frac{2}{\vol\left(\Gamma \backslash \Hb\right)} \left(2\Re\left(\frac{\Lambda'}{\Lambda} \left(1 + 2it_g\right)\right) + 2\gamma_0 - \frac{12 \zeta'(2)}{\pi^2} - \log \left|4 \Im(w) \eta(w)^4\right|\right).
\end{equation}
Here $\gamma_0$ is the Euler--Mascheroni constant and
\[\eta(w) \defeq e\left(\frac{w}{24}\right) \prod_{m = 1}^{\infty} \left(1 - e(mw)\right)\]
denotes the Dedekind eta function; note that $\Im(w)^6 \eta(w)^{24}$ is a Maa\ss{} cusp form of weight $12$ and level $1$ that is nonvanishing outside the single cusp of $\Gamma \backslash \Hb$. That $D(g;w)$ is, in some sense, the ``true'' average of $|E(z,1/2 + it_g)|^2$ on compact sets, rather than
\[\frac{\log \left(\frac{1}{4} + t_g^2\right)}{\vol\left(\Gamma \backslash \Hb\right)},\]
has previously been observed by Young \cite[Section 4.2]{You16} and also Hejhal and Rackner \cite[p.~300]{HejRa}, though in the latter case, their expression does not include the Dedekind eta function.

\begin{proof}[Proof of \texorpdfstring{\hyperref[D(g;w)compactlemma]{Lemma \ref*{D(g;w)compactlemma}}}{Lemma \ref{D(g;w)compactlemma}}]
This follows from \eqref{varphi'varphi}, \eqref{Gamma'Gamma}, and \eqref{zeta'zeta}, together with the fact that $\Im(w)^6 \eta(w)^{24}$ is nonvanishing in $K$.
\end{proof}

We define
\begin{equation}\label{C(g;R;w)}
C(g;R;w) \defeq D(g;w) + \frac{2 i h_R'\left(\frac{i}{2}\right)}{\vol\left(\Gamma \backslash \Hb\right)} + 2\Re\left(h_R\left(2t_g + \frac{i}{2}\right) \frac{\Lambda(1 - 2it_g)}{\Lambda(1 + 2it_g)} E\left(w,1 - 2it_g\right)\right).
\end{equation}

\begin{lemma}\label{smallscaleEisenlemma}
Let $g(z) = E\left(z, 1/2 + it_g\right)$. Then
\begin{multline*}
\frac{1}{\vol(B_R)} \int_{B_R(w)} |g(z)|^2 \, d\mu(z) = C(g;R;w) + \sum_{f \in \BB_0(\Gamma)} h_R(t_f) f(w) \left\langle |g|^2, f \right\rangle	\\
+ \frac{1}{4\pi} \int_{-\infty}^{\infty} h_R(t) E\left(w, \frac{1}{2} + it\right) \left\langle |g|^2, E\left(\cdot, \frac{1}{2} + it\right) \right\rangle_{\reg} \, dt.
\end{multline*}
\end{lemma}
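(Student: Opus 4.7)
The plan is to derive this identity from Lemma \ref{FEElemma} by taking the limit as $s_1 \to 1/2 + it_g$ and $s_2 \to 1/2 - it_g$. Since $\overline{E(z,1/2+it_g)} = E(z,1/2-it_g)$ for real $t_g$, the left-hand side of Lemma \ref{FEElemma} tends to $\vol(B_R)^{-1}\int_{B_R(w)} |g(z)|^2 \, d\mu(z)$. The four explicit ``boundary'' terms on the right will collapse into $C(g;R;w)$, while the cuspidal sum and continuous integral pass to the corresponding expressions involving $|g|^2$.

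First I would treat terms 2 and 3 of Lemma \ref{FEElemma}, for which the limit is immediate. At $(s_1,s_2) = (1/2 + it_g, 1/2 - it_g)$, one has $1/2 \mp s_1 \pm s_2 = 1/2 \mp 2it_g$, so these two terms are complex conjugates of each other, and their sum is
\[2\Re\left(h_R\left(2t_g + \tfrac{i}{2}\right) \frac{\Lambda(1-2it_g)}{\Lambda(1+2it_g)} E\left(w,1-2it_g\right)\right),\]
precisely the final summand in the definition \eqref{C(g;R;w)} of $C(g;R;w)$.

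Next I would handle terms 1 and 4, both of which have simple poles as $s_1 + s_2 \to 1$, coming from $E(w,s_1+s_2)$ and $E(w,2-s_1-s_2)$ respectively. Setting $u \defeq s_1 + s_2 - 1$, I would Taylor expand each factor around $u = 0$, using $h_R(i/2) = 1$ (Lemma \ref{heigenlemma} applied to the constant function) to obtain
\[h_R(i(1/2 \pm u)) = 1 \pm i h_R'(i/2) u + O(u^2).\]
Direct computation of the logarithmic derivatives at $(1/2+it_g, 1/2-it_g)$ shows that both partial derivatives of $\log$ of the Lambda ratio in term 4 equal $-4\Re(\Lambda'/\Lambda)(1+2it_g)$, so that this ratio depends on $u$ only to leading order:
\[\frac{\Lambda(2-2s_1)\Lambda(2-2s_2)}{\Lambda(2s_1)\Lambda(2s_2)} = 1 - 4u \Re\frac{\Lambda'}{\Lambda}(1+2it_g) + O(u^2).\]
Kronecker's limit formula applied to $2\zeta(2s)E(z,s) = \sum_{(m,n)\neq (0,0)} y^s|m+nz|^{-2s}$ yields the Laurent expansion
\[E(w,1 \pm u) = \pm \frac{1/\vol(\Gamma\backslash\Hb)}{u} + c_0(w) + O(u),\]
with
\[c_0(w) = \frac{2}{\vol(\Gamma\backslash\Hb)}\left(\gamma_0 - \tfrac{1}{2}\log|4\Im(w)\eta(w)^4|\right) - \frac{36 \zeta'(2)}{\pi^3};\]
the $\zeta'(2)$ term arises from multiplying the Taylor expansion of $2\zeta(2s)$ at $s=1$ against the simple pole of $E(z,s)$. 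The $u^{-1}$ singularities then cancel between terms 1 and 4, and the constant coefficients sum to
\[2c_0(w) + \frac{2i h_R'(i/2)}{\vol(\Gamma\backslash\Hb)} + \frac{4\Re(\Lambda'/\Lambda)(1+2it_g)}{\vol(\Gamma\backslash\Hb)} = D(g;w) + \frac{2i h_R'(i/2)}{\vol(\Gamma\backslash\Hb)},\]
which together with the contribution from terms 2 and 3 reconstitutes $C(g;R;w)$ exactly.

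For the spectral expansion, the inner products $\langle E(\cdot,s_1) E(\cdot,s_2), f\rangle$ and $\langle E(\cdot,s_1) E(\cdot,s_2), E(\cdot,1/2+it)\rangle_{\reg}$ are analytic on a neighborhood of $(1/2+it_g, 1/2-it_g)$ in $(s_1,s_2)$ and converge to $\langle |g|^2, f\rangle$ and $\langle |g|^2, E(\cdot,1/2+it)\rangle_{\reg}$ respectively; termwise passage to the limit is justified by the factorisations of Propositions \ref{innerproductEisenasympprop} and \ref{innerproductEisen3asympprop} combined with the rapid decay of $h_R(t)$ that ensures absolute convergence uniformly in a neighbourhood of the limit point. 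The main obstacle is the bookkeeping for the pole cancellation between terms 1 and 4, as one must verify that the various $O(u^2)$ cross-terms in the Taylor expansions do not pollute the $u^0$ coefficient, and that the Kronecker constant $c_0(w)$ is computed with the correct normalisation so as to produce both the Dedekind eta contribution and the $\zeta'(2)/\pi^2$ term present in $D(g;w)$.
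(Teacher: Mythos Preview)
Your proof is correct and follows essentially the same approach as the paper: specialise Lemma~\ref{FEElemma} along the path $s_1 = 1/2+it_g+\e$, $s_2 = 1/2-it_g+\e$ (so your $u=2\e$), expand $h_R$, the $\Lambda$-ratio, and $E(w,1\pm u)$ to first order in $\e$ using Kronecker's limit formula, and observe that the simple poles cancel while the constant terms assemble into $C(g;R;w)$. The paper's proof records exactly these three Taylor expansions and nothing more; your write-up simply fills in the bookkeeping that the paper leaves implicit.
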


\begin{proof}
This follows from \hyperref[FEElemma]{Lemma \ref*{FEElemma}} upon taking $s_1 = 1/2 + it_g + \e$ and $s_2 = 1/2 - it_g + \e$ and using the expansions
\begin{align*}
h_R\left(i\left(\frac{1}{2} + 2\e\right)\right) & = 1 + 2i h_R'\left(\frac{i}{2}\right) \e + O(\e^2),	\\
\frac{\Lambda(1 - 2it_g - 2\e) \Lambda(1 + 2it_g - 2\e)}{\Lambda(1 + 2it_g + 2\e) \Lambda(1 - 2it_g + 2\e)} & = 1 - 8 \Re \left(\frac{\Lambda'}{\Lambda} \left(1 + 2it_g\right)\right) \e + O(\e^2),	\\
\vol\left(\Gamma \backslash \Hb\right) E(w,1 + 2\e) & = \frac{1}{2\e} + 2\gamma_0 - \log \left|4 \Im(w) \eta(w)^4\right| - \frac{12 \zeta'(2)}{\pi^2} + O(\e),
\end{align*}
where the last line is the Kronecker limit formula.
\end{proof}

With this in hand, we can finally give the spectral expansion of $\Var(g;R)$.

\begin{proposition}\label{Var(g;R)Eisenspectralprop}
Let $g(z) = E\left(z, 1/2 + it_g\right)$. Then $\Var(g;R)$ is equal to
\[\sum_{f \in \BB_0(\Gamma)} \left|h_R(t_f)\right|^2  \left|\left\langle |g|^2, f \right\rangle \right|^2 + \frac{1}{4\pi} \int_{-\infty}^{\infty} \left|h_R(t)\right|^2 \left|\left\langle |g|^2, E\left(\cdot, \frac{1}{2} + it\right) \right\rangle_{\reg} \right|^2 \, dt.\]
\end{proposition}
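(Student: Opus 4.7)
The plan is to derive the spectral expansion of $\Var(g;R)$ as a direct consequence of \hyperref[smallscaleEisenlemma]{Lemma \ref*{smallscaleEisenlemma}} together with Parseval's identity, mirroring the strategy used in the proof of \hyperref[Var(g;R)cuspspectralprop]{Proposition \ref*{Var(g;R)cuspspectralprop}}. Set
\[\Phi(w) \defeq \frac{1}{\vol(B_R)} \int_{B_R(w)} |g(z)|^2 \, d\mu(z) - C(g;R;w).\]
By \hyperref[smallscaleEisenlemma]{Lemma \ref*{smallscaleEisenlemma}}, the function $\Phi$ admits the identity
\[\Phi(w) = \sum_{f \in \BB_0(\Gamma)} h_R(t_f) f(w) \left\langle |g|^2, f \right\rangle + \frac{1}{4\pi} \int_{-\infty}^{\infty} h_R(t) E\left(w, \tfrac{1}{2} + it\right) \left\langle |g|^2, E\left(\cdot, \tfrac{1}{2} + it\right) \right\rangle_{\reg} dt.\]

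The next step is to recognise the right-hand side as the spectral expansion of $\Phi$ in $L^2(\Gamma \backslash \Hb)$. The subtraction of $C(g;R;w)$ is designed precisely to remove the non-$L^2$ contributions that arise in \hyperref[FEElemma]{Lemma \ref*{FEElemma}} from the residual spectral data — the four Eisenstein series $E(w, s_1 + s_2)$, $E(w, 1 \pm (s_1 - s_2))$, and $E(w, 2 - s_1 - s_2)$ — after passing to the limit $s_1 \to 1/2 + it_g$, $s_2 \to 1/2 - it_g$, as encoded via the Kronecker limit formula; consequently $\Phi \in L^2(\Gamma \backslash \Hb)$ and its constant spectral coefficient $\langle \Phi, f_0 \rangle$ vanishes. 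The cuspidal coefficients are $h_R(t_f) \langle |g|^2, f \rangle$ and the continuous-spectrum coefficients are $h_R(t) \langle |g|^2, E(\cdot, 1/2 + it) \rangle_{\reg}$; these are valid because, as in the derivation of \hyperref[smallscaleEisenlemma]{Lemma \ref*{smallscaleEisenlemma}} itself, $\EE$ is a linear combination of Eisenstein series and hence orthogonal to every cusp form $f$, so the subtraction does not alter the cuspidal inner products, while for the continuous spectrum the regularised inner product is precisely the one computed via the Zagier regularisation.

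Applying Parseval's identity from \hyperref[spectralexpansionlemma]{Lemma \ref*{spectralexpansionlemma}} to $\Phi \in L^2(\Gamma \backslash \Hb)$ then yields
\[\Var(g;R) = \int_{\Gamma \backslash \Hb} |\Phi(w)|^2 \, d\mu(w) = \sum_{f \in \BB_0(\Gamma)} |h_R(t_f)|^2 \left|\langle |g|^2, f \rangle\right|^2 + \frac{1}{4\pi} \int_{-\infty}^{\infty} |h_R(t)|^2 \left|\langle |g|^2, E(\cdot, \tfrac{1}{2} + it) \rangle_{\reg}\right|^2 dt,\]
which is exactly the desired identity.

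The main obstacle is the verification that $\Phi \in L^2(\Gamma \backslash \Hb)$, since the divergent $|E(\cdot, 1/2 + it_g)|^2$ is not a priori $L^2$. This is the reason for introducing $C(g;R;w)$: it is the limit (in the sense made precise by Zagier's regularisation) of those terms in \hyperref[FEElemma]{Lemma \ref*{FEElemma}} whose Eisenstein series have real part at or below $1/2$ as $s_1, s_2$ approach $1/2 \pm it_g$, and their subtraction produces a rapidly decaying function at the cusp. Once this square-integrability is in hand, the remainder of the proof is a routine application of Parseval's identity, and no delicate convergence issues remain.
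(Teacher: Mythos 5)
Your proof is correct and takes essentially the same route as the paper, which derives the identity directly from Lemma \ref{smallscaleEisenlemma} followed by an application of Parseval's identity (Lemma \ref{spectralexpansionlemma}); you simply spell out the intermediate steps --- defining $\Phi$, observing that $C(g;R;w)$ removes the non-$L^2$ residual Eisenstein terms so $\Phi \in L^2(\Gamma\backslash\Hb)$ with vanishing constant coefficient, and then squaring and integrating --- which the paper leaves implicit.
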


\begin{proof}
This follows directly from \hyperref[smallscaleEisenlemma]{Lemma \ref*{smallscaleEisenlemma}} after an application of Parseval's identity in \hyperref[spectralexpansionlemma]{Lemma \ref*{spectralexpansionlemma}}.
\end{proof}

\begin{proof}[Proof of \texorpdfstring{\hyperref[shrinkingsetuncondprop]{Proposition \ref*{shrinkingsetuncondprop}}}{Proposition \ref{shrinkingsetuncondprop}}]
We use \hyperref[Var(g;R)Eisenspectralprop]{Propositions \ref*{Var(g;R)Eisenspectralprop}} and \ref{innerproductEisenasympprop} and \hyperref[hR(t)asymplemma]{Lemmata \ref*{hR(t)asymplemma}} and \ref{Gammafactorslemma}. We then divide the spectral expansion in \hyperref[Var(g;R)Eisenspectralprop]{Proposition \ref*{Var(g;R)Eisenspectralprop}} into various ranges.

The two ranges of the continuous spectrum are:
\begin{itemize}
\item the initial range $0 \leq |t| < 2t_g + t_g^{\delta}$, and
\item the tail range $|t| > 2t_g + t_g^{\delta}$.
\end{itemize}
The cuspidal spectrum can be broken into five ranges, which depend on a small fixed parameter $0 < \delta' < 1 - \delta$:
\begin{itemize}
\item the short initial range $0 < t_f \leq t_g^{\delta}$,
\item the short initial polynomial decay range $t_g^{\delta} < t_f < t_g^{1 - \delta'}$,
\item the bulk polynomial decay range $t_g^{1 - \delta'} \leq t_f \leq 2t_g - t_g^{\delta}$,
\item the short transition polynomial decay range $2t_g - t_g^{\delta} < t_f < 2t_g + t_g^{\delta}$,
\item the tail range $t_f \geq 2t_g + t_g^{\delta}$.
\end{itemize}

Thus $\Var(g;R)$ is bounded by a constant multiple dependent on $\e$ of
\begin{multline*}
t_g^{-1 + \e} \sum_{0 < t_f \leq t_g^{\delta}} \frac{L\left(\frac{1}{2}, f\right)^2 \left|L\left(\frac{1}{2} + 2it_g, f\right) \right|^2}{t_f L\left(1, \sym^2 f\right)}	\\
+ t_g^{3\delta - 1 + \e} \sum_{t_g^{\delta} < t_f < t_g^{1 - \delta'}} \frac{L\left(\frac{1}{2}, f\right)^2 \left|L\left(\frac{1}{2} + 2it_g, f\right) \right|^2}{t_f^4 L\left(1, \sym^2 f\right)}	\\
+ t_g^{3\delta - \frac{1}{2} + \e} \sum_{t_g^{1 - \delta'} \leq t_f \leq 2t_g - t_g^{\delta}} \frac{L\left(\frac{1}{2}, f\right)^2 \left|L\left(\frac{1}{2} + 2it_g, f\right) \right|^2}{t_f^4 (1 + |2t_g - t_f|)^{1/2} L\left(1, \sym^2 f\right)}	\\
+ t_g^{3\delta - \frac{9}{2} + \e} \sum_{2t_g - t_g^{\delta} < t_f < 2t_g + t_g^{\delta}} \frac{L\left(\frac{1}{2}, f\right)^2 \left|L\left(\frac{1}{2} + 2it_g, f\right) \right|^2}{(1 + |2t_g - t_f|)^{1/2} L\left(1, \sym^2 f\right)}	\\
+ t_g^{3\delta + \e} \sum_{t_f \geq 2t_g + t_g^{\delta}} e^{-\pi (t_f - 2t_g)} \frac{L\left(\frac{1}{2}, f\right)^2 \left|L\left(\frac{1}{2} + 2it_g, f\right) \right|^2}{t_f^{\frac{9}{2}} (1 + t_f - 2t_g)^{1/2} L\left(1, \sym^2 f\right)}	\\
+ t_g^{-\frac{1}{2} + \e} \int_{0}^{2t_g + t_g^{\delta}} \frac{\left| \zeta\left(\frac{1}{2} + i(2t_g + t)\right) \zeta\left(\frac{1}{2} + it\right)^2 \zeta\left(\frac{1}{2} + i(2t_g - t)\right) \right|^2}{(1 + t) (1 + |2t_g - t|)^{1/2} \left|\zeta\left(1 - 2it\right)\right|^2} \, dt	\\
+ t_g^{-\frac{1}{2} + \e} \int_{2t_g + t_g^{\delta}}^{\infty} e^{-\pi (t - 2t_g)} \frac{\left| \zeta\left(\frac{1}{2} + i(2t_g + t)\right) \zeta\left(\frac{1}{2} + it\right)^2 \zeta\left(\frac{1}{2} + i(2t_g - t)\right) \right|^2}{(1 + t) (1 + |2t_g - t|)^{1/2} \left|\zeta\left(1 - 2it\right)\right|^2} \, dt.
\end{multline*}
The continuous spectrum is readily dealt with:
\begin{itemize}
\item From \cite[Proposition 3.4]{Spi} and \cite[Theorem 5]{Bou}, the initial and tail ranges of the continuous spectrum are bounded by a constant multiple dependent on $\e$ of $t_g^{-\frac{13}{84} + \e}$.
\end{itemize}
For the cuspidal spectrum, we have the following:
\begin{itemize}
\item The convexity bounds for $L(1/2, f)$ and $L(1/2 + 2it_g, f)$ show that the tail range is rapidly decaying.
\item The short initial range is bounded by a constant multiple dependent on $\e$ of $t_g^{-\min\left\{1 - \delta, 1/6\right\} + \e}$ upon dividing into dyadic intervals and applying \hyperref[shortinitialrangelemma]{Lemma \ref*{shortinitialrangelemma}}.
\item The same method bounds the short initial polynomial decay range by $t_g^{-\min\left\{\delta', 1/6\right\} + \e}$.
\item For the bulk polynomial decay range, we divide into dyadic intervals and use \hyperref[largesieveinitiallemma]{Lemma \ref*{largesieveinitiallemma}}, which shows that this range is bounded by  $t_g^{-\frac{5}{2}(1 - \delta - \delta') + \e}$.
\item We divide the short transition polynomial decay range into intervals of length $t_g^{1/3}$, use the Cauchy--Schwarz inequality, and apply \hyperref[shorttransitionrangelemma]{Lemma \ref*{shorttransitionrangelemma}}, which gives the bound $t_g^{-\frac{7}{2} (1 - \delta) + \e}$.
\end{itemize}
\hyperref[shrinkingsetuncondprop]{Proposition \ref*{shrinkingsetuncondprop}} is proven upon taking $\delta' = \frac{5}{7} (1 - \delta)$.
\end{proof}

\begin{proof}[Proof of \texorpdfstring{\hyperref[Eisenalmosteverythm]{Theorem \ref*{Eisenalmosteverythm}}}{Theorem \ref{Eisenalmosteverythm}}]
By Chebyshev's inequality and \hyperref[shrinkingsetuncondprop]{Proposition \ref*{shrinkingsetuncondprop}},
\begin{multline*}
\vol\left(\left\{w \in \Gamma \backslash \Hb : \left|\frac{1}{\vol(B_R)} \int_{B_R(w)} |g(z)|^2 \, d\mu(z) - C(g;R;w)\right| > c\right\}\right)	\\
\ll_{\e} \frac{t_g^{-\min\left\{\frac{5}{7} (1 - \delta), \frac{1}{6} \right\} + \e}}{c^2}.
\end{multline*}

Again by Chebyshev's inequality,
\begin{multline*}
\vol\left(\left\{w \in \Gamma \backslash \Hb : \left|h_R\left(2t_g + \frac{i}{2}\right) E\left(w,1 - 2it_g\right)\right| > c\right\}\right)	\\
\leq \vol\left(\left\{w \in \Gamma \backslash \Hb : \Im(w) > T\right\}\right) + \frac{\left|h_R\left(2t_g + \frac{i}{2}\right)\right|^2}{c^2} \int_{\Gamma \backslash \Hb} \left|\Lambda^T E\left(w, 1 - 2it_g\right)\right|^2 \, d\mu(z)
\end{multline*}
for any $T \geq 1$, which, by the Maa\ss{}--Selberg relation \eqref{MaassSelberg}, is equal to
\[\frac{1}{T} + \frac{\left|h_R\left(2t_g + \frac{i}{2}\right)\right|^2}{c^2}\left(T + 2\Re\left(\frac{\Lambda(1 - 4it_g)}{\Lambda(2 - 4it_g)} \frac{T^{4it_g}}{4it_g}\right) + \left|\frac{\Lambda(1 - 4it_g)}{\Lambda(2 - 4it_g)}\right|^2 \frac{1}{T}\right).\]
Using stationary phase as in the proof of \hyperref[hR(t)asymplemma]{Lemma \ref*{hR(t)asymplemma}}, or alternatively using \cite[Lemma 2.4]{Cha96}, we have that $|h_R\left(2t_g + \frac{i}{2}\right)|^2 \ll t_g^{-3(1 - \delta)}$, while Stirling's approximation implies that
\[\frac{\Lambda(1 - 2it_g)}{\Lambda(2 - 4it_g)} \ll_{\e} t_g^{-\frac{1}{2} + \e}.\]

Next, we note that
\[ih_R'\left(\frac{i}{2}\right) = \frac{R^2}{\pi} \int_{-1}^{1} r \sqrt{1 - \left(\frac{\sinh \frac{Rr}{2}}{\sinh \frac{R}{2}}\right)^2} \frac{\sinh \frac{Rr}{2}}{\sinh \frac{R}{2}} \, dr \sim \frac{R^2}{8} \asymp t_g^{-2\delta},\]
so if $c \gg_{\e} t_g^{-2\delta + \e}$, then for all sufficiently large $t_g$,
\[\left| \frac{2 i h_R'\left(\frac{i}{2}\right)}{\vol\left(\Gamma \backslash \Hb\right)} \right| < c.\]

So piecing everything together, we find that if $c \gg_{\e} t_g^{-2\delta + \e}$,
\begin{multline*}
\vol\left(\left\{w \in \Gamma \backslash \Hb : \left|\frac{1}{\vol(B_R)} \int_{B_R(w)} |g(z)|^2 \, d\mu(z) - D(g;w)\right| > c\right\}\right)	\\
\ll_{\e} \frac{t_g^{-\frac{5}{7} (1 - \delta) + \e}}{c^2} + \frac{t_g^{-\frac{1}{6} + \e}}{c^2} + \frac{1}{T} + \frac{t_g^{-3(1 - \delta)} T}{c^2}.
\end{multline*}
Taking $T = c t_g^{\frac{3}{2} (1 - \delta)}$ yields the result.
\end{proof}

\section{Equidistribution of Geometric Invariants of Quadratic Fields}
\label{section6}

\subsection{Geometric Invariants of Quadratic Fields}\label{geominvsect}

Let $K = \Q(\sqrt{D})$ be a quadratic field of discriminant $D$. We denote by $h_K^+ \defeq \# \Cl_K^+$ the narrow class number of $K$ and $h_K \defeq \# \Cl_K$ the (wide) class number of $K$; note that $\Cl_K^+ = \Cl_K$, so that $h_K^+ = h_K$, except when $D > 1$ and $\OO_K^{\times}$ contains no elements of norm $-1$, in which case $h_K^+ = 2h_K$. Each narrow ideal class $A$ of $\Cl_K^+$ is associated to an $\SL_2(\Z)$-equivalence class of binary quadratic forms $Q(x,y) = ax^2 + bxy + cy^2$ of discriminant $D$.

Associated to equivalence classes of binary quadratic forms are geometric invariants: if $D < 0$, this is a Heegner point $z_A \in \Gamma \backslash \Hb$, while if $D > 0$, these are a closed geodesic $\CC_A \subset \Gamma \backslash \Hb$ and a hyperbolic orbifold $\Gamma_A \backslash \NN_A$ whose boundary is $\CC_A$. This last geometric invariant was introduced by Duke, Imamo\={g}lu, and T\'{o}th in \cite{DIT}.

\subsubsection{Heegner Points \texorpdfstring{$z_A$}{z\9040\220}}

Given a binary quadratic form $Q(x,y) = ax^2 + bxy + cy^2$ of discriminant $b^2 - 4ac = D < 0$, the point
\[z = \frac{-b + i \sqrt{-D}}{2a}\]
lies in $\Hb$. The equivalence class of binary quadratic forms containing $Q(x,y)$, and hence the corresponding ideal class $A \in \Cl_K$, thereby corresponds to a point $z = z_A$ in $\Gamma \backslash \Hb$, which we call a Heegner point.

\subsubsection{Closed Geodesics \texorpdfstring{$\CC_A$}{C\9040\220}}

Given a binary quadratic forms $Q(x,y) = ax^2 + bxy + cy^2$ of discriminant $b^2 - 4ac = D > 0$, the points
\[\frac{-b \pm \sqrt{D}}{2a}\]
determine the endpoints of a closed geodesic in $\Hb$. The equivalence class of binary quadratic forms containing $Q(x,y)$ thereby corresponds to a closed geodesic $\CC = \CC_A$ in $\Gamma \backslash \Hb$. The length
\[\ell(\CC_A) \defeq \int_{\CC_A} \, ds\]
of $\CC_A$, with $ds^2 = y^{-2} dx^2 + y^{-2} dy^2$, is equal to $2 \log \epsilon_K^+$, where $\epsilon_K^+ > 1$ is the smallest unit with positive norm in the ring of integers $\OO_K$ of $K$, so that $\epsilon_K^+ = (x + y \sqrt{D})/2$ with $(x,y) \in \R_+^2$ the fundamental solution to the Pell equation $x^2 - D y^2 = 4$. Note that $\epsilon_K^+$ is equal to $\epsilon_K$, the fundamental unit of $K$, if $\OO_K^{\times}$ contains no elements of norm $-1$, whereas $\epsilon_K^+ = \epsilon_K^2$ if $\OO_K^{\times}$ does contain elements of norm $-1$.

\subsubsection{Hyperbolic Orbifolds \texorpdfstring{$\Gamma_A \backslash \NN_A$}{\textGamma\9040\220 \textbackslash N\9040\220}}

Let $K = \Q(\sqrt{D})$ be a real quadratic field of discriminant $D > 1$. Associated to a narrow ideal class $A \in \Cl_K^+$ is an invariant $((n_1,\ldots,n_{\ell_A}))$, where $\ell_A$ is a positive integer and $n_1,\ldots,n_{\ell_A}$ are integers; this is the primitive cycle, unique up to cyclic permutations, occurring in the minus continued fraction expansion of each point $w \in K$ for which $1 > w > \sigma(w) > 0$ and $w\Z + \Z \in A$. We define the elements
\[S \defeq \pm \begin{pmatrix} 0 & 1 \\ -1 & 0 \end{pmatrix}, \qquad T \defeq \pm \begin{pmatrix} 1 & 1 \\ 0 & 1 \end{pmatrix}\]
of $\PSL_2(\Z)$, which generate $\PSL_2(\Z)$ as the free product of $S$ and $T$. For each $k \in \{1, \ldots, \ell_A\}$, define
\[S_k \defeq T^{n_1 + \cdots + n_k} S T^{-n_1 - \cdots - n_k}.\]
This is an elliptic element of order $2$ in $\PSL_2(\Z)$. We set
\[\Gamma_A \defeq \left\langle S_1, \cdots, S_{\ell_A}, T^{n_1 + \cdots + n_{\ell_A}} \right\rangle,\]
which is a thin subgroup of $\PSL_2(\Z)$. The Nielsen region $\NN_A$ of $\Gamma_A$ is the smallest nonempty $\PSL_2(\Z)$-invariant open convex subset of $\Hb$. Then $\Gamma_A \backslash \NN_A$ is a hyperbolic orbifold, which naturally projects onto $\Gamma \backslash \Hb$. The boundary of $\Gamma_A \backslash \NN_A$ is a simple closed geodesic whose image in $\Gamma \backslash \Hb$ is $\CC_A$, and the volume of $\Gamma_A \backslash \NN_A$ is $\pi \ell_A$.

\begin{remark}
In fact, $\Gamma_A$ depends on the choice of $w$. The resulting hyperbolic orbifold $\Gamma_A \backslash \NN_A$ ends up being only unique up to translation; however, the projection of $\Gamma_A \backslash \NN_A$ onto $\Gamma \backslash \Hb$ is independent of the choice of $w$.
\end{remark}

\subsection{Weyl Sums}

\subsubsection{Variances and Weyl Sums}

We define
\begin{align*}
& \Var\left(G_K(z_A);R\right) \defeq \int_{\Gamma \backslash \Hb} \left(\frac{\#\left\{A \in G_K : z_A \in B_R(w)\right\}}{\vol\left(B_R\right) \# G_K} - \frac{1}{\vol\left(\Gamma \backslash \Hb\right)}\right)^2 \, d\mu(w),	\\
& \Var\left(G_K(\CC_A);R\right) \defeq \int_{\Gamma \backslash \Hb} \left(\frac{\sum_{A \in G_K} \ell\left(\CC_A \cap B_R(w)\right)}{\vol\left(B_R\right) \sum_{A \in G_K} \ell\left(\CC_A\right)} - \frac{1}{\vol\left(\Gamma \backslash \Hb\right)}\right)^2 \, d\mu(w),	\\
& \Var\left(G_K(\Gamma_A \backslash \NN_A);R\right)	\\
& \hspace{2.605cm} \defeq \int_{\Gamma \backslash \Hb} \left(\frac{\sum_{A \in G_K} \vol\left(\Gamma_A \backslash \NN_A \cap B_R(w)\right)}{\vol\left(B_R\right) \sum_{A \in G_K} \vol\left(\Gamma_A \backslash \NN_A\right)} - \frac{1}{\vol\left(\Gamma \backslash \Hb\right)}\right)^2 \, d\mu(w).
\end{align*}

The proofs of \hyperref[quadalmosteverycondthm]{Theorems \ref*{quadalmosteverycondthm}} and \ref{quadalmosteveryuncondthm} follow via Chebyshev's inequality from the following two propositions.

\begin{proposition}\label{shrinkingquadcondprop}
Suppose that $R \asymp |D|^{-\delta}$. Assuming the generalised Lindel\"{o}f hypothesis, we have that as $D \to -\infty$ along fundamental discriminants,
\begin{align*}
\Var\left(G_K(z_A);R\right) & \ll_{\e} (-D)^{-\left(\frac{1}{4} - \delta\right) + \e} \quad \text{for $0 < \delta < 1/4$,}
\intertext{while as $D \to \infty$ along fundamental discriminants,}
\Var\left(G_K(\CC_A);R\right) & \ll_{\e} D^{-\left(\frac{1}{2} - \delta\right) + \e} \quad \text{for $0 < \delta < 1/2$.}
\end{align*}
\end{proposition}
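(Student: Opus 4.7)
The plan mirrors the proofs of \hyperref[shrinkingsetcondprop]{Propositions \ref*{shrinkingsetcondprop}} and \hyperref[shrinkingsetuncondprop]{\ref*{shrinkingsetuncondprop}}: apply Chebyshev's inequality to reduce to the variance bounds $\Var(G_K(z_A);R) \ll_\e |D|^{-(1/4-\delta)+\e}$ and $\Var(G_K(\CC_A);R) \ll_\e D^{-(1/2-\delta)+\e}$. For Heegner points, write
$$\frac{\#\{A \in G_K : z_A \in B_R(w)\}}{\vol(B_R)\,\# G_K} = \frac{1}{\# G_K}\sum_{A \in G_K} K_R(z_A, w),$$
and for closed geodesics use the analogous arc-length integral against the automorphic kernel $K_R(z,w)$. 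Substituting the spectral expansion of $K_R(z,w)$ given by \hyperref[spectralexpansionlemma]{Lemma \ref*{spectralexpansionlemma}} and \hyperref[heigenlemma]{Lemma \ref*{heigenlemma}}, subtracting the constant term $1/\vol(\Gamma \backslash \Hb)$, squaring and integrating over $w$ produces, via Parseval's identity, a spectral identity
$$\Var = \sum_{f \in \BB_0(\Gamma)} |h_R(t_f)|^2 |W_f|^2 + \frac{1}{4\pi}\int_{-\infty}^{\infty} |h_R(t)|^2 |W^E_t|^2\, dt,$$
where $W_f$ and $W^E_t$ are the normalised Weyl sums against $f \in \BB_0(\Gamma)$ and against $E(\cdot,1/2+it)$ respectively, with the normalising factor being $\# G_K$ in the Heegner case and $\sum_A \ell(\CC_A)$ in the closed geodesic case.

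Next I would invoke the period identities of Duke--Imamo\={g}lu--T\'{o}th \cite{DIT}, which play the role here that the Watson--Ichino formula played in \hyperref[shrinkingsetcondprop]{Propositions \ref*{shrinkingsetcondprop}} and \hyperref[shrinkingsetuncondprop]{\ref*{shrinkingsetuncondprop}}. Decomposing the indicator of $G_K$ over the genus characters $\psi \in \widehat{\Gen_K}$, each of which corresponds to a factorisation $D = d_1 d_2$ into coprime fundamental discriminants, expresses the absolute value squared of each unnormalised sum $\sum_{A \in \Cl_K^+} \psi(A) f(z_A)$ (and its geodesic analogue) as a product $L(1/2, f \otimes \chi_{d_1}) L(1/2, f\otimes \chi_{d_2})/L(1,\sym^2 f)$ up to explicit archimedean factors. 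Similarly, Hecke's formula expresses the Eisenstein Weyl sum $W^E_t$ through $\zeta(1/2+it) L(1/2+it,\chi_D)/\zeta(1+2it)$ together with a favourable factor of $|D|^{-1/4}$ that makes the Eisenstein contribution smaller than the cuspidal one.

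Assuming the generalised Lindel\"{o}f hypothesis, each central $L$-value is $O_\e((|D|(1+t_f))^\e)$. Combined with the Hoffstein--Lockhart lower bound for $L(1,\sym^2 f)^{-1}$, Siegel's lower bound $\# G_K \gg |D|^{1/2-\e}$ (for $D < 0$), and the class number formula giving $\sum_A \ell(\CC_A) \gg D^{1/2-\e}$ (for $D > 0$), this yields uniform pointwise control on $|W_f|^2$ and $|W^E_t|^2$. The spectral sum is then estimated by splitting at $t_f \asymp 1/R$ (respectively $|t| \asymp 1/R$) using $h_R(t) \ll \min\{1,(R|t|)^{-3/2}\}$ from \hyperref[hR(t)asymplemma]{Lemma \ref*{hR(t)asymplemma}} together with the Weyl law $\#\{f : t_f \leq T\} \sim T^2/12$, which gives $\sum_f |h_R(t_f)|^2 \ll R^{-2}$, and combining these yields the claimed bounds after a routine computation.

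The main obstacle is the careful handling of the genus-character decomposition, necessary because $G_K$ is only a coset of $(\Cl_K^+)^2$ in $\Cl_K^+$ rather than the full narrow class group, together with the matching of the precise archimedean factors in the DIT period identities so as to secure uniformity in the spectral parameter $t_f$. A secondary technical point is the regularisation of the Eisenstein contribution (cf.~\hyperref[innerproductEisen3asympprop]{Proposition \ref*{innerproductEisen3asympprop}}) in the spectral expansion of $\Var(G_K(\CC_A);R)$, which is needed because geodesic periods of Eisenstein series are not absolutely convergent and must be interpreted in Zagier's regularised sense before Parseval's identity can be applied.
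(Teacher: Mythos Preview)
Your approach is correct and mirrors the paper's proof: the spectral expansion of the variance via Parseval (the paper's \hyperref[VarGKprop]{Proposition \ref*{VarGKprop}}), the genus-character decomposition combined with the Duke--Imamo\={g}lu--T\'{o}th period formulae (\hyperref[MaassWeyllemma]{Lemmata \ref*{MaassWeyllemma}} and \ref{EisensteinWeyllemma}), the lower bounds of \hyperref[GKboundlemma]{Lemma \ref*{GKboundlemma}}, and finally the generalised Lindel\"{o}f hypothesis together with the Weyl law after splitting at $t_f \asymp 1/R$ using the decay of $h_R$.

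Two minor corrections, neither of which affects the argument. First, the Eisenstein Weyl sums carry no extra saving in $|D|$ relative to the cuspidal ones: compare \hyperref[MaassWeyllemma]{Lemma \ref*{MaassWeyllemma}} with \hyperref[EisensteinWeyllemma]{Lemma \ref*{EisensteinWeyllemma}}, where in both cases the unnormalised squared sums scale like $\sqrt{|D|}$ (Heegner) or $\sqrt{D/(1/4+t^2)}$ (geodesic), so the continuous and discrete contributions are handled on an equal footing. Second, no Zagier regularisation is needed here: a closed geodesic $\CC_A$ is compact, so $\int_{\CC_A} E(z,1/2+it)\,ds$ converges absolutely, and Parseval applies directly as in the proof of \hyperref[VarGKprop]{Proposition \ref*{VarGKprop}}.
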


\begin{proposition}\label{shrinkingquaduncondprop}
Suppose that $R \asymp |D|^{-\delta}$. Then as $D \to -\infty$ along odd fundamental discriminants,
\begin{align*}
\Var\left(G_K(z_A);R\right) & \ll_{\e} (-D)^{-\left(\frac{1}{12} - \delta\right) + \e} \quad \text{for $0 < \delta < 1/12$,}
\intertext{while as $D \to \infty$ along odd fundamental discriminants,}
\Var\left(G_K(\CC_A);R\right) & \ll_{\e} D^{-\left(\frac{1}{6} - \delta\right) + \e} \quad \text{for $0 < \delta < 1/6$,}	\\
\Var\left(G_K(\Gamma_A \backslash \NN_A);R\right) & \ll_{\e} D^{-\frac{1}{2} + \e} \quad \text{for all $\delta > 0$.}
\end{align*}
\end{proposition}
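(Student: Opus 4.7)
The plan is to follow the spectral variance method of \hyperref[shrinkingsetuncondprop]{Proposition \ref*{shrinkingsetuncondprop}}, with the role of $|\langle|g|^2,f\rangle|^2$ replaced by the squared Weyl sum
\[|W_f^{(j)}(G_K)|^2, \qquad W_f^{(j)}(G_K) \defeq \frac{1}{N_j(G_K)} \sum_{A \in G_K} P_j(f, A),\]
where $P_j(f, A)$ is $f(z_A)$ for $j=1$ (Heegner points), $\int_{\CC_A} f\, ds$ for $j=2$ (closed geodesics), $\int_{\Gamma_A \backslash \NN_A} f\, d\mu$ for $j=3$ (hyperbolic orbifolds), and $N_j(G_K)$ is the corresponding total mass. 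Writing each small-ball counting functional as $\sum_{A \in G_K} N_j(G_K)^{-1} \langle K_R(\cdot,w), P_j(\cdot,A) \rangle$ and applying \hyperref[heigenlemma]{Lemma \ref*{heigenlemma}} together with Parseval's identity, exactly as in \hyperref[Var(g;R)cuspspectralprop]{Proposition \ref*{Var(g;R)cuspspectralprop}}, yields the spectral expansion
\[\Var(G_K(\bullet);R) = \sum_{f \in \BB_0(\Gamma)} |h_R(t_f)|^2 |W_f^{(j)}(G_K)|^2 + \frac{1}{4\pi} \int_{-\infty}^{\infty} |h_R(t)|^2 |W_{E(\cdot,1/2+it)}^{(j)}(G_K)|^2 \, dt.\]

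Next, I invoke the Weyl-sum-to-$L$-function identities of Duke, Imamo\u{g}lu, and T\'oth \cite{DIT}, which play the role of the Watson--Ichino formula in the earlier propositions. For odd fundamental discriminants $D$ these take the shape
\[|W_f^{(j)}(G_K)|^2 \asymp \frac{\Phi_j(t_f)}{|D|^{1/2}} \cdot \frac{L(1/2,f)\, L(1/2, f\otimes\chi_D)}{L(1, \sym^2 f)},\]
with an analogous formula for Eisenstein Weyl sums, with $L(1/2,f)$ replaced by $\zeta(1/2+it)$ and $L(1/2, f\otimes\chi_D)$ by $L(1/2+it, \chi_D)$. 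Here $\chi_D$ is the Kronecker symbol modulo $|D|$, and the oddness of $D$ ensures that no local-at-$2$ correction factors enter. The archimedean weight $\Phi_j$ is extracted via Stirling and satisfies $\Phi_1(t_f) \asymp 1$ for Heegner points, $\Phi_2(t_f) \asymp t_f^{-1}$ for closed geodesics, and $\Phi_3(t_f) \ll t_f^{-3}$ for orbifolds (reflecting successive integration by parts of $\int f\,d\mu$ against the Laplacian eigenvalue, with the boundary terms reducing to geodesic periods).

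Since $|h_R(t_f)|^2 \ll (1 + R t_f)^{-3}$ by \hyperref[hR(t)asymplemma]{Lemma \ref*{hR(t)asymplemma}}, the spectral sum is effectively truncated at $t_f \ll 1/R = |D|^{\delta}$, which is well below the conductor $|D|$ of the quadratic twist. I then bound the resulting dyadic moments of $L$-functions using the Conrey--Iwaniec Weyl-type subconvex bound $L(1/2, f\otimes\chi_D) \ll |D|^{1/3+\e}$, Iwaniec's spectral large sieve, the Hoffstein--Lockhart bound $L(1,\sym^2 f) \gg (\log t_f)^{-1}$, and, for the continuous spectrum, Bourgain's subconvex estimate $\zeta(1/2+it) \ll (1+|t|)^{13/84+\e}$. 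For Heegner points the full range $t_f \leq |D|^{\delta}$ contributes, giving the exponent $\beta_1 = 1/12$; the extra $t_f^{-1}$ saving in $\Phi_2$ improves this to $\beta_2 = 1/6$ for closed geodesics; for orbifolds, $\Phi_3$ already renders the spectral sum $O(|D|^{\e})$ uniformly in $R$, so only the prefactor $|D|^{-1/2}$ survives, which explains why the orbifold bound has no dependence on $\delta$.

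The main obstacle is pinning down the precise shape of $\Phi_j$ in each case, particularly for the orbifold invariant, whose Weyl-sum identity is the most recent of the three and whose archimedean behaviour is the most delicate; this requires combining the formula of \cite{DIT} with Stirling's approximation along the lines of \hyperref[Gammafactorslemma]{Lemma \ref*{Gammafactorslemma}}. Crucially, no short-transition range analogous to that of \hyperref[spectralranglesL4sect]{Section \ref*{spectralranglesL4sect}} arises, because the truncation $t_f \ll |D|^{\delta}$ keeps us safely inside the range where pointwise Conrey--Iwaniec subconvexity applies uniformly, and in particular no hybrid subconvexity in two parameters is needed. This is precisely the feature that makes the Heegner-point and closed-geodesic cases here tractable unconditionally, whereas \hyperref[Maassalmosteverythm]{Theorem \ref*{Maassalmosteverythm}} remained conditional on the generalised Lindel\"of hypothesis.
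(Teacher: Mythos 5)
Your structural skeleton is the same as the paper's: spectral expansion of the variance via \hyperref[VarGKprop]{Proposition \ref*{VarGKprop}}, the Duke--Imamo\={g}lu--T\'{o}th identities rewriting the Weyl sums as products of twisted central $L$-values (\hyperref[MaassWeyllemma]{Lemma \ref*{MaassWeyllemma}} and \hyperref[EisensteinWeyllemma]{Lemma \ref*{EisensteinWeyllemma}}), the archimedean weights $\Phi_1 \asymp 1$, $\Phi_2 \asymp t_f^{-1}$, $\Phi_3 \asymp t_f^{-3}$, and the normalisation via \hyperref[GKboundlemma]{Lemma \ref*{GKboundlemma}}. Your observation that no transition-range analysis is needed here is also correct, and is an accurate diagnosis of why this case is tractable unconditionally.

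Where you diverge from the paper, and where I think there is a genuine gap, is in the mechanism for bounding the resulting spectral moments of $L$-functions. You invoke the pointwise Conrey--Iwaniec bound $L(1/2, f\otimes\chi_D) \ll |D|^{1/3+\e}$, Iwaniec's spectral large sieve, and (for the continuous spectrum) Bourgain's estimate for $\zeta(1/2+it)$. None of these is what is actually used, and the first is not quite usable as stated: the Conrey--Iwaniec bound is uniform in the conductor $|D|$ but not in the spectral parameter $t_f$, yet here you must sum over $t_f$ up to $|D|^{\delta}$ with a polynomial dependence, so you need a \emph{hybrid} bound uniform in both aspects. Similarly, for the continuous spectrum the relevant object is $|L(1/2+it,\chi_{d_1})L(1/2+it,\chi_{d_2})|^2$, so Bourgain's $\zeta$-subconvexity does not apply directly. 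The paper instead applies the generalised H\"{o}lder inequality with exponents $(3,3,3)$ across the two $L$-factors and the $L(1,\sym^2 f)^{-1}$ factor, reducing the problem to short-interval cubic moment bounds: \hyperref[Dtrivsubconvexsumlemma]{Lemma \ref*{Dtrivsubconvexsumlemma}} (Ivi\'{c}) for $\sum_{T\leq t_f\leq T+1} L(1/2,f)^3/L(1,\sym^2 f)$ and the zeta sixth moment, and \hyperref[Dnontrivsubconvexsumlemma]{Lemma \ref*{Dnontrivsubconvexsumlemma}} (Young, Theorem 1.1 of \cite{You17a}) for $\sum_{T\leq t_f\leq T+1} L(1/2,f\otimes\chi_D)^3/L(1,\sym^2 f) \ll (|D|T)^{1+\e}$ and its Dirichlet-series analogue. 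Young's cubic moment is precisely the hybrid substitute for Conrey--Iwaniec that makes the argument close; this is also the source of the restriction to odd discriminants, which your remark about ``local-at-$2$ correction factors'' misattributes.

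To make your proposal correct you should replace the pointwise Conrey--Iwaniec step and the appeals to the large sieve and Bourgain with the H\"{o}lder $(3,3,3)$ argument, bounding each of the three cube-power sums in unit intervals via Ivi\'{c}'s and Young's cubic moment estimates together with the Weyl law and Hoffstein--Lockhart for the third factor, exactly as in the proof of \hyperref[shrinkingquadcondprop]{Proposition \ref*{shrinkingquadcondprop}} but with these moment lemmata in place of the generalised Lindel\"{o}f hypothesis.
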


We begin by determining the spectral expansions of these variances. For $f \in \BB_0(\Gamma)$, we define the Weyl sums
\begin{align*}
W_{G_K(z_A),f} & \defeq \sum_{A \in G_K} f\left(z_A\right),	\\
W_{G_K(\CC_A),f} & \defeq \sum_{A \in G_K} \int_{\CC_A} f(z) \, ds,	\\
W_{G_K(\Gamma_A \backslash \NN_A),f} & \defeq \sum_{A \in G_K} \int_{\Gamma_A \backslash \NN_A} f(z) \, d\mu(z).
\end{align*}
We define $W_{G_K(z_A),\infty}(t)$, $W_{G_K(\CC_A),\infty}(t)$, and $W_{G_K(\Gamma_A \backslash \NN_A),\infty}(t)$ similarly with $f$ replaced by $E(\cdot,1/2 + it)$.

\begin{proposition}\label{VarGKprop}
We have that
\begin{align*}
\Var\left(G_K(z_A);R\right) & = \sum_{f \in \BB_0(\Gamma)} \left|h_R\left(t_f\right)\right|^2 \frac{\left|W_{G_K(z_A),f}\right|^2}{\left(\# G_K\right)^2}	\\
& \hspace{2cm} + \frac{1}{4\pi} \int_{-\infty}^{\infty} \left|h_R(t)\right|^2 \frac{\left|W_{G_K(z_A),\infty}(t)\right|^2}{\left(\# G_K\right)^2} \, dt,	\\
\Var\left(G_K(\CC_A);R\right) & = \sum_{f \in \BB_0(\Gamma)} \left|h_R\left(t_f\right)\right|^2 \frac{\left|W_{G_K(\CC_A),f}\right|^2}{\left(\sum_{A \in G_K} \ell\left(\CC_A\right)\right)^2}	\\
& \hspace{2cm} + \frac{1}{4\pi} \int_{-\infty}^{\infty} \left|h_R(t)\right|^2 \frac{\left|W_{G_K(\CC_A),\infty}(t)\right|^2}{\left(\sum_{A \in G_K} \ell\left(\CC_A\right)\right)^2} \, dt,	\\
\Var\left(G_K(\Gamma_A \backslash \NN_A);R\right) & = \sum_{f \in \BB_0(\Gamma)} \left|h_R\left(t_f\right)\right|^2 \frac{\left|W_{G_K(\Gamma_A \backslash \NN_A),f}\right|^2}{\left(\sum_{A \in G_K} \vol\left(\Gamma_A \backslash \NN_A\right)\right)^2}	\\
& \hspace{2cm} + \frac{1}{4\pi} \int_{-\infty}^{\infty} \left|h_R(t)\right|^2 \frac{\left|W_{G_K(\Gamma_A \backslash \NN_A),\infty}(t)\right|^2}{\left(\sum_{A \in G_K} \vol\left(\Gamma_A \backslash \NN_A\right)\right)^2} \, dt.
\end{align*}
\end{proposition}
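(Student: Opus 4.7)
My plan is to derive all three spectral expansions in parallel by a single application of Parseval's identity to the automorphic kernel $K_R(z,w)$, following the template used in the proofs of \hyperref[Var(g;R)cuspspectralprop]{Propositions \ref*{Var(g;R)cuspspectralprop}} and \ref{Var(g;R)Eisenspectralprop}. The key geometric input is that integrating $K_R(\cdot,w)$ against each of the three natural measures --- counting measure on the Heegner points, arc length on the closed geodesics, and hyperbolic area on the orbifolds --- reproduces the normalised averages appearing in the variances. Using $k_R(u(z,w)) = \vol(B_R)^{-1}\mathbf{1}_{B_R(w)}(z)$ together with unfolding from $\Hb$ to $\Gamma \backslash \Hb$, one checks that
\begin{align*}
\frac{\#\{A \in G_K : z_A \in B_R(w)\}}{\vol(B_R)} & = \sum_{A \in G_K} K_R(z_A,w), \\
\frac{\sum_{A \in G_K} \ell(\CC_A \cap B_R(w))}{\vol(B_R)} & = \sum_{A \in G_K} \int_{\CC_A} K_R(z,w)\,ds, \\
\frac{\sum_{A \in G_K} \vol(\Gamma_A \backslash \NN_A \cap B_R(w))}{\vol(B_R)} & = \sum_{A \in G_K} \int_{\Gamma_A \backslash \NN_A} K_R(z,w)\,d\mu(z).
\end{align*}

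Next, I would spectrally decompose $K_R(\cdot,w)$ using \hyperref[spectralexpansionlemma]{Lemma \ref*{spectralexpansionlemma}}. By \hyperref[heigenlemma]{Lemma \ref*{heigenlemma}} together with the fact that $h_R$ is real-valued, the coefficient of each basis element $\phi$ in the spectral expansion of $K_R(\cdot,w)$ equals $h_R(t_\phi)\overline{\phi(w)}$, and in particular the constant-function contribution equals $1/\vol(\Gamma \backslash \Hb)$. Inserting this expansion into the right-hand sides above, dividing by the corresponding total masses $N \in \{\# G_K,\,\sum_A \ell(\CC_A),\,\sum_A \vol(\Gamma_A \backslash \NN_A)\}$, and subtracting $1/\vol(\Gamma \backslash \Hb)$ to centre the average, the constant term cancels, and the remaining expression is
\[\sum_{f \in \BB_0(\Gamma)} h_R(t_f)\,\overline{f(w)}\,\frac{W_{G_K(\cdot),f}}{N} + \frac{1}{4\pi}\int_{-\infty}^{\infty} h_R(t)\,\overline{E(w,1/2+it)}\,\frac{W_{G_K(\cdot),\infty}(t)}{N}\,dt,\]
where $W_{G_K(\cdot),\star}$ denotes the appropriate Weyl sum, defined according to whether the geometric invariants are $\{z_A\}$, $\{\CC_A\}$, or $\{\Gamma_A \backslash \NN_A\}$.

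Finally, I would square each expression and integrate over $w \in \Gamma \backslash \Hb$. Parseval's identity, i.e.\ the orthonormality of the spectral basis $\{1/\sqrt{\vol(\Gamma \backslash \Hb)}\} \cup \BB_0(\Gamma) \cup \{E(\cdot,1/2+it) : t \in \R\}$, converts the cross terms into the diagonal sums stated in \hyperref[VarGKprop]{Proposition \ref*{VarGKprop}}. The argument is uniform across the three geometric invariants, differing only in the choice of underlying measure. I foresee no serious obstacle: the only point requiring care is the legitimacy of interchanging the $A$-sum with the spectral sum, which is justified by the decay $h_R(t) \ll (1+Rt)^{-3/2}$ from \hyperref[hR(t)asymplemma]{Lemma \ref*{hR(t)asymplemma}} together with polynomial bounds on the Weyl sums and the Weyl law. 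The substantive work of this section lies instead in estimating the resulting spectral expansions, which is the content of the subsequent \hyperref[shrinkingquadcondprop]{Propositions \ref*{shrinkingquadcondprop}} and \ref{shrinkingquaduncondprop}.
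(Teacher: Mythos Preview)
Your proposal is correct and is precisely the argument the paper has in mind: the paper's proof consists of the single sentence ``This follows from the spectral expansion of $K_R$ and Parseval's identity,'' and you have faithfully unpacked that sentence. Your added care about interchanging the $A$-sum with the spectral sum and about the constant-term cancellation is appropriate but does not deviate from the intended route.
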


\begin{proof}
This follows from the spectral expansion of $K_R$ and Parseval's identity.
\end{proof}

To bound these variances, we require upper bounds for the Weyl sums as well as lower bounds for $\# G_K$, $\sum_{A \in G_K} \ell\left(\CC_A\right)$, and $\sum_{A \in G_K} \vol\left(\Gamma_A \backslash \NN_A\right)$.

\begin{lemma}\label{GKboundlemma}
We have that
\begin{gather*}
(-D)^{\frac{1}{2} - \e} \ll_{\e} \# G_K \ll \sqrt{-D} \log(-D),	\\
D^{\frac{1}{2} - \e} \ll_{\e} \sum_{A \in G_K} \ell\left(\CC_A\right) \ll \sqrt{D} \log D,	\\
D^{\frac{1}{2} - \e} \ll_{\e} \sum_{A \in G_K} \vol\left(\Gamma_A \backslash \NN_A\right) \ll \sqrt{D} \log D.
\end{gather*}
\end{lemma}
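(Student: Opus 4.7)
The plan is to reduce each pair of bounds to the Dirichlet class number formula, combined with Siegel's ineffective lower bound $L(1,\chi_D) \gg_\e |D|^{-\e}$ and the classical upper bound $L(1,\chi_D) \ll \log|D|$. Throughout we use the elementary estimate $\omega(|D|) \ll \log|D|/\log\log|D|$, hence $2^{\omega(|D|)} \ll_\e |D|^\e$, which absorbs the factor $2^{1-\omega(|D|)}$ appearing in $\# G_K = 2^{1-\omega(|D|)} h_K^+$ at the cost of an $|D|^{\e}$ error.

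The first bound, on $\# G_K$ with $D < 0$, follows at once from the imaginary quadratic class number formula $h_K = w_K\sqrt{|D|}\,L(1,\chi_D)/(2\pi)$, upon noting that $h_K^+ = h_K$ in this case and applying Siegel's theorem for the lower bound and $L(1,\chi_D) \ll \log|D|$ for the upper bound.

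The second bound, on $\sum_{A \in G_K}\ell(\CC_A)$ with $D > 0$, rests on the observation that $\ell(\CC_A) = 2\log\epsilon_K^+$ is independent of $A$, so the sum simplifies to $2\,\#G_K \cdot \log\epsilon_K^+$. The identity $h_K^+\log\epsilon_K^+ = 2h_K\log\epsilon_K$, which holds regardless of the sign of $N(\epsilon_K)$, together with the real quadratic class number formula $2h_K\log\epsilon_K = \sqrt{D}\,L(1,\chi_D)$, reduces this to $2^{2-\omega(|D|)}\sqrt{D}\,L(1,\chi_D)$, and the bounds follow exactly as in the first case.

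The third bound, on $\sum_{A \in G_K}\vol(\Gamma_A \backslash \NN_A) = \pi\sum_{A \in G_K}\ell_A$, is the most delicate step, since $\ell_A$ now varies with $A$ and no direct algebraic identity is available. Here the plan is to invoke the formulas of Duke--Imamo\u{g}lu--T\'{o}th \cite{DIT} expressing $\sum_{A \in \Cl_K^+}\ell_A$ as a weighted sum of class numbers of imaginary quadratic orders $H(D - m^2)$ for $0 \leq m^2 < D$; the restriction to a single genus is then effected by inserting genus characters and using their orthogonality. The resulting twisted class-number sum admits a Dirichlet-series identity expressing it as $\sqrt{D}$ times a product of Dirichlet $L$-values at $s=1$, to which Siegel's theorem and the classical upper bound again apply. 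The principal technical obstacle lies in handling the genus-character twist cleanly, so as to ensure the genus-restricted sum retains its expected size of order $\sqrt{D}\,L(1,\chi_D)$ rather than being dominated by cancellation between genera.
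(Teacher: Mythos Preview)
Your treatment of the first two inequalities matches the paper's proof essentially verbatim: both reduce to $\# G_K = 2^{1-\omega(|D|)} h_K^+$, the class number formula, Siegel's ineffective lower bound, and $L(1,\chi_D) \ll \log|D|$.

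For the third inequality, however, you take a genuinely harder road than necessary and leave it incomplete. The paper does not attempt any class-number-sum identity or genus-character orthogonality argument here. Instead it simply invokes \cite[Proposition 1]{DIT}, which already gives pointwise bounds of the shape
\[
\frac{\log \epsilon_K^+}{\log D} \ll \ell_A \ll \log \epsilon_K^+
\]
for each narrow ideal class $A$, so that summing over $A \in G_K$ yields
\[
\frac{\# G_K \log \epsilon_K^+}{\log D} \ll \sum_{A \in G_K} \vol\left(\Gamma_A \backslash \NN_A\right) \ll \# G_K \log \epsilon_K^+.
\]
Since $\# G_K \log \epsilon_K^+$ is, up to a factor of $2$, exactly $\sum_{A \in G_K} \ell(\CC_A)$, this reduces the third bound to the second, and the extra $\log D$ in the lower bound is absorbed into the $D^{\e}$. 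The ``principal technical obstacle'' you flag---controlling cancellation in the genus-twisted Hurwitz class-number sum---therefore never arises, because one works with individual $\ell_A$ rather than with a global sum over $\Cl_K^+$ that must afterwards be sieved down to a genus. As written, your third argument is a sketch with an acknowledged gap; the paper's route closes the proof in one line.
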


\begin{proof}
We have that $\# G_K = 2^{1 - \omega(|D|)} h_K^+$ and $\ell(\CC_A) = 2 \log \epsilon_K^+$, while it is shown in \cite[Proposition 1]{DIT} that
\[\frac{\# G_K \log \epsilon_K^+}{\log D} \ll \sum_{A \in G_K} \vol\left(\Gamma_A \backslash \NN_A\right) \ll \# G_K \log \epsilon_K^+.\]
The class number formula states that
\[h_K^+ = \begin{dcases*}
\frac{\sqrt{D} L\left(1, \chi_D\right)}{\log \epsilon_K^+} & if $D > 0$,	\\
\frac{w_K \sqrt{-D} L\left(1, \chi_D\right)}{2\pi} & if $D < 0$,
\end{dcases*}\]
where
\[w_K \defeq \# \OO_{K,\tors}^{\times} = \begin{dcases*}
4 & if $D = -4$,	\\
6 & if $D = -3$,	\\
2 & otherwise.
\end{dcases*}\]
The result then follows from the Landau--Siegel theorem and the bound $L(1,\chi_D) \ll \log |D|$.
\end{proof}

\subsubsection{Genus Characters}

The character group $\widehat{\Gen}_K$ of $\Gen_K$ is the group of real characters of $\Cl_K^+$. These genus characters are indexed by unordered pairs of coprime fundamental discriminants $d_1,d_2 \in \Z$ satisfying $d_1 d_2 = D$. To each pair $d_1,d_2$, we let $\chi = \chi_{d_1,d_2}$ denote the genus character corresponding to $d_1,d_2$: this is a real character of the narrow class group $\Cl_K^+$ that extends multiplicatively to all nonzero fractional ideals via
\[\chi(\pp) \defeq \begin{dcases*}
\chi_{d_1}(N(\pp)) & if $(N(\pp), d_1) = 1$,	\\
\chi_{d_2}(N(\pp)) & if $(N(\pp), d_2) = 1$,
\end{dcases*}\]
for any prime ideals $\pp \nmid \dd_K$, where $\chi_{d_1}, \chi_{d_2}$ are the primitive real Dirichlet characters modulo $d_1,d_2$ respectively. In particular, $\chi$ is a quadratic character unless either $d_1$ or $d_2$ is $1$, in which case it is the trivial character.

\begin{lemma}
For any narrow ideal classes $A_1,A_2 \in \Cl_K^+$, we have that
\[\frac{1}{2^{\omega(|D|) - 1}} \sum_{\chi \in \widehat{\Gen}_K} \chi(A_1 A_2) = \begin{dcases*}
1 & if $A_2 \in A_1 (\Cl_K^+)^2$,	\\
0 & otherwise.
\end{dcases*}\]
\end{lemma}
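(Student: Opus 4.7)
The plan is to recognize this as the standard orthogonality relation for the character group of the finite abelian group $\Gen_K = \Cl_K^+/(\Cl_K^+)^2$ applied to the element $A_1 A_2 (\Cl_K^+)^2$. The content is essentially group-theoretic; the number-theoretic input has already been absorbed into the earlier identification $\Gen_K \cong (\Z/2\Z)^{\omega(|D|)-1}$, so that $\#\Gen_K = 2^{\omega(|D|) - 1}$.

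First I would note that, by definition, $\widehat{\Gen}_K$ is the group of real characters of $\Cl_K^+$; equivalently, it is the group of characters of $\Cl_K^+$ that are trivial on $(\Cl_K^+)^2$, and therefore may be regarded as the dual group of $\Gen_K = \Cl_K^+/(\Cl_K^+)^2$. In particular, $\chi(A_1 A_2)$ depends only on the coset $A_1 A_2 (\Cl_K^+)^2$, and $\#\widehat{\Gen}_K = \#\Gen_K = 2^{\omega(|D|)-1}$.

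Next I would apply the standard orthogonality relation for characters on the finite abelian group $\Gen_K$: for any $g \in \Gen_K$,
\[\sum_{\chi \in \widehat{\Gen}_K} \chi(g) = \begin{dcases*} \#\Gen_K & if $g$ is the identity, \\ 0 & otherwise. \end{dcases*}\]
Taking $g = A_1 A_2 (\Cl_K^+)^2$, the sum equals $2^{\omega(|D|) - 1}$ precisely when $A_1 A_2 \in (\Cl_K^+)^2$, and vanishes otherwise. Dividing by $2^{\omega(|D|) - 1}$ yields the claimed formula, provided one verifies that $A_1 A_2 \in (\Cl_K^+)^2$ is equivalent to $A_2 \in A_1 (\Cl_K^+)^2$. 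This last equivalence follows immediately because $A_1^2 \in (\Cl_K^+)^2$, so $A_1 A_2 \in (\Cl_K^+)^2$ if and only if $A_1^{-1} A_2 \in (\Cl_K^+)^2$, that is, $A_2 \in A_1 (\Cl_K^+)^2$.

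There is no real obstacle here; the only thing one must be careful about is that $\widehat{\Gen}_K$ is being used interchangeably as the set of real characters of $\Cl_K^+$ and as the Pontryagin dual of $\Gen_K$. Once this identification is made explicit, the lemma reduces verbatim to character orthogonality on a finite abelian group of order $2^{\omega(|D|)-1}$.
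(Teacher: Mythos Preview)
Your proof is correct and takes the same approach as the paper, which simply states ``This is character orthogonality for finite abelian groups.'' You have merely spelled out the details of that one-line justification, including the identification of $\widehat{\Gen}_K$ with the dual of $\Gen_K$ and the equivalence $A_1 A_2 \in (\Cl_K^+)^2 \Leftrightarrow A_2 \in A_1 (\Cl_K^+)^2$.
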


\begin{proof}
This is character orthogonality for finite abelian groups.
\end{proof}

We abuse notation and write $G_K$ for an element in the coset of $\Cl_K^+$ corresponding to the genus $G_K$. This allows us to write
\begin{align*}
W_{G_K(z_A),f} & = \frac{1}{2^{\omega(-D) - 1}} \sum_{\chi \in \widehat{\Gen}_K} \chi(G_K) \sum_{A \in \Cl_K} \chi(A) f(z_A),	\\
W_{G_K(\CC_A),f} & = \frac{1}{2^{\omega(D) - 1}} \sum_{\chi \in \widehat{\Gen}_K} \chi(G_K) \sum_{A \in \Cl_K^+} \chi(A) \int_{\CC_A} f(z) \, ds,	\\
W_{G_K(\Gamma_A \backslash \NN_A),f} & = \frac{1}{2^{\omega(D) - 1}} \sum_{\chi \in \widehat{\Gen}_K} \chi(G_K) \sum_{A \in \Cl_K^+} \chi(A) \int_{\Gamma_A \backslash \NN_A} f(z) \, d\mu(z),
\end{align*}
and analogous identities for $W_{G_K(z_A),\infty}(t)$, $W_{G_K(\CC_A),\infty}(t)$, and $W_{G_K(\Gamma_A \backslash \NN_A),\infty}(t)$. This has the advantage that we are able to show in each case that the square of the sum over $A \in \Cl_K^+$ is essentially equal to a product of $L$-functions.

\subsubsection{Maa\ss{} Form Weyl Sums}

\begin{lemma}\label{MaassWeyllemma}
We have that
\begin{align*}
\left|W_{G_K(z_A),f}\right|^2 & \ll \sqrt{-D} \sum_{\chi \in \widehat{\Gen}_K} \frac{L\left(\frac{1}{2}, f \otimes \chi_{d_1}\right) L\left(\frac{1}{2}, f \otimes \chi_{d_2}\right)}{L\left(1, \sym^2 f\right)},	\\
\left|W_{G_K(\CC_A),f}\right|^2 & \ll \sqrt{\frac{D}{\frac{1}{4} + t_f^2}} \sum_{\substack{\chi \in \widehat{\Gen}_K \\ d_1, d_2 > 0}} \frac{L\left(\frac{1}{2}, f \otimes \chi_{d_1}\right) L\left(\frac{1}{2}, f \otimes \chi_{d_2}\right)}{L\left(1, \sym^2 f\right)},	\\
\left|W_{G_K(\Gamma_A \backslash \NN_A),f}\right|^2 & \ll \sqrt{\frac{D}{\left(\frac{1}{4} + t_f^2\right)^{3}}} \sum_{\substack{\chi \in \widehat{\Gen}_K \\ d_1, d_2 < 0}} \frac{L\left(\frac{1}{2}, f \otimes \chi_{d_1}\right) L\left(\frac{1}{2}, f \otimes \chi_{d_2}\right)}{L\left(1, \sym^2 f\right)}.
\end{align*}
\end{lemma}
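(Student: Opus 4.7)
The plan is to reduce each bound to a Weyl sum bound over the \emph{full} class group twisted by a genus character, by combining the genus character decomposition with Cauchy--Schwarz, and then to invoke the classical Waldspurger-type period factorisations. Setting $N \defeq 2^{\omega(|D|)-1} = \#\widehat{\Gen}_K$ and writing $S_\chi$ for the $\chi$-twisted Weyl sum over $\Cl_K^+$ (or $\Cl_K$ in the Heegner case) corresponding to each geometric invariant, the decomposition stated just before the lemma gives
\[W_{G_K(\cdot),f} = \frac{1}{N} \sum_{\chi \in \widehat{\Gen}_K} \chi(G_K) \, S_\chi,\]
so Cauchy--Schwarz on the sum over the $N$ characters yields
\[\left|W_{G_K(\cdot),f}\right|^2 \leq \frac{1}{N} \sum_{\chi \in \widehat{\Gen}_K} \left|S_\chi\right|^2.\]

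For each genus character $\chi = \chi_{d_1,d_2}$, an explicit period formula expresses $|S_\chi|^2$ as a product of central $L$-values times an archimedean local factor $c_\infty(t_f)$ and a global factor of $\sqrt{|D|} \cdot N$. The Heegner point case is the Maa\ss{}-form period formula of Zhang (following Waldspurger and the weight-zero computation of Katok--Sarnak); the closed geodesic and hyperbolic orbifold cases are established by Duke--Imamo\={g}lu--T\'{o}th as part of the work cited in the introduction. In each case the nonarchimedean part factorises as
\[\frac{L\left(\tfrac{1}{2}, f \otimes \chi_{d_1}\right) L\left(\tfrac{1}{2}, f \otimes \chi_{d_2}\right)}{L(1, \sym^2 f)},\]
via the identity $L(s, f \otimes \theta_\chi) = L(s, f \otimes \chi_{d_1}) L(s, f \otimes \chi_{d_2})$, with $\theta_\chi$ the theta series attached to $\chi$; the combinatorial factor $N$ that appears in the period formula for $|S_\chi|^2$ exactly cancels the $N^{-1}$ from Cauchy--Schwarz, leaving $\sqrt{|D|}$ in the final bound. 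Summing over $\chi \in \widehat{\Gen}_K$ and relabelling then produces the three claimed estimates.

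The archimedean factor $c_\infty(t_f)$ and the summation range on $(d_1,d_2)$ distinguish the three cases, and their determination is the only nontrivial bookkeeping. For Heegner points (a $0$-dimensional invariant) the local archimedean period is a ratio of $\Gamma$-values which Stirling bounds by $O(1)$, and no signature restriction occurs. For closed geodesics the real period vanishes unless both $\chi_{d_1}$ and $\chi_{d_2}$ are even Dirichlet characters, forcing $d_1, d_2 > 0$, and the relevant $\Gamma$-factor ratio is $\Gamma\bigl(\tfrac{1}{4} + \tfrac{it_f}{2}\bigr) \Gamma\bigl(\tfrac{1}{4} - \tfrac{it_f}{2}\bigr) / \Gamma\bigl(\tfrac{1}{2} + it_f\bigr)$, which by Stirling gives $c_\infty(t_f) \asymp (\tfrac{1}{4} + t_f^2)^{-1/2}$. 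For hyperbolic orbifolds the dual signature $d_1, d_2 < 0$ is required, and the thicker $2$-dimensional archimedean integration produces the additional factor $(\tfrac{1}{4} + t_f^2)^{-1}$, yielding $c_\infty(t_f) \asymp (\tfrac{1}{4} + t_f^2)^{-3/2}$. The main obstacle is isolating the correct Duke--Imamo\={g}lu--T\'{o}th period identity for the orbifold invariant (which is more recent and less standardised than the Heegner and geodesic formulas) and carrying out the signature analysis consistently across the three cases; once these are in hand, the proof reduces to routine Stirling asymptotics and bookkeeping of the factor $N$.
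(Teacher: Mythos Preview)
Your approach is essentially identical to the paper's: decompose the Weyl sum over a genus into $\chi$-twisted sums over the full narrow class group, apply Cauchy--Schwarz over the $N = 2^{\omega(|D|)-1}$ characters, invoke the period formula of Duke--Imamo\={g}lu--T\'{o}th (which the paper cites as \cite[Theorem 4 and Equation (5.17)]{DIT} uniformly for all three invariants) to express each $|S_\chi|^2$ as a ratio of completed $L$-functions, and then use Stirling to extract the archimedean behaviour. Your signature analysis and the resulting powers of $(\tfrac14+t_f^2)$ are correct.

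One point to correct: your claim that ``the combinatorial factor $N$ that appears in the period formula for $|S_\chi|^2$ exactly cancels the $N^{-1}$ from Cauchy--Schwarz'' is not right. The DIT period identity gives $|S_\chi|^2 = \tfrac{1}{2}\,\Lambda(\tfrac12,f\otimes\chi_{d_1})\Lambda(\tfrac12,f\otimes\chi_{d_2})/\Lambda(1,\sym^2 f)$ with no factor of $N$; the $\sqrt{|D|}$ arises solely from the conductors $|d_1|,|d_2|$ inside the completed $L$-functions (since $|d_1 d_2| = |D|$). The leftover $N^{-1}$ from Cauchy--Schwarz is simply discarded using $N^{-1}\le 1$, which is harmless for an upper bound. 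This does not affect your conclusion, but the cancellation you describe does not occur.
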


\begin{proof}
For $\chi = \chi_{d_1,d_2}$ and even $f \in \BB_0(\Gamma)$, it is shown in \cite[Theorem 4 and Equation (5.17)]{DIT} that the quantity
\begin{equation}\label{toricintegral}
\begin{dcases*}
\left|\sum_{A \in \Cl_K} \chi(A) \frac{4 \sqrt{\pi}}{w_K} f\left(z_A\right)\right|^2 & if $d_1 d_2 < 0$,	\\
\left|\sum_{A \in \Cl_K^+} \chi(A) \int_{\CC_A} f(z) \, ds\right|^2 & if $d_1,d_2 > 0$,	\\
\left|\sum_{A \in \Cl_K^+} \chi(A) \frac{\frac{1}{4} + t_f^2}{2} \int_{\Gamma_A \backslash \NN_A} f(z) \, d\mu(z)\right|^2 & if $d_1,d_2 < 0$
\end{dcases*}
\end{equation}
is equal to
\[\frac{1}{2} \frac{\Lambda\left(\frac{1}{2}, f \otimes \chi_{d_1}\right) \Lambda\left(\frac{1}{2}, f \otimes \chi_{d_2}\right)}{\Lambda\left(1, \sym^2 f\right)}.\]
Here we recall the definition \eqref{Lambda(s,pi)} of the completed $L$-function, and in particular that this includes the conductor. This identity also holds when $f$ is odd, because in this case $L(1/2, f \otimes \chi_d) = 0$. Finally, it is also shown that
\[\sum_{A \in \Cl_K^+} \chi(A) \int_{\Gamma_A \backslash \NN_A} f(z) \, d\mu(z)\]
vanishes if $d_1,d_2 > 0$, and similarly
\[\sum_{A \in \Cl_K^+} \chi(A) \int_{\CC_A} f(z) \, ds\]
vanishes if $d_1,d_2 < 0$. The result then follows from the Cauchy-Schwarz inequality and Stirling's approximation.
\end{proof}

\begin{remark}
The terms \eqref{toricintegral} can be viewed as toric integrals in the sense of \cite[Section 2.2.1]{MV06}, and these can be generalised to involve Hecke Gr\"{o}\ss{}encharaktere $\chi$ of $K$ that are not necessarily genus characters. The resulting toric integral in this generalised setting will essentially be equal to the completed Rankin--Selberg $L$-function $\Lambda(1/2, f \otimes g_{\chi})$, where $g_{\chi}$ denotes the automorphic induction of the Hecke Gr\"{o}\ss{}encharakter $\chi$ to a Maa\ss{} newform $g_{\chi}$. When $\chi$ is a genus character $\chi_{d_1,d_2}$, this Rankin--Selberg $L$-function factorises as $\Lambda(1/2, f \otimes \chi_{d_1}) \Lambda(1/2, f \otimes \chi_{d_2})$, while the case of $\chi$ being an ideal class character of an imaginary quadratic field $K$ and its applications towards equidistribution of Heegner points in conjugates of $\Gamma \backslash \Hb$ in $\Gamma_0(q) \backslash \Hb$ is investigated in \cite{LMY}.
\end{remark}

\subsubsection{Eisenstein Series Weyl Sums}

\begin{lemma}\label{EisensteinWeyllemma}
We have that
\begin{align*}
\left|W_{G_K(z_A),\infty}(t)\right|^2 & \ll \sqrt{-D} \sum_{\chi \in \widehat{\Gen}_K} \left|\frac{L\left(\frac{1}{2} + it, \chi_{d_1}\right) L\left(\frac{1}{2} + it, \chi_{d_2}\right)}{\zeta(1 + 2it)}\right|^2,	\\
\left|W_{G_K(\CC_A),\infty}(t)\right|^2 & \ll \sqrt{\frac{D}{\frac{1}{4} + t^2}} \sum_{\substack{\chi \in \widehat{\Gen}_K \\ d_1, d_2 > 0}} \left|\frac{L\left(\frac{1}{2} + it, \chi_{d_1}\right) L\left(\frac{1}{2} + it, \chi_{d_2}\right)}{\zeta(1 + 2it)}\right|^2,	\\
\left|W_{G_K(\Gamma_A \backslash \NN_A),\infty}(t)\right|^2 & \ll \sqrt{\frac{D}{\left(\frac{1}{4} + t^2\right)^3}} \sum_{\substack{\chi \in \widehat{\Gen}_K \\ d_1, d_2 < 0}} \left|\frac{L\left(\frac{1}{2} + it, \chi_{d_1}\right) L\left(\frac{1}{2} + it, \chi_{d_2}\right)}{\zeta(1 + 2it)}\right|^2.
\end{align*}
\end{lemma}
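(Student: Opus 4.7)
The proof plan is to mirror the structure of the proof of \hyperref[MaassWeyllemma]{Lemma \ref*{MaassWeyllemma}}, replacing the Maa\ss{} form $f$ throughout by the Eisenstein series $E(\cdot, 1/2 + it)$ and invoking the analogous Hecke-type period identities for Eisenstein series due to Duke--Imamo\=glu--T\'{o}th \cite{DIT}. First I would insert the genus character decomposition, writing each Weyl sum as
\[W_{G_K(\ast),\infty}(t) = \frac{1}{2^{\omega(|D|) - 1}} \sum_{\chi \in \widehat{\Gen}_K} \chi(G_K) \sum_{A} \chi(A) \, P_{\ast}(A; t),\]
where $\ast \in \{z_A, \CC_A, \Gamma_A \backslash \NN_A\}$ and $P_{\ast}(A;t)$ is the corresponding Eisenstein period (point evaluation, geodesic integral, or orbifold integral) of $E(\cdot, 1/2 + it)$.

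Next, for each genus character $\chi = \chi_{d_1, d_2}$, I would quote (the Eisenstein analog of) the period identities from \cite{DIT}, which assert that the character-twisted sum $\sum_A \chi(A) P_{\ast}(A;t)$ is an explicit scalar multiple of
\[\frac{\Lambda\left(\frac{1}{2} + it, \chi_{d_1}\right) \Lambda\left(\frac{1}{2} + it, \chi_{d_2}\right)}{\Lambda(1 + 2it)}\]
when the signs of $d_1, d_2$ are compatible with the geometric invariant (namely $d_1 d_2 < 0$ for Heegner points, $d_1, d_2 > 0$ for closed geodesics, $d_1, d_2 < 0$ for hyperbolic orbifolds), and vanishes otherwise. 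This is the Eisenstein series counterpart of \cite[Theorem 4 and Equation (5.17)]{DIT}, replacing the Rankin--Selberg $L$-function with its degenerate factorisation $L(s, E(\cdot, 1/2 + it) \otimes \chi) = L(s + it, \chi) L(s - it, \chi)$ and replacing $L(1, \sym^2 f)$ with the archimedean-corrected form of $\zeta(1) \zeta(1 + 2it) \zeta(1 - 2it)$, in which the pole of $\zeta(s)$ cancels with the measure-theoretic normalisation.

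With this identity in hand, I would square, apply the Cauchy--Schwarz inequality over the $2^{\omega(|D|) - 1}$ genus characters to dispose of the characters $\chi(G_K)$ (as in the last line of the proof of \hyperref[MaassWeyllemma]{Lemma \ref*{MaassWeyllemma}}), and then convert the completed $L$-functions into their nonarchimedean parts using Stirling's approximation. The conductor of $\chi_{d_j}$ contributes $\sqrt{|d_j|}$, so the two conductor factors combine to $\sqrt{|d_1 d_2|} = \sqrt{|D|}$, while the archimedean $\Gamma$-factors of $\Lambda(1/2 + it, \chi_{d_j}) / \Lambda(1 + 2it)$ contribute the powers of $(1/4 + t^2)$ displayed in the statement: a factor of $(1/4 + t^2)^{-1/4}$ per Heegner period (giving no net power once we account for the normalising $w_K / (4\sqrt{\pi})$ from \eqref{toricintegral}), a factor of $(1/4 + t^2)^{-1/4}$ per closed geodesic period, and a factor of $(1/4 + t^2)^{-3/4}$ per orbifold integral (arising from the $(1/4 + t_f^2)/2$ normalisation in \eqref{toricintegral} carried over to the Eisenstein setting).

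The main obstacle will be locating or verifying the Eisenstein analogs of the DIT period identities with the correct explicit constants and vanishing conditions; in particular, one must check that the cancellation of the trivial character contribution (which would produce a pole from $\zeta(1/2 + it)^2$ only if $\chi_{d_1}$ or $\chi_{d_2}$ equals the principal character, i.e.\ $d_1 = 1$ or $d_2 = 1$) does not yield a spurious divergent term for the principal genus, and that the signs of $d_1, d_2$ correctly govern the vanishing for each geometric invariant. Once these identities are in place, the remainder of the argument is a direct transcription of the proof of \hyperref[MaassWeyllemma]{Lemma \ref*{MaassWeyllemma}}.
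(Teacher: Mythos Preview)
Your proposal is correct and follows essentially the same route as the paper: decompose into genus characters, invoke the Eisenstein period identities from \cite{DIT}, apply Cauchy--Schwarz, and use Stirling's approximation exactly as in the proof of \hyperref[MaassWeyllemma]{Lemma \ref*{MaassWeyllemma}}. The only adjustment is bibliographic: the relevant Eisenstein period formula in \cite{DIT} is Theorem 3 (not the Maa\ss{}-form Theorem 4 and Equation (5.17)), which already supplies the identities with the correct vanishing conditions and explicit constants, so the ``main obstacle'' you flag is in fact resolved by that citation.
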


\begin{proof}
This follows from \cite[Theorem 3]{DIT}, akin to the proof of \hyperref[MaassWeyllemma]{Lemma \ref*{MaassWeyllemma}}.
\end{proof}

\subsection{Bounds for the Variances}

\begin{proof}[Proof of \texorpdfstring{\hyperref[shrinkingquadcondprop]{Proposition \ref*{shrinkingquadcondprop}}}{Proposition \ref{shrinkingquadcondprop}}]
For $R \asymp (-D)^{-\delta}$, $\Var\left(G_K(z_A);R\right)$ is bounded by a constant multiple dependent on $\e$ of
\begin{multline*}
(-D)^{-\frac{1}{2} + \e} \sum_{\chi \in \widehat{\Gen}_K} \sum_{0 < t_f < 2(-D)^{\delta}} \frac{L\left(\frac{1}{2}, f \otimes \chi_{d_1}\right) L\left(\frac{1}{2}, f \otimes \chi_{d_2}\right)}{L\left(1, \sym^2 f\right)}	\\
+ (-D)^{-\frac{1}{2} + 3\delta + \e} \sum_{\chi \in \widehat{\Gen}_K} \sum_{t_f \geq 2(-D)^{\delta}} \frac{L\left(\frac{1}{2}, f \otimes \chi_{d_1}\right) L\left(\frac{1}{2}, f \otimes \chi_{d_2}\right)}{t_f^3 L\left(1, \sym^2 f\right)}	\\
+ (-D)^{-\frac{1}{2} + \e} \sum_{\chi \in \widehat{\Gen}_K} \int_{0}^{2(-D)^{\delta}} \frac{\left|L\left(\frac{1}{2} + it, \chi_{d_1}\right)\right|^2 \left|L\left(\frac{1}{2} + it, \chi_{d_2}\right)\right|^2}{\left|\zeta(1 + 2it)\right|^2} \, dt	\\
+ (-D)^{-\frac{1}{2} + 3\delta + \e} \sum_{\chi \in \widehat{\Gen}_K} \int_{2(-D)^{\delta}}^{\infty} \frac{\left|L\left(\frac{1}{2} + it, \chi_{d_1}\right)\right|^2 \left|L\left(\frac{1}{2} + it, \chi_{d_2}\right)\right|^2}{t^3 \left|\zeta(1 + 2it)\right|^2} \, dt
\end{multline*}
via \hyperref[VarGKprop]{Proposition \ref*{VarGKprop}} and \hyperref[GKboundlemma]{Lemmata \ref*{GKboundlemma}}, \ref{MaassWeyllemma}, and \ref{EisensteinWeyllemma}; an analogous bound also holds for $\Var(G_K(\CC_A);R)$. Making use of the generalised Lindel\"{o}f hypothesis in each expression and using the Weyl law yields \hyperref[shrinkingquadcondprop]{Proposition \ref*{shrinkingquadcondprop}}.
\end{proof}

For unconditional results, we make use of the following bounds.

\begin{lemma}[{\cite[Theorem]{Ivi}}]\label{Dtrivsubconvexsumlemma}
For $T \gg 1$,
\begin{align*}
\sum_{T \leq t_f \leq T + 1} \frac{L\left(\frac{1}{2}, f\right)^3}{L(1,\sym^2 f)} & \ll_{\e} T^{1 + \e},	\\
\int_{T}^{T + 1} \frac{\left|\zeta\left(\frac{1}{2} + it\right)\right|^6}{\left|\zeta(1 + 2it)\right|^2} \, dt & \ll_{\e} T^{1 + \e}.
\end{align*}
\end{lemma}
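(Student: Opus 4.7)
The plan is to handle the two estimates separately, the second being essentially pointwise and the first a genuinely spectral statement.

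For the sixth-moment bound on $\zeta$, I would argue via Weyl-type subconvexity. The classical Weyl bound $\zeta(1/2 + it) \ll (1 + |t|)^{1/6 + \e}$ (improved by Bourgain to $(1 + |t|)^{13/84 + \e}$ \cite[Theorem 5]{Bou}) yields $|\zeta(1/2 + it)|^6 \ll T^{1 + \e}$ uniformly for $t \in [T, T+1]$. Combined with $|\zeta(1 + 2it)|^{-1} \ll (\log t)^{2/3} (\log \log t)^{1/3}$ from \cite[Theorem 8.29]{IK}, cf.~\eqref{zeta'zeta}, the integrand is $\ll T^{1 + \e}$ pointwise, and integrating over a unit interval gives the stated bound.

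For the cubic-moment bound, I would follow Ivi\'{c}'s approach, which exploits the reciprocity between the fourth moment of $\zeta$ on the critical line and cubic moments of Maa\ss{} form $L$-functions provided by Motohashi's explicit formula: for a suitable test function $g$,
\[\int_{-\infty}^{\infty} \left|\zeta\left(\tfrac{1}{2} + it\right)\right|^4 g(t) \, dt = \mathcal{M}(g) + \sum_{f \in \BB_0(\Gamma)} \frac{L\left(\frac{1}{2}, f\right)^3}{L(1, \sym^2 f)} g^{\ast}(t_f) + \mathcal{C}(g),\]
where $\mathcal{M}(g)$ collects the polar contributions, $g^{\ast}$ is an explicit Bessel-type transform of $g$, and $\mathcal{C}(g)$ is the continuous-spectrum contribution. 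Choosing $g$ to be a smooth bump at scale $T$ whose transform $g^{\ast}$ is nonnegative on $\R$ and satisfies $g^{\ast}(t_f) \gg 1$ for $t_f \in [T, T+1]$, and invoking the nonnegativity of $L(1/2, f)$ due to Waldspurger and Katok--Sarnak, one bounds the left-hand sum by the right-hand integral plus the explicit contributions from $\mathcal{M}$ and $\mathcal{C}$. The desired bound $T^{1 + \e}$ then follows from Heath-Brown's and Ivi\'{c}'s short-interval fourth-moment bounds for $\zeta$, combined with standard estimates for $\mathcal{M}$ and for the continuous-spectrum term (which reduces to sixth-moment bounds for $\zeta$ in short intervals, handled exactly as in the first part).

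The main obstacle is the construction of the test function $g$: its Motohashi transform $g^{\ast}$ must be simultaneously nonnegative on all of $\R$ and bounded below by a positive constant on the short window $[T, T+1]$, while keeping the zeta-side integral $\ll T^{1+\e}$. Since the Bessel kernels underlying Motohashi's formula oscillate, one cannot simply use a sharp indicator; a careful stationary-phase analysis of the inverse Bessel transform is required to produce an admissible $g$, as carried out in Ivi\'{c}'s paper. Once this choice is made, everything else reduces to classical short-interval moment estimates for $\zeta$ on the critical line.
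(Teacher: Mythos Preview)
The paper does not supply its own proof of this lemma; it simply cites Ivi\'{c}'s theorem for the cubic spectral moment and treats both estimates as known input. Your sketch is a faithful outline of Ivi\'{c}'s actual argument: Motohashi's explicit formula together with the nonnegativity of $L(1/2,f)$ (Waldspurger/Katok--Sarnak) and a carefully chosen test function whose spectral transform is nonnegative and $\gg 1$ on $[T,T+1]$, the zeta side then being controlled by short-interval fourth-moment bounds. Your treatment of the $\zeta$ sixth-moment integral via the pointwise Weyl bound is also correct and is indeed how one would justify that companion estimate.

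One small caveat worth noting: in Motohashi's formula the spectral transform $g^\ast$ is an oscillatory integral transform of $g$, and enforcing nonnegativity of $g^\ast$ together with the lower bound on a unit window is the heart of Ivi\'{c}'s paper rather than a routine step; you acknowledge this, but the reader should be aware that this construction occupies most of \cite{Ivi} and is not a one-line choice. Apart from that, nothing is missing and the approach matches the cited source.
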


\begin{lemma}[{\cite[Theorem 1.1]{You17a}}]\label{Dnontrivsubconvexsumlemma}
For odd fundamental discriminants $D \neq 1$ and $T \gg 1$,
\begin{align*}
\sum_{T \leq t_f \leq T + 1} \frac{L\left(\frac{1}{2}, f \otimes \chi_D\right)^3}{L(1,\sym^2 f)} & \ll_{\e} (|D| T)^{1 + \e},	\\
\int_{T}^{T + 1} \frac{\left|L\left(\frac{1}{2} + it, \chi_D\right)\right|^6 \, dt}{\left|\zeta(1 + 2it)\right|^2} & \ll_{\e} (|D| T)^{1 + \e}.
\end{align*}
\end{lemma}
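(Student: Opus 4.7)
The plan is to treat both bounds in parallel as instances of hybrid third-moment subconvex estimates for $\GL_2$-type $L$-functions twisted by a quadratic character, following essentially the method of Young \cite{You17a}. In each case the strategy is: expand the cube of the $L$-value via an approximate functional equation into a short Dirichlet polynomial, then apply the appropriate summation formula in the remaining average (the Kuznetsov trace formula in the spectral case, Parseval together with a large sieve in the integral case), and carefully handle the off-diagonal terms with uniform control in both $|D|$ and $T$.

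For the first bound, I would begin by writing $L\bigl(\tfrac{1}{2}, f \otimes \chi_D\bigr)^3$ as a triple sum of Hecke eigenvalues $\lambda_f(n_1)\lambda_f(n_2)\lambda_f(n_3)$ twisted by $\chi_D(n_1 n_2 n_3)$, smoothly cut off at $n_1 n_2 n_3 \asymp (|D|T)^{3/2}$. Using the Hecke relation $\lambda_f(m)\lambda_f(n) = \sum_{d \mid (m,n)} \lambda_f(mn/d^2)$, I would collapse two of the three factors and reduce the problem to bounding a sum of the form
\[
\sum_{T \leq t_f \leq T+1} \frac{1}{L(1,\sym^2 f)} \sum_{m,n} \alpha(m,n) \, \chi_D(mn) \, \lambda_f(m)\lambda_f(n),
\]
where $\alpha(m,n)$ is smooth and supported on the appropriate range. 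The Kuznetsov trace formula transforms this into a diagonal contribution, which after bounding $1/L(1,\sym^2 f)$ via Hoffstein--Lockhart and summing trivially gives a term of size $(|D|T)^{1+\e}$, together with an off-diagonal piece involving Kloosterman sums $S(\pm m, n; c)$ against Bessel transforms of the localising test function. Weil's bound, Voronoi summation (to exploit oscillation against $\chi_D$), and stationary-phase analysis of the Bessel transform should then show that the off-diagonal matches the diagonal.

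For the second bound, the outline is entirely analogous, with the spectral average replaced by an archimedean one. An approximate functional equation for $L\bigl(\tfrac{1}{2}+it,\chi_D\bigr)$ gives a Dirichlet polynomial of length $\sqrt{|D|(1+|t|)}$ with coefficients $\chi_D(n) n^{-1/2-it}$. Cubing and combining via Cauchy--Schwarz reduces matters to a mean-value estimate for $\int_{T}^{T+1} |P(t)|^2\,dt$ for a Dirichlet polynomial $P$ of length $(|D|T)^{3/2}$, which is controlled by the standard mean-value theorem in conjunction with the large sieve for the character modulus $|D|$. The lower bound $1/|\zeta(1+2it)| \ll (\log T)^{2/3}(\log\log T)^{1/3}$ contributes only a $T^\e$ factor.

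The principal obstacle, and the reason this result is nontrivial, is obtaining genuine hybrid uniformity in both $|D|$ and $T$ simultaneously: both the Kuznetsov Bessel transform and the Voronoi dual have qualitatively different behaviour in the transition ranges $|D| \asymp T$, $|D| \asymp T^{1/2}$, etc., and naive application of subconvex bounds in one aspect loses powers in the other. The key technical input, as in \cite{You17a}, is to keep the two approximate functional equations balanced, apply a Mellin-Barnes representation of the test functions to separate the $m$ and $n$ variables cleanly, and then track the stationary-phase analysis of the resulting integral transforms uniformly in the ratio $|D|/T$. Once this uniform stationary-phase estimate is in place, standard bounds for character sums and Kloosterman sums close the argument.
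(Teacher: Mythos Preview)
The paper does not prove this lemma at all: it is simply quoted as \cite[Theorem 1.1]{You17a} and used as a black box. So there is no ``paper's own proof'' to compare against beyond the citation.

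That said, your sketch of how Young's argument goes contains a genuine gap in the treatment of the second bound. You propose to cube the approximate functional equation and then control $\int_{T}^{T+1} |P(t)|^2 \, dt$ for a Dirichlet polynomial $P$ of length $N \asymp (|D|T)^{3/2}$ via ``the standard mean-value theorem in conjunction with the large sieve for the character modulus $|D|$''. But there is no average over characters here, so the large sieve gives nothing; and the mean-value theorem on an interval of length $1$ has an off-diagonal contribution of size roughly $N/T \asymp |D|^{3/2} T^{1/2}$, which exceeds $(|D|T)^{1+\e}$ as soon as $|D|$ is larger than $T$. In other words, this approach recovers at best the convexity bound in the $D$-aspect, not the Weyl-strength hybrid bound that the lemma asserts.

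In Young's actual proof the two displayed estimates are not handled by separate mechanisms. The sixth moment of $L(\tfrac{1}{2}+it,\chi_D)$ over $t \in [T,T+1]$ arises from the continuous part of the Kuznetsov formula in the same computation that yields the cuspidal cubic moment; both are controlled simultaneously by the Conrey--Iwaniec method (Poisson/Voronoi after Kuznetsov, then delicate stationary-phase and character-sum analysis). Your outline for the first bound is broadly in that direction, though the step ``collapse two of the three factors via the Hecke relation'' is not how the cubic moment is organised; one keeps the three sums and applies Kuznetsov directly to the spectral average. The key point you identify at the end, namely the need for stationary-phase estimates uniform in the ratio $|D|/T$, is indeed the heart of the matter, but it must be carried out within the Conrey--Iwaniec framework for both spectral components, not via a separate mean-value argument for the Eisenstein contribution.
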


\begin{proof}[Proof of \texorpdfstring{\hyperref[shrinkingquaduncondprop]{Proposition \ref*{shrinkingquaduncondprop}}}{Proposition \ref{shrinkingquaduncondprop}}]
We bound the variance by breaking up into ranges as in the proof of \hyperref[shrinkingquadcondprop]{Proposition \ref{shrinkingquadcondprop}}. Instead of applying the generalised Lindel\"{o}f hypothesis, we use the generalised H\"{o}lder inequality with exponents $(3,3,3)$. Via the bounds in \hyperref[Dtrivsubconvexsumlemma]{Lemmata \ref*{Dtrivsubconvexsumlemma}} and \ref{Dnontrivsubconvexsumlemma}, together with the Weyl law, we obtain the result.
\end{proof}

\subsection{Representations of Integers by Indefinite Ternary Quadratic Forms}

We briefly describe how the results in this section can be interpreted in terms of indefinite ternary quadratic forms. For simplicity, we only discuss the case of negative discriminant and summing over all genera; for positive discriminant, a detailed presentation can be found in \cite[Section 2]{ELMV}.

Consider the indefinite ternary quadratic form
\[Q(a,b,c) = b^2 - 4ac.\]
We are interested in the level sets
\[V_{Q,D}(\Z) \defeq \left\{(a,b,c) \in \Z^3 : b^2 - 4ac = D\right\},\]
where $D < 0$ is a fundamental discriminant; these sets parametrise the different ways that the integer $D$ can be represented by the ternary quadratic form $Q$. The normalised level set $\Gs_D \defeq (-D)^{-1/2} V_{Q,D}(\Z)$ lies inside the two-sheeted hyperboloid
\[V_{Q,-1}(\R) \defeq \left\{(a,b,c) \in \R^3 : b^2 - 4ac = -1\right\}.\]

It is natural to ask whether the normalised level sets $\Gs_D$ cover $V_{Q,-1}(\R)$ randomly as $D$ tends to $-\infty$ along fundamental discriminants. Each level set $V_{Q,D}(\Z)$ is countably infinite, and $V_{Q,-1}(\R)$ is isomorphic to $\C \setminus \R$, which is not of finite volume, so one cannot immediately rephrase this random covering as equidistribution.

On the other hand, the group
\[\SO_Q(\Z) \defeq \left\{A \in \SL_3(\Z) : Q(Ax) = Q(x) \text{ for all $x = (a,b,c) \in \Z^3$}\right\}\]
acts transitively on $V_{Q,D}(\Z)$, and the quotient space $\SO_Q(\Z) \backslash \Gs_D$ is finite for all fundamental discriminants $D$, with cardinality equal to $h_K$. Moreover, $\SO_Q(\Z)$ is a discrete subgroup of $\SO_Q(\R)$ of finite covolume, and $V_{Q,-1}(\R) \cong \SO_Q(\R) / K$ with $K$ equal to the maximal compact subgroup of $\SO_Q(\R)$, and so the space $\SO_Q(\Z) \backslash V_{Q,-1}(\R)$ is of finite volume.

Thus to ask whether the normalised level sets $\Gs_D$ randomly cover $V_{Q,-1}(\R)$ can be rephrased as asking whether the finite sets $\SO_Q(\Z) \backslash \Gs_D$ equidistribute in the finite volume space $\SO_Q(\Z) \backslash V_{Q,-1}(\R)$. This has a positive answer by naturally realising this result in terms of the equidistribution of Heegner points on $\Gamma \backslash \Hb$, as proved by Duke \cite[Theorem 1]{Duk}. Indeed, the fact that $Q$ is indefinite implies that $\SO_Q$ is isomorphic to the split special orthogonal group $\SO_{1,2}$, and we have the accidental isomorphism $\SO_{1,2} \cong \PGL_2$, while $K \cong \SO_2(\R)$. From this, we see that $\SO_Q(\Z) \backslash V_{Q,-1}(\R) \cong \PGL_2(\Z) \backslash \PGL_2(\R) / \SO_2(\R) \cong \Gamma \backslash \Hb$, while $\SO_Q(\Z) \backslash \Gs_D$ is naturally identified with the set of Heegner points $\{z_A \in \Gamma \backslash \Hb : A \in \Cl_K\}$.

With this reinterpretation in mind, we now see that \hyperref[shrinkingquadcondprop]{Proposition \ref*{shrinkingquadcondprop}} implies that under the assumption of the generalised Lindel\"{o}f hypothesis, almost every shrinking ball of radius $R \asymp (-D)^{-\delta}$ with $0 < \delta < 1/4$ in $\SO_Q(\Z) \backslash V_{Q,-1}(\R)$ contains a normalised equivalence class of points $(a,b,c) \in \Z^3$ that represent the integer $D$ by the indefinite ternary quadratic form $Q(a,b,c) = b^2 - 4ac$. This complements \cite[Theorem 1.8]{BRS}, where the analogous result is proved for the definite ternary quadratic form $Q(a,b,c) = a^2 + b^2 + c^2$.

\subsection*{Acknowledgements}

The author thanks Peter Sarnak for suggesting this problem and many helpful discussions on this topic, as well as Matt Young for useful feedback.

\end{document}